\numberwithin{equation}{section}
\newtheorem{theorem}[equation]{Theorem}
\newtheorem{assumption}[equation]{Assumption}
\newtheorem{lemma}[equation]{Lemma}
\newtheorem{proposition}[equation]{Proposition}
\newtheorem{corollary}[equation]{Corollary}
\newtheorem*{corollary*}{Corollary}
\theoremstyle{definition}
\newtheorem{remark}[equation]{Remark}
\newtheorem{construction}[equation]{Construction}
\newtheorem{definition}[equation]{Definition}
\newenvironment{myproof}[2] {\emph{Proof of {#1} {#2}.}}{\hfill$\square$}
\def\Gal{\mathrm{Gal}}
\def\GL{\mathrm{GL}}
\def\GSp{\mathrm{GSp}}
\def\GSpin{\mathrm{GSpin}}
\def\Sp{\mathrm{Sp}}
\def\GU{\mathrm{GU}}
\def\SO{\mathrm{SO}}
\def\Nilp{\mathrm{Nilp}}
\def\Lie{\mathrm{Lie}}
\def\Ker{\mathrm{Ker}}
\def\Coker{\mathrm{Coker}}
\def\min{\mathrm{min}}
\def\lr{\mathrm{lr}}
\def\mix{\mathrm{mix}}
\def\ram{\mathrm{ram}}
\def\Pa{\mathrm{Pa}}
\def\Kl{\mathrm{Kl}}
\def\Sie{\mathrm{Sie}}
\def\sing{\mathrm{sing}}
\def\Frob{\mathrm{Frob}}
\def\Ta{\mathrm{Ta}}
\def\IH{\mathrm{IH}}
\DeclareMathOperator{\rank}{rank}
\DeclareMathOperator{\Hom}{Hom}
\DeclareMathOperator{\End}{End}
\DeclareMathOperator{\Gr}{Gr}
\DeclareMathOperator{\Spec}{Spec}
\def\calD{\mathcal{D}}
\def\calE{\mathcal{E}}
\def\calF{\mathcal{F}}
\def\calH{\mathcal{H}}
\def\calL{\mathcal{L}}
\def\calN{\mathcal{N}}
\def\calM{\mathcal{M}}
\def\calO{\mathcal{O}}
\def\calS{\mathcal{S}}
\def\calT{\mathcal{T}}
\def\calQ{\mathcal{Q}}
\def\fracm{\mathfrak{m}}
\def\fracn{\mathfrak{n}}
\def\AAA{\mathbb{A}}
\def\bfB{\mathbf{B}}
\def\CC{\mathbb{C}}
\def\DD{\mathbb{D}}
\def\FF{\mathbb{F}}
\def\GG{\mathbb{G}}
\def\bfG{\mathbf{G}}
\def\sfH{\mathsf{H}}
\def\PP{\mathbb{P}}
\def\QQ{\mathbb{Q}}
\def\RR{\mathbb{R}}
\def\TT{\mathbb{T}}
\def\bfT{\mathbf{T}}
\def\bfx{\mathbf{x}}
\def\XX{\mathbb{X}}
\def\sfX{\mathsf{X}}
\def\ZZ{\mathbb{Z}}
\def\bfU{\mathbf{U}}
\def\rma{\mathrm{a}}
\def\rmb{\mathrm{b}}
\def\rmc{\mathrm{c}}
\def\rmv{\mathrm{v}}
\def\rmx{\mathrm{x}}
\def\rms{\mathrm{s}}
\def\sfA{\mathsf{A}}
\def\rmB{\mathrm{B}}
\def\rmC{\mathrm{C}}
\def\rmD{\mathrm{D}}
\def\rmE{\mathrm{E}}
\def\rmF{\mathrm{F}}
\def\rmG{\mathrm{G}}
\def\rmH{\mathrm{H}}
\def\rmI{\mathrm{I}}
\def\rmJ{\mathrm{J}}
\def\rmK{\mathrm{K}}
\def\rmL{\mathrm{L}}
\def\rmM{\mathrm{M}}
\def\rmN{\mathrm{N}}
\def\rmO{\mathrm{O}}
\def\rmP{\mathrm{P}}
\def\rmQ{\mathrm{Q}}
\def\rmU{\mathrm{U}}
\def\rmV{\mathrm{V}}
\def\rmW{\mathrm{W}}
\def\rmR{\mathrm{R}}
\def\rmS{\mathrm{S}}
\def\rmX{\mathrm{X}}
\def\rmY{\mathrm{Y}}
\def\rmT{\mathrm{T}}
\def\rmZ{\mathrm{Z}}
\def\Iw{\mathrm{Iw}}
\def\Spf{\mathrm{Spf}}
\def\unr{\mathrm{unr}}
\def\Sh{\mathrm{Sh}}
\begin{document}
\title[level raising on quaternionic unitary Shimura variety]
{Arithmetic level raising on certain quaternionic unitary Shimura variety}

\keywords{\emph{Shimura varieties, Arithmetic level raising, Vanishing cycles}}

\begin{abstract}
In this article we prove an arithmetic level raising theorem for the symplectic group of degree four in the ramified case. This result is a key step towards the Beilinson--Bloch--Kato conjecture for certain  Rankin-Selberg motives associated to orthogonal groups within the framework of the Gan--Gross--Prasad conjecture. The theorem itself can be also viewed as an analogue of the Ihara's lemma or the Tate conjectures for special fibers of Shimura varieties at ramified characteristics. The proof relies heavily on the description of the supersingular locus of certain quaternionic unitary Shimura variety which is closely related to the classical Siegel threefold.
\end{abstract}

\author{Haining Wang }
\address{\parbox{\linewidth}{Haining Wang\\Shanghai Center for Mathematical Sciences,\\ Fudan University,\\No.2005 Songhu Road,\\Shanghai, 200438, China.~ }}
\email{wanghaining@fudan.edu.cn}
\date{\today}

\maketitle
\tableofcontents
\section{Introduction}
Let $f$ be an elliptic modular newform of weight $2$, which is normalized of level $\Gamma_{0}(\rmN)$ for some positive integer $\rmN$. Let $\ell\geq 3$ be a prime and we fix an isomorphism $\iota_{\ell}: \CC\cong\overline{\QQ}_{\ell}$.  Let $E=\QQ(f)$ be the Hecke field of $f$ and suppose $\lambda$ is the place of $E$ over $\ell$ determined by $\iota_{\ell}$. We write $E_{\lambda}$ for the localization of $E$ at $\lambda$ with valuation ring $\calO_{\lambda}$ and whose residual field is $k$. Then we can attach a Galois representation 
\begin{equation*}
\rho_{f,\lambda}: \rmG_{\QQ}\rightarrow \GL_{2}(E_{\lambda})
\end{equation*}
 to $f$ such that $\rho_{f,\lambda}$ is unramfied outside $\rmN \ell$ and is characterized by the property that the trace of Frobenius at a prime $p$ not dividing $\rmN l$ equals to the $p$-th Fourier coefficient of $f$.  Let $\overline{\rho}_{f,\lambda}: \rmG_{\QQ}\rightarrow \GL_{2}(k)
$ be the residual representation which we assume is absolutely irreducible. Then Ribet \cite{ribet} proves the following level raising theorem.
\begin{theorem}[Ribet]
Suppose that $\overline{\rho}_{f,\lambda}$ is absolutely irreducible and $p$ is a prime away from $\rmN l$. If we have
\begin{equation*}
a^{2}_{p}\equiv (p+1)^{2}\mod \lambda,
\end{equation*}
then there exists a weight $2$ normalized newform $g$ of level $\Gamma_{0}(\rmN p)$ such that  $\overline{\rho}_{f,\lambda}\cong \overline{\rho}_{g,\lambda}$.
\end{theorem}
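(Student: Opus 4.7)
The plan is to locate a mod-$\lambda$ Hecke eigensystem matching that of $f$ inside the $p$-new part of $J_{0}(\rmN p)$ and then lift it to a characteristic-zero newform $g$ via Deligne--Serre. The level-raising congruence $a_{p}^{2}\equiv (p+1)^{2}\bmod\lambda$ is precisely the numerical condition that the local characteristic polynomial of $\overline{\rho}_{f,\lambda}$ at $p$ matches that of a Steinberg (tamely ramified unipotent) representation, which is exactly what a $p$-new newform of level $\Gamma_{0}(\rmN p)$ produces.

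First I would analyze the semistable reduction of $X_{0}(\rmN p)$ at $p$: by Deligne--Rapoport the special fiber is the union of two copies of $X_{0}(\rmN)_{\FF_{p}}$ crossing transversely at the supersingular points. Consequently the character group $X_{\ast}$ of the toric part of the Néron model of $J_{0}(\rmN p)$ at $p$ is Hecke-equivariantly identified with the degree-zero divisors on the supersingular locus, which via the Cerednik--Drinfeld $p$-adic uniformization and the Jacquet--Langlands correspondence matches the space of weight-two automorphic forms of level $\Gamma_{0}(\rmN)$ on the definite quaternion algebra $B$ ramified exactly at $p$ and $\infty$.

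The weight--monodromy (vanishing cycles) exact sequence attached to this semistable reduction then produces a short exact sequence of Hecke modules
\begin{equation*}
0\to X_{\ast}/\lambda\to \rmH^{1}_{\mathrm{et}}(X_{0}(\rmN p)_{\overline{\QQ}_{p}},\calO_{\lambda}/\lambda)\to Y\to 0,
\end{equation*}
where (up to Tate twist) the quotient $Y$ is identified via the two degeneracy maps with a subquotient of $\rmH^{1}(X_{0}(\rmN)_{\overline{\QQ}},\calO_{\lambda}/\lambda)^{\oplus 2}$. The level-raising congruence together with the Eichler--Shimura relation on $Y$ forces any $\overline{\rho}_{f,\lambda}$-eigensystem lifted to the middle term to have a nonzero component in $X_{\ast}/\lambda$; Deligne--Serre then promotes the resulting mod-$\lambda$ eigenform to a characteristic-zero $p$-new newform $g$ on $\Gamma_{0}(\rmN p)$ with $\overline{\rho}_{g,\lambda}\cong \overline{\rho}_{f,\lambda}$.

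The decisive step, and the one I expect to be the main obstacle, is to show that the $\overline{\rho}_{f,\lambda}$-eigensystem really does lift nontrivially to $\rmH^{1}_{\mathrm{et}}(X_{0}(\rmN p)_{\overline{\QQ}_{p}},\calO_{\lambda}/\lambda)$, i.e.\ does not get killed in the passage from level $\rmN$ to the $p$-old subspace at level $\rmN p$. For this I would invoke Ihara's lemma: under absolute irreducibility of $\overline{\rho}_{f,\lambda}$ the sum of the two degeneracy maps induces an injection on mod-$\lambda$ cohomology localized at the $f$-maximal ideal, so no $p$-old class is annihilated modulo $\lambda$. Combining this with the congruence shows that the lift of the $f$-eigensystem to level $\Gamma_{0}(\rmN p)$ cannot remain purely $p$-old, hence must have nontrivial image in $X_{\ast}/\lambda$ as required. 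The essential input, and the reason Ihara's lemma is indispensable, is that the $f$-maximal ideal is non-Eisenstein, which is precisely what absolute irreducibility of $\overline{\rho}_{f,\lambda}$ provides.
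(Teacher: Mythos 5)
The paper does not actually prove this statement: it is quoted from Ribet's ICM address, and the introduction only records the method (the integral structure of the cohomology of $\rmX_{0}(\rmN)$ and $\rmX_{0}(\rmN p)$ together with Ihara's lemma) before generalizing it to $\GSp_{4}$. Your sketch is exactly that classical argument, and it is also structurally parallel to the paper's own higher-rank proof: your identification of the character group of the torus with divisors on the supersingular locus plays the role of $\calO_{\lambda}[\Sh(\overline{\rmB},\rmK_{\rmH})]$, your vanishing-cycle sequence is the Picard--Lefschetz/monodromy diagram, and your combination of Ihara's lemma with the congruence is the analogue of the level-raising matrix computation. Two points are looser than what the argument needs. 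First, the quotient of $\rmH^{1}(\rmX_{0}(\rmN p)_{\overline{\QQ}_{p}},\calO_{\lambda}/\lambda)$ by the toric part is not a subquotient of $\rmH^{1}(\rmX_{0}(\rmN))^{\oplus 2}$: the semistable filtration has three graded pieces, the top one being the dual character group up to twist, so your $Y$ should be the middle graded piece only. Second, the inference ``cannot remain purely $p$-old, hence has nontrivial image in $X_{\ast}/\lambda$'' needs an explicit mechanism, since $X_{\ast}/\lambda$ is a submodule and ``component'' only makes sense after an argument: the standard one is to compose the two degeneracy maps with their duals, obtaining the matrix $\Matrix{p+1}{T_{p}}{T_{p}}{p+1}$ on the $p$-old part, whose determinant $(p+1)^{2}-T_{p}^{2}$ lies in $\fracm$ by the level-raising hypothesis; Ihara's injectivity at the non-Eisenstein ideal $\fracm$, together with the self-duality of $\rmH^{1}_{\fracm}$ under Poincar\'e duality, then forces $\fracm$ to lie in the support of the character group, i.e.\ of the $p$-new part, after which lifting the eigensystem to characteristic zero gives $g$. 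This is precisely the step your appeal to ``the Eichler--Shimura relation on $Y$'' is standing in for, and it should be spelled out; with that repair your outline is the proof the paper has in mind.
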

We will refer to the condition $a^{2}_{p}\equiv (p+1)^{2}\mod \lambda$ as the level raising condition. Let $\alpha_{p}, \beta_{p}$ be the eigenvalues of $\overline{\rho}_{f,\lambda}(\Frob_{p})$. Then the level raising condition is equivalent to $\{\alpha_{p}, \beta_{p}\}=\{1, p\}$ or $\{\alpha_{p}, \beta_{p}\}=\{-1, -p\}$ which is further equivalent to the condition that $\alpha_{p}/\beta_{p}=p^{\pm1}$. Let $\AAA$ be adele ring over $\QQ$. Let $\pi$ be the automorphic representation of $\GL_{2}(\mathbb{A})$ associated to $f$. Since  the eigenvalues of $\overline{\rho}_{f,\lambda}(\Frob_{p})$ correspond to the Satake parameters of $\pi_{p}$ modulo $\lambda$, the level raising condition can be interpreted as the condition for $\pi_{p}$ to degenerates to the twisted Steinberg representation under the reduction modulo $\lambda$. This representation theoretic interpretation sheds some lights on the question of generalizing the above theorem to automorphic forms on higher rank reductive groups. One of the themes of this article is to establish a level raising result for cuspidal automorphic representations of $\GSp_{4}(\mathbb{A}_{\QQ})$ of general type in the sense of Arthur \cite{Arthur-GSp}. 

Due to the importance of level raising theorems, there are quite a few investigations in this direction using various different methods. We give a survey of some these methods. The original method of Ribet exploits the natural integral structure of the cohomology of the modular curves $\rmX_{0}(\rmN)$ and $\rmX_{0}(\rmN p)$. In particular, it relies on the so-called Ihara's lemma, which roughly speaking, says that the fundamental group of the special fiber of $\rmX_{0}(\rmN)$ modulo $p$ is generated by the Frobenius elements supported on the supersingular points of the special fiber of $\rmX_{0}(\rmN)$. It is quite difficult to prove the generalization of the Ihara's lemma to other reductive groups but fortunately there is a method of proving the level raising theorem using modularity lifting theorems. In particular, it relies on the Ihara's avoidance technique of Taylor. For an implementation of this type of method, see for example \cite{Gee11}.  The third method comes from part of Bertolini--Darmon's first reciprocity law \cite[Theorem 5.15, Corollary 5.18]{BD-Main}. Nowadays, the third method is known as the arithmetic level raising in the terminology of \cite{LTXZZ}. We will go into some details on this third method later as it directly inspires the present article. Finally, there is a more representation theoretic method due to Clozel \cite{Clo} which relies heavily on the theory of universal modules in mod $\ell$ representation theory of reductive groups. Note in particular, following this method Sorensen \cite{Sor-level-raising} proves a level raising theorem for $\GSp_{4}$. We will make some comments on the relation of his results and the result in this article in the last subsection of this introduction. For the moment, we only remark that these representation theoretic methods seem to work well even when the residual Galois representation has small image, for example, in the endoscopic or the CAP cases. 

The first step of proving the arithmetic level raising theorem is to transfer $f$ to an automorphic form on a definite quaternion algebra $\overline{\rmB}$ unramified at $p$ using the Jacquet-Langlands correspondence. Then $f$ is viewed as an element of $\calO_{\lambda}[\Sh(\overline{\rmB}, \rmK_{\rmH})]$
where $\Sh(\overline{\rmB}, \rmK_{\rmH})$ is the Shimura set of $\overline{\rmB}$ and $\rmK_{\rmH}$ is a suitable level structure which is unramified at $p$. As we know that if $f$ satisfies the level raising condition, then its associated automorphic representation $\pi$  has its local component $\pi_{p}$ degenerates to a twisted Steinberg representation which transfers to a representation of the quaternion division algebra over $\QQ_{p}$, it makes sense to consider the nearby quaternion algbera $\rmB$ which is obtained by switching the local invariant at $\infty$ and $p$ of $\overline{\rmB}$. From $\rmB$, we can construct a Shimura curve $\Sh(\rmB, \rmK_{\Iw})$ where $\rmK_{\Iw}$ is a suitable level structure such that 
its local model is the same as the classical modular curve with Iwahori level at $p$. Let $\TT^{\Sigma}$ be the Hecke algebra unramified away from a finite set $\Sigma$ of non-archimedean places of $\QQ$ containing all the places where either $\pi$ or $\rmB$ is ramified. Then the Hecke eigenvalues of $f$ furnishes a homomorphism $\phi_{\lambda}: \TT^{\Sigma}\rightarrow\calO_{\lambda}$. Let $\fracm=\ker(\TT^{\Sigma}\xrightarrow{\phi}\calO_{\lambda}\rightarrow\calO_{\lambda}/\lambda)$, we say $\fracm$ is the maximal ideal of $\TT^{\Sigma}$ corresponding to $\overline{\rho}_{f,\lambda}$. Let $m$ be the $\lambda$-adic valuation of $a^{2}_{p}-(p+1)^{2}$. The arithmetic level raising theorem in this setting says the following.
\begin{theorem}[Bertolini--Darmon]
Let $f$ be an elliptic modular newform of level $\Gamma_{0}(\rmN)$ and weight $2$ as above and suppose it can be transferred to an automorphic form on $\overline{\rmB}$. Let $p$ be a level raising prime for $f$. Assume that the residual representation $\overline{\rho}_{f,\lambda}$
\begin{enumerate}
\item  is absolutely irreducible and has image containing $\GL_{2}(\FF_{p})$;
\item  is $\ell$-isolated in the sense of \cite[Definition 1.2]{BD-Main}.
\end{enumerate} 
Then we have a canonical isomorphism
\begin{equation*}
\calO_{\lambda}/\lambda^{m}[\Sh(\overline{\rmB}, \rmK_{\rmH})]_{\fracm}\xrightarrow{\sim} \rmH^{1}_{\mathrm{sing}}(\QQ_{p^{2}}, \rmH^{1}(\Sh(\rmB, \rmK_{\Iw}), \calO_{\lambda}/\lambda^{m}(1))_{\fracm}).
\end{equation*}
\end{theorem}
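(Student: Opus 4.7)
The plan is to exploit the $p$-adic (Cerednik--Drinfeld) uniformization of the Shimura curve $\Sh(\rmB, \rmK_{\Iw})$ at the ramified prime $p$. First I would establish that $\Sh(\rmB, \rmK_{\Iw})$ admits an integral model over $\ZZ_{p^{2}}$ with strictly semistable reduction whose special fiber consists of smooth irreducible components meeting transversally at a finite set of ordinary double points $\Sigma$, together with a canonical identification $\Sigma \simeq \Sh(\overline{\rmB}, \rmK_{\rmH})$ arising from the Jacquet--Langlands transfer. This identification is the geometric incarnation of the exchange of local invariants between $\rmB$ and $\overline{\rmB}$, and it is the structural input that lets the two sides of the desired isomorphism talk to each other.

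Second, I would feed this geometric description into the Rapoport--Zink/Saito weight spectral sequence for the nearby cycles $R\psi\,\calO_{\lambda}/\lambda^{m}$. For a strictly semistable curve it degenerates at $E_{2}$ and yields a $\rmG_{\QQ_{p^{2}}}$-equivariant short exact sequence of the form
\begin{equation*}
0 \to \rmH^{1}(\tilde{X}_{s}, \calO_{\lambda}/\lambda^{m}) \to \rmH^{1}(\Sh(\rmB, \rmK_{\Iw})_{\overline{\QQ}_{p}}, \calO_{\lambda}/\lambda^{m}) \xrightarrow{\,N\,} \calO_{\lambda}/\lambda^{m}[\Sigma]^{0}(-1) \to 0,
\end{equation*}
in which the outer terms carry trivial inertia action, inertia acts on the middle term through the monodromy operator $N$, and $\tilde{X}_{s}$ denotes the normalization of the special fiber. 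After localization at the non-Eisenstein maximal ideal $\fracm$ the degree-zero constraint on the right-hand term becomes invisible, and $\Frob_{p^{2}}$ acts on $\calO_{\lambda}/\lambda^{m}[\Sh(\overline{\rmB}, \rmK_{\rmH})]_{\fracm}$ via the Eichler--Shimura/congruence relation by an operator whose reduction modulo $\fracm$ is governed by $a_{p}^{2}$, while the Tate twist contributes a factor of $p^{2}$.

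Third, I would compute $\rmH^{1}_{\sing}(\QQ_{p^{2}}, -)$ via the standard local-cohomology long exact sequence and identify it, after localization at $\fracm$, with the Frobenius coinvariants of the toric quotient produced in the second step. The level raising hypothesis $a_{p}^{2} \equiv (p+1)^{2} \pmod{\lambda^{m}}$ is precisely the assertion that $\Frob_{p^{2}} - 1$ vanishes on the $\fracm$-localization of $\calO_{\lambda}/\lambda^{m}[\Sh(\overline{\rmB}, \rmK_{\rmH})]$, so the Frobenius coinvariants coincide with the module itself, yielding the desired canonical isomorphism.

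The principal obstacle, and the source of assumptions (1) and (2) on $\overline{\rho}_{f,\lambda}$, is controlling the contribution of $\rmH^{1}(\tilde{X}_{s}, \calO_{\lambda}/\lambda^{m})_{\fracm}$ to the singular quotient. Hypothesis (1), that the image of $\overline{\rho}_{f,\lambda}$ contains $\GL_{2}(\FF_{p})$, provides the non-Eisenstein localization needed to collapse the degree-zero constraint and to rule out parasitic classes from constants; hypothesis (2), $l$-isolation in the sense of \cite{BD-Main}, plays the role of an Ihara-type lemma, ensuring that the smooth-component cohomology has trivial image in $\rmH^{1}_{\sing}$ after localization at $\fracm$. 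Establishing this last vanishing is the technical heart of the argument; once it is in place, the remainder is an unwinding of the weight spectral sequence, the Picard--Lefschetz formula, and local Tate duality.
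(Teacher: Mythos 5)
The overall skeleton you propose --- Cerednik--Drinfeld uniformization at the ramified prime, nearby cycles and monodromy, then a Hecke-theoretic identification --- is the right family of ideas, and it matches the strategy this paper runs in its $\GSp_{4}$ setting (the quoted $\GL_{2}$ theorem itself is not reproved here; it is cited from \cite{BD-Main}). But two of your key identifications are wrong. First, the set $\Sigma$ of double points of the special fiber is \emph{not} $\Sh(\overline{\rmB},\rmK_{\rmH})$: in the Cerednik--Drinfeld picture the double points are the edges of the quotient of the Bruhat--Tits tree and are parametrized by the Shimura set for $\overline{\rmB}$ with \emph{Iwahori} level at $p$, whereas the hyperspecial-level set $\Sh(\overline{\rmB},\rmK_{\rmH})$ parametrizes the irreducible components (the vertices) --- exactly as the paper states when it says the left-hand side ``comes from the parametrizing set of the irreducible components.'' Second, your short exact sequence cannot hold: the special fiber is totally degenerate, so its normalization $\tilde{\rmX}_{s}$ is a disjoint union of $\PP^{1}$'s and $\rmH^{1}(\tilde{\rmX}_{s})=0$; the correct monodromy filtration on $\rmH^{1}$ of the generic fiber has graded pieces $\rmH^{1}(\Gamma)$ and $\rmH_{1}(\Gamma)(-1)$ for the dual graph $\Gamma$, each of rank equal to the genus, and neither of them is $\calO_{\lambda}/\lambda^{m}[\Sigma]^{0}(-1)$.

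Third, the mechanism by which the level-raising congruence enters is misplaced. In Bertolini--Darmon's argument, and in the paper's own $\GSp_{4}$ analogue (Propositions \ref{F-1}, \ref{mondromy-exact-seq}, \ref{lr-matrix} and Theorem \ref{arithmetic-level-raising}), $\rmH^{1}_{\sing}$ is computed via Picard--Lefschetz / the combinatorics of $\Gamma$ as the cokernel of a composite restriction--Gysin ``level raising matrix'' acting on the module spanned by the components --- for $\GL_{2}$ this operator is essentially $\rmT_{p}^{2}-(p+1)^{2}$ up to units --- localized at $\fracm$; the hypothesis that the $\lambda$-adic valuation of $a_{p}^{2}-(p+1)^{2}$ is exactly $m$ is what makes this cokernel $\calO_{\lambda}/\lambda^{m}[\Sh(\overline{\rmB},\rmK_{\rmH})]_{\fracm}$. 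It is not the assertion that $\Frob_{p^{2}}-1$ vanishes on the module so that Frobenius coinvariants are everything: that argument at best produces a surjection, does not detect the exact depth $m$, and gives no control of injectivity. Finally, you gesture at but do not supply the multiplicity/freeness input needed to upgrade the surjection to an isomorphism; this is precisely where the $l$-isolated hypothesis is used in \cite{BD-Main}, and in the paper's analogue it is replaced by the $\rmR=\TT$ freeness results of \cite{Wangd} together with the rank comparison $d_{\unr}=d_{\ram}$. As written, the proposal would need these corrections before it could be completed.
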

The righthand side of the above isomorphism is the singular quotient of the Galois cohomology group 
\begin{equation*}
\rmH^{1}(\QQ_{p^{2}}, \rmH^{1}(\Sh(\rmB, \rmK_{\Iw}), \calO_{\lambda}/\lambda^{m}(1))_{\fracm}). 
\end{equation*}
The theorem implies that the maximal ideal $\fracm$ appears in the support of the cohomology $\rmH^{1}(\Sh(\rmB, \rmK_{\Iw}), \calO_{\lambda}(1))$ and since $\rmB$ is ramified at $p$, the automorphic representations carried by $\rmH^{1}(\Sh(\rmB, \rmK_{\Iw}), \calO_{\lambda}(1))$ are necessarily ramified and hence gives rise to a level raising form $g$ by the Jacquet--Langlands correspondence. Thus the above arithmetic level raising theorem implies the usual level raising theorem.

The isomorphism 
\begin{equation*}
\calO_{\lambda}/\lambda^{m}[\Sh(\overline{\rmB}, \rmK_{\rmH})]_{\fracm}\xrightarrow{\sim} \rmH^{1}_{\mathrm{sing}}(\QQ_{p^{2}}, \rmH^{1}(\Sh(\rmB, \rmK_{\Iw}), \calO_{\lambda}/\lambda^{m}(1))_{\fracm})
\end{equation*}
is really an analogue of the Ihara's lemma for the Shimura curve ramified at $p$. Here the set 
\begin{equation*}
\calO_{\lambda}/\lambda^{m}[\Sh(\overline{\rmB}, \rmK_{\rmH})]_{\fracm}
\end{equation*}
comes from the parametrizing set of the irreducible components of the supersingular locus of the special fiber of $\Sh(\rmB, \rmK_{\Iw})$ which turns out to be the whole special fiber. This isomorphism is also an analogue of the Tate conjecture proved by in Xiao and Zhu in \cite{XZ} which, roughly speaking, says that the middle degree cohomology of the special fiber of a Shimura variety of good reduction should be generated by Tate classes coming from their supersingular locus when we localize the cohomology at a sufficiently generic maximal ideal.

It is natural to ask if one can generalize the above arithmetic level raising result to higher dimensional Shimura varieties and deduce level raising theorems for automorphic forms on other reductive groups. For this direction, we only mention the following results.
\begin{itemize}
\item In \cite{Liu-cubic},Yifeng Liu proves the arithmetic level raising theorem  for certain quaternionic Hilbert modular threefold at a ramified prime. Parallel to this, the author proves the the arithmetic level raising theorem for triple product of Shimura curves at a ramified prime in \cite{Wangd}. These results are tailored to have applications to construct Euler system in the frame work of the arithmetic Gan--Gross--Prasad conjecture for $\SO_{3}\times\SO_{4}$. 
\item In a recent breakthrough, the authors of \cite{LTXZZ} establish the arithmetic level raising theorem for unitary Shimura varieties at a ramified prime. This is closely related to the arithmetic Gan--Gross--Prasad conjecture for  $\rmU_{n}\times\rmU_{n+1}$ and its associated Beillinson--Bloch--Kato conjecture. Indeed, the main result of \cite{LTXZZ} establishes one direction in the rank $0$ and rank $1$ case of the Beillinsion--Bloch--Kato conjecture attached to the Rankin--Selberg motive of $\rmU_{n}\times\rmU_{n+1}$.
\end{itemize}
In this article, we will establish the arithmetic level raising theorem for $\GSp_{4}$ over $\QQ$ and discuss its applications to level raising results for cuspidal automorphic representations of  $\GSp_{4}(\mathbb{A}_{\QQ})$. Our main results will be described in more details in the next subsection.

\subsection{Main results} Let $\pi$ be a cuspidal automorphic representation of $\GSp_{4}(\mathbb{A})$ of general type in the sense of Arthur \cite{Arthur-GSp}. This means that $\pi$ is non-endoscopic and non-CAP. Alternatively, we can define $\pi$ to be of general type if $\pi$ transfers to a cuspidal automorphic representation $\Pi$ of $\GL_{4}(\mathbb{A}_{\QQ})$. We assume that $\pi$ is of weight $(3, 3)$ with trivial central character. Suppose $E$ is a strong coefficient field for $\pi$ in the sense of Definition \ref{strong-field}. We fix as before an isomorphism $\iota_{\ell}:\CC\cong\overline{\QQ}_{\ell}$ which induces the place $\lambda$ of $E$ over $\ell$. We can attach to such $\pi$ a continuous homomorphism
\begin{equation*}
\rho_{\pi,\lambda}: \rmG_{\QQ}\rightarrow\GSp_{4}(E_{\lambda})
\end{equation*}
characterized by the property that for a prime $p$ such that $\pi_{p}$ is unramfied, we have
\begin{equation*}
\det(\rmX-\rho_{\pi,\lambda}(\Frob_{p}))=\calQ_{p}(\rmX)
\end{equation*}
where $\calQ_{p}(\rmX)$ is the Hecke polynomial given by
\begin{equation*}
\rmX^{4}-a_{p,2}\rmX^{3}+(pa_{p,1}+(p^{3}+p)a_{p, 0})\rmX^{2}-p^{3}a_{p, 0}a_{p, 2}\rmX+p^{6}a^{2}_{p, 0}
\end{equation*}
whose coefficients $a_{p, 0}, a_{p, 1}, a_{p, 2}$ are the eigenvalues of the spherical Hecke operators $\rmT_{p,0}, \rmT_{p,2}, \rmT_{p,1}$, defined as in \ref{Hecke-operators}, 
%\begin{equation*}
%\begin{split}
%&\rmT_{p,0}=\mathrm{char}(\GSp_{4}(\ZZ_{p})\begin{pmatrix}p&&&\\&p&&\\&&p&\\&&&p\\\end{pmatrix}\GSp_{4}(\ZZ_{p})), \\
%&\rmT_{p,2}=\mathrm{char}(\GSp_{4}(\ZZ_{p})\begin{pmatrix}p&&&\\& p&&\\&&1&\\&&&1\\\end{pmatrix}\GSp_{4}(\ZZ_{p})), \\
%&\rmT_{p,1}=\mathrm{char}(\GSp_{4}(\ZZ_{p})\begin{pmatrix}p^{2} &&&\\&p&&\\&&p&\\&&& 1\\\end{pmatrix}\GSp_{4}(\ZZ_{p})), \\
%\end{split}
%\end{equation*}
which generate the spherical Hecke algebra for $\GSp_{4}(\QQ_{p})$ acting on the space $\pi^{\GSp_{4}(\ZZ_{p})}_{p}$ of spherical vectors of $\pi_{p}$.
The four roots $[\alpha_{p}, \beta_{p}, \gamma_{p}, \delta_{p}]$ of the Hecke polynomial $\calQ_{p}(\rmX)$ are called the \emph{Hecke parameters} of $\pi$ at $p$ which we can arrange to be in the form $[\alpha_{p}, \beta_{p}, p^{3}\beta^{-1}_{3}, p^{3}\alpha^{-1}_{p}]$ under our assumptions imposed on $\pi$.  
We will further assume that $\pi_{p}$ is an unramified type $\rmI$ representation in the classification of Sally--Tadic \cite{ST-classification} and Schmidt \cite{Sch-Iwahori}. See the beginning of section $3$ for a review of this notion.
\begin{definition}
Suppose $p$ is a prime such that $\pi_{p}$ is an unramified type $\rmI$ representation and $m$ is a positive integer. We say $p$ is level raising special for $\pi$ of depth $m$ if 
\begin{enumerate}
\item $\ell\nmid (p^{2}-1)p$;
\item the Hecke parameters $[\alpha_{p}, \beta_{p}, p^{3}\beta^{-1}, p^{3}\alpha^{-1}_{p}]$ of $\pi_{p}$ satisfy simultaneously the following three condiitons, 
\begin{enumerate}
\item $\beta_{p}+p^{3}\beta^{-1}_{p}\equiv u (p+ p^{2})\mod \lambda$ for some $u\in\{\pm1\}$;
\item $\alpha_{p}+p^{3}\alpha^{-1}_{p}\not\equiv \pm (p+ p^{2})\mod \lambda$.
\end{enumerate} 
and such that the $\lambda$-adic valuation of $\beta_{p}+p^{3}\beta^{-1}_{p}-u (p+ p^{2})$ is exactly $m$.
\end{enumerate}
\end{definition}
In terms of the classification of non-supercuspidal irreducible admissible representations of $\GSp_{4}(\QQ_{p})$ of Sally--Tadic \cite{ST-classification} and Schmidt \cite{Sch-Iwahori},  this condition means $\pi_{p}$, which is an unramified type $\rmI$ representation, degenerates into a ramified representation of type $\rmI\rmI{\rma}$ under the reduction modulo $\lambda$. In the theory of local newforms for $\GSp_{4}$ of \cite{BS-newform}, representations of type $\rmI\rmI{\rma}$ can be viewed as the local newforms of paramodular level. Moreover, representations of type $\rmI\rmI{\rma}$ can be transferred to a representation of the inner form $\GU_{2}(\rmD)$ of $\GSp_{4}(\QQ_{p})$ for the quaternion division algebra $\rmD$ over $\QQ_{p}$ under the Jacquet--Langlands correspondence. This puts us in a similar setting as in the $\GL_{2}$ case discussed in the beginning of the introduction. We therefore will proceed analogously to prove the arithmetic level raising theorem.

Let $\pi$ be as above and $p$ be a level raising special prime for $\pi$ of depth $m$. Let $\overline{\rmB}$ be the definite quaternion algebra ramified at $q\infty$ where $q$ is a prime which will play a completely auxiliary role in this article. This restriction on the discriminant of the  quaternion algebra $\overline{\rmB}$ is completely unnecessary and is only assumed for simplicity. 
The quaternionic unitary group $\bfG(\overline{\rmB})=\GU_{2}(\overline{\rmB})$ of degree $2$ over $\QQ$ is an inner form of $\bfG=\GSp_{4}$ over $\QQ$. It defines a Shimura set $\Sh(\overline{\rmB}, \rmK_{\rmH})$ where $\rmK_{\rmH}$ is a suitable level structure which in particular is hyperspecial at $p$. Similar to the $\GL_{2}$-case, we will also switch the local invariant of $\overline{\rmB}$ at $p$ and $\infty$ to obtain the indefinite quaternion algebra $\rmB$ ramified at $pq$ and in this case we have the quaternionic unitary group $\bfG(\rmB)=\GU_{2}(\rmB)$ of degree $2$ over $\QQ$ which is another inner form of $\bfG=\GSp_{4}$. There is an associated PEL-type Shimura variety $\Sh(\rmB, \rmK_{\Pa})$ which classifies abelian fourfolds with quaternionic multiplication and principal polarizations. This Shimura variety is the central geometric object in this article, its level structure $\rmK_{\Pa}$ is chosen so that its integral model over $\ZZ_{p}$ has the same local model as the classical Siegel modular threefold with paramodular level at $p$. In particular, its integral model $\rmX_{\Pa}(\rmB)$ has isolated quadratic singularities at its special fiber. This allows us to compute the monodromy action on the cohomology of this Shimura variety using the Picard--Lefschetz formula. We will write the special fiber of $\rmX_{\Pa}(\rmB)$ as $\overline{\rmX}_{\Pa}(\rmB)$ and we will use the same notation $\overline{\rmX}_{\Pa}(\rmB)$ to denote its base change to the algebraic closure $\FF$ of $\FF_{p}$.

Now we can state our main result on arithmetic level raising for $\GSp_{4}$ in the ramified case. Let $\Sigma$ be a finite set of places of $\QQ$ containing all the places at which $\pi$ is ramified. Then the Hecke parameters of $\pi$ defines a morphism $\phi_{\pi}: \TT^{\Sigma}\rightarrow E$ and its $\lambda$-adic avatar $\phi_{\pi, \lambda}: \TT^{\Sigma}\rightarrow \calO_{\lambda}$. Using the same terminology as in the $\GL_{2}$ case, we say
\begin{equation*}
\fracm_{\pi}=\ker(\TT^{\Sigma}\xrightarrow{\phi_{\pi,\lambda}}\calO_{\lambda}\rightarrow\calO_{\lambda}/\lambda)
\end{equation*}
is the maximal ideal corresponding to $\overline{\rho}_{\pi,\lambda}$ and we define $\fracm$ by
\begin{equation*}
\fracm=\TT^{\Sigma\cup\{p\}}\cap \fracm_{\pi}.
\end{equation*}

\begin{theorem}[Arithmetic level raising for $\GSp_{4}$]\label{intro-main}
Suppose that $\pi$ is a cuspidal automorphic representation of $\GSp_{4}(\mathbb{A}_{\QQ})$ of general type with weight $(3,3)$ and trivial central character. 
Let $p$ be a level raising special prime for $\pi$ of depth $m$. 

We assume further that
\begin{enumerate}
\item the prime $\ell\geq 3$;
\item $\overline{\rho}_{\pi,\lambda}$ is rigid for $(\Sigma_{\min}, \Sigma_{\lr})$ as in Definition \ref{rigid};
\item $\overline{\rho}_{\pi,\lambda}$ is absolutely irreducible and the image of $\overline{\rho}_{\pi,\lambda}(\rmG_{\QQ})$ contains $\GSp_{4}(\FF_{\ell})$;
\item $\fracm$ is in the support of $\calO_{\lambda}[\Sh(\overline{\rmB}, \rmK_{\rmH})]$ and is generic in the sense of Definition \ref{generic}.
\end{enumerate}
Then we have an isomorphism
\begin{equation*}
\rmH^{1}_{\sing}(\QQ_{p^{2}}, \rmH^{3}_{\rmc}(\Sh(\rmB, \rmK_{\Pa}), \calO_{\lambda}/\lambda^{m}(2))_{\fracm})\xrightarrow{\sim}\calO_{\lambda}/\lambda^{m}[\Sh(\overline{\rmB}, \rmK_{\rmH})]_{\fracm}.
\end{equation*}
In particular $\fracm$ occurs in the support of $\rmH^{3}_{\rmc}(\Sh(\rmB, \rmK_{\Pa}), \calO_{\lambda}(2))$. 
\end{theorem}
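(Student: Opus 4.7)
The plan is to adapt the Bertolini--Darmon arithmetic level raising strategy, as refined in \cite{LTXZZ}, to the $\GSp_4$ setting. The central geometric input, promised by the abstract, is that the integral model $\rmX_{\Pa}(\rmB)$ has only isolated ordinary quadratic singularities in its special fiber, and that via Rapoport--Zink uniformization of the supersingular locus (after switching the local invariants at $p$ and $\infty$) the singular points are parametrized by the Shimura set $\Sh(\overline{\rmB}, \rmK_{\rmH})$. I would take this description as the starting point and attack the theorem in three stages.

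First, I would analyze the nearby cycles $R\Psi \calO_\lambda/\lambda^m$ on $\overline{\rmX}_{\Pa}(\rmB)$ using the Picard--Lefschetz formula for isolated quadratic singularities in dimension three. The resulting triangle expresses the monodromy operator $N$ on $\rmH^3_{\rmc}(\Sh(\rmB, \rmK_{\Pa})_{\overline{\QQ}_p}, \calO_\lambda/\lambda^m(1))$ in terms of intersection with the vanishing cycle classes $\delta_j$ supported at the singular points $s_j$, so that the image of $N$ is identified with the free $\calO_\lambda/\lambda^m$-module on $\Sh(\overline{\rmB}, \rmK_{\rmH})$ as a Hecke/Galois-equivariant object. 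The Hecke-equivariance itself comes from matching the action on vanishing cycles with the action on $\Sh(\overline{\rmB}, \rmK_{\rmH})$ via the local Jacquet--Langlands correspondence, which sends unramified type I representations of $\GSp_4(\QQ_p)$ to unramified representations of $\GU_2(\rmD)$.

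Second, I would compute the singular quotient $\rmH^1_{\sing}(\QQ_{p^2}, V)$ for $V = \rmH^3_{\rmc}(\Sh(\rmB, \rmK_{\Pa}), \calO_\lambda/\lambda^m(1))_{\fracm}$ by combining local Tate duality with Grothendieck's local monodromy theorem: up to unramified twist, one obtains the $\Frob_{p^2}$-invariants of $V/V^{I_{p^2}}$, which through the monodromy filtration is controlled by the image of $N$. The Tate twist together with the weight of the vanishing cycles places the relevant Frobenius eigenvalue at $p^2$, and the hypothesis $l \nmid p^2 - 1$ in the definition of a level raising special prime ensures this weight condition reduces to the trivial one modulo $\lambda^m$; the depth $m$ then enters precisely through the $\lambda$-adic valuation encoded in the level raising condition.

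Third, I would carry out the localization at $\fracm$. The concentration hypothesis (4) ensures that the weight spectral sequence degenerates around degree three; the non-congruences in the level raising definition, together with the rigidity hypothesis (2), kill all $N$-invariant (``smooth part'') automorphic contributions at $\fracm$, since any such contribution would correspond to a ramification type at $p$ excluded by the second and third congruences in the definition of \emph{level raising special}; and the large image hypothesis (3) together with hypothesis (5) upgrades the map from an inclusion of direct summands to an honest isomorphism. The main obstacle, and the technical heart of the argument, will be the geometric step of identifying the singular locus of $\overline{\rmX}_{\Pa}(\rmB)$ with $\Sh(\overline{\rmB}, \rmK_{\rmH})$ at the level of integral $\calO_\lambda/\lambda^m$-coefficients, together with the explicit computation of the vanishing cycle sheaf for the paramodular local model; handling torsion rather than working rationally is exactly the obstruction overcome in \cite{LTXZZ} for unitary groups, and the corresponding analysis here requires the Rapoport--Zink space attached to $\GU_2(\rmD)$ of degree two.
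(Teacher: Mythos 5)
There are genuine gaps, and the most basic one is geometric: you have the uniformization backwards. In $\overline{\rmX}_{\Pa}(\rmB)$ the isolated quadratic singular points are parametrized by the Shimura set with \emph{paramodular} level at $p$ (the set $\rmZ_{\{1\}}(\overline{\rmB})=\rmZ_{\Pa}(\overline{\rmB})$), not by $\Sh(\overline{\rmB},\rmK_{\rmH})$; the hyperspecial set enters only through the two-dimensional supersingular locus, whose irreducible components are indexed by \emph{two} copies of $\Sh(\overline{\rmB},\rmK_{\rmH})$. Consequently the space of vanishing cycles is $\calO_{\lambda}[\rmZ_{\Pa}(\overline{\rmB})]$, and your Stage 1 claim that the image of $N$ is directly the free module on $\Sh(\overline{\rmB},\rmK_{\rmH})$ does not hold. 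The actual argument must interpose the supersingular cycle classes: one composes the Picard--Lefschetz maps $\alpha\circ\rmN_{\Sigma}^{-1}\circ\beta$ with the Gysin and restriction maps attached to the supersingular components, computes the resulting ``level raising matrix'' of explicit Hecke operators (involving $\rmT_{p,1}$, $\rmT_{02}$, $\rmT_{20}$ and a nontrivial identity expressing $\rmT_{p^{2},2}$ in the spherical Hecke algebra), and then must prove that $\rmH^{2}_{\rmc}$ and $\rmH^{4}$ of the special fiber localized at $\fracm$ are \emph{generated} by these supersingular Tate classes. That Tate-cycle generation step is itself a theorem requiring an automorphic counting argument (Jacquet--Langlands for $\GSp_{4}$ and its inner forms, multiplicity matching via the stable trace formula, and the paramodular newform theory); nothing in your proposal produces it, and without it the monodromy computation only lands you in the dual space of vanishing cycles $\calO_{\lambda}[\rmZ_{\Pa}(\overline{\rmB})]$, which is the wrong target. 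The depth $m$ also does not enter through a Frobenius-weight argument as you suggest; it enters because $\det\calT_{\lr}$ acts on $\calO_{\lambda}[\rmZ_{\rmH}(\overline{\rmB})]_{\fracm}$ with $\lambda$-adic valuation exactly $m$, so that the quotient by $\det\calT_{\lr}$ is $\calO_{\lambda}/\lambda^{m}[\rmZ_{\rmH}(\overline{\rmB})]_{\fracm}$.

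The second gap is that your final step (``large image plus hypothesis (5) upgrades the inclusion of direct summands to an isomorphism'') is exactly the point that cannot be done by the soft arguments you describe. The Picard--Lefschetz and Tate-cycle analysis only yields a \emph{surjection} from $\mathrm{F}_{-1}\rmH^{1}(\rmI_{\QQ_{p^{2}}},\sfH_{\fracm})$ onto $\calO_{\lambda}[\rmZ_{\rmH}(\overline{\rmB})]_{\fracm}/\det\calT_{\lr}$, with kernel controlled by the torsion of $\rmH^{4}(\overline{\rmX}_{\Pa}(\rmB),\calO_{\lambda}(2))_{\fracm}$, and there is no known purely geometric proof that this torsion vanishes. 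The isomorphism is obtained by a Galois deformation argument: one needs $\rmR^{\unr}\cong\bfT^{\unr}_{\fracm}$ and $\rmR^{\ram}\cong\bfT^{\ram}_{\fracm}$ together with freeness of $\calO_{\lambda}[\rmZ_{\rmH}(\overline{\rmB})]_{\fracm}$ and of $\rmH^{3}_{\rmc}(\overline{\rmX}_{\Pa}(\rmB),\rmR\Psi(\calO_{\lambda}))_{\fracm}$ over these rings (this is where the rigidity and large-image hypotheses are really consumed), the identification of $\det\calT_{\lr}$ with the congruence element cutting out $\rmR^{\mathrm{cong}}$, and finally an equality of ranks $d_{\unr}=d_{\ram}$ proved by comparing automorphic multiplicities through the transfer to $\GL_{4}$. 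Your proposal contains no substitute for this Taylor--Wiles input, so as written the argument stops at a surjection modulo unidentified torsion rather than the asserted isomorphism.
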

Here the cohomology $\rmH^{d}_{\rmc}(\Sh(\rmB, \rmK_{\Pa}), \calO_{\lambda})_{\fracm}$ in the above theorem are understood as the cohomology of the Shimura variety $\Sh(\rmB, \rmK_{\Pa})$ base changed to $\overline{\QQ}$.  To prove the theorem, we identify $\rmH^{3}_{\rmc}(\Sh(\rmB, \rmK_{\Pa}), \calO_{\lambda})$ with the nearby cycle cohomology $\rmH^{3}_{\rmc}(\overline{\rmX}_{\Pa}(\rmB), \rmR\Psi(\calO_{\lambda}))$  and apply the Picard--Lefschetz formula to calculate the monodromy action on the cohomology $\rmH^{3}_{\rmc}(\overline{\rmX}_{\Pa}(\rmB), \rmR\Psi(\calO_{\lambda}))$ in terms of the space of vanishing cycles on $\overline{\rmX}_{\Pa}(\rmB)$. In the process of proving this theorem, we will give some refinements of the Lefschetz formula. In particular we will identify the graded pieces of the monodromy filtration in this setting with certain intersection cohomologies and prove the weight-monodromy conjecture for some interior cohomologies, see Corollary \ref{WM}.

The most difficult step in the proof of the main theorem is to prove that the Tate cycles coming from the supersingular locus of the Shimura variety $\overline{\rmX}_{\Pa}(\rmB)$ generate the degree $2$  and degree $4$ intersection cohomology of the special fiber $\overline{\rmX}_{\Pa}(\rmB)$ localized at a generic maximal ideal. More precisely, we prove the following theorem.

\begin{theorem}\label{intro-Tate}
Let $\fracm$ be a generic maximal ideal for $\pi$ in the sense of Definition \ref{generic}.
\begin{enumerate}
\item There is an isomorphism
\begin{equation*}
\calO_{\lambda}[\Sh(\overline{\rmB}, \rmK_{\rmH})]^{\oplus 2}_{\fracm} \xrightarrow{\sim}  \mathrm{IH}^{2}(\overline{\rmX}_{\Pa}(\rmB), \calO_{\lambda}(1))_{\fracm}.
\end{equation*}

\item There is an isomorphism
\begin{equation*}
 \mathrm{IH}^{4}_{\rmc}(\overline{\rmX}_{\Pa}(\rmB), \calO_{\lambda}(2))_{\fracm}\xrightarrow{\sim} \calO_{\lambda}[\Sh(\overline{\rmB}, \rmK_{\rmH})]^{\oplus 2}_{\fracm}.
\end{equation*}
\end{enumerate}
\end{theorem}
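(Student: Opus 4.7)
The plan is to prove both isomorphisms by constructing explicit Hecke-equivariant cycle class maps from the irreducible components of the supersingular locus (and related cycles) on $\overline{\rmX}_{\Pa}(\rmB)$, and then showing these maps become isomorphisms after localization at a generic maximal ideal $\fracm$. The starting point is a Rapoport--Zink uniformization: since $\rmB$ is ramified at $p$ and $\rmK_{\Pa}$ is paramodular, the local Rapoport--Zink space $\calN$ attached to the quaternionic unitary group at $p$ should admit an explicit stratification, and I expect its irreducible components of the relevant dimensions to come in two families, each indexed by a local lattice-type datum. Globalizing via the $p$-adic uniformization
\[
\overline{\rmX}_{\Pa}(\rmB)^{\sg} \cong I(\QQ)\backslash \calN \times \bfG(\AAA^{\infty,p}_{\QQ})/\rmK^{p}_{\Pa}
\]
for a suitable inner form $I$, one then obtains, after taking components and matching double cosets at the level away from $p$, two copies of the Shimura set $\Sh(\overline{\rmB}, \rmK_{\rmH})$. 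This is what produces the direct sums on both sides of (1) and (2).

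With the geometric picture in place, for part (1) I would define the cycle class map by sending each parametrized cycle to its class in $\rmH^{2}_{\rmc}(\overline{\rmX}_{\Pa}(\rmB), \calO_{\lambda}(1))$, and analogously for part (2) using fundamental classes landing in $\rmH^{4}(\overline{\rmX}_{\Pa}(\rmB), \calO_{\lambda}(2))$. Both maps are Hecke-equivariant by construction. Injectivity should follow from an intersection-pairing computation which, by the $p$-adic uniformization, reduces to a local intersection matrix on $\calN$ and should be essentially block-diagonal over the Shimura-set indexing, with determinant a unit at $l$ after localization at $\fracm$. For part (2), I would additionally deduce the statement from part (1) by Poincar\'e duality, using that the Hecke action is self-adjoint with respect to the cup-product pairing up to the twist identifying $\fracm$ with itself.

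The core obstacle is surjectivity, which amounts to a Tate-type statement: after localization at a generic $\fracm$, all classes in $\rmH^{2}$ and $\rmH^{4}$ should be captured by the supersingular cycle classes. I would use the open--closed decomposition of $\overline{\rmX}_{\Pa}(\rmB)$ into its smooth and singular (= supersingular) loci, run the Gysin long exact sequence, and argue via Deligne's purity together with a concentration-in-middle-degree type hypothesis for the generic fiber that the cohomology of the smooth open stratum vanishes at $\fracm$ in degrees $2$ and $4$. The genericity of $\fracm$, which excludes endoscopic, CAP, and Eisenstein contributions, is essential here. This is the most technically delicate step and, in analogous situations such as \cite{LTXZZ}, is where the bulk of the geometric and representation-theoretic work lies.
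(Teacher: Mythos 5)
Your overall framework (Rapoport--Zink uniformization producing two families of components indexed by $\rmZ_{\{0\}}(\overline{\rmB})$ and $\rmZ_{\{2\}}(\overline{\rmB})$, Gysin/restriction maps, injectivity from an intersection matrix that becomes invertible after localization at a generic $\fracm$) matches the paper's setup, but the two steps you flag as delicate are exactly where your proposed routes break down. For surjectivity you conflate the singular locus with the supersingular locus: the singular points of $\overline{\rmX}_{\Pa}(\rmB)$ are only the finitely many superspecial points $\rmZ_{\{1\}}(\overline{\rmB})$, while the supersingular locus is a $2$-dimensional closed subscheme of the $3$-dimensional special fiber, so the open--closed decomposition you want is not ``smooth versus supersingular,'' and the vanishing at $\fracm$ of the cohomology of the open non-supersingular stratum in degrees $2$ and $4$ is precisely what is \emph{not} available here (the paper even records only as a conjecture the affineness statement that would make such a purity argument run). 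The paper's actual argument is automorphic rather than geometric: it feeds $\rmH^{2}_{\rmc}$ into the co-specialization exact sequence coming from the Picard--Lefschetz formalism, identifies the dual space of vanishing cycles with $\overline{\QQ}_{l}[\rmZ_{\Pa}(\overline{\rmB})]$, and then uses the Jacquet--Langlands transfer of Theorem \ref{JL} (with preservation of multiplicities) together with the paramodular newform/oldform theory of Roberts--Schmidt to show that any $\vec{\pi}$ contributing to $\rmH^{2}_{\rmc}$ must be unramified of type $\rmI$ at $p$, whence the rank bound $2\,\mathrm{rank}\,\calO_{\lambda}[\rmZ_{\rmH}(\overline{\rmB})]_{\fracm}\geq \mathrm{rank}\,\rmH^{2}_{\rmc}(\overline{\rmX}_{\Pa}(\rmB),\calO_{\lambda}(1))_{\fracm}$ that forces the injection to be an isomorphism. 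Also note that the injectivity matrix is not block-diagonal: it is a genuine $2\times 2$ matrix of Hecke operators with off-diagonal entries $\rmT_{02},\rmT_{20}$, and one needs the explicit determinant computations (Propositions \ref{lr-matrix} and \ref{ss-matrix}), using the supersingular matrix when $\fracm$ is level-raising special and the level-raising matrix otherwise, since in the level-raising case $\det\calT_{\lr}$ vanishes mod $\fracm$.

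The second gap is your deduction of (2) from (1) by Poincar\'e duality. Duality (applied after excising the zero-dimensional singular locus) only identifies $\rmH^{4}$ with $\rmH^{2}_{\rmc}$ with $E_{\lambda}$-coefficients, i.e.\ up to torsion; integrally it yields exactly what the paper proves at this stage, namely that $(\mathrm{inc}^{\ast}_{\{0\}},\mathrm{inc}^{\ast}_{\{2\}})_{\fracm}$ is surjective with kernel the torsion of $\rmH^{4}(\overline{\rmX}_{\Pa}(\rmB),\calO_{\lambda}(2))_{\fracm}$. The full isomorphism asserted in the statement requires this torsion to vanish, and the paper is explicit that it cannot do this geometrically: torsion-freeness is obtained only in Corollary \ref{no-torsion}, as a byproduct of the arithmetic level raising theorem and the $\rmR=\bfT$/freeness results over deformation rings imported from \cite{Wangd}. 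So your proposal, as written, proves at best the statement modulo torsion in degree $4$ and leaves the surjectivity mechanism unsubstantiated.
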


The appearance of the two copies of $\calO_{\lambda}[\Sh(\overline{\rmB}, \rmK_{\rmH})]_{\fracm}$ is related to the fact that the irreducible components of the supersingular locus of $\overline{\rmX}_{\Pa}(\rmB)$ are naturally parametrized by two copies of $\Sh(\overline{\rmB}, \rmK_{\rmH})$. This fact along with other finer properties of the supersingular locus are obtained in author's work \cite{Wanga, Wangb} and Oki's work \cite{Oki}.  

Coming back to the above theorem, the following interesting picture emerges: it seems that the low degree cohomology of the special fiber $\overline{\rmX}_{\Pa}(\rmB)$ will only be related to oldforms via the Jacquet-Langlands correspondence while the middle degree cohomology will contribute to the Galois representations of the newforms. This should be compared to a conjecture of Arthur predicting that the low degree cohomology of the Shimura variety (not the special fiber) should be generated by automorphic forms coming from theta correspondences.  Note that a similar picture is confirmed for the special fiber of certain unitary Shimura variety at a ramified characteristic, see \cite[Proposition 6.3.1]{LTXZZ}. Another related results to Theorem \ref{intro-Tate} is that the cohomology of the special fiber $\overline{\rmX}_{\Pa}(\rmB)$  with coefficient in $\calO_{\lambda}$ are all torsion free after localization at a generic maximal ideal $\fracm$. This seems to be a quite difficult question in general, the author can not prove it directly using only geometric techniques. Fortunately, this can be proved using techniques from the theory of Galois deformations developed in \cite{LTXZZa} for unitary Shimura varieties.  In the companion article \cite{Wangd}, we adapt their methods to our setting and prove a freeness result of the cohomology of the quaternionic unitary Shimura variety and the quaternionic unitary Shimura set over suitable universal deformation rings. 

\subsection{Applications and prospects} As mentioned in the discussion for the $\GL_{2}$ case, the arithmetic level raising theorem is not only an interesting theorem in its own right but also has applications in many other related problems. We finish this introduction by discussing some of these applications. 

\subsubsection{Level raising for $\GSp_{4}$} The most direct application of our arithmetic level raising theorem is to deduce a level raising theorem for 
cuspidal automorphic representations of $\GSp_{4}(\mathbb{A})$ of general type. More precisely, we can deduce the following theorem from the arithemtic level raising theorem.
\begin{theorem}\label{intro-level-raise}
Let $\pi$ be an automorphic representation of $\GSp_{4}(\mathbb{A})$ of general type with trivial central character and weight $(3,3)$ satisfying all the assumptions in Theorem \ref{intro-main}. Let $p$ be a level raising special prime for $\pi$. 

Then there exists an automorphic representation $\mathbf{\Pi}$ of $\GSp_{4}(\mathbb{A})$ of general type with weight $(3,3)$ and trivial central character such that
\begin{enumerate}
\item the component $\mathbf{\Pi}_{p}$ at $p$ is of type $\rmI\rmI\rma$;
\item we have an isomorphism of Galois representation as in the Construction 
\begin{equation*}
\overline{\rho}_{\pi, \lambda}\cong \overline{\rho}_{\mathbf{\Pi}, \lambda}.
\end{equation*}
\end{enumerate}
In this case we will say $\mathbf{\Pi}$ is a level raising of $\pi$. 
\end{theorem}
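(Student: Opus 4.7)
The plan is to deduce this theorem as a direct cohomological consequence of the arithmetic level raising theorem (Theorem \ref{intro-main}) together with the Jacquet--Langlands correspondence between $\bfG(\rmB)=\GU_{2}(\rmB)$ and $\bfG=\GSp_{4}$. First, under the running hypotheses, Theorem \ref{intro-main} gives a non-zero singular quotient
\begin{equation*}
\rmH^{1}_{\sing}(\QQ_{p^{2}}, \rmH^{3}_{\rmc}(\Sh(\rmB, \rmK_{\Pa}), \calO_{\lambda}/\lambda^{m}(1))_{\fracm})\cong\calO_{\lambda}/\lambda^{m}[\Sh(\overline{\rmB}, \rmK_{\rmH})]_{\fracm},
\end{equation*}
which is non-trivial by the assumption that $\fracm$ lies in the support of $\calO_{\lambda}[\Sh(\overline{\rmB}, \rmK_{\rmH})]$. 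In particular $\fracm$ occurs in the support of $\rmH^{3}(\Sh(\rmB, \rmK_{\Pa}), \calO_{\lambda})$, and the monodromy on this cohomology module is non-trivial modulo $\lambda$.

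Next I would translate this into the existence of a cuspidal automorphic representation on the inner form. Since the $\lambda$-adic cohomology of the Shimura variety $\Sh(\rmB, \rmK_{\Pa})$ is spanned, after tensoring with $\overline{\QQ}_{l}$, by contributions of automorphic representations of $\bfG(\rmB)(\AAA_{\QQ})=\GU_{2}(\rmB)(\AAA_{\QQ})$, the occurrence of $\fracm$ in $\rmH^{3}(\Sh(\rmB, \rmK_{\Pa}), \calO_{\lambda})_{\fracm}$ produces an automorphic representation $\sigma$ of $\GU_{2}(\rmB)(\AAA_{\QQ})$ of the correct infinity type, with trivial central character, whose prime-to-$\Sigma\cup\{p\}$ Hecke eigensystem reduces modulo $\lambda$ to the one cut out by $\fracm$. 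Because $\rmB$ is ramified at $p$, the local component $\sigma_{p}$ is a representation of $\GU_{2}(\rmD)$; moreover, the non-vanishing of the singular quotient above forces $\sigma_{p}$ to be non-trivial on the paramodular Iwahori (so that it contributes to the vanishing cycles rather than to the smooth part), which means $\sigma_{p}$ is necessarily the (essentially unique) non-trivial representation of $\GU_{2}(\rmD)$ of that level.

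I would then transfer $\sigma$ to $\GSp_{4}(\AAA_{\QQ})$ via the global Jacquet--Langlands correspondence available between $\GU_{2}(\rmB)$ and $\GSp_{4}$ (as established in the cuspidal non-CAP setting by the Arthur-type classification for $\GSp_{4}$), obtaining a cuspidal automorphic representation $\pi^{\prime}$ of $\GSp_{4}(\AAA_{\QQ})$ with the same archimedean component, hence of weight $(3,3)$ and trivial central character. The local Jacquet--Langlands identification at $p$ takes the paramodular representation of $\GU_{2}(\rmD)$ picked out above to the Steinberg-type representation of $\GSp_{4}(\QQ_{p})$ of type $\rmI\rmI\rma$ in the Sally--Tadi\'c--Schmidt classification, giving property (1). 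The Hecke eigensystems away from $\Sigma\cup\{p\}$ agree by construction, so the associated Galois representations $\rho_{\pi^{\prime},\lambda}$ and $\rho_{\pi,\lambda}$ have the same characteristic polynomials of Frobenius outside a finite set, yielding the desired congruence $\overline{\rho}_{\pi,\lambda}\cong\overline{\rho}_{\pi^{\prime},\lambda}$ after semisimplification; absolute irreducibility of the residual representation then upgrades this to an honest isomorphism.

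The delicate step, which I expect to be the main obstacle, is verifying that $\pi^{\prime}$ is of \emph{general type} rather than endoscopic or CAP. Here the rigidity assumption (2) on $\overline{\rho}_{\pi,\lambda}$ together with the bigness of the residual image (assumption (3), in particular containing $\GSp_{4}(\FF_{l})$) is essential: an endoscopic or CAP lift would force $\overline{\rho}_{\pi^{\prime},\lambda}\cong\overline{\rho}_{\pi,\lambda}$ to be reducible or to have image contained in a proper Levi/theta-lift-type subgroup, contradicting absolute irreducibility and the largeness of the image. Once $\pi^{\prime}$ is known to be of general type, the classification ensures that it transfers to a cuspidal representation of $\GL_{4}(\AAA_{\QQ})$ compatible with $\rho_{\pi^{\prime},\lambda}$, completing the proof.
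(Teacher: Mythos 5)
Your overall strategy is sound and, at the level of architecture, matches the paper: arithmetic level raising puts $\fracm$ in the support of $\rmH^{3}_{\rmc}(\Sh(\rmB,\rmK_{\Pa}),\calO_{\lambda})$, one extracts an automorphic representation $\pi^{\prime}$ with the same Hecke eigensystem modulo $\lambda$ away from $\Sigma\cup\{p\}$, Chebotarev plus absolute irreducibility gives $\overline{\rho}_{\pi,\lambda}\cong\overline{\rho}_{\pi^{\prime},\lambda}$, and irreducibility of the residual representation rules out the CAP and endoscopic possibilities (the paper disposes of this last point in one line, exactly as you do, rather than treating it as the main obstacle). Where you genuinely diverge is in pinning down $\pi^{\prime}_{p}$. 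You argue on the automorphic side: the level at $p$ is the paramodular subgroup of $\GU_{2}(\rmD)$, and you assert that the local Jacquet--Langlands correspondence sends the relevant paramodular--spherical representation of $\GU_{2}(\rmD)$ to a type $\rmI\rmI\rma$ representation of $\GSp_{4}(\QQ_{p})$. That assertion is not free: your ``essentially unique non-trivial representation of that level'' claim is exactly where the work lies, and in the paper it is carried by Theorem \ref{JL}, i.e.\ Sorensen's matching of $e_{\eta\Pa}$ with $e_{\Pa^{\rmD}}$, the Chan--Gan character identity, and the fact that an unramified type $\rmI$ representation is traceless against the Atkin--Lehner element; so your route is viable but must lean on that machinery rather than on a uniqueness-of-level statement. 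The paper's own proof instead proceeds Galois-theoretically: it uses freeness of $\rmH^{3}_{\rmc}(\overline{\rmX}_{\Pa}(\rmB),\rmR\Psi(\calO_{\lambda}))_{\fracm}$ over $\rmR^{\ram}$ to produce a characteristic-zero point $\eta^{\ram}$, then applies the Picard--Lefschetz formula to see that the monodromy of the Weil--Deligne representation of $\rho_{\pi^{\prime},\lambda}$ at $p$ is a rank-one nilpotent, and concludes type $\rmI\rmI\rma$ from temperedness (general type) together with the local Langlands tables of Sally--Tadi\'c and Schmidt. Your route buys a more classical ``congruence of automorphic forms'' flavor and avoids the deformation-ring input at this step, while the paper's route avoids having to re-invoke the trace-identity/character-identity analysis at $p$ and extracts the local type directly from the geometry already used in the level-raising argument; also note that your intermediate remark that the nonzero singular quotient ``forces $\sigma_{p}$ to be non-trivial on the paramodular Iwahori'' is not the right justification (every representation contributing at level $\rmK_{\Pa}$ has paramodular-fixed vectors at $p$); what the singular quotient actually certifies is ramification of the Galois representation at $p$, which is the paper's monodromy argument.
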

Sorensen proves a related level raising result for $\GSp_{4}$ in \cite{Sor-level-raising}. We comment that the level raising condition in his theorem is different from our's, in fact he assumes that $\pi_{p}$ is congruent to the trivial representation modulo $\lambda$. Also the focus of his article is different, while we assume $\pi$ is of general type throughout this article, $\pi$ is mainly of Saito--Kurokawa type in  \cite{Sor-level-raising} which makes it possible to have applications in the Bloch--Kato conjecture in the $\GL_{2}$ setting. 

\subsubsection{Beillinson--Bloch--Kato conjecture} The arithmetic level raising theorem is the first step towards the Beillinson--Bloch--Kato conjecture of certain Rankin-Selberg product motive for $\SO_{4}\times\SO_{5}$ within the frame work of the Gan--Gross--Prasad conjecture. In a sequel of this work, we will show that if certain period integral of on the definite Shimura set $\Sh(\overline{\rmB}, \rmK_{\rmH})$ is non-zero modulo $\lambda$, then certain Bloch--Kato Selmer group of certain Rankin--Selberg product motive for $\SO_{4}\times\SO_{5}$ is zero. This period integral should have direct relations to the Rankin-Selberg  $L$-function for automorphic forms on $\SO_{4}\times\SO_{5}$ once the the Global Gan--Gross--Prasad conjecture and the Ichino--Ikeda conjecture \cite{Ichino-Ikeda} is confirmed. However, as far as the author's knowledge, this is only known in certain endoscopic or CAP cases. Recently, based on ideas and constructions of this article, Sweeting \cite{Sweeting} finds some interesting applications on the Bloch--Kato conjecture for some four dimensional Galois representations of symplectic type.

\subsubsection{Level lowering theorem for $\GSp_{4}$} In the companion article \cite{Wange}, we will prove certain level lowering result for cuspidal automorphic representations $\pi$ of $\GSp_{4}(\mathbb{A})$ whose component at the level lowering prime $p$ admits paramodular fixed vectors.  The approach in that article follows the famous approach of Ribet in his proof of Serre's epsilon conjecture for modular forms. In particular, we need to apply the so called $p, q$-switch trick in order to realize the representation $\pi$ in the cohomology of the quaternionic unitary Shimura variety $\Sh(\rmB, \rmK_{\Pa})$ studied in this article. To achieve this, one has to first take an auxiliary prime $q$ and raise the level of $\pi$ at $q$ and our Theorem \ref{intro-level-raise} makes this possible. 

It would be interesting to generalize the results of this article to cuspidal automorphic representations $\pi$ of $\GSp_{4}(\mathbb{A}_{\rmF})$ where $\rmF$ is a totally real field. The main reason we make this restriction in this article is that the results in \cite{Wanga, Wangb} and \cite{Oki} are obtained only for the quaternionic unitary Shimura variety over $\QQ$. As far as the author's knowledge, the current Rapoport--Zink uniformization theorem does not apply to those Shimura varieties studied in this article if one considers totally real field. However, we still believe the description of the supersingular locus should be similar if one is able to extend our results to the totally real field case. It is also reasonable to ask if our result works for those $\pi$ of general weights. This is only a matter of formalisim if we restrict the Hodge--Tate weights of $\rho_{\pi, \lambda}$ in the regular Fontaine--Laffaille range. Finally, our results and computations should shed some light on the general problem of realizing arithmetic level raising on general orthogonal type Shimura varieties at ramified characteristic.

\subsection{Notations and conventions}  We will use common notations and conventions in algebraic number theory and algebraic geometry. The cohomology  of schemes appearing in this article will be understood as computed over the \'{e}tale sites. 
\begin{itemize}
\item For a field $\rmK$, we denote by $\overline{\rmK}$ a separable closure of $\rmK$ and put $\rmG_{\rmK}:=\Gal(\overline{\rmK}/\rmK)$ the Galois group of $\rmK$.  Suppose $\rmK$ is a number field with ring of integers $\calO_{\rmK}$. Let $v$ be a place of $\rmK$, then $\rmK_{v}$ is the completion of $\rmK$ at $v$ with valuation ring $\calO_{v}$ whose maximal ideal is also written as $v$ if no danger of confusion could arise. Let $\mathrm{Art}:\rmK^{\times}_{v}\rightarrow \mathrm{W}^{\mathrm{ab}}_{\rmK}$ be the local Artin map sending the uniformizers to the geometric Frobenius element $\Frob_{p}$. We let $\phi_{v}$ be the arithmetic Frobenius element at $v$. 
\item We let $\mathbb{A}_{\rmK}$ be the ring of ad\`{e}les over $\rmK$ and $\mathbb{A}^{(\infty)}_{\rmK}$ be the subring of finite ad\`{e}les. More generally, let $\rmS$ be a finite set of places of $\rmK$, then $\mathbb{A}^{(\rmS)}_{\rmK}$ will denote the ring of ad\`{e}les over $\rmK$ away from the places in $\rmS$. If $\rmK=\QQ$, then we omit $\QQ$ from the notations. Similarly, we let $\widehat{\calO}^{(\rmS)}_{\rmK}=\prod_{v\not\in\rmS}\calO_{v}$, in particular $\widehat{\ZZ}^{(\rmS)}=\prod_{v\not\in\rmS}\ZZ_{v}$.

\item Let $\rmK$ be a local field with valuation ring $\calO_{\rmK}$ and residue field $k$. We let $\rmI_{\rmK}$ be the inertia subgroup of $\rmG_{K}$. Suppose $\rmM$ is a $\rmG_{\rmK}$-module. Then the finite part $\rmH^{1}_{\mathrm{fin}}(\rmK, \rmM)$ of $\rmH^{1}(\rmK, \rmM)$ is defined to be $\rmH^{1}(k, \rmM^{\rmI_{\rmK}})$ and the singular part $\rmH^{1}_{\mathrm{sing}}(\rmK, \rmM)$ of $\rmH^{1}(\rmK, \rmM)$ is defined to be the quotient of $\rmH^{1}(\rmK, \rmM)$ by the image of $\rmH^{1}_{\mathrm{fin}}(\rmK, \rmM)$. 

\item Let $\ell\geq 3$ be a prime and we fix an isomorphism $\iota_{\ell}: \CC\cong \overline{\QQ}_{\ell}$. We will denote by $\epsilon_{\ell}$ the $\ell$-adic cyclotomic character. The prime $\ell$ will be the residue characteristic of the coefficient of the \'etale cohomologies in this article.

\item Let $p$ be a prime and let $\FF$ be an algebraically closed field containing $\FF_{p}$. We denote by $\sigma$ be the Frobenius element on $\FF$. Let $\rmW_{0}=\rmW(\FF)$ be the ring of Witt vectors of $\FF$ and $\rmK_{0}=\rmW(\FF)_{\QQ}$ its fraction field, then $\sigma$ extends to an action on $\rmW_{0}$ which we will denote by the same symbol. Let $\FF_{p^{d}}$ be the finite field of $p^{d}$ elements. Then $\ZZ_{p^{d}}$ is defined to be $\rmW(\FF_{p^{d}})$. Let $\rmM_{1}\subset \rmM_{2}$ be two $\rmW_{0}$-modules we wrire $\rmM_{1}\subset^{d} \rmM_{2}$ if the $\rmW_{0}$ colength of the inclusion is $d$. 
\item Let $\rmR$ be a ring. Let $\rmM_{1}\subset \rmM_{2}$ be two $\rmR$-modules, then we write $\rmM_{1}\subset_{\Gr\rmM}\rmM_{2}$ if $\rmM_{2}/\rmM_{1}=\Gr\rmM$. If $\rmR$ is ring and $\rmL$ is an $\rmR$-module and $\rmR^{\prime}$ is an $\rmR$-algebra, we define $\rmL_{\rmR^{\prime}}=\rmL\otimes_{\rmR} \rmR^{\prime}$.

\item Let $\rmX$ be a smooth variety, then we write $\calT_{\rmX}$ for the tangent bundle of $\rmX$. Suppose $\rmY\subset \rmX$ be a smooth  subvariety, then we denote by $\calN_{\rmY}(\rmX)$ the normal bundle of $\rmY$ in $\rmX$. 
\item If $\rmS=\Spec\rmR$, we write $\mathbb{A}^{n}_{\rmS}$ for the affine scheme over $\rmR$ of relative dimension $n$. Let $\rmT$ be a set of polynomials over $\rmR$ with $n$-variables, then $\rmV(\rmT)\subset \mathbb{A}^{n}_{\rmS}$ is defined to be the vanishing locus of the polynomials contained in $\rmT$. 

%\item Let $\rmX$ be a variety and $\Lambda$ be a coefficient ring over $\ZZ_{l}$, then we write $\rmD^{+}(\rmX, \Lambda)$ for the derived category of sheaves of $\Lambda$-modules over $\rmX$.

\item For our convention, $\GSp_{4}$ be the reductive group over $\ZZ$ defined by 
\begin{equation*}
\GSp_{4}=\{g\in\GL_{4}: g\rmJ g^{t}=\rmc(g)\rmJ\}
\end{equation*}
where $\rmJ$ is the antisymmetric matrix given by $\begin{pmatrix}0&s\\-s&0\\ \end{pmatrix}$
for $s=\begin{pmatrix}0&1\\ 1&0 \\ \end{pmatrix}$ and where $\rmc$ is the similitude character of $\GSp_{4}$. By definition, we have an exact sequence 
\begin{equation*}
1\rightarrow \Sp_{4}\rightarrow \GSp_{4}\xrightarrow{\rmc}\GG_{m}\rightarrow 1.
\end{equation*}
For the split reductive group $\bfG=\GSp_{4}$ over $\ZZ$, we will fix a datum $\bfT\subset \bfB =\bfT\bfU$ where $\bfT$ is the split torus of the diagonal matrices and $\bfB$ is the Borel subgroup of the upper triangular matrices. 

%In the local situation, we will abuse the notations and write $\bfG$, $\bfB=\bfT\bfU$ for the above group schemes but considered over $\ZZ_{p}$. 

\item Let $\rmG$ be a reductive group over a $p$-adic field $\rmF$, we write $C^{\infty}_{\rmc}(G(\rmF))$ for the space of locally constant functions on $\rmG(\rmF)$ with compact support. 

\item We use covariant Dieudonn\'e theory in this article: a Dieudonn\'e module $\rmM$ over $\FF$ is a left $\rmW_{0}$-module equipped with a Frobenius linear $\rmF$ and a Frobenius anti-linear operator $\rmV$ such that $\rmM$ is finitely generated and free over $\rmW_{0}$. A polarization of $\rmM$ is an alternating form
\begin{equation*}
\langle\cdot,\cdot\rangle:\rmM\times\rmM\rightarrow \rmW_{0}
\end{equation*}
such that $\langle \rmF x, y\rangle=\langle x, \rmV y\rangle^{\sigma}$. Let $\rmN$ be an isocyrstal over $\rmK_{0}$, then we will refer to a Dieudonn\'e module $\rmM\subset \rmN$ as a Dieudonn\'e  lattice of $\rmN$.
\item Let $\sfA$ be an abelian scheme over a scheme $\rmS$. Then we have the Hodge exact sequence
\begin{equation*}
0\rightarrow\omega_{\sfA^{\vee}/\rmS}\rightarrow \rmH^{\mathrm{dR}}_{1}(\sfA/\rmS)\rightarrow \Lie(\sfA/\rmS)\rightarrow 0.
\end{equation*}
Let $\sfA$ be an abelian variety over $\FF$. We denote by $\rmD(\sfA)$ the covariant Dieudonn\'e module of the $p$-divisible group $\sfA[p^{\infty}]$.  Let $\Lie(\sfA)$ be the Lie algebra of $\sfA$. Then we have an exact sequence relating $\rmD(\sfA)$ and $\Lie(\sfA)$
\begin{equation*}
0\rightarrow \rmV\rmD(\sfA)\rightarrow \rmD(\sfA)\rightarrow \Lie(\sfA)\rightarrow 0.
\end{equation*}
If we identify $\rmD(\sfA)/p\rmD(\sfA)$ with $\rmH^{\mathrm{dR}}_{1}(\sfA)$, then $\rmV\rmD(\sfA)$ is identified with $\omega_{\sfA^{\vee}}\otimes_{\FF,\sigma}\FF\cong \omega_{\sfA^{\vee}}$.
Let $v$ be a rational prime, then $\mathrm{Ta}_{v}(\sfA)$ will denote the $v$-adic Tate module of $\sfA$.

\item For an $\FF$-scheme $\rmS$ and a $p$-divisible group $\sfX$ over $\rmS$, let $\DD(\sfX/\rmS)$ be the Lie algebra of the universal vector extension of $\sfX$ with rank equal to the height of $\sfX$. Then we have an exact sequence of coherent $\calO_{\rmS}$-modules
\begin{equation*}
0\rightarrow\omega_{\sfX^{\vee}/\rmS}\rightarrow \DD(\sfX/\rmS)\rightarrow \Lie(\sfX/\rmS)\rightarrow 0.
\end{equation*}
Suppose $\sfX$ is a $p$-divisible group over $\FF$ and let $\rmM=\rmM(\sfX)$ be the the covariant Dieudonn\'e module of $\sfX$. If we identify $\rmM/p\rmM$ with $\DD(\sfX)$, then $\rmV\rmM/p\rmM$ is identified with $\omega_{\sfX^{\vee}}\otimes_{\FF,\sigma}\FF$.

\end{itemize}

\subsection*{Acknowledgements}  The author would like to thank Liang Xiao for all his help and encouragement throughout the course of writing this article. The author is also grateful to Yifeng Liu and Yichao Tian for very helpful conversations. Finally we would like to express our gratitude to the referee for all the corrections and suggestions.

\subsection*{Statements and Declarations}
The author declares no competing interests.

\section{Quaternionic unitary Shimura variety}

\subsection{Quaternionic unitary groups} We will recall the definition of the quaternionic unitary groups of degree $2$ and some of their parahoric subgroups. Then we define integral models of certain quaternionic unitary Shimura varity with paramodular level structure and some related quaternionic unitary Shimura set. This  quaternionic unitary Shimura variety is closely related to the classical Siegel threefold with paramodular level structure. Their relation is similar to that of a classical modular curve with a Shimura curve for an indefinite quaternion algebra.

Let $\rmB=\rmB_{pq}$ be an indefinite quaternion algebra over $\QQ$ which is ramified exactly at two distinct primes $p$ and $q$ which are distinct from the prime $\ell$. Let $\ast$ be a nebentype involution on $\rmB$ and $\calO_{\rmB}$ be a maximal order fixed by $\ast$, see \cite[A.4, A.5]{KR-Siegel}. We have the quaternionic unitary group $\bfG(\rmB)=\mathrm{GU}_{2}(\rmB)$ defined by an alternating form $(\cdot, \cdot)$ on $\rmV=\rmB\oplus\rmB$ with the property that $(bx, y)=(x, b^{\ast}y)$ for $x, y\in \rmV$ and $b\in \rmB$. To fix a definite choice of such alternating form, we use the same recipe as in \cite[\S 7.1]{Oki}. Then we define the quaternionic unitary group of degree $2$ over $\rmB$ by
\begin{equation*}
\bfG(\rmB)(\QQ)=\{g\in \GL_{\rmB}(\rmV): (gx, gy)=\rmc(g)(x, y),\phantom{.}\rmc(g)\in \QQ^{\times}\}. 
\end{equation*}
In the local setting, let $\rmD$ be the quaternion division algebra over $\QQ_{p}$. Then we can define the group $\GU_{2}(\rmD)$ over $\QQ_{p}$ in the same way as above using an alternating form defined as in \cite[\S 2.1]{Oki}. We have $\bfG(\rmB)(\QQ_{p})=\GU_{2}(\rmD)(\QQ_{p})$. Note that the group $\GU_{2}(\rmD)$ is non-quasi-split but $\GU_{2}(\rmD)/\QQ_{p^{2}}\cong \GSp_{4}/\QQ_{p^{2}}$. 

Let $\overline{\rmB}=\rmB_{\infty q}$ be the definite quaternion algebra ramified at the achimedean place and the prime $q$.  Let $\ast$ be a {main type involution} on $\overline{\rmB}$ and $\calO_{\overline{\rmB}}$ be a maximal order fixed by $\ast$. We have the quaternionic unitary group $\bfG(\overline{\rmB})=\mathrm{GU}_{2}(\overline{\rmB})$ defined by an alternating form $(\cdot, \cdot)$ on $\rmV=\overline{\rmB}\oplus \overline{\rmB}$ with the property that $(bx, y)=(x, b^{\ast}y)$ for $x, y\in \rmV$ and $b\in \overline{\rmB}$. To fix a definite choice of such alternating form, we use the one defined in \cite[\S 7.1]{Oki}. Then we define the quaternionic unitary group of degree $2$ over $\overline{\rmB}$ by
\begin{equation*}
\bfG(\overline{\rmB})(\QQ)=\{g\in \GL_{\overline{\rmB}}(\rmV): (gx, gy)=\rmc(g)(x, y), \rmc(g)\in \QQ^{\times}\}. 
\end{equation*}
Note that $\bfG(\overline{\rmB})(\QQ)$ is anisotropic modulo the center and $\bfG(\overline{\rmB})(\QQ_{p})=\GSp_{4}(\QQ_{p})$ as $p$ is unramfied in $\overline{\rmB}$.

\subsection{Parahoric subgroups}
The affine Dynkin diagram of type $\widetilde{\rmC}_{2}$ is given by
\begin{displaymath}
  \xymatrix{\underset{0}\bullet \ar@2{->}[r] &\underset{1}\bullet &\underset{2}\bullet \ar@2{->}[l]}
\end{displaymath}
where the nodes $0$ and $2$ are special and $1$ is not special in the sense that the parahoric subgroup corresponding to $0$ and $2$ are special parahoric subgroups of $\bfG(\QQ_{p})$. These nodes correspond to the so-called vertex lattices in a symplectic space $\rmV$ over $\QQ_{p}$. 

\begin{definition}
We denote by $\calL_{\{i\}}$ the set of vertex lattices of type $i\in\{0, 1, 2\}$. They are by definition given by
\begin{equation*}\label{vertex-lattice}
\begin{split}
&\calL_{\{0\}}=\{\text{vertex lattice $\Lambda_{0}$ of type $0$ in $\rmV$: } p\Lambda^{\vee}_{0}\subset^{4} \Lambda_{0}\subset^{0} \Lambda^{\vee}_{0}\};\\
&\calL_{\{2\}}=\{\text{vertex lattice $\Lambda_{2}$ of type $2$ in $\rmV$: }  p\Lambda^{\vee}_{2}\subset^{0} \Lambda_{2}\subset^{4} \Lambda^{\vee}_{2}\};\\
&\calL_{\{1\}}=\{\text{vertex lattice $\Lambda_{1}$ of type $1$ in $\rmV$: }  p\Lambda^{\vee}_{1}\subset^{2} \Lambda_{1}\subset^{2} \Lambda^{\vee}_{1}\}.\\
\end{split}
\end{equation*}
We will usually write a vertex lattice $\Lambda_{1}$ of type $1$ as $\Lambda_{\Pa}$ as it is related to the paramodular subgroup. For a vertex lattice $\Lambda_{i}$ with $i\in\{0,1,2\}$, we denote its stabilizer in $\GSp(\rmV)\cong\GSp_{4}(\QQ_{p})$ by $\rmK_{\{i\}}$. 
\end{definition}

These vertex lattices and their stabilizer groups give rise to the following parahoric subrgoups of $\bfG(\QQ_{p})$. We introduce some notations and terminologies for these parahoric subgroups that will be used later. 
\begin{itemize}
\item The groups $\rmK_{\{0\}}$ and $\rmK_{\{2\}}$ are the {hyperspecial subgroups} of $\bfG(\QQ_{p})$ and are conjugate to $\bfG(\ZZ_{p})$. These are the maximal special parahoric subgroups of $\bfG(\QQ_{p})$. The group $\rmK_{\{1\}}$ is called the {paramodular subgroup} of $\bfG(\QQ_{p})$. This is the unique maximal non-special parahoric subgroup of $\bfG(\QQ_{p})$. We will also sometimes use the notation $\Pa$ for $\rmK_{\{1\}}$.

\item  Let $\rmK_{\{01\}}$ be the stabilizer of a pair of vertex lattices $(\Lambda_{0}, \Lambda_{1})$ of type $0$ and $1$ with the relation
\begin{equation*}
p\Lambda^{\vee}_{0}\subset p\Lambda^{\vee}_{1}\subset\Lambda_{1}\subset \Lambda_{0}. 
\end{equation*}
The group $\rmK_{\{01\}}$ is called the {Klingen parahoric} of $\bfG(\QQ_{p})$. We will also sometimes use the notation $\Kl$ for the {\em Klingen parahoric} $\rmK_{\{01\}}$.

\item Let $\rmK_{\{02\}}$  is the stabilizer of a pair of vertex lattices $(\Lambda_{0}, \Lambda_{2})$ of type $0$ and $2$ with the relation 
\begin{equation*}
p\Lambda^{\vee}_{0}\subset p\Lambda^{\vee}_{2}=\Lambda_{2}\subset \Lambda_{0}. 
\end{equation*}
The group $\rmK_{\{02\}}$ is called the {Siegel parahoric} of $\bfG(\QQ_{p})$. We will also sometimes use the notation $\Sie$ for the {\em Siegel parahoric} $\rmK_{\{02\}}$.

\item Let $\rmK_{\{012\}}$ be the stabilizer of a pair of vertex lattices $(\Lambda_{0}, \Lambda_{1}, \Lambda_{2})$ of type $0$, $1$ and $2$ with the relation 
\begin{equation*}
p\Lambda^{\vee}_{0}\subset p\Lambda^{\vee}_{1}\subset p\Lambda^{\vee}_{2}=\Lambda_{2}\subset\Lambda_{1}\subset\Lambda_{0}. 
\end{equation*}
The group $\rmK_{\{012\}}$ is called the {Iwahori subgroup} of $\bfG(\QQ_{p})$. Then $\rmK_{\{012\}}$ is conjugate to usual Iwahori subgroup defined as the preimage of the Borel subgroup $\bfB(\FF_{p})$ in $\bfG(\ZZ_{p})$. We will also sometimes use the notation $\Iw$ for the Iwahori subgroup $\rmK_{\{012\}}$. We define the { pro-$p$ Iwahori subgroup} $\Iw_{1}$ to be the preimage of $\bfU(\FF_{p})$ in $\bfG(\ZZ_{p})$.
\end{itemize}

The quaternionic unitary group $\GU_{2}(\rmD)$ is the unique inner form of $\GSp_{4}$ over $\QQ_{p}$, it has the same affine Dynkin diagram of  type $\widetilde{\rmC}_{2}$  given by
\begin{displaymath}
 \xymatrix{\underset{0}\bullet \ar@/^1pc/[rr]\ar@2{->}[r] &\underset{1}\bullet &\underset{2}\bullet \ar@2{->}[l] \ar@/_1pc/[ll]}
\end{displaymath}
but with the Frobenius acting on the diagram by permuting the node ${0}$ and the node $2$. Therefore the node $0$ and the node $2$ should be identified when we consider vertex lattices for $\GU_{2}(\rmD)$. We define a vertex lattice of type $\{0,2\}$ as a pair of lattices $(\Lambda_{0}, \Lambda_{2})$ where $\Lambda_{0}$ is a vertex lattice of type $0$ and $\Lambda_{2}$ is a vertex lattice of type $2$ with the relation $p\Lambda^{\vee}_{0}\subset p\Lambda^{\vee}_{2}=\Lambda_{2}\subset \Lambda_{0}$. 
\begin{itemize}

\item The stabilizer of a vertex lattice $(\Lambda_{0}, \Lambda_{2})$ of type $\{0, 2\}$ in $\GU_{2}(\rmD)$ is denoted by $\rmK^{\rmD}_{\{0,2\}}$. The parahoric subgroup $\rmK^{\rmD}_{\{0,2\}}$ splits to the Siegel parahoric and therefore we refer to it as the Siegel parahoric of $\GU_{2}(\rmD)$. Later on, we will use the notation $\Sie^{\rmD}$ for the group $\rmK^{\rmD}_{\{0,2\}}$.

\item The stabilizer of a vertex lattice $\Lambda_{1}$ of type $1$ in $\GU_{2}(\rmD)$ is denoted by $\rmK^{\rmD}_{\{1\}}$. The parahoric subgroup $\rmK^{\rmD}_{\{1\}}$ splits to the paramodular subgroup over $\ZZ_{p^{2}}$. Therefore we will sometimes refer to it as  the paramodular subgroup in $\GU_{2}(\rmD)$. Later on, we will use the notation $\mathrm{Pa}^{\rmD}$ for the group  $\rmK^{\rmD}_{\{1\}}$. 
\end{itemize}

\subsection{The local quaternionic unitary Shimura variety}
Let $\rmN$ be an isocrystal of height $8$ over $\FF$ which is isotypic of slope $\frac{1}{2}$. Let $\iota: \rmD\rightarrow \End(\rmN)$ be an action of $\rmD$ on $\rmN$. We can write $\rmD$ as $\QQ_{p^{2}}+\QQ_{p^{2}}\Pi$ with $\Pi^{2}=p$. Suppose $\rmN$ is equipped with an alternating form $(\cdot,\cdot): \rmN\times \rmN \rightarrow \rmK_{0}$ with the property that $(\rmF x, y)= (x, \rmV y)^{\sigma}$ and $(bx, y)= (x, b^{*}y)$ where $b^{*}$ is the image of $b$ under the neben type involution given by 
\begin{equation*}
\begin{split}
&x^{*}=\sigma(x),\phantom{.}x\in \QQ_{p^{2}}\\
&\Pi^{*}=\Pi.\\
\end{split}
\end{equation*}
Since $\ZZ_{p^{2}}\otimes\rmW_{0}\cong \rmW_{0}\times\rmW_{0}$, the action of $\mathcal{O}_{\rmD}\otimes\rmW_{0}$ decomposes $\rmN$ into $\rmN=\rmN_{0}\oplus \rmN_{1}$. Restricting the alternating form $(x, y)_{0}:= ( x, \Pi y)$ to $\rmN_{0}$ gives an identification between the group $\GU_{\rmD}(\rmN)$ and $\GSp(\rmN_{0})$
over $\rmW_{0}$ cf. \cite[1.42]{RZ-space}. We will use covariant Dieudonn\'{e} theory throughout this article. A Dieudonn\'{e} lattice $\rmM$ is a lattice in $\rmN$  with the property that $p\rmM \subset \rmF\rmM\subset \rmM$.  A Dieudonn\'{e} lattice is superspecial if $\rmF^{2}\rmM=p\rmM$ and in this case $\rmV\rmM=\rmF\rmM$. We are concerned with a Dieudonn\'{e} lattice $\rmM$ with an additional endomorphism $\iota: \mathcal{O}_{\rmD}\rightarrow \End(\rmM)$. We will represent $\mathcal{O}_{\rmD}$ by $\ZZ_{p^{2}}+\ZZ_{p^{2}}\Pi$ and hence we can decompose M as $\rmM_{0}\oplus \rmM_{1}$ with an additional operator $\Pi$ that swaps the two components. The alternating form $(\cdot,\cdot)$, restricted to $\rmM$, induces a pairing $(\cdot,\cdot): \rmM_{0}\times \rmM_{1}\rightarrow \rmW_{0}$. We will fix a $p$-divisible group $\XX$ over $\FF$ of dimension $4$ and height $8$ with quaternionic multiplication $\iota_{\XX}: \mathcal{O}_{\rmD}\rightarrow \End(\XX)$ and with polarization $\lambda: \XX\rightarrow \XX^{\vee}$ whose isocrystal agrees with $\rmN$ with its additional structures discussed above. Let $\Nilp$ be the category of $\rmW_{0}$-schemes over which $p$ is locally nilpotent.

\begin{construction}
We consider the set valued functor $\mathcal{N}_{\mathrm{Pa}}$ that sends $\rmS\in\Nilp$ to the isomorphism classes of the collection $(\sfX, \iota_{\sfX},  \lambda_{\sfX}, \rho_{\sfX})$ where:
\begin{itemize} 
\item $\mathsf{X}$ is a $p$-divisible group of dimension $4$ and height $8$ over $\rmS$;
\item $\lambda_{\mathsf{X}}: \sfX\rightarrow \sfX^{\vee}$ is a principal polarization;
\item $\iota_{\sfX}: \calO_{\rmD}\rightarrow \End_{\rmS}(\sfX)$ is an action of $\calO_{\rmD}$ on $\sfX$ defined over $\rmS$;
\item $\rho_{\sfX}: \sfX\times_{\rmS} \overline{\rmS}\rightarrow \XX\times_{\FF}\overline{\rmS}$ is an $\calO_{\rmD}$-linear quasi-isogeny over $\overline{\rmS}$ which is the special fiber of $\rmS$ at $p$.
\end{itemize}

We require that $\iota_{\sfX}$ satisfies the Kottwitz condition 
\begin{equation*}
\det(\rmT-\iota_{\sfX}(c)\vert \Lie(\sfX))=(\rmT^{2}-\mathrm{Trd}^{0}(c)\rmT+ \mathrm{Nrd}^{0}(c))^{2}
\end{equation*}
for $c\in\mathcal{O}_{\rmD}$, where $\mathrm{Trd}^{0}(c)$ is the reduced trace of $c$ and $\mathrm{Nrd}^{0}(c)$ is the reduced norm of $c$. For $\rho_{\sfX}: \sfX\times_{\rmS} \overline{\rmS}\rightarrow \XX\times_{\FF}\overline{\rmS}$, we require that $\rho_{\sfX}^{\vee}\circ\lambda_{\XX}\circ\rho_{\sfX}=c(\rho_{\sfX})\lambda_{\sfX}$
for a $\QQ_{p}$-multiple $c(\rho_{\sfX})$. 
\end{construction}

This moduli problem is representable by a formal scheme $\calN_{\Pa}$, locally formally of finite type over $\Spf\phantom{.}\mathrm{W}_{0}$. The formal scheme $\calN_{\Pa}$ can be decomposed into open and closed formal subschemes: 
\begin{equation*}
\calN_{\Pa}=\bigsqcup_{i\in\ZZ}\calN_{\Pa}(i) 
\end{equation*}
where each $\calN_{\Pa}(i)$ is isomorphic to $\calN_{\Pa}(0)$. Let $b$ be the $\sigma$-conjugacy class corresponding to the isocrystal $\rmN$, the space $\calN_{\Pa}$ admits an action of the group $\rmJ_{b}(\QQ_{p})\cong \GSp_{4}(\QQ_{p})$. 

Denote by $\calM_{\Pa}$ the reduced scheme underlying the formal scheme $\calN_{\Pa}(0)$. The main result of \cite{Wanga}, \cite{Oki} concerns the structure of $\calM_{\Pa}$ which we starts to recall. More precisely, the scheme $\calM_{\Pa}$ admits the {\em Bruhat-Tits stratification} 
\begin{equation*}
\calM_{\Pa}= \rmM_{\Pa}\{0\}\sqcup  \rmM_{\Pa}\{2\} \sqcup  \rmM_{\Pa}\{02\} \sqcup \rmM_{\Pa}\{1\}
\end{equation*}
into locally closed subvarieties. We introduce the following notations related to the strata.
\begin{itemize}
\item The stratum $\rmM_{\Pa}\{0\}$ has its irreducible components indexed by the set $\calL_{\{0\}}$ of vertex lattices of type $\{0\}$. 
For each vertex lattice $\Lambda_{0}\in \calL_{\{0\}}$, the corresponding irreducible component is denoted by $\rmM_{\Pa}(\Lambda_{0})$. 
We write $\calM_{\Pa}\{0\}$ for the closure of $\rmM_{\Pa}\{0\}$ and  $\calM_{\Pa}(\Lambda_{0})$ for the closure of $\rmM_{\Pa}(\Lambda_{0})$.

\item The stratum $\rmM_{\Pa}\{2\}$ has its irreducible components indexed by the set $\calL_{\{2\}}$ of vertex lattices of type $\{2\}$. For each vertex lattice $\Lambda_{2}\in \calL_{\{2\}}$, the corresponding irreducible component is denoted by $\rmM_{\Pa}(\Lambda_{2})$. We write $\calM_{\Pa}\{2\}$ for the closure of $\rmM_{\Pa}\{2\}$ and  $\calM_{\Pa}(\Lambda_{2})$ for the closure of $\rmM_{\Pa}(\Lambda_{2})$.

\item The stratum $\rmM_{\Pa}\{02\}$ has its irreducible components indexed by a pair of vertex lattices $(\Lambda_{0}, \Lambda_{2})$ with the relation
$p\Lambda^{\vee}_{0}\subset p\Lambda^{\vee}_{2}=\Lambda_{2}\subset \Lambda_{0}$. For such a pair $(\Lambda_{0}, \Lambda_{2})$, the corresponding irreducible component is denoted by $\rmM_{\Pa}(\Lambda_{0}, \Lambda_{2})$ and its irreducible component is denoted by $\calM_{\Pa}(\Lambda_{0}, \Lambda_{2})$.

\item The stratum $\rmM_{\Pa}\{1\}=\calM_{\Pa}\{1\}$ is closed and is a discrete set of points indexed by the set  $\calL_{\{1\}}$ of vertex lattices of type $1$. For a lattice $\Lambda_{1}$ of type $1$, we denote the corresponding point by $\calM_{\Pa}(\Lambda_{1})$. For each vertex lattice $\Lambda_{i}$ with $i\in\{0,2\}$, we define $\calM_{\Pa}(\Lambda_{i})\{1\}$ to be $\calM_{\Pa}(\Lambda_{i})\cap \calM_{\Pa}\{1\}$.

\end{itemize}

\begin{theorem}\label{RZ-description}
We have the following descriptions of the Bruhat--Tits stratification of $\calM_{\Pa}$.
\begin{enumerate}
\item For each vertex lattice $\Lambda_{i}\in \calL_{\{i\}}$ with $i\in\{0,2\}$, the corresponding irreducible component $\rmM_{\Pa}(\Lambda_{i})$ is isomorphic to a certain Deligne--Lusztig variety  whose closure $\calM_{\Pa}(\Lambda_{i})$ is given by the projective surface  
\begin{equation*}
\rmZ_{3}^{p}\rmZ_{0}-\rmZ^{p}_{0}\rmZ_{3}+\rmZ^{p}_{2}\rmZ_{1}-\rmZ^{p}_{1}\rmZ_{2}=0
\end{equation*}
in a suitable coordinate system $[\rmZ_{0}: \rmZ_{1}:  \rmZ_{2}: \rmZ_{3}]$ of $\PP^{3}$. 

\item The stratum $\rmM_{\Pa}\{1\}=\calM_{\Pa}\{1\}$ consists of superspecial points of $\calM_{\Pa}$.

\item For each pair of vertex lattices $(\Lambda_{0}, \Lambda_{2})$ such that
$p\Lambda^{\vee}_{0}\subset p\Lambda^{\vee}_{2}=\Lambda_{2}\subset \Lambda_{0}$, the corresponding irreducible component $\rmM_{\Pa}(\Lambda_{0}, \Lambda_{2})$ is the intersection of $\rmM_{\Pa}(\Lambda_{0})$ and $\rmM_{\Pa}(\Lambda_{2})$, and its closure $\calM_{\Pa}(\Lambda_{0}, \Lambda_{2})$ is isomorphic to a projective line $\PP^{1}$.

\item For each pair $(\Lambda_{0}, \Lambda^{\prime}_{0})$ of vertex lattices of type $0$ such that $\Lambda_{1}=\Lambda_{0}\cap\Lambda^{\prime}_{0}$ is a vertex lattice of type $1$, the irreducible components $\calM_{\Pa}(\Lambda_{0})$ and $\calM_{\Pa}(\Lambda^{\prime}_{0})$ intersect at the point $\calM_{\Pa}(\Lambda_{1})$. 

\item For each pair $(\Lambda_{2}, \Lambda^{\prime}_{2})$ of vertex lattices of type $2$ such that $\Lambda_{1}=\Lambda_{2}+\Lambda^{\prime}_{2}$ is a vertex lattice of type $1$, the irreducible components $\calM_{\Pa}(\Lambda_{2})$ and $\calM_{\Pa}(\Lambda^{\prime}_{2})$ intersect at the point $\calM_{\Pa}(\Lambda_{1})$.
\end{enumerate}
\end{theorem}
\begin{proof}
This follows from  \cite[Theorem 5.1]{Wanga}.
\end{proof}

We will need to recall some ingredients going into the proof of the above theorem. Given an $\FF$-point $x=(\sfX, \lambda_{\sfX}, \iota_{\sfX}, \rho_{\sfX} )\in\calM_{\Pa}(\FF)$, we denote by $\mathrm{M}=\mathrm{M}_{0}\oplus\mathrm{M}_{1}$ the Dieudonn\'{e} lattice attached to $x$. Then the set $\calM_{\Pa}(\FF)$ can be identified with the set  
\begin{equation*}
\{\rmM\subset \rmN: \rmM^{\perp}=\rmM,\phantom{.}p\rmM\subset^{4} \rmV\rmM\subset^{4} \rmM,\phantom{.}p\rmM\subset^{4} \Pi \rmM\subset^{4} \rmM\}
\end{equation*}
where $\rmM^{\perp}$ is the integral dual of $\rmM$ in $\rmN$ with respect to $(\cdot, \cdot)$.  Then projecting $\rmM$ to $\mathbb{D}=\rmM_{0}$ identifies this set with 
\begin{equation}\label{RZ-F-point}
\{\DD \subset \rmN_{0}:p\DD^{\vee} \subset^{2} \DD\subset^{2} \DD^{\vee},\phantom{.}p\tau\DD^{\vee}\subset^{2} \DD\subset^{2} \tau\DD^{\vee}\}
\end{equation}
where $\DD^{\vee}$ is the integral dual of $\DD$ with respect to the pairing $(\cdot, \cdot)_{0}$ on $\rmN_{0}$ defined by $(x, y)_{0}=(x, \Pi y)$ for $x\in \rmM_{0}$ and $y\in \rmM_{0}$ and $\tau$ is given by $\Pi\rmV^{-1}$. Under this identification, the superspecial points of $\calM_{\Pa}$ correspond to those lattices $\DD\subset\rmN_{0}$ such that $\tau\DD=\DD$.

\subsubsection{The scheme $\calM_{\Pa}(\Lambda_{0})$} Let $\calM^{\square}_{\Pa}(\Lambda_{0})=\calM_{\Pa}(\Lambda_{0})-\calM_{\Pa}\{1\}$. In terms of the descriptions of \ref{RZ-F-point}, an $\FF$-point of $\calM^{\square}_{\Pa}(\Lambda_{0})$ corresponds to a lattice $\DD$ such that $\DD+\tau(\DD)$ is $\tau$-stable and gives rise to a vertex lattice $\Lambda_{0}$ of type $0$. In this case $\DD$ and  $\tau\DD$ inserts in the chains
\begin{equation*}
\begin{aligned}
& p\Lambda_{0}\subset p\DD^{\vee}\subset \DD\cap \tau{\DD} \subset \DD \subset \Lambda_{0}\\
&p\Lambda_{0}\subset p\tau\DD^{\vee}\subset  \DD\cap \tau{\DD} \subset \tau\DD \subset \Lambda_{0}.\\
\end{aligned}
\end{equation*}
Taking the quotient by $p\Lambda_{0}$, we obtain two complete isotropic flags
\begin{equation*}
\begin{aligned}
p\DD^{\vee}/p\Lambda_{0}&\subset  \DD\cap \tau{\DD}/p\Lambda_{0} \subset \DD/p\Lambda_{0} \subset \Lambda_{0}/p\Lambda_{0} \\
p\tau\DD^{\vee}/p\Lambda_{0}&\subset  \DD\cap \tau{\DD}/p\Lambda_{0} \subset \tau\DD/p\Lambda_{0} \subset \Lambda_{0}/p\Lambda_{0}\\
\end{aligned}
\end{equation*}
in the symplectic space $\overline{\rmV}_{\Lambda_{0}}=\Lambda_{0}/p\Lambda_{0}$. Now it is clear that the latter two isotropic flags agree with the Deligne--Lusztig variety defined in \cite[2.4]{DL} and its equation is worked out to be
\begin{equation}\label{DL-eq}
\rmZ_{3}^{p}\rmZ_{0}-\rmZ^{p}_{0}\rmZ_{3}+\rmZ^{p}_{2}\rmZ_{1}-\rmZ^{p}_{1}\rmZ_{2}=0
\end{equation}
by \cite[Theorem 3.6]{Wanga}. We denote this Deligne--Lusztig variety by $\mathrm{DL}(\Lambda_{0})$ which is given by the set of lines $l\subset\overline{\rmV}_{\Lambda_{0}}$ such that
\begin{equation*}
l\subset \sigma(l)^{\perp}.
\end{equation*}
Here $ \sigma(l)^{\perp}$ is the orthogonal complement of the line $\sigma(l)$ with respect to the symplectic form on $\overline{\rmV}_{\Lambda_{0}}$. The set of lines fixed by $\sigma$ in $\mathrm{DL}(\Lambda_{0})$ will be called the set of superspecial points on $\mathrm{DL}(\Lambda_{0})$ and the complement of the set of superspecial points will be  denoted by $\mathrm{DL}^{\square}(\Lambda_{0})$. The map sending $\DD$ to $l=p\DD^{\vee}/\Lambda_{0}$ gives a bijection from $\calM_{\Pa}(\Lambda_{0})(\FF)$ to $\mathrm{DL}(\Lambda_{0})(\FF)$ which induces a bijection from $\calM^{\square}_{\Pa}(\Lambda_{0})(\FF)$ to $\mathrm{DL}^{\square}(\Lambda_{0})(\FF)$.

For the next lemma, we need to recall the map from $\calM_{\Pa}(\Lambda_{0})$ to $\mathrm{DL}(\Lambda_{0})$ following \cite[\S 4]{Wanga}. Let $\XX_{\Lambda_{0}}$ be the $p$-divisible group over $\FF$ whose Dieudonn\'{e}  module is given by $\Lambda_{0}\oplus \Pi^{-1}\Lambda_{0}$ \cite[Lemma 4.1]{Wanga}. There is a universal isogeny
\begin{equation}
\rho^{+}_{\Lambda_{0}}:\sfX\rightarrow\XX_{\Lambda_{0}}
\end{equation}
over $\calM_{\Pa}(\Lambda_{0})$ for the universal $p$-divisible group $\sfX$. The $\ZZ_{p^{2}}$-action on $\DD(\sfX)$ induces decompositions $\DD(\sfX)=\DD(\sfX)_{0}\oplus \DD(\sfX)_{1}$, $\omega_{\sfX^{\vee}}=\omega_{\sfX^{\vee},0}\oplus \omega_{\sfX^{\vee},1}$ and $\mathrm{Lie}(\sfX)=\mathrm{Lie}(\sfX)_{0}\oplus \mathrm{Lie}(\sfX)_{1}$. For an $\FF$-algebra $\rmR$ and any point $(\sfX, \lambda_{\sfX}, \iota_{\sfX}, \rho_{\sfX} )\in\calM_{\Pa}(\Lambda_{0})(\rmR)$, sending this point to $\rho^{+}_{\Lambda_{0}}(\omega_{\sfX^{\vee},0})\subset \overline{\rmV}_{\Lambda_{0}}\otimes\rmR$ gives the desired map from $\calM_{\Pa}(\Lambda_{0})$ to $\mathrm{DL}(\Lambda_{0})$ which is in fact an isomorphism by \cite[Proposition 4.12]{Wanga} and induces an isomorphism from $\calM^{\square}_{\Pa}(\Lambda_{0})$ to $\mathrm{DL}^{\square}(\Lambda_{0})$.
\begin{lemma}\label{tan-0}
The tangent bundle $\calT_{\calM^{\square}_{\Pa}(\Lambda_{0})}$ of $\calM^{\square}_{\Pa}(\Lambda_{0})$ is given by
\begin{equation*}
\calT_{\calM^{\square}_{\Pa}(\Lambda_{0})}\cong \mathrm{Hom}(\omega_{\sfX^{\vee},0}/\Pi\omega_{\sfX^{\vee},1}, \Lie(\sfX)_{0})
\end{equation*}
for the universal $p$-divisible group $\sfX$ over $\calM^{\square}_{\Pa}(\Lambda_{0})$. Moreover, there is an isomorphism of line bundles from $\Lie(\sfX)_{1}/\Pi\Lie(\sfX)_{0}$ to $\calO_{\mathrm{DL}^{\square}(\Lambda_{0})}(-p)$.
\end{lemma}
\begin{proof}
This is a standard computation using  Grothendieck--Messing deformation theory which proceeds similarly as in \cite[Theorem 4.3.5]{LTXZZ} and \cite[Theorem 6.37]{Sweeting}. We only remark that the kernel of $\rho^{+}_{\Lambda_{0}}$ on $\omega_{\sfX^{\vee},0}$ is given by $\Pi \omega_{\sfX^{\vee},1}$ which explains the term $\omega_{\sfX^{\vee},0}/\Pi\omega_{\sfX^{\vee},1}$ in the above formula.

For the moreover part, note that $\rho^{+}_{\Lambda_{0}}(\omega_{\sfX^{\vee},0})$ is identified with the line bundle $\calO_{\mathrm{DL}^{\square}(\Lambda_{0})}(-1)$ under the identification of $\calM^{\square}_{\Pa}(\Lambda_{0})$ with $\mathrm{DL}^{\square}(\Lambda_{0})$. Then $\rho^{+}_{\Lambda_{0}}(\rmV\DD(\sfX)_{1})\cong \rho^{+}_{\Lambda_{0}}(\omega^{(p)}_{\sfX^{\vee},0})$ is identified with $\calO_{\mathrm{DL}^{\square}(\Lambda_{0})}(-p)$. Now the kernel of $\rho^{+}_{\Lambda_{0}}$ on $\DD(\sfX)_{1}$ is given by $\omega_{\sfX^{\vee}, 1}+\Pi\DD(\sfX)_{0}$ and $\rmV$ is an isomorphism from $\DD(\sfX)_{1}$ to $\DD(\sfX)_{0}$, hence the map $\rmV\rho^{+}_{\Lambda_{0}}$ indues an identification of $\Lie(\sfX)_{1}/\Pi\Lie(\sfX)_{0}$ with $\calO_{\mathrm{DL}^{\square}(\Lambda_{0})}(-p)$.
\end{proof}

%\begin{lemma}
%Let $x$ be a point in $\calM_{\Pa}(\Lambda_{0})\cap\calM_{\Pa}\{1\}$, then the tangent space $\calT_{\calM_{\Pa}(\Lambda_{0}), x}$ is given by
%\begin{equation*}
%\calT_{\calM_{\Pa}(\Lambda_{0}), x}=\mathrm{Hom}(p\rmD^{\vee}/p\Lambda_{0}, \rmD/p\rmD^{\vee})
%\end{equation*}
%\end{lemma}
%\begin{proof}
%This follows from the same argument as in the previous lemma.
%\end{proof}

\subsubsection{The scheme $\calM_{\Pa}(\Lambda_{2})$} The discussion for $\calM_{\Pa}(\Lambda_{2})$ runs completely parallel to that of $\calM_{\Pa}(\Lambda_{0})$. Let $\calM^{\square}_{\Pa}(\Lambda_{2})=\calM_{\Pa}(\Lambda_{2})-\calM_{\Pa}\{1\}$. In terms of the descriptions of \ref{RZ-F-point}, an $\FF$-point of $\calM^{\square}_{\Pa}(\Lambda_{2})$ corresponds to a lattice $\DD$ such that $\DD\cap\tau\DD$ is $\tau$-stable and gives rise to the vertex lattice $\Lambda_{2}$ of type $2$. Moreover the lattices $\DD$ and  $\tau\DD$ inserts into the chain 
\begin{equation*}
\begin{aligned}
&\Lambda_{2}\subset \DD\subset \DD+\tau{\DD} \subset \DD^{\vee} \subset \Lambda^{\vee}_{2}\\
&\Lambda_{2}\subset \tau\DD\subset  \DD+\tau{\DD} \subset \tau\DD^{\vee} \subset \Lambda^{\vee}_{2}\\
\end{aligned}
\end{equation*}
respectively. Taking the quotient by $\Lambda_{2}$, we obtain two complete isotropic flags
\begin{equation*}
\begin{aligned}
&\DD/\Lambda_{2}\subset  \DD+\tau{\DD}/\Lambda_{2} \subset \DD^{\vee}/\Lambda_{2} \subset \Lambda^{\vee}_{2}/\Lambda_{2}\\
&\tau\DD/\Lambda_{2}\subset  \DD+\tau{\DD}/\Lambda_{2} \subset \tau\DD^{\vee}/\Lambda_{2} \subset \Lambda^{\vee}_{2}/\Lambda_{2}\\
\end{aligned}
\end{equation*}
in the symplectic space $\overline{\rmV}_{\Lambda_{2}}=\Lambda^{\vee}_{2}/\Lambda_{2}$. In this case, the closure $\calM_{\Pa}(\Lambda_{2})$ of $\calM^{\square}_{\Pa}(\Lambda_{2})$ is again isomorphic to the Deligne--Lusztig variety as in \ref{DL-eq} given by the set of lines $l\in\overline{\rmV}_{\Lambda_{2}}$ such that
\begin{equation*}
l\subset \sigma(l)^{\perp}.
\end{equation*}
We denote by $\mathrm{DL}(\Lambda_{2})$ this Deligne--Lusztig variety and the complement of the set of superspecial points in $\mathrm{DL}(\Lambda_{2})$ is denoted by $\mathrm{DL}^{\square}(\Lambda_{2})$. By sending $\DD/\Lambda_{2}$ to $l$, we have an isomorphism from $\calM_{\Pa}(\Lambda_{2})(\FF)$ to $\mathrm{DL}(\Lambda_{2})(\FF)$ which induces an isomorphism from $\calM^{\square}_{\Pa}(\Lambda_{2})(\FF)$ to $\mathrm{DL}^{\square}(\Lambda_{2})(\FF)$.

For the next lemma, we need to recall the map from $\calM_{\Pa}(\Lambda_{2})$ to $\mathrm{DL}(\Lambda_{2})$ following \cite[\S 4]{Wanga}. Let $\XX_{\Lambda_{2}}$ be the $p$-divisible group over $\FF$ whose Dieudonn\'{e}  module is given by $\Lambda^{\vee}_{2}\oplus \Pi\Lambda^{\vee}_{2}$ \cite[Lemma 4.7]{Wanga}. There is a universal isogeny
\begin{equation}
\rho^{-}_{\Lambda_{2}}:\sfX\rightarrow\XX_{\Lambda_{2}}
\end{equation}
for any $\sfX$ parametrized by $\calM_{\Pa}(\Lambda_{2})$. The $\ZZ_{p^{2}}$-action on $\DD(\sfX)$ induces decompositions $\DD(\sfX)=\DD(\sfX)_{0}\oplus \DD(\sfX)_{1}$, $\omega_{\sfX^{\vee}}=\omega_{\sfX^{\vee},0}\oplus \omega_{\sfX^{\vee},1}$ and $\mathrm{Lie}(\sfX)=\mathrm{Lie}(\sfX)_{0}\oplus \mathrm{Lie}(\sfX)_{1}$. For an $\FF$-algebra $\rmR$ and any point $(\sfX, \lambda_{\sfX}, \iota_{\sfX}, \rho_{\sfX} )\in\calM_{\Pa}(\Lambda_{2})(\rmR)$, sending this point to $\rho^{-}_{\Lambda_{2}}(\DD(\sfX)_{0})\subset \overline{\rmV}_{\Lambda_{2}}\otimes\rmR$ gives the desired map from $\calM_{\Pa}(\Lambda_{2})$ to $\mathrm{DL}(\Lambda_{2})$ which is in fact an isomorphism by \cite[Proposition 4.12]{Wanga} and induces an isomorphism from $\calM^{\square}_{\Pa}(\Lambda_{2})$ to $\mathrm{DL}^{\square}(\Lambda_{2})$. 

 \begin{lemma}\label{tan-2}
Over $\calM^{\square}_{\Pa}(\Lambda_{2})$, the tangent bundle $\calT_{\calM^{\square}_{\Pa}(\Lambda_{2})}$ is given by
\begin{equation*}
\calT_{\calM^{\square}_{\Pa}(\Lambda_{2})}\cong \mathrm{Hom}(\omega_{\sfX^{\vee},0}, \Pi\Lie(\sfX)_{1})
\end{equation*}
where $\sfX$ is the universal $p$-divisible group. Moreover, there is an isomorphism $\Lie(\sfX)_{0}/\Pi\Lie(\sfX)_{1}$ with $\calO_{\mathrm{DL}^{\square}(\Lambda_{2})}(-p)$.
\end{lemma}
\begin{proof}
This is a standard computation using  Grothendieck--Messing deformation theory which proceeds similarly as in \cite[Theorem 4.3.5]{LTXZZ} and \cite[Theorem 6.37]{Sweeting}. We only remark that the kernel of $\rho^{-}_{\Lambda_{2}}$ on $\DD(\sfX)_{0}$ is given by $\Pi\DD(\sfX)_{1}+\omega_{\sfX^{\vee}, 0}$ which in particular contains $\omega_{\sfX^{\vee}, 0}$ and we have
\begin{equation*}
\Pi\DD(\sfX)_{1}+\omega_{\sfX^{\vee}, 0}/\omega_{\sfX^{\vee}, 0}\cong \Pi\DD(\sfX)_{1}/\omega_{\sfX^{\vee}, 0}\cap\Pi\DD(\sfX)_{1}\cong \Pi\Lie(\sfX)_{1}
\end{equation*}
which explains the term $\Pi\Lie(\sfX)_{1}$ in  the above formula.

For the moreover part, note that $\rho^{-}_{\Lambda_{2}}(\DD(\sfX)_{0})$ is identified with the line bundle $\calO_{\mathrm{DL}^{\square}(\Lambda_{2})}(-1)$ under the identification of $\calM^{\square}_{\Pa}(\Lambda_{2})$ with $\mathrm{DL}^{\square}(\Lambda_{2})$. Then $\rmV\rho^{-}_{\Lambda_{2}}(\DD(\sfX)_{0})$ is identified with $\calO_{\mathrm{DL}^{\square}(\Lambda_{2})}(-p)$. Now the kernel of $\rho^{-}_{\Lambda_{2}}$ on $\DD(\sfX)_{0}$ is given by $\omega_{\sfX^{\vee}, 0}+\Pi\DD(\sfX)_{1}$ and $\rmV$ is an isomorphism from $\DD(\sfX)_{0}$ to $\DD(\sfX)_{1}$, hence the map $\rmV\rho^{-}_{\Lambda_{2}}$ indues an identification of $\Lie(\sfX)_{0}/\Pi\Lie(\sfX)_{1}$ with $\calO_{\mathrm{DL}^{\square}(\Lambda_{2})}(-p)$.
\end{proof}

%Then the tangent space is calculated in the same way as in the previous lemma.
%\begin{lemma}
%Let $x$ be an $\FF$-point in $\calM_{\Pa}(\Lambda_{2})$, then the tangent space $\calT_{\calM_{\Pa}(\Lambda_{2}), x}$ is given by
%\begin{equation*}
%\calT_{\calM_{\Pa}(\Lambda_{2}), x}\cong \mathrm{Hom}(\DD/\Lambda_{2}, {\tau\DD}^{\vee}/\DD)\rightarrow 0.
%\end{equation*}
%\end{lemma}
%\begin{proof}
%This follows from a standard computation using  deformation theory similar to the previous case. Indeed, the liftings of $\rmD/\Lambda_{2}$ to $\FF[\epsilon]/\epsilon^{2}$ form a torsor under $\mathrm{Hom}(\rmD/\Lambda_{2}, \tau\rmD^{\vee}/\rmD)$. 
%\end{proof}

\subsection{The quaternionic unitary Shimura variety}
We define the integral model of the quaternionic unitary Shimura variety with paramodular level $\rmX_{\Pa}(\rmB)$ over $\ZZ_{(p)}$ which represents certain PEL-type moduli problem. Recall that $\rmV=\rmB\oplus \rmB$ considered as a vector space of dimension $8$ over $\QQ$ and we have
the quaternionic unitary group
\begin{equation*}
\bfG(\rmB)(\QQ)=\{g\in \GL_{\rmB}(\rmV): (gx, gy)=\rmc(g)(x, y),\phantom{.}\rmc(g)\in\QQ^{\times}\}
\end{equation*}
defined by an alternating form $(\cdot,\cdot)$ on $\rmV$ fixed in $\S 2.1$. 

Since $\rmB$ is split at $\infty$, $\bfG(\rmB)(\RR)=\GSp(4)(\RR)$. Let $h: \GG_{m}\rightarrow \bfG(\rmB)(\CC)$ be the cocharacter given by
\begin{equation*}
z\mapsto\begin{pmatrix} z&&&\\&z&&\\&&1&\\&&&1\\ \end{pmatrix}\text{ for }z\in\CC^{\times}.
\end{equation*}
Then $h$ defines a decomposition $\rmV_{\CC}=\rmV_{1}\oplus \rmV_{2}$ where $h(z)$ acts on $\rmV_{1}$ by $z$ and on $\rmV_{2}$ by $\bar{z}$. We fix an open compact subgroup $\rmK=\rmK^{p}\rmK_{p}$ of $\bfG(\rmB)(\mathbb{A}^{(\infty)})$ and we assume $\rmK^{p}$ is sufficiently small and $\rmK_{p}=\rmK^{\rmD}_{\{1\}}$ is the paramodular subgroup of $\GU_{2}(\rmD)$.
Let $\Lambda=\calO_{\rmB}\oplus\calO_{\rmB}$, then $\rmK_{p}$ is also the stabilizer of $\Lambda\otimes \ZZ_{p}=\calO_{\rmD}\oplus\calO_{\rmD}$. 
This datum defines a Shimura variety $\Sh(\rmB, \rmK_{\Pa})$ over $\QQ$ such that its $\CC$ points are given by
\begin{equation*}
\Sh(\rmB, \rmK_{\Pa})(\CC)\cong \bfG(\rmB)(\QQ)\backslash \calH^{\pm }\times \bfG(\rmB)(\mathbb{A}^{(\infty)})/\rmK^{(p)}\rmK_{p} 
\end{equation*}
where $\calH^{\pm}$ is the lower and upper Siegel half-spaces of dimension $3$. 

\begin{construction}
To the datum $(\rmB,\ast,  \rmV, (\cdot,\cdot), h ,\Lambda)$, we associate the following moduli problem $\rmX_{\Pa}(\rmB)$ over $\ZZ_{(p)}$. To a scheme $\rmS$ over $\ZZ_{(p)}$, $\rmX_{\Pa}(\rmB)(\rmS)$ classifies the set of isomorphism classes of the quadruple $(\sfA, \iota , \lambda, \eta^{p})$ where:
\begin{itemize}
\item $\mathsf{A}$ is an abelian scheme of relative dimension $4$ over $\rmS$;
\item  $\lambda: \mathsf{A}\rightarrow \sfA^{\vee}$ is a prime-to-$p$ polarization which is $\calO_{\rmB}\otimes\ZZ_{(p)}$-linear;
\item  $\iota: \calO_{\rmB} \rightarrow \End_{\rmS}(\sfA)$ is a morphism such that $\lambda\circ \iota(a^{*})= \iota(a)^{\vee}\circ\lambda$ and satisfies the Kottwitz condition 
\begin{equation*}
\det(\rmT-\iota(a)\vert \Lie(\sfA))=(\rmT^{2}-\mathrm{Trd}^{0}(a)\rmT+\mathrm{Nrd}^{0}(a))^{2}
\end{equation*}
for all $a\in \calO_{\rmB}$, where $\mathrm{Trd}^{0}$ is the reduced trace on $\rmB$ and $\mathrm{Nrd}^{0}$ is the reduced norm on $\rmB$;
\item $\eta^{p}: \Lambda\otimes_{\ZZ} {\widehat{\ZZ}}^{(p)} \rightarrow\widehat{\rmT}^{(p)}(\sfA_{\overline{s}})$ is a $\pi_{1}(\rmS,\overline{s})$-invariant $\rmK^{p}$-orbit of $\calO_{\rmB}\otimes_{\ZZ}\widehat{\ZZ}^{(p)}$-linear isomorphisms where we put
\begin{equation*}
\widehat{\rmT}^{(p)}(\sfA_{\overline{s}})=\prod_{v \neq p}\Ta_{v}(\sfA_{\overline{s}})
\end{equation*}
for a geometric point $\overline{s}\in\rmS$.
We require that $\eta^{p}$ is compatible with the Weil-pairing on the righthand side and the alternating form $(\cdot,\cdot)$ on $\rmV$ restricted to $\Lambda$ on the lefthand side.
\end{itemize}

This moduli problem is representable by a quasi-projective scheme $\rmX_{\Pa}(\rmB)$ over $\ZZ_{(p)}$ whose generic fiber is given by $\Sh(\rmB, \rmK_{\Pa})$. We will mainly use the integral model $\rmX_{\Pa}(\rmB)$ base changed to $\rmW_{0}$ in this article. Therefore we will abuse the notation and write $\rmX_{\Pa}(\rmB)$ for this base change from here on. The special fiber will be denoted by $\overline{\rmX}_{\Pa}(\rmB)$. 
\end{construction}

The singularity of the integral model of this Shimura variety is well-understood and the local model is discussed in \cite[\S 2.3]{Wanga}. In particular, we have the following result.
\begin{lemma}
The scheme $\rmX_{\Pa}(\rmB)$ has the following geometric properties.
\begin{enumerate}
\item The scheme $\rmX_{\Pa}(\rmB)$ over $\mathrm{W}_{0}$ is regular whose singular locus is concentrated on its special fiber $\overline{\rmX}_{\Pa}(\rmB)$ with isolated ordinary quadratic singularities. 

\item Suppose that the singular locus is denoted by $\overline{\rmX}^{\mathrm{sing}}_{\Pa}(\rmB)$. Then $\rmX_{\Pa}(\rmB)$ is \'etale locally around each singular point in $\overline{\rmX}^{\mathrm{sing}}_{\Pa}(\rmB)$ isomorphic to 
\begin{equation*}
\rmW_{0}[ \rmZ_{1}, \rmZ_{2}, \rmZ_{3}, \rmZ_{4}]/(\rmZ_{1}\rmZ_{2}-\rmZ_{3}\rmZ_{4}-p).
\end{equation*} 
\item The singular locus $\overline{\rmX}^{\mathrm{sing}}_{\Pa}(\rmB)$ consists of precisely the discrete set of points whose underlying $p$-divisible groups are superspecial and therefore it is contained in the supersingular locus $\overline{\rmX}^{\mathrm{ss}}_{\Pa}(\rmB)$.
\end{enumerate}
\end{lemma}
\begin{proof}
This follows immediately from \cite[Theorem 2.2]{Wanga}.
\end{proof}

For later use, we introduce the following two notations:
\begin{itemize}
\item we denote by $\overline{\rmX}^{\square}_{\Pa}(\rmB)=\overline{\rmX}_{\Pa}(\rmB)-\overline{\rmX}^{\mathrm{sing}}_{\Pa}(\rmB)$ the regular locus of $\overline{\rmX}_{\Pa}(\rmB)$;
\item we denote by $\overline{\rmX}^{\square\mathrm{ss}}_{\Pa}(\rmB)$ the complement of $\overline{\rmX}^{\mathrm{sing}}_{\Pa}(\rmB)$ in $\overline{\rmX}^{\mathrm{ss}}_{\Pa}(\rmB)$.
\end{itemize}

There is an arithmetic minimal compatification of the scheme $\rmX_{\Pa}(\rmB)$ whose boundary components are the zero-dimensional cusps. There is also a good theory of toroidal compatification of  $\rmX_{\Pa}(\rmB)$ compatible with its semi-stable model $\rmX^{\mathrm{Bl}}_{\Pa}(\rmB)$. Since we will not use them explicitly, we will not make these theories precise in this article.

We are interested in the supersingular locus  $\overline{\rmX}^{\mathrm{ss}}_{\Pa}(\rmB)$ of $\overline{\rmX}_{\Pa}(\rmB)$ considered as a reduced closed subscheme. The uniformization theorem of Rapoport--Zink translates this problem to the problem of  describing the corresponding Rapoport--Zink space. More precisely, we have the following isomorphism.

\begin{proposition}\label{uniformization}
There is an isomorphism of $\FF$-schemes
\begin{equation*}
\Theta_{\mathrm{RZ}}:\rmI(\QQ)\backslash {\calN}_{\Pa, \mathrm{red}}\times  \bfG({\rmB})(\AAA^{(\infty p)})/\rmK^{p} \cong \overline{\rmX}^{\mathrm{ss}}_{\Pa}(\rmB).
\end{equation*}
equivariant with respect to the $\bfG({\rmB})(\AAA^{(\infty p)})$-action
\end{proposition} 
\begin{proof}
This follows from the Rapoport--Zink uniformization theorem \cite[Theorem 6.1]{RZ-space}.
\end{proof}

Here $\rmI$ is an inner form of $\mathrm{GU}_{2}(\rmB)$ with $\rmI(\QQ_{p})=\rmJ_{b}(\QQ_{p})$ in our case. More precisely, it is defined in the following way. Let  $(\bf{A}, \boldsymbol{\iota}, \boldsymbol{\lambda}, \boldsymbol{\eta})$ be a supersingular point in $\overline{\rmX}^{\mathrm{ss}}_{\Pa}(\rmB)$ whose associated $p$-divisible group is given by $\XX$ which we use as a base point to define the Rapoport--Zink space. Then $\rmI$ is defined to be the algebraic group over $\QQ$ whose $\QQ$-points $\rmI(\QQ)$ are given by the group of quasi-isogenies in $\End(\bf{A})_{\QQ}$ which preserve the polarization $\boldsymbol{\lambda}$. In fact, it is not difficult to see that this group $\rmI$ is exactly the group $\bfG(\overline{\rmB})=\mathrm{GU}_{2}(\overline{\rmB})$. We will refer to $\Theta_{\mathrm{RZ}}$ as the Rapoport--Zink uniformization map.

\begin{construction}
Let $\rmK$ be an open compact subgroup of $\bfG(\overline{\rmB})(\mathbb{A}^{(\infty)})$. We define the Shimura set for the group $\bfG(\overline{\rmB})$ with level $\rmK$ by
\begin{equation*}
\Sh(\overline{\rmB}, \rmK)=\bfG(\overline{\rmB})(\QQ)\backslash \bfG(\overline{\rmB})(\mathbb{A}^{(\infty)})/\rmK.
\end{equation*}
When $\rmK_{p}$ is a hyperspecial subgroup of $\bfG(\QQ_{p})$, we will write the Shimura set as $\Sh(\overline{\rmB}, \rmK_{\rmH})$ to emphasize its level structure at $p$ is hyperspecial. 
\end{construction}

\begin{remark}
When we study the supersingular locus of the Shimura variety $\overline{\rmX}_{\Pa}(\rmB)$, these Shimura sets appear naturally, and in this setting we will prefer to use another set of notations. 
\begin{itemize}

\item For $i\in\{0,1,2\}$, we will write the Shimura set $\Sh(\overline{\rmB}, \rmK^{p}\rmK_{\{i\}})$ as $\rmZ_{\{i\}}(\overline{\rmB})$
and will refer to the Shimura set $\rmZ_{\{i\}}(\overline{\rmB})$ as the Shimura set for $\bfG(\overline{\rmB})$ with $\rmK_{\{i\}}$-level structure. 

\item Since $\rmK_{\{0\}}\cong \rmK_{\{2\}}\cong \bfG(\ZZ_{p})$, sometimes,  we identify the Shimura sets $\rmZ_{\{0\}}(\overline{\rmB})$ and $\rmZ_{\{2\}}(\overline{\rmB})$ with $\Sh(\overline{\rmB}, \rmK_{\rmH})$, in which case we will write them commonly as  $\rmZ_{\rmH}(\overline{\rmB})$. 

\item Similarly, we sometimes prefer to use the notation $\rmZ_{\mathrm{Pa}}(\overline{\rmB})$ to denote the Shimura set  $\rmZ_{\{1\}}(\overline{\rmB})$.
\end{itemize}
\end{remark}

The Rapoport--Zink uniformization theorem immediately transfers the results in Theorem \ref{RZ-description} to a description of the supersingular locus $\overline{\rmX}^{\mathrm{ss}}_{\Pa}(\rmB)$. For each vertex lattice $\Lambda_{i}$ of type $i$ with $i\in\{0,2\}$, we also denote by $\mathrm{DL}(\Lambda_{i})$ the image of $\calM_{\Pa}(\Lambda_{i})$ under the Rapoport--Zink uniformization map $\Theta_{\mathrm{RZ}}$ by a slight abuse of notations. 

\begin{corollary}
The supersingular locus $\overline{\rmX}^{\mathrm{ss}}_{\Pa}(\rmB)$ of $\overline{\rmX}_{\Pa}(\rmB)$ is pure of dimension $2$ and has the following properties.
\begin{enumerate} 
\item The irreducible components of  $\overline{\rmX}^{\mathrm{ss}}_{\Pa}(\rmB)$  are parametrized by the Shimura sets $\rmZ_{\{i\}}(\overline{\rmB})$ with $i\in\{0, 2\}$. Each irreducible component  is isomorphic to some $\mathrm{DL}(\Lambda_{i})$. 
\item The variety $\mathrm{DL}(\Lambda_{i})$ is isomorphic to a smooth projective surface of the form 
\begin{equation*}
\rmZ_{3}^{p}\rmZ_{0}-\rmZ^{p}_{0}\rmZ_{3}+\rmZ^{p}_{2}\rmZ_{1}-\rmZ^{p}_{1}\rmZ_{2}=0
\end{equation*}
in a suitable coordinate system $[\rmZ_{0}:\rmZ_{1}: \rmZ_{2}: \rmZ_{3}]$ of $\PP^{3}$.

\item For each pair of vertex lattices $(\Lambda_{0}, \Lambda_{2})$ such that
$p\Lambda^{\vee}_{0}\subset p\Lambda^{\vee}_{2}=\Lambda_{2}\subset \Lambda_{0}$, the corresponding irreducible component $\mathrm{DL}(\Lambda_{0})$ and $\mathrm{DL}(\Lambda_{2})$ intersects at a projective line $\PP^{1}$.

\item For each pair $(\Lambda_{0}, \Lambda^{\prime}_{0})$ of vertex lattices of type $0$ such that $\Lambda_{1}=\Lambda_{0}\cap\Lambda^{\prime}_{0}$ is a vertex lattice of type $1$, the irreducible components $\mathrm{DL}(\Lambda_{0})$ and $\mathrm{DL}(\Lambda^{\prime}_{0})$ intersect at a superspecial point. 

\item For each pair $(\Lambda_{2}, \Lambda^{\prime}_{2})$ of vertex lattices of type $2$ such that $\Lambda_{1}=\Lambda_{2}+\Lambda^{\prime}_{2}$ is a vertex lattice of type $1$, the irreducible components $\mathrm{DL}(\Lambda_{2})$ and $\mathrm{DL}(\Lambda^{\prime}_{2})$ intersect at a superspecial point.

\end{enumerate}
\end{corollary}

\begin{proof}
Notice that we can write the Shimura set $\rmZ_{\{i\}}(\overline{\rmB})$ in the form 
\begin{equation*}
\rmZ_{\{i\}}(\overline{\rmB})=\bfG(\overline{\rmB})(\QQ)\backslash \rmJ_{b}(\QQ_{p})/\rmK_{\{i\}}\times\bfG(\overline{\rmB})(\mathbb{A}^{(\infty p)})/\rmK^{p}. 
\end{equation*}
This follows from the fact that $\rmJ_{b}(\QQ_{p})\cong \GSp_{4}(\QQ_{p})$.  Then statement $(1)$ follows from the immediately from the Rapoport--Zink uniformization in Theorem \ref{uniformization} and Theorem \ref{RZ-description} $(1)$ along with the fact that $\rmJ_{b}(\QQ_{p})$ acts transitively on the set of vertex lattices of type $\{i\}$ for $i\in\{0,1,2\}$.  The rest of the statements follows from the Rapoport--Zink uniformization theorem and the descriptions of $\calM_{\Pa}$ in Theorem \ref{RZ-description}.
\end{proof}

\subsection{The blow-up  model}
Let $\mathbb{W}_{0}=\rmW_{0}(\sqrt{p})$ be a totally ramified quadratic extension of $\rmW_{0}$. There is a natural semi-stable model  $\rmX^{\mathrm{Bl}}_{\Pa}(\rmB)$ of $\rmX_{\Pa}(\rmB)$ over $\mathbb{W}_{0}$ by blowing up $\rmX_{\Pa}(\rmB)$ at  the singular locus $\Sigma_{\Pa}=\overline{\rmX}^{\mathrm{sing}}_{\Pa}(\rmB)$ of its special fiber. For simplicity, we denote by $\Sigma_{\Pa}$ the singular locus $\overline{\rmX}^{\mathrm{sing}}_{\Pa}(\rmB)$ of $\overline{\rmX}_{\Pa}(\rmB)$. The description of this semi-stable model is well-known, see \cite[Proposition 2.4]{Illusie-van}:
\begin{itemize}
\item Let $\rmX^{\mathrm{Bl}}_{\Pa}(\rmB)$ be the blow-up of  $\rmX_{\Pa}(\rmB)$ at the locus of singular points $\overline{\rmX}^{\mathrm{sing}}_{\Pa}(\rmB)$ over $\mathbb{W}_{0}$. The scheme  $\rmX^{\mathrm{Bl}}_{\Pa}(\rmB)$ is semi-stable and its special fiber has the form  $\overline{\rmX}^{\mathrm{Bl}}_{\Pa}(\rmB)=\overline{\rmX}^{\bullet}_{\Pa}(\rmB)+ \sum\limits_{\sigma\in\Sigma_{\Pa}}\rmD_{\sigma}$ as a divisor in $\rmX^{\mathrm{Bl}}_{\Pa}(\rmB)$. 
\item The scheme $\overline{\rmX}^{\bullet}_{\Pa}(\rmB)$ is the strict transform of $\overline{\rmX}_{\Pa}(\rmB)$ under this blow-up and $\rmD=\sum\limits_{\sigma\in\Sigma_{\Pa}}\rmD_{\sigma}$ is the exceptional divisor. Note the smooth scheme $\overline{\rmX}^{\bullet}_{\Pa}(\rmB)$ is also the blow-up of $\overline{\rmX}_{\Pa}(\rmB)$ at its singular locus $\Sigma_{\Pa}$.
\item Each $\rmD_{\sigma}$ is the quadric hypersurface of $\PP^{4}=\mathrm{Proj}\phantom{.}\FF[\rmX_{1},\cdots, \rmX_{4}, \rmT]$ defined by the equation $\overline{\rmQ}_{\sigma}-\overline{u}_{\sigma}\rmT^{2}=0$ for the reduction $\overline{\rmQ}_{\sigma}$ of a ordinary quadratic form $\rmQ_{\sigma}$ and a unit $\overline{u}_{\sigma}$ of $\rmW_{0}$. Let $\rmC_{\sigma}$ be the hyperplane of $\rmD_{\sigma}$ defined by $\rmT=0$ which is also a  smooth quadric. We then clearly have $\overline{\rmX}_{\Pa}(\rmB )\cap \rmD_{\sigma}=\rmC_{\sigma}$. 
\item The exceptional divisor of the blow-up of $\overline{\rmX}_{\Pa}(\rmB)$ at $\Sigma_{\Pa}$ is precisely given by $\sum\limits_{\sigma\in\Sigma_{\Pa}}\rmC_{\sigma}$ and each copy of $\rmC_{\sigma}$ is isomorphic to $\PP^{1}\times\PP^{1}$. Each $\PP^{1}$ on the surface $\rmC_{\sigma}\cong\PP^{1}\times\PP^{1}$ gives one of the two distinct gentracies on the quadratic surface of $\rmC_{\sigma}$. See \cite[XII \S 2]{SGA7} for details.
\end{itemize}

We would like to give a description of the supersingular locus $\overline{\rmX}^{\bullet\mathrm{ss}}_{\Pa}(\rmB)$ of $\overline{\rmX}^{\bullet}_{\Pa}(\rmB)$ which is defined as the strict transform of the supersingular locus $\overline{\rmX}^{\mathrm{ss}}_{\Pa}(\rmB)$ under the blow-up. To do so, we introduce an auxiliary moduli problem of abelian schemes and its corresponding Raopoport--Zink space.

\begin{construction}
We define the moduli problem $\overline{\rmX}_{\Pa, \calF}(\rmB)$ over $\FF_{p}$. For a scheme $\rmS$ over $\FF_{p}$, $\overline{\rmX}_{\Pa,\calF}(\rmB)(\rmS)$ classifies the set of isomorphism classes of the quadruple $(\sfA, \iota , \lambda, \eta^{p}, \calF)$ where:
\begin{itemize}
\item $(\sfA, \iota , \lambda, \eta^{p})$ is an element of $\rmX_{\Pa}(\rmB)(\rmS)$;
\item $\calF$ is a local direct summand of $\mathrm{Lie}(\mathsf{A})$ of rank $2$ stable under the $\calO_{\rmB}\otimes\ZZ_{p}$-action satisfies the conditions below.
\end{itemize}
Since $\calF$ is stable under $\calO_{\rmB}\otimes\ZZ_{p}=\ZZ_{p^{2}}[\Pi]$ induced by $\iota$, we can require that $\calF$ satisfies the following conditions: 
\begin{itemize}
\item $\calF$ is annihilated by $\Pi$ under the action of $\iota$;
\item $\calF$ decomposes into $\calF=\calF_{0}\oplus \calF_{1}$ under the action of $\iota\vert_{\FF_{p^{2}}}$ such that $\calF_{0}$ and $\calF_{1}$ are both of rank one.
\end{itemize}
\end{construction}

This moduli problem is representable by a quasi-projective scheme $\overline{\rmX}_{\Pa,\calF}(\rmB)$ over $\FF_{p}$. There is a natural map $\mathrm{pr}:\overline{\rmX}_{\Pa,\calF}(\rmB)\rightarrow \overline{\rmX}_{\Pa}(\rmB)$ by forgetting the datum $\calF$. We note that $\overline{\rmX}_{\Pa,\calF}(\rmB)$ exists only in characteristic $p$. 

\begin{lemma}\label{moduli-bullet}
There is an isomorphism between the scheme $\overline{\rmX}_{\Pa,\calF}(\rmB)$ and $\overline{\rmX}^{\bullet}_{\Pa}(\rmB)$ over $\FF$.
\end{lemma}
\begin{proof}
Let $(\sfA, \iota , \lambda, \eta^{p}, \calF)$ be an $\FF$-point of $\overline{\rmX}_{\Pa,\calF}(\rmB)$. Then $\Lie(\sfA)\cong\rmD(\sfA)/\rmV \rmD(\sfA)$ and the action of $\ZZ_{p^{2}}$ induces a decomposition $\Lie(\sfA)=\Lie(\sfA)_{0}\oplus\Lie(\sfA)_{1}\cong\rmD(\sfA)_{0}/\rmV \rmD(\sfA)_{1}\oplus \rmD(\sfA)_{1}/\rmV \rmD(\sfA)_{0}$. Moreover the action of $\Pi$ induces maps
\begin{equation*}
\begin{aligned}
\Pi_{0}: \Lie(\sfA)_{0}\rightarrow \Lie(\sfA)_{1}, \phantom{aa}\Pi_{1}: \Lie(\sfA)_{1}\rightarrow \Lie(\sfA)_{0}.
\end{aligned}
\end{equation*}
Since $\Pi^{2}=p$, $\ker(\Pi_{0})\subset\Lie(\sfA)_{0}$ and $\ker(\Pi_{1})\subset\Lie(\sfA)_{1}$ are one dimensional spaces unless $\Pi\rmD(\sfA)$ agrees with $\rmV\rmD(\sfA)$. Therefore away from $\Sigma_{\Pa}$ the natural map $\mathrm{pr}:\overline{\rmX}_{\Pa,\calF}(\rmB)\rightarrow \overline{\rmX}_{\Pa}(\rmB)$ induces an isomorphism over $\FF$. Over $\Sigma_{\Pa}$, we have to choose  lines $\calF_{0}$ and $\calF_{1}$ in the two dimensional spaces $\Lie(\sfA)_{0}$ and  $\Lie(\sfA)_{1}$. It follows that $\overline{\rmX}_{\Pa, \calF}(\rmB)\vert_{\Sigma_{\Pa}}$ is a $\PP^{1}\times\PP^{1}$-bundle over $\Sigma_{\Pa}$. Then the lemma follows from the universal property of blow-up which characterizes $\overline{\rmX}^{\bullet}_{\Pa}(\rmB)$.
\end{proof}

Let $\overline{\rmX}^{\bullet\mathrm{ss}}_{\Pa}(\rmB)$ be the total transform of the supersingular locus $\overline{\rmX}^{\mathrm{ss}}_{\Pa}(\rmB)$ under the natural blow-up map $\mathrm{pr}: \overline{\rmX}^{\bullet}_{\Pa}(\rmB)\rightarrow \overline{\rmX}_{\Pa}(\rmB)$. Let $\overline{\rmX}^{\bullet\mathrm{exp}}_{\Pa}(\rmB)$ be the exceptional locus of the blow-up. We will refer to $\overline{\rmX}^{\bullet\mathrm{ss}}_{\Pa}(\rmB)$ as the supersingular locus of $\overline{\rmX}^{\bullet}_{\Pa}(\rmB)$ and to $\overline{\rmX}^{\bullet\mathrm{exp}}_{\Pa}(\rmB)$ as the superspecial locus of $\overline{\rmX}^{\bullet}_{\Pa}(\rmB)$, we will define an auxiliary Rapoport--Zink space which uniformizes $\overline{\rmX}^{\bullet\mathrm{ss}}_{\Pa}(\rmB)$.
\begin{construction}
We consider the set valued functor $\mathcal{N}^{\bullet}_{\mathrm{Pa}}$ that sends an $\FF$-scheme $\rmS$ to the isomorphism classes of the collection $(\sfX, \iota_{\sfX},  \lambda_{\sfX}, \rho_{\sfX}, \calF)$ where:
\begin{itemize} 
\item $(\mathsf{X}, \iota_{\sfX},  \lambda_{\sfX}, \rho_{\sfX})$ is an element of $\mathcal{N}_{\mathrm{Pa}}(\rmS)$;
\item $\calF$ is a local direct summand of $\mathrm{Lie}(\mathsf{X})$ of rank $2$ stable under the $\calO_{\rmD}$-action.
\end{itemize}

We require that $\calF$ satisfies the following conditions: 
\begin{itemize}
\item $\calF$ is annihilated by $\Pi$;
\item $\calF$ decomposes into $\calF=\calF_{0}\oplus \calF_{1}$ under the action of $\iota_{\sfX}\vert_{\ZZ_{p^{2}}}$ such that $\calF_{0}$ and $\calF_{1}$ are both of rank one.
\end{itemize}
\end{construction}

This moduli problem is representable by a formal scheme $\calN^{\bullet}_{\Pa}$ over $\FF$. The formal scheme $\calN^{\bullet}_{\Pa}$ can be decomposed into open and closed subschemes: 
\begin{equation*}
\calN^{\bullet}_{\Pa}=\bigsqcup\limits_{i\in\ZZ}\calN^{\bullet}_{\Pa}(i) 
\end{equation*}
where each $\calN^{\bullet}_{\Pa}(i)$ is isomorphic to $\calN^{\bullet}_{\Pa}(0)$. We will denote  by $\calM^{\bullet}_{\Pa}$ the reduced scheme underlying the formal $\calN^{\bullet}_{\Pa}(0)$. We have a natural map $\mathrm{pr}:\calM^{\bullet}_{\Pa}\rightarrow\calM_{\Pa}$ by forgetting the datum $\calF$.  

\begin{lemma}\label{bl-ss}
The scheme $\mathcal{M}^{\bullet}_{\mathrm{Pa}}$ maps isomorphically to $\mathcal{M}_{\mathrm{Pa}}$ away from the stratum $\mathcal{M}_{\Pa}\{1\}$ and is a contraction of a $\PP^{1}\times\PP^{1}$-bundle over $\mathcal{M}_{\Pa}\{1\}$.
\end{lemma}
\begin{proof}
Recall that the set $\calM_{\Pa}(\FF)$ can be identified with the set  
\begin{equation*}
\{\rmM\subset \rmN: \rmM^{\perp}=\rmM,\phantom{.}p\rmM\subset^{4} \rmV\rmM\subset^{4} \rmM,\phantom{.}p\rmM\subset^{4} \Pi \rmM\subset^{4} \rmM\}
\end{equation*}
in terms of Dieudonn\'e modules. The local direct factor $\calF$ of $\mathrm{Lie}(\mathsf{X})=\rmM/\rmV\rmM$ gives rise to a sub-lattice $\mathrm{M}_{\calF}$ of $\rmM$ containing $\rmV\rmM$. Since $\calF$ is annihilated by $\Pi$, we know $\Pi\mathrm{M}_{\calF}$ is contained in $\rmV\rmM$. Thus $\calM^{\bullet}_{\Pa}(\FF)$ is given by the set
\begin{equation*}
\{\rmM\subset \rmN: \rmM^{\perp}=\rmM,\phantom{.}p\rmM\subset^{4} \rmV\rmM\subset^{2}\mathrm{M}_{\calF}\subset^{2} \rmM,\phantom{.}p\rmM\subset^{4} \Pi \rmM\subset^{2}\rmV^{-1}\Pi\mathrm{M}_{\calF}\subset^{2} \rmM\}.
\end{equation*}
Then projecting $\rmM$ to $\DD=\rmM_{0}$ as we did in \ref{RZ-F-point} identifies this set with 
\begin{equation*}
\{\DD \subset \rmN_{0}:p\DD^{\vee} \subset^{1}\DD_{\calF} \subset^{1} \DD\subset^{1}\DD^{\dagger}_{\calF}\subset^{1} \DD^{\vee},\phantom{.}p\tau\DD^{\vee}\subset^{1}\tau\DD_{\calF} \subset^{1}  \DD\subset^{1}\tau\DD^{\dagger}_{\calF} \subset^{1} \tau\DD^{\vee}\}.
\end{equation*}
To see this, recall that $\DD_{\calF}$ comes from $\calF_{0}$ and $\DD^{\dagger}_{\calF}$ comes from $\calF_{1}$ by considering the  orthogonal complement $\calF^{\perp}\subset \omega_{\mathsf{X}^{\vee}}$ of $\calF$ under the natural pairing 
$\mathrm{Lie}(\mathsf{X})\times \omega_{\mathsf{X}^{\vee}} \rightarrow \FF$. Note that away from $\mathcal{M}_{\Pa}\{1\}$, $\DD_{\calF}$ and $\DD^{\dagger}_{\calF}$ are redundant since $\DD_{\calF}=\DD\cap\tau^{-1}\DD$ and $\DD^{\dagger}_{\calF}=\DD\cup \tau^{-1}\DD $. On the other hand, over $\mathcal{M}_{\Pa}\{1\}$, $\DD_{\calF}$ resp. $\DD^{\dagger}_{\calF}$ is given by choosing a line in $\DD/p\DD^{\vee}$ resp. in $\DD^{\vee}/\tau^{-1}\DD$. Hence the fiber over the locus $\calM_{\Pa}\{1\}$ is isomorphic to $\PP^{1}\times\PP^{1}$. The lemma follows from the above discussions.
\end{proof}

We define the Bruhat--Tits stratification 
\begin{equation*}
\calM^{\bullet}_{\Pa}= {\rmM}^{\bullet}_{\Pa}\{0\}\sqcup  \rmM^{\bullet}_{\Pa}\{2\} \sqcup  \rmM^{\bullet}_{\Pa}\{02\} \sqcup \rmM^{\bullet}_{\Pa}\{1\}
\end{equation*}
of $\calM^{\bullet}_{\Pa}$ by taking the inverse image of the  Bruhat--Tits stratification 
\begin{equation*}
\calM_{\Pa}= \rmM_{\Pa}\{0\}\sqcup  \rmM_{\Pa}\{2\} \sqcup  \rmM_{\Pa}\{02\} \sqcup \rmM_{\Pa}\{1\}
\end{equation*}
of $\calM_{\Pa}$ under the natural map $\mathrm{pr}:\calM^{\bullet}_{\Pa}\rightarrow \calM_{\Pa}$. 
\begin{itemize}
\item The stratum $\rmM^{\bullet}_{\Pa}\{0\}$ has its irreducible components indexed by the set $\calL_{\{0\}}$ of vertex lattices of type $\{0\}$. For each vertex lattice $\Lambda_{0}\in \calL_{\{0\}}$, the corresponding irreducible component $\rmM^{\bullet}_{\Pa}(\Lambda_{0})$ maps isomorphically to $\rmM_{\Pa}(\Lambda_{0})$. We write $\calM^{\bullet}_{\Pa}\{0\}$ for the closure of $\rmM^{\bullet}_{\Pa}\{0\}$ and  $\calM^{\bullet}_{\Pa}(\Lambda_{0})$ for the closure of $\rmM^{\bullet}_{\Pa}(\Lambda_{0})$.

\item The stratum $\rmM^{\bullet}_{\Pa}\{2\}$ has its irreducible components indexed by the set $\calL_{\{2\}}$ of vertex lattices of type $\{2\}$. For each vertex lattice $\Lambda_{2}\in \calL_{\{2\}}$, the corresponding irreducible component $\rmM^{\bullet}_{\Pa}(\Lambda_{2})$ maps isomorphically to $\rmM_{\Pa}(\Lambda_{2})$. We write $\calM^{\bullet}_{\Pa}\{2\}$ for the closure of $\rmM^{\bullet}_{\Pa}\{2\}$ and  $\calM^{\bullet}_{\Pa}(\Lambda_{2})$ for the closure of $\rmM^{\bullet}_{\Pa}(\Lambda_{2})$.

\item The stratum $\rmM^{\bullet}_{\Pa}\{02\}$ has its irreducible components indexed by a pair of vertex lattices $(\Lambda_{0}, \Lambda_{2})$ with the relation
\begin{equation*}
p\Lambda^{\vee}_{0}\subset p\Lambda^{\vee}_{2}=\Lambda_{2}\subset \Lambda_{0}. 
\end{equation*}
For such a pair $(\Lambda_{0}, \Lambda_{2})$, the correpsonding irreducible component $\rmM^{\bullet}_{\Pa}(\Lambda_{0}, \Lambda_{2})$ is the intersection of $\rmM^{\bullet}_{\Pa}(\Lambda_{0})$ and $\rmM^{\bullet}_{\Pa}(\Lambda_{2})$, and its closure is denoted by $\calM^{\bullet}_{\Pa}(\Lambda_{0}, \Lambda_{2})$.

\item The stratum $\rmM^{\bullet}_{\Pa}\{1\}=\calM^{\bullet}_{\Pa}\{1\}$ is a set of $\PP^{1}\times\PP^{1}$ indexed by the set  $\calL_{\{1\}}$ of vertex lattices of type $1$.  For each vertex lattice $\Lambda_{1}$ of type $1$, we write the corresponding $\PP^{1}\times\PP^{1}$ by $\calM^{\bullet}_{\Pa}(\Lambda_{1})$. 
\end{itemize}

\begin{proposition}\label{RZ-description-bullet}
The Bruhat--Tits stratification of $\calM^{\bullet}_{\Pa}$ has the following properties.
\begin{enumerate}
\item For each vertex lattice $\Lambda_{i}\in \calL_{\{i\}}$ of type $i\in \{0,2\}$, the corresponding irreducible component $\rmM^{\bullet}_{\Pa}(\Lambda_{i})$ maps isomorphically to $\rmM_{\Pa}(\Lambda_{i})$. Its closure  $\calM^{\bullet}_{\Pa}(\Lambda_{i})$ is the blow-up of $\calM_{\Pa}(\Lambda_{i})$ at the set of its superspecial points $\calM_{\Pa}(\Lambda_{i})\{1\}$.

\item For each pair of vertex lattices $(\Lambda_{0}, \Lambda_{2})$ such that
\begin{equation*}
p\Lambda^{\vee}_{0}\subset p\Lambda^{\vee}_{2}=\Lambda_{2}\subset \Lambda_{0},
\end{equation*}
the correpsonding irreducible component $\rmM^{\bullet}_{\Pa}(\Lambda_{0}, \Lambda_{2})$ is the intersection of $\rmM^{\bullet}_{\Pa}(\Lambda_{0})$ and $\rmM^{\bullet}_{\Pa}(\Lambda_{2})$. Its closure $\calM^{\bullet}_{\Pa}(\Lambda_{0}, \Lambda_{2})$ is isomorphic to a projective line $\PP^{1}$.

\item For each vertex lattice $\Lambda_{1}$ of type $1$ such that $\Lambda_{1}\subset \Lambda_{0}$ for a vertex lattice $\Lambda_{0}$ of type $0$, $\calM^{\bullet}_{\Pa}(\Lambda_{0})$ and  $\calM^{\bullet}_{\Pa}(\Lambda_{1})$ intersects at a projective line $\PP^{1}$.

\item For each vertex lattice $\Lambda_{1}$ of type $1$ such that $\Lambda_{2}\subset \Lambda_{1}$ for a vertex lattice $\Lambda_{2}$ of type $2$, $\calM^{\bullet}_{\Pa}(\Lambda_{2})$ and $\calM^{\bullet}_{\Pa}(\Lambda_{1})$ intersects at a projective line $\PP^{1}$.

\item For a pair $(\Lambda_{0}, \Lambda^{\prime}_{0})$ of vertex lattices of type $0$ such that $\Lambda_{1}=\Lambda_{0}\cap\Lambda^{\prime}_{0}$ is a vertex lattice of type $1$, the irreducible components $\calM^{\bullet}_{\Pa}(\Lambda_{0})$ and $\calM^{\bullet}_{\Pa}(\Lambda^{\prime}_{0})$ has trivial intersection unless $\Lambda_{1}=\Lambda^{\prime}_{1}$. 

\item For a pair $(\Lambda_{2}, \Lambda^{\prime}_{2})$ of vertex lattices of type $2$ such that $\Lambda_{1}=\Lambda_{2}+\Lambda^{\prime}_{2}$ is a vertex lattice of type $1$, the irreducible components $\calM^{\bullet}_{\Pa}(\Lambda_{2})$ and $\calM^{\bullet}_{\Pa}(\Lambda^{\prime}_{2})$ has trivial intersection unless $\Lambda_{2}=\Lambda^{\prime}_{2}$.
\end{enumerate}
\end{proposition}
\begin{proof}
The statements in $(1), (2)$ are clear from the construction of $\calM_{\Pa}$ and Lemma \ref{bl-ss}. For $(3)$, the intersection of $\calM^{\bullet}_{\Pa}(\Lambda_{0})$ and $\calM^{\bullet}_{\Pa}(\Lambda_{1})$ is determined by the datum
$\DD=\Lambda_{1}$ along with a lattice $\DD_{\calF}$ inserting in $p\DD^{\vee}\subset^{1}\DD_{\calF}\subset^{1} \DD$ which clearly corresponds to a $\PP^{1}$. For $(4)$, the intersection of $\calM^{\bullet}_{\Pa}(\Lambda_{2})$ and $\calM^{\bullet}_{\Pa}(\Lambda_{1})$ is determined by the datum
$\DD=\Lambda_{1}$ along with a lattice $\DD^{\dagger}_{\calF}$ inserting in $\tau^{-1}\DD\subset^{1}\DD^{\dagger}_{\calF}\subset^{1} \DD^{\vee}$ which clearly corresponds to a $\PP^{1}$. For $(5)$, it follows from $(3)$ that for a vertex lattice $\Lambda_{0}$ of type $0$ the intersection of $\calM^{\bullet}_{\Pa}(\Lambda_{0})$ and $\calM^{\bullet}_{\Pa}(\Lambda_{1})$ is isomorphic to $\PP^{1}$ which represents the class $(1, 0)$ under the isomorphism $\calM^{\bullet}_{\Pa}(\Lambda_{1})\simeq \PP^{1}\times \PP^{1}$. The class $(1,0)\in \mathrm{CH}^{1}(\PP^{1}\times \PP^{1})$ has trivial self-intersection number and thus $(5)$ follows. The statement in $(6)$ follows from $(4)$ using the same argument as in $(5)$.
\end{proof}

\begin{corollary}\label{unifor-bullet}
There is an isomorphism of $\FF$-schemes
\begin{equation*}
\Theta^{\bullet}_{\mathrm{RZ}}:  \rmI(\QQ)\backslash {\calN}^{\bullet}_{\Pa, \mathrm{red}}\times  \bfG({\rmB})(\AAA^{(\infty p)})/\rmK^{p}\cong \overline{\rmX}^{\bullet\mathrm{ss}}_{\Pa}(\rmB)
\end{equation*}
equivariant with respect to the $\bfG({\rmB})(\AAA^{(\infty p)})$-action.
\end{corollary} 
\begin{proof}
This follows from the Rapoport--Zink uniformization in Theorem \ref{uniformization} and Lemma \ref{moduli-bullet} characterizing the blow-up $\overline{\rmX}^{\bullet}_{\Pa}(\rmB)$. 
\end{proof}

For each vertex lattice $\Lambda_{i}$ of type $i\in\{0,2\}$, let $\mathrm{DL}^{\bullet}(\Lambda_{i})$ be the image of $\calM^{\bullet}_{\Pa}(\Lambda_{i})$ under the Rapoport--Zink uniformization map $\Theta^{\bullet}_{\mathrm{RZ}}$.  For each vertex lattice $\Lambda_{1}$ of type $1$, we denote by $\rmC^{\bullet}(\Lambda_{1})$ the image of $\calM^{\bullet}_{\Pa}(\Lambda_{1})$ under $\Theta^{\bullet}_{\mathrm{RZ}}$.

\begin{corollary}\label{ss-bullet}
The scheme $\overline{\rmX}^{\bullet\mathrm{ss}}_{\Pa}(\rmB)$ is pure of dimension $2$ whose irreducible components are parametrized by 
the Shimura sets $\rmZ_{\{0\}}(\overline{\rmB})$ and $\rmZ_{\{2\}}(\overline{\rmB})$. 
\begin{enumerate}

\item An irreducible component of $\overline{\rmX}^{\bullet\mathrm{ss}}_{\Pa}(\rmB)$ is isomorphic to $\mathrm{DL}^{\bullet}(\Lambda_{i})$ for some vertex lattice $\Lambda_{i}$ of type $i\in\{0,2\}$ which is isomorphic to the blow-up of $\mathrm{DL}(\Lambda_{i})$ at all the superspecial points contained in $\mathrm{DL}(\Lambda_{i})$.

\item For each pair of vertex lattices $(\Lambda_{0}, \Lambda_{2})$ such that
\begin{equation*}
p\Lambda^{\vee}_{0}\subset p\Lambda^{\vee}_{2}=\Lambda_{2}\subset \Lambda_{0}, 
\end{equation*}
the corresponding irreducible component $\mathrm{DL}^{\bullet}(\Lambda_{0})$ and $\mathrm{DL}^{\bullet}(\Lambda_{2})$ intersects at a projective line $\PP^{1}$.

\item For each vertex lattice $\Lambda_{1}$ of type $1$ such that $\Lambda_{1}\subset \Lambda_{0}$ for a vertex lattice $\Lambda_{0}$ of type $0$, $\mathrm{DL}^{\bullet}(\Lambda_{0})$ and  $\rmC^{\bullet}(\Lambda_{1})$ intersects at a projective line $\PP^{1}$.

\item For each vertex lattice $\Lambda_{1}$ of type $1$ such that $\Lambda_{2}\subset \Lambda_{1}$ for a vertex lattice $\Lambda_{2}$ of type $2$, $\mathrm{DL}^{\bullet}(\Lambda_{2})$ and $\rmC^{\bullet}(\Lambda_{1})$ intersects at a projective line $\PP^{1}$.

\item For each pair $(\Lambda_{0}, \Lambda^{\prime}_{0})$ of vertex lattices of type $0$ such that $\Lambda_{1}=\Lambda_{0}\cap\Lambda^{\prime}_{0}$ is a vertex lattice of type $1$, the irreducible components $\mathrm{DL}^{\bullet}(\Lambda_{0})$ and $\mathrm{DL}^{\bullet}(\Lambda^{\prime}_{0})$ has trivial intersection unless $\Lambda_{0}=\Lambda^{\prime}_{0}$.

\item For each pair $(\Lambda_{2}, \Lambda^{\prime}_{2})$ of vertex lattices of type $2$ such that $\Lambda_{1}=\Lambda_{2}+\Lambda^{\prime}_{2}$ is a vertex lattice of type $1$, the irreducible components $\mathrm{DL}^{\bullet}(\Lambda_{2})$ and $\mathrm{DL}^{\bullet}(\Lambda^{\prime}_{2})$ has trivial intersection unless $\Lambda_{2}=\Lambda^{\prime}_{2}$.
\end{enumerate}
\end{corollary}

\begin{proof}
These statements follow immediately from Proposition \ref{RZ-description-bullet} and the isomorphism in Corollary \ref{unifor-bullet}.
\end{proof}

\subsection{Normal bundle computation}
The final task in this section is to compute the normal bundle for an irreducible component of the supersingular locus $\overline{\rmX}^{\bullet\mathrm{ss}}_{\Pa}(\rmB)$ in the special fiber $\overline{\rmX}^{\bullet}_{\Pa}(\rmB)$. Suppose $\mathrm{DL}^{\bullet}(\Lambda_{i})$ is  such an irreducible component corresponding to a vertex lattice $\Lambda_{i}$ of type $i\in\{0, 2\}$. We would like to compute the normal bundle $\calN_{\mathrm{DL}^{\bullet}(\Lambda_{i})}({\overline{\rmX}^{\bullet}_{\Pa}(\rmB)})$ of the embedding $\mathrm{DL}^{\bullet}(\Lambda_{i})$ in $\overline{\rmX}^{\bullet}_{\Pa}(\rmB)$. To do this, we first calculate the normal bundle $\calN_{\mathrm{DL}^{\square}(\Lambda_{i})}({\overline{\rmX}^{\square}_{\Pa}(\rmB)})$ using the exact sequence 
\begin{equation*}
0\rightarrow \calT_{\mathrm{DL}^{\square}(\Lambda_{i})}\rightarrow \calT_{\overline{\rmX}^{\square}_{\Pa}(\rmB)}\vert_{\mathrm{DL}^{\square}(\Lambda_{i})}\rightarrow \calN_{\mathrm{DL}^{\square}(\Lambda_{i})}({\overline{\rmX}^{\square}_{\Pa}(\rmB)})\rightarrow 0
\end{equation*}
and then reduce the calculation of the normal bundle $ \calN_{\mathrm{DL}^{\bullet}(\Lambda_{i})}({\overline{\rmX}^{\bullet}_{\Pa}(\rmB)})$ to a problem of calculating certain intersection numbers. This strategy follows \cite[\S6.4]{Sweeting} which simplifies our original approach to calculate $\calN_{\mathrm{DL}^{\bullet}(\Lambda_{i})}({\overline{\rmX}^{\bullet}_{\Pa}(\rmB)})$.

\begin{proposition}\label{square-Tangent}
For each vertex lattice $\Lambda_{i}$ of type $i$ with $i\in\{0,2\}$, the tangent bundle $\calT_{i}=\calT_{\overline{\rmX}^{\square}_{\Pa}(\rmB)}\vert_{\mathrm{DL}^{\square}(\Lambda_{i})}$ fits in the following exact sequence
\begin{equation*}
0\rightarrow \calT_{i}\rightarrow \mathrm{Hom}(\omega_{\sfX^{\vee},0}, \Lie(\sfX)_{0})\rightarrow \mathrm{Hom}(\Pi\omega_{\sfX^{\vee},1}, \Lie(\sfX)_{0}/\Pi\Lie(\sfX)_{1})\rightarrow 0
\end{equation*}
where $\sfX$ is the $p$-divisible group of the universal abelian scheme over $\mathrm{DL}^{\square}(\Lambda_{i})$.
\end{proposition}
\begin{proof}
This follows from \cite[Theorem 6.3.7]{Sweeting}.
\end{proof}

\begin{proposition}\label{normal-square}
The normal bundle $\calN_{i}=\calN_{\mathrm{DL}^{\square}(\Lambda_{i})}({\overline{\rmX}^{\square}_{\Pa}(\rmB)})$ is given by $\calO_{\mathrm{DL}^{\square}(\Lambda_{i})}(-2p)$.
\end{proposition}
\begin{proof}
We first treat the case when $i=0$. By Lemma \ref{tan-0}, we have 
\begin{equation*}
\calT_{\mathrm{DL}^{\square}(\Lambda_{0})}\cong \mathrm{Hom}(\omega_{\sfX^{\vee},0}/\Pi\omega_{\sfX^{\vee},1}, \Lie(\sfX)_{0})
\end{equation*}
which can be inserted in the exact sequence
\begin{equation*}
0\rightarrow \mathrm{Hom}(\omega_{\sfX^{\vee},0}/\Pi\omega_{\sfX^{\vee},1}, \Lie(\sfX)_{0})\rightarrow \mathrm{Hom}(\omega_{\sfX^{\vee},0}, \Lie(\sfX)_{0})\rightarrow \mathrm{Hom}(\Pi\omega_{\sfX^{\vee},1}, \Lie(\sfX)_{0})\rightarrow 0.
\end{equation*}
Now we consider the following commutative diagram 
\begin{equation*}
\begin{tikzcd}
            &                       &                       & \mathrm{Hom}(\Pi\omega_{\sfX^{\vee},1}, \Pi\Lie(\sfX)_{1}) \arrow[d]           &   \\
0 \arrow[r] &\mathrm{Hom}(\frac{\omega_{\sfX^{\vee},0}}{\Pi \omega_{\sfX^{\vee},1}}, \Lie(\sfX)_{0})  \arrow[r] \arrow[d] &  \mathrm{Hom}(\omega_{\sfX^{\vee},0}, \Lie(\sfX)_{0})  \arrow[r] \arrow[d, Rightarrow, no head] & \mathrm{Hom}(\Pi\omega_{\sfX^{\vee},1}, \Lie(\sfX)_{0}) \arrow[r] \arrow[d] & 0 \\
0 \arrow[r] & \calT_{0}  \arrow[r] \arrow[d] & \mathrm{Hom}(\omega_{\sfX^{\vee},0}, \Lie(\sfX)_{0}) \arrow[r]           & \mathrm{Hom}(\Pi\omega_{\sfX^{\vee},1}, \frac{\Lie(\sfX)_{0}}{\Pi\Lie(\sfX)_{1}}) \arrow[r]           & 0 \\
            & \calN_{0}.                   &                       &                       &  
\end{tikzcd}
\end{equation*}
It follows from the snake's lemma that $\calN_{0}\cong \mathrm{Hom}(\Pi\omega_{\sfX^{\vee},1}, \Pi\Lie(\sfX)_{1})$. Note that over $\mathrm{DL}^{\square}(\Lambda_{0})$, we have an isomorphism $\Lie(\sfX)_{1}/\Pi\Lie(\sfX)_{0}\cong \Pi\Lie(\sfX)_{1}$ induced by multiplication by $\Pi$ as they are both line bundles by \cite[2.5]{Wanga}. Then our claim follows from Lemma \ref{tan-0} since $\Pi \omega_{\sfX^{\vee},1}$ is dual to $\Lie(\sfX)_{1}/\Pi\Lie(\sfX)_{0}$ and hence corresponds to $\calO_{\mathrm{DL}^{\square}(\Lambda_{0})}(p)$.

We now move on to the case when $i=2$. By Lemma \ref{tan-2}, we have 
\begin{equation*}
\calT_{\mathrm{DL}^{\square}(\Lambda_{2})}\cong \mathrm{Hom}(\omega_{\sfX^{\vee},0}, \Pi\Lie(\sfX)_{1})
\end{equation*}
which can be inserted in the exact sequence
\begin{equation*}
0\rightarrow \mathrm{Hom}(\omega_{\sfX^{\vee},0}, \Pi\Lie(\sfX)_{1})\rightarrow \mathrm{Hom}(\omega_{\sfX^{\vee},0}, \Lie(\sfX)_{0})\rightarrow \mathrm{Hom}(\omega_{\sfX^{\vee},0}, \Lie(\sfX)_{0}/\Pi\Lie(\sfX)_{1})\rightarrow 0.
\end{equation*}
Now we consider the following commutative diagram 
\begin{equation*}
\begin{tikzcd}
            &                       &                       & \mathrm{Hom}(\frac{\omega_{\sfX^{\vee},0}}{\Pi\omega_{\sfX^{\vee},1}}, \frac{\Lie(\sfX)_{0}}{\Pi\Lie(\sfX)_{1}}) \arrow[d]           &   \\
0 \arrow[r] &\mathrm{Hom}(\omega_{\sfX^{\vee},0}, \Pi\Lie(\sfX)_{1})  \arrow[r] \arrow[d] &  \mathrm{Hom}(\omega_{\sfX^{\vee},0}, \Lie(\sfX)_{0})  \arrow[r] \arrow[d, Rightarrow, no head] & \mathrm{Hom}(\omega_{\sfX^{\vee},0}, \frac{\Lie(\sfX)_{0}}{\Pi\Lie(\sfX)_{1}}) \arrow[r] \arrow[d] & 0 \\
0 \arrow[r] & \calT_{2}  \arrow[r] \arrow[d] & \mathrm{Hom}(\omega_{\sfX^{\vee},0}, \Lie(\sfX)_{0}) \arrow[r]           & \mathrm{Hom}(\Pi\omega_{\sfX^{\vee},1}, \frac{\Lie(\sfX)_{0}}{\Pi\Lie(\sfX)_{1}}) \arrow[r]           & 0 \\
            & \calN_{2}.                   &                       &                       &  
\end{tikzcd}
\end{equation*}
It follows from the snake's lemma that $\calN_{2}\cong \mathrm{Hom}(\omega_{\sfX^{\vee},0}/\Pi\omega_{\sfX^{\vee},1}, \Lie(\sfX)_{0}/\Pi\Lie(\sfX)_{1})$. Note that over $\mathrm{DL}^{\square}(\Lambda_{2})$, we have an isomorphism $\omega_{\sfX^{\vee},0}/\Pi\omega_{\sfX^{\vee},1}\cong \Pi\omega_{\sfX^{\vee},0}$ induced by the multiplication by $\Pi$ as they are both line bundles by \cite[2.5]{Wanga}. Then our claim follows from Lemma \ref{tan-2} since $\Pi \omega_{\sfX^{\vee},0}$ is dual to $\Lie(\sfX)_{0}/\Pi\Lie(\sfX)_{1}$ and hence corresponds to $\calO_{\mathrm{DL}^{\square}(\Lambda_{2})}(p)$.
\end{proof}

Now we can compute the normal bundle $ \calN_{\mathrm{DL}^{\bullet}(\Lambda_{i})}({\overline{\rmX}^{\bullet}_{\Pa}(\rmB)})$ using the previous two propositions and Corollary \ref{ss-bullet}.

\begin{corollary}\label{intersection-number}
The normal bundle $\calN^{\bullet}_{i}=\calN_{\mathrm{DL}^{\bullet}(\Lambda_{i})}({\overline{\rmX}^{\bullet}_{\Pa}(\rmB)})$ is given by $\calO_{\mathrm{DL}^{\bullet}(\Lambda_{i})}(-2p)$.
\end{corollary}
\begin{proof}
Let $\Lambda$ be a vertex lattice of type $0$ and $\{\Lambda_{k}\}_{k=1,\cdots, n}$ be the set of vertex lattices of type $1$ contained in $\Lambda$.  Then $\mathrm{DL}^{\bullet}(\Lambda)$ is the blow-up of $\mathrm{DL}(\Lambda)$ along the superspecial points indexed by $\{\Lambda_{k}\}_{k=1,\cdots, n}$ with exceptional divisors denoted by $\rmE_{k}=\rmE(\Lambda_{k})$ which is isomorphic to $\PP^{1}$. Then we have $\calN^{\bullet}_{0}=\calO_{\mathrm{DL}^{\bullet}(\Lambda)}(-2p+\sum^{n}_{k=1}a_{k}\rmE_{k})$ for some $a_{k}\in\ZZ$ by Proposition \ref{normal-square}.

We will determine the numbers $a_{k}$ by calculating the triple intersection number 
\begin{equation*}
{m}_{k}(\Lambda)=\mathrm{DL}^{\bullet}(\Lambda)\cdot\mathrm{DL}^{\bullet}(\Lambda)\cdot\rmC^{\bullet}(\Lambda_{k})
\end{equation*}
over $\overline{\rmX}^{\bullet}_{\Pa}(\rmB)$. On one hand, we have 
\begin{equation*}
\begin{aligned}
{m}_{k}(\Lambda)&=(\mathrm{DL}^{\bullet}(\Lambda)\cdot\rmC^{\bullet}(\Lambda_{k}))\cdot_{\rmC^{\bullet}(\Lambda_{k})}(\mathrm{DL}^{\bullet}(\Lambda)\cdot\rmC^{\bullet}(\Lambda_{k}))\\
&=\theta_{k}\cdot_{\rmC^{\bullet}(\Lambda_{k})}\theta_{k}\\
&=0\\
\end{aligned}
\end{equation*}
where $\theta_{k}$ is one of the two generatrices of $\rmC^{\bullet}(\Lambda_{k})$ in \cite[Expose XII Theorem 3.3]{SGA7}. The second equality follows from Corollary \ref{ss-bullet} $(5)$  and the last equality follows from \cite[Expose XII Theorem 3.3 iii (b)]{SGA7}. On the other hand, we have
\begin{equation*}
\begin{aligned}
{m}_{k}(\Lambda)&=(\mathrm{DL}^{\bullet}(\Lambda)\cdot\mathrm{DL}^{\bullet}(\Lambda))\cdot_{\mathrm{DL}^{\bullet}(\Lambda)}(\mathrm{DL}^{\bullet}(\Lambda)\cdot\rmC^{\bullet}(\Lambda_{k}))\\
&=[\calO_{\mathrm{DL}^{\bullet}(\Lambda)}(-2p+\sum^{n}_{k=1}a_{k}\rmE_{k})]\cdot_{\mathrm{DL}^{\bullet}(\Lambda)}[\calO_{\mathrm{DL}^{\bullet}(\Lambda)}(\rmE_{k})]\\
&=-a_{k}\\
\end{aligned}
\end{equation*}
since $\rmE_{k}\cdot_{\mathrm{DL}^{\bullet}(\Lambda)}\rmE_{k}=-1$. Therefore $a_{k}=0$ for each $k$ and we are done. The case of $i=2$ is proved in the same way.
\end{proof}

\section{Level raising conditions}
\subsection{Level raising representations}
Let $\chi_{1}, \chi_{2}, \sigma$ be characters of $\QQ^{\times}_{p}$, we consider the principal series representation of $\bfG(\QQ_{p})=\GSp_{4}(\QQ_{p})$ given by
\begin{equation*}
\chi_{1}\times\chi_{2}\rtimes \sigma:= \mathrm{Ind}^{\bfG(\QQ_{p})}_{\mathrm{\bfB}(\QQ_{p})}\chi_{1}\otimes\chi_{2}\otimes \sigma.
\end{equation*}
This is defined by the normalized induction from the Borel subgroup for the character given by
\begin{equation*}
\chi_{1}\otimes\chi_{2}\otimes\sigma: \begin{pmatrix}a&\ast&\ast&\ast\\ &b&\ast&\ast\\  &&cb^{-1}&\ast\\  &&&ca^{-1}\\ \end{pmatrix}\mapsto \chi_{1}(a)\chi_{2}(b)\sigma(c).
\end{equation*}
This is irreducible if and only if none of the characters $\chi_{1}, \chi_{2}, \chi_{1}\chi^{\pm1}_{2}$ is equal to $\nu^{\pm1}$ where $\nu$ is the normalized valuation such that $\nu(p)=\vert p\vert=p^{-1}$. In this case, we say $\tau=\chi_{1}\times\chi_{2}\rtimes \sigma$ is a {\em type $\rmI$ representation}. If the characters $\chi_{1}, \chi_{2}, \sigma$ are unramified, then we say $\tau$ is an unramified type $\rmI$ representation. This terminology comes from the work of \cite{ST-classification} and \cite{Sch-Iwahori} classifying the non-cuspidal representations of $\bfG(\QQ_{p})$. 
The local Langlands correspondence associates $\tau=\chi_{1}\times\chi_{2}\rtimes \sigma$ with the $L$-parameter given by
\begin{equation*}
\mathrm{rec}_{\mathrm{GT}}(\tau)= \begin{pmatrix}\chi_{1}\chi_{2}\sigma&&&\\&\chi_{1}\sigma&&&\\&&\chi_{2}\sigma&&\\&&&&\sigma\end{pmatrix}
\end{equation*}
if $\tau$ is a type I representation. 

We will be interested in the case when $\tau$ is unramified. Then the representation $\mathrm{rec}_{\mathrm{GT}}(\tau\otimes\vert\rmc\vert^{-3/2})$ has the characteristic polynomial at the geometric Frobenius $\mathrm{Frob}_{p}$ given by
\begin{equation*}
\mathcal{Q}_{p}(\rmX)=\rmX^{4}- {a}_{p,2}\rmX^{3}+ (p{a}_{p,1}+(p^{3}+p)){a}_{p,0}\rmX^{2}-p^{3}{a}_{p,2}{a}_{p,0}\rmX+p^{6}{a}^{2}_{p,0}
\end{equation*}
where ${a}_{p, 0}, {a}_{p, 1},{a}_{p, 2}$ are the eigenvalues of the spherical operators $\mathrm{T}_{p,0}, \mathrm{T}_{p,1}, \mathrm{T}_{p,2}$ on the space $\tau^{\mathbf{G}(\ZZ_{p})}$ of spherical vectors in $\tau$, where 
\begin{equation}\label{Hecke-operators}
\begin{split}
&\rmT_{p,0}=\mathbf{char}(\bfG(\ZZ_{p})\begin{pmatrix}p&&&\\&p&&\\&&p&\\&&&p\\\end{pmatrix}\bfG(\ZZ_{p}));\\
&\rmT_{p,2}=\mathbf{char}(\bfG(\ZZ_{p})\begin{pmatrix}p&&&\\& p&&\\&&1&\\&&&1\\\end{pmatrix}\bfG(\ZZ_{p}));\\
&\rmT_{p,1}=\mathbf{char}(\bfG(\ZZ_{p})\begin{pmatrix}p^{2} &&&\\&p&&\\&&p&\\&&& 1\\\end{pmatrix}\bfG(\ZZ_{p}))\\
\end{split}
\end{equation}
and $\mathbf{char}(\ast)$ is the characteristic function of the double coset $\ast$. We call the roots of the polynomial $\calQ_{p}(\rmX)$ the {\em Hecke parameters} of $\tau$ which can be represented by a quadruple
\begin{equation*}
[\alpha_{p}, \beta_{p}, \gamma_{p}, \delta_{p}]=[\alpha_{p}, \beta_{p}, \xi_{p}\beta^{-1}_{p}, \xi_{p}\alpha^{-1}_{p}] 
\end{equation*}
where $\xi_{p}$ is the similitude character of  $\mathrm{rec}_{\mathrm{GT}}(\tau\otimes\vert \rmc\vert^{-3/2})$ evaluated at $\mathrm{Frob}_{p}$. We are interested in the case $\xi_{p}=p^{3}$.  Moreover,  we can arrange that
\begin{equation*}
\begin{aligned}
&p^{3}\alpha^{-1}_{p}=p^{3/2}\chi_{1}\chi_{2}\sigma\circ\mathrm{Art}^{-1}(\Frob_{p});\\
&p^{3}\beta^{-1}_{p}=p^{3/2}\chi_{1}\sigma\circ\mathrm{Art}^{-1}(\Frob_{p});\\
&\beta_{p}=p^{3/2}\chi_{2}\sigma\circ\mathrm{Art}^{-1}(\Frob_{p});\\
&\alpha_{p}=p^{3/2}\sigma\circ\mathrm{Art}^{-1}(\Frob_{p}).\\
\end{aligned}
\end{equation*}

The level raising representation can be thought of as the representation where the $\chi_{1}\sigma$, $\chi_{2}\sigma$ part of the type $\rmI$ representation degenerates into a Steinberg representation twisted by a quadratic character. More formally, we would like to have a congruence of the form
\begin{equation*}
\begin{aligned}
&\beta=p^{3/2}\chi_{2}\sigma\circ\mathrm{Art}^{-1}(\Frob_{p})\equiv p^{3/2}\chi\nu^{1/2}\circ\mathrm{Art}^{-1}(\Frob_{p})=u p \mod \lambda;\\
&p^{3}\beta^{-1}=p^{3/2}\chi_{1}\sigma\circ\mathrm{Art}^{-1}(\Frob_{p})\equiv p^{3/2}\chi\nu^{-1/2}\circ\mathrm{Art}^{-1}(\Frob_{p})=u p^{2} \mod \lambda.\\
\end{aligned}
\end{equation*}
Here $\chi$ is some quadratic character and $u=\chi(p)\in \{\pm1\}$.

\begin{definition}\label{level-raising-prime-local}
Let $p$ be prime and $\tau$ be the unramified type $\rmI$ representation as above. Then we say $p$ is level raising special for $\tau$ if the following two conditions are satisfied.
\begin{enumerate}
\item $\ell\nmid p(p^{2}-1)$;
\item the Hecke parameters $[\alpha_{p}, \beta_{p}, p^{3}\beta^{-1}, p^{3}\alpha^{-1}_{p}]$ of $\tau$ satisfy simultaneously the following two conditions
\begin{enumerate}
\item $\beta_{p}+p^{3}\beta^{-1}_{p}\equiv u (p+ p^{2})\mod \lambda$ for some $u\in\{\pm 1\}$;
\item $\alpha_{p}+p^{3}\alpha^{-1}_{p}\not\equiv \pm (p+ p^{2})\mod \lambda$.
\end{enumerate}
\end{enumerate}
We say $p$ is level raising special for $\pi$ of depth $m$ if it is level raising special for $\tau$ and the $\lambda$-adic valuation of
\begin{equation*}
\beta_{p}+p^{3}\beta^{-1}_{p}-u(p+ p^{2})
\end{equation*}
is exactly $m$.
\end{definition}

Examine the classification result of \cite{ST-classification} and \cite{Sch-Iwahori}. This can be understood as the congruence between the unramified type $\mathrm{I}$ representation and the ramified type $\mathrm{II}$ representation. Recall a representation $\tau$ is of type $\mathrm{II}$ if $\tau$ is an irreducible constitute of the representation $\nu^{-1/2}\chi\times\nu^{1/2}\chi\rtimes \sigma$ for the normalized valuation $\nu(p)=p^{-1}$. More precisely, it is called of type $\mathrm{IIa}$ if it is of the form $\chi\mathrm{St}_{\GL_{2}}\rtimes \sigma$ and it is called of type $\mathrm{II}\rmb$ if it is of the form $\chi\mathbf{1}_{\GL_{2}}\rtimes \sigma$. Both of these two representations have their common semi-simple part of the $L$-parameter given by
\begin{equation*}
\mathrm{rec}_{\mathrm{GT}}(\tau)= \begin{pmatrix}\chi^{2}\sigma&&&\\&\nu^{1/2}\chi\sigma&&&\\&&\nu^{-1/2}\chi\sigma&&\\&&&&\sigma\end{pmatrix}.
\end{equation*}
While the $L$-parameter of $\chi\mathbf{1}_{\GL_{2}}\rtimes \sigma$ has trivial monodromy $\mathrm{N}=0$,  the $L$-parameter of the type $\mathrm{II}\rma$ representation $\chi\mathrm{St}_{\GL_{2}}\rtimes \sigma$ has monodromy $\mathrm{N}$ conjugate to 
\begin{equation}\label{N1}
\rmN_{\Pa}=\begin{pmatrix}0&&&\\&0&1&\\&&0&\\&&&0\\ \end{pmatrix}. 
\end{equation}
Note that the type $\mathrm{IIa}$ representation is tempered and the type $\mathrm{IIb}$ representation is non-tempered. Our level raising condition can be understood as a condition for congruence between type $\mathrm{I}$ representation and  a special type of the type $\mathrm{II}\rma$ representation. In the work of Sorensen \cite{Sor-level-raising}, he further assumes that  $[\alpha_{p}, \beta_{p}, p^{3}\beta^{-1}, p^{3}\alpha^{-1}_{p}]$ is congruent to $[1, p, p^{2}, p^{3}]$. In fact, his level raising condition corresponds to the case when $\tau$ is congruent to the trivial representation. 

\subsection{Automorphic representation and Hecke algebra}
Let $\pi$ be a cuspidal automorphic representation of $\GSp_{4}(\mathbb{A})$ which is of general type. We assume that $\pi$ has trivial central character and  weight $(k_{1}, k_{2})=(3, 3)$. 

\begin{definition}\label{level-raise-prime}
Suppose $\pi$ is a cuspidal automorphic representation of $\GSp_{4}(\mathbb{A})$ as above and is an unramified principal series at $p$. Then we say $p$ is level raising special for $\pi$ of depth $m$ if $p$ is level raising special of depth $m$ for $\pi_{p}$ in the sense of Definition \ref{level-raising-prime-local}.
\end{definition}

 Let $\Sigma_{\pi}$ be the minimal set of non-archimedean places of $\QQ$ such that $\pi$ is unramified away from the places in $\Sigma_{\pi}$.
\begin{construction}\label{Hecke-Algebra}
Let $\pi$ be as above and $\Sigma$ be a finite set of non-archimedean places of $\QQ$ containing the set $\Sigma_{\pi}$.  We denote by
\begin{equation*}
\TT^{\Sigma}=\bigotimes_{v\not\in \Sigma}\ZZ[\bfG(\ZZ_{v})\backslash\bfG(\QQ_{v})/\bfG(\ZZ_{v})]
\end{equation*}
the abstract Hecke algebra unramified away from $\Sigma$. 
\begin{enumerate}
\item To such $\pi$, we can attach a homomorphism
\begin{equation*}
\phi_{\pi}: \TT^{\Sigma}\rightarrow \CC
\end{equation*}
determined by the Hecke parameters of $\pi$ at each place $v$ outside of $\Sigma$.  More precisely, $\phi_{\pi}$ will send 
$\rmT_{i, v}$ to $a_{v, i}$ for $i\in\{0, 1, 2\}$, where $a_{v, 0}=1$ as $\pi$ has trivial central character and $a_{v,1}$ and $a_{v,2}$ are the eigenvalues of $\rmT_{v,1}$ and $\rmT_{v, 2}$ on the space of spherical vectors of $\tau$ for each $v$ outside of $\Sigma$.

\item In fact, the image of $\phi_{\pi}$ is contained in a number field $E$ called the {\em coefficient field} of $\pi$.  We further assume that the image of $\phi_{\pi}$ is contained in the ring of integers $\calO_{E}$, that is we have a homomorphism
$\phi_{\pi}: \TT^{\Sigma}\rightarrow \calO_{E}$. Let $\lambda$ be a prime of $E$ induced by $\iota_{\ell}$ over $\ell$ and $\calO_{\lambda}$ be the valuation ring of $E_{\lambda}$. Then $\phi_{\pi}$ induces a morphism
\begin{equation*}
\phi_{\pi,\lambda}: \TT^{\Sigma}\rightarrow \calO_{\lambda}.
\end{equation*} 
which we will call the $\lambda$-adic avatar of $\phi_{\pi}$. 

\item We introduce the maximal ideal $\fracm$ of  the Hecke algebra $\TT^{\Sigma\cup\{p\}}$ by 
given by
\begin{equation*}
\fracm=\TT^{\Sigma\cup\{p\}}\cap \ker(\TT^{\Sigma}\xrightarrow{\phi_{\pi,\lambda}}\calO_{\lambda}\rightarrow\calO_{\lambda}/\lambda)
\end{equation*}
and the prime ideal of the Hecke algebra $\TT^{\Sigma\cup\{p\}}$ for each $m\geq 1$ by
\begin{equation*}
\fracn_{m}=\TT^{\Sigma\cup\{p\}}\cap \ker(\TT^{\Sigma}\xrightarrow{\phi_{\pi,\lambda}}\calO_{\lambda}\rightarrow\calO_{\lambda}/\lambda^{m}).
\end{equation*}
When $m$ is clear from the context, then we will write $\fracn_{m}$ simply by $\fracn$.
\end{enumerate}
\end{construction}

\section{Jacquet-Langlands correspondence for $\GSp_{4}$}
\subsection{Strong transfer and strong multiplicity one}
In this subsection, we state three results proved in \cite[\S10, \S11]{RW} concerning the analogue of Jacquet--Langlands transfer between automorphic representations of $\GSp_{4}(\mathbb{A})$ and automorphic representations of quaternionic unitary groups of general type. We recall the notion of a weak equivalence class of an irreducible cuspidal automorphic representation of a reductive group $\rmG$. Suppose  $\pi=\otimes_{v}\pi_{v}$ and $\pi^{\prime}=\otimes_{v}\pi^{\prime}_{v}$ are two such representations, then we say $\pi$ and $\pi^{\prime}$ are weakly equivalent if $\pi_{v}\cong \pi^{\prime}_{v}$ for all most all $v$. Let $\calE(\pi)$ be the weak equivalence packet of $\pi$ that is the set of isomorphism classes of automorphic representations that are weakly equivalent to $\pi$.

\begin{theorem}\label{mult1}
Let $\pi$ be a cuspidal automorphic representation of $\GSp_{4}(\mathbb{A})$ of general type with weight $(3, 3)$ and trivial central character.  Then 
\begin{enumerate}
\item each representation in $\calE(\pi)$ occurs with multiplicity one;
\item The set $\calE(\pi)$ is a cartesian product of local L-packets. Moreover the four dimensional Galois representation attached to $\pi$ as in Theorem \ref{Galois} uniquely determines $\calE(\pi)$.
\end{enumerate}
\end{theorem}
\begin{proof}
This is proved in \cite[Proposition 10.1]{RW}.
\end{proof}
\begin{remark}
This proposition also follows from Arthurs's multiplicity formula proved in \cite{Arthur-GSp} and \cite{GT19}.
\end{remark}

Let $\tau$ be an automorphic representation of $\bfG(\rmB)$ or $\bfG(\overline{\rmB})$ of general type. Here general type means $\tau^{(pq\infty)}$ is not CAP or weakly endoscopic, see \cite[\S 11]{RW}. The following two theorems provide simple instances of the global Jacquet--Langlands correspondence between $\GSp_{4}$ and $\bfG(\overline{\rmB})$ or $\bfG({\rmB})$.
\begin{theorem}\label{mult1-definite}
Let $\tau$ be a cuspidal automorphic representation of $\bfG(\overline{\rmB})$ of general type with weight $(3, 3)$ and trivial central character. Then
\begin{enumerate}
\item $\tau$ lifts uniquely to a cuspidal automorphic representation $\pi$ of $\GSp_{4}(\mathbb{A})$ of general type with weight $(3, 3)$ and trivial central character;
\item A cuspidal automorphic representation $\pi$ of $\GSp_{4}(\mathbb{A})$ of general type with weight $(3, 3)$ does not come from such a lift if and only if the local L-packets of $\pi_{q}$ is Klingen induced. In particular, if $\pi_{q}$ is ramified of type $\mathrm{IIa}$, then $\pi$ come from a lift of such $\tau$;
\item Each irreducible automorphic representation in $\calE(\tau)$ occurs with multiplicity one.
\end{enumerate}
\end{theorem}
\begin{proof}
This is a special case of  \cite[Theorem 11.4]{RW}.
\end{proof}

\begin{theorem}\label{mult1-indefinite}
Let $\tau$ be a cuspidal automorphic representation of $\bfG({\rmB})$ of general type with weight $(3, 3)$ and trivial central character. Then we have 
\begin{enumerate}
\item $\tau$ lifts uniquely to a cuspidal automorphic representation $\pi$ of $\GSp_{4}(\mathbb{A})$ of general type with weight $(3, 3)$ with trivial central character. Moreover $\tau$ is weakly equivalent to $\pi$;
\item A cuspidal automorphic representation $\pi$ of $\GSp_{4}(\mathbb{A})$ of general type with weight $(3, 3)$ does not come from such a lift if and only if the local L-packets of $\pi_{p}$ and $\pi_{q}$ are Klingen induced. In particular, if both $\pi_{p}$ and $\pi_{q}$ are ramified of type $\mathrm{IIa}$, then $\pi$ come from a lift of such $\tau$;
\item Each irreducible automorphic representation in $\calE(\tau)$ occurs with multiplicity one.
\end{enumerate}
\end{theorem}
\begin{proof}
This is a special case of  \cite[Theorem 11.5]{RW}.
\end{proof}

\subsection{Matching of orbital integrals}
In this subsection we will work locally at a prime $p$. Let $\rmG$ be the quaternionic unitary group $\GU_{2}(\rmD)$ and $\bfG$ be the group $\GSp_{4}$ over $\QQ_{p}$ throughout this subsection. We fix an inner twisting $\psi: \rmG\rightarrow \bfG$, that is $\psi$ is an isomorphism when base changed to $\overline{\QQ}_{p}$ such that $\sigma(\psi)\psi^{-1}$ is an inner automorphism of $\bfG$. This $\psi$ is defined over $\QQ_{p^{2}}$ and defines an injection from the semisimple stable conjugacy classes in $\rmG(\QQ_{p})$ to the semisimple stable conjugacy classes in $\bfG(\QQ_{p})$. Let $\gamma\in \rmG(\QQ_{p})$ be a semisimple element and $f\in{C}^{\infty}_{\rmc}(\rmG(\QQ_{p}))$. We recall the orbital integral of $f$ at $\gamma$ is defined by 
\begin{equation*}
{\rmO}^{\rmG}_{\gamma}({f})=\int_{\rmG_{\gamma}(\QQ_{p})\backslash\rmG(\QQ_{p})}f(g^{-1}\gamma g)\mathrm{d}g
\end{equation*}
where $\rmG_{\gamma}$ is the centralizer for $\gamma$ in $\rmG(\QQ_{p})$. Let $\{\tilde{\gamma}\}$ be the set of representatives of the stable conjugacy class of $\gamma$ modulo the center. The the stable orbital integral is 
\begin{equation*}
{\rmS\rmO}^{\rmG}_{\gamma}(f)=\sum_{\tilde{\gamma}} {e}(\rmG_{\tilde{\gamma}})\rmO^{\rmG}_{\tilde{\gamma}}(f)
\end{equation*}
where $e(\cdot)$ is the Kottwitz sign given in \cite[\S 5]{Ko83}. Let $f^{\rmG}_{p}\in{C}^{\infty}_{\rmc}(\rmG(\QQ_{p}))$ and $f^{\bfG}_{p}\in{C}^{\infty}_{\rmc}(\bfG(\QQ_{p}))$. Then we say $f^{\rmG}_{p}$ and $f^{\bfG}_{p}$ have matching orbital integrals if
\begin{equation*}
 \mathrm{SO}^{\bfG}_{\gamma^{\prime}}(f^{\bfG}_{p})=\mathrm{SO}^{\rmG}_{\gamma}(f^{\rmG}_{p})
\end{equation*} 
if $\gamma^{\prime}$ is in the semisimple conjugacy class given by $\psi(\gamma)$ and $\mathrm{SO}^{\bfG}_{\gamma^{\prime}}(f^{\bfG}_{p})=0$ for $\gamma$ not coming from $\rmG$. It follows from \cite[\S 1.5 Th\`eor\'em ]{Wal-trans} one can always find such a function with matching orbital integrals.
For the global case, one defines a global stable orbital integral as a product of local ones. 

Let $e_{\rmK}$ be the characteristic function of $\rmK$ and $\eta$ be the Atkkin-Lehner element given by
\begin{equation*}
\eta=\begin{pmatrix}&&1&\\ &&&1\\ p&&&\\ &p&&\\ \end{pmatrix}
\end{equation*}
Let $\Pa$ be the paramodular subgroup of $\bfG(\QQ_{p})$ and $\Pa^{\rmD}$ be the paramodular subgroup of $\GU_{2}(\rmD)$. 
\begin{theorem}[Sorensen]\label{matching}
The functions $e_{\eta\Pa}$ and $e_{\Pa^{\rmD}}$ have matching orbital integrals.
\end{theorem}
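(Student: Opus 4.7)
The strategy is the standard spectral reduction from orbital integrals to characters. By the Weyl integration formula on both $\bfG(\QQ_{p})$ and $\rmG(\QQ_{p})$, combined with the Plancherel formula and the density of tempered characters, the matching of stable orbital integrals for $f^{\bfG}_{p} = e_{\eta\Pa}$ and $f^{\rmG}_{p} = e_{\Pa^{\rmD}}$ is equivalent to the spectral identity
\[
\Tr \pi(e_{\eta \Pa}) \;=\; e(\rmG) \cdot \Tr JL(\pi)(e_{\Pa^{\rmD}})
\]
for every tempered irreducible admissible representation $\pi$ of $\bfG(\QQ_{p})$, where $JL(\pi)$ denotes the Jacquet--Langlands transfer to $\rmG(\QQ_{p})$ when $\pi$ is a discrete series (and zero otherwise), and $e(\rmG) = -1$ is the Kottwitz sign of the inner twist $\psi$. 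Thus the proof reduces to a finite verification on the tempered dual.

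Next I would classify the tempered $\pi$ for which at least one side of the spectral identity is nonzero, using the Sally--Tadic classification and Roberts--Schmidt's theory of paramodular newforms. The relevant representations are: (a) unramified type I principal series, where $\pi^{\Pa}$ is two-dimensional but $JL(\pi) = 0$ since $\pi$ is not a discrete series; (b) type $\rmI\rmI\rma$ representations $\chi \mathrm{St}_{\GL_{2}} \rtimes \sigma$, whose paramodular newvector space is one-dimensional; and (c) the remaining discrete series types (including V and VI) known to admit paramodular fixed vectors. For (a) one needs $\Tr(\eta \mid \pi^{\Pa}) = 0$, which follows because $\eta$ acts as an involution on the two-dimensional space $\pi^{\Pa}$ with eigenvalues $+1$ and $-1$ occurring once each (a consequence of Roberts--Schmidt's description of the old/new decomposition). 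For (b) and (c), one needs to compute explicitly both the Atkin--Lehner eigenvalue on the paramodular newvector and the dimension of $JL(\pi)^{\Pa^{\rmD}}$, and to verify they agree modulo the Kottwitz sign.

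The main obstacle is the bookkeeping in (b) and (c): one must pin down $JL(\pi)$ as a concrete irreducible representation of $\GU_{2}(\rmD)$ and read off $\dim JL(\pi)^{\Pa^{\rmD}}$. This requires the explicit form of the Jacquet--Langlands correspondence for $\GSp_{4}$ (Gan--Tantono, Chan--Gan) together with a description of the smooth dual of $\GU_{2}(\rmD)$ with its paramodular fixed vectors; the latter can be extracted from the type theory of the paramodular parahoric $\Pa^{\rmD}$ via the Bruhat--Tits identification of its Levi quotient. The sign matching ultimately rests on the fact that $e(\rmG) = -1$ exactly cancels the sign of $\eta$ acting on the type $\rmI\rmI\rma$ paramodular newvector, which is the computation Sorensen carries out.

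Once the spectral identity is verified on all tempered $\pi$, Waldspurger's transfer theorem upgrades this to the matching of stable orbital integrals on all regular semisimple classes, and the vanishing of $\mathrm{SO}^{\bfG}_{\gamma'}(e_{\eta\Pa})$ on classes not coming from $\rmG$ follows from the fact that such classes are not hit by any discrete series character appearing in the spectral sum. This completes the matching.
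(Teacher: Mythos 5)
Your central reduction is not correct as stated, and this is a genuine gap rather than a bookkeeping issue. You declare that matching of (stable) orbital integrals for $e_{\eta\Pa}$ and $e_{\Pa^{\rmD}}$ is equivalent to the identity $\Tr\,\pi(e_{\eta\Pa})=e(\rmG)\,\Tr\,JL(\pi)(e_{\Pa^{\rmD}})$ over all tempered $\pi$, with $JL(\pi)=0$ unless $\pi$ is a discrete series. That dichotomy is appropriate for $\GL_{2}$ versus $\rmD^{\times}$, but not here: $\GU_{2}(\rmD)$ is not anisotropic modulo its center; it has a proper parabolic with Levi essentially $\rmD^{\times}\times\GG_{m}$, so tempered non-square-integrable representations of $\GSp_{4}(\QQ_{p})$ whose inducing data transfer to that Levi do have nonzero counterparts on $\GU_{2}(\rmD)$. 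Unitary type $\rmI\rmI\rma$ representations $\chi\mathrm{St}_{\GL_{2}}\rtimes\sigma$ are exactly of this kind: they are tempered but not discrete series, yet they transfer (this is precisely what the present paper uses). Under your stated rule the identity would force $\Tr(\eta\mid\pi^{\Pa})=0$ for type $\rmI\rmI\rma$, contradicting the $\pm1$ Atkin--Lehner eigenvalue on the one-dimensional newvector space, and contradicting your own case (b), where you propose to compute $\dim JL(\pi)^{\Pa^{\rmD}}$ for these very representations. The correct spectral criterion must be phrased with stable characters of tempered $L$-packets (the inner-form character identities of Gan--Tantono/Chan--Gan), including the induced ones, which is a far heavier input. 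Your final step also misuses Waldspurger: his theorem produces existence of \emph{some} transfer of $e_{\eta\Pa}$; it does not upgrade a character identity to orbital-integral matching. The repaired argument would take a Waldspurger transfer $f'$, show that $e_{\Pa^{\rmD}}-f'$ annihilates all stable tempered characters, and conclude by density; and the vanishing of $\mathrm{SO}^{\bfG}_{\gamma'}(e_{\eta\Pa})$ on classes not coming from $\rmG$ is part of what the transfer provides, not a consequence of ``not being hit by discrete series characters.''

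For comparison: the paper offers no proof of this statement at all; it quotes Sorensen's Theorem B. Sorensen's proof proceeds by a direct comparison of the two orbital integrals (lattice/building-theoretic computation on both sides), not by a spectral reduction --- indeed the full character theory for $\GSp_{4}$ and its inner form that your route would require (Chan--Gan) postdates his paper and is invoked in the present article only later, to evaluate the trace of $\eta$ on paramodular vectors in the proof of Theorem \ref{JL}. Your observation in (a) that an unramified type $\rmI$ representation is traceless for $\eta$ on its two-dimensional paramodular space is correct, but it is an input to the application of the matching, not a substitute for proving the matching itself.
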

\begin{proof}
This matching theorem is due to Sorensen \cite[Theorem 3.2]{Sor-level-raising}.
\end{proof}

\subsection{Refinement of strong transfer} In this subsection, we will consider the global situation and refine the strong transfer results in $\S 4.1$. Therefore we will let $\rmG$ be the quaternionic unitary group $\bfG(\overline{\rmB})$ or $\bfG(\rmB)$ over $\QQ$ and $\bfG$ be the group $\GSp_{4}$ over $\QQ$. Up to equivalence, $\bfG$ and $\rmG$ admits a unique non-trivial elliptic endoscopic triple $(\rmH, s, \xi)$ consisting of the following data. 
\begin{itemize}
\item The endoscopic group $\rmH$ is 
\begin{equation*}
\rmH=\GL_{2}\times\GL_{2}/\mathbb{G}_{m}
\end{equation*}
where $\mathbb{G}_{m}$ acts by identifying $x$ with $(x, x^{-1})$. The dual group of $\rmH$ is given by $\hat{\rmH}=\{(g, g^{\prime})\in\GL_{2}(\CC)\times\GL_{2}(\CC): \det(g)=\det(g^{\prime})\}$. 

\item  The element $s\in \hat{\rmG}$ is given by
\begin{equation*}
s=\begin{pmatrix}1&&&\\ &-1&&\\ &&-1&\\  &&&1\\ \end{pmatrix}.
\end{equation*}

\item The homomorphism $\xi: \hat{\rmH}\rightarrow \hat{\rmG}$ whose image is given by $\rmZ_{\hat{\rmG}}(s)^{\circ}$ is defined by
\begin{equation*}
\xi: \begin{pmatrix}a&b\\ c&d\\ \end{pmatrix}\times \begin{pmatrix}a^{\prime}&b^{\prime}\\ c^{\prime}&d^{\prime}\\ \end{pmatrix}\rightarrow 
\begin{pmatrix}a^{\prime}&&&b^{\prime}\\ &a&b&\\ &c&d&\\ c^{\prime}&&&d^{\prime}\\ \end{pmatrix}.
\end{equation*}
\end{itemize}
Recall that a semisimple element $\delta\in\rmH(\QQ_{p})$ is said to be a $(\rmG, \rmH)$-regular if $\alpha(\delta)\neq 1$ for every root $\alpha$ of $\rmG$ that does not come from $\rmH$.
\begin{theorem}
For every test function $f^{\rmG}\in C^{\infty}_{\rmc}(\rmG(\QQ_{p}))$, there exists a matching function $f^{\rmH}\in C^{\infty}_{\rmc}(\rmH(\QQ_{p}))$ which means
\begin{equation*}
\mathrm{SO}^{\rmH}_{\delta}(f^{\rmH})=\sum\limits_{\gamma}\Delta_{\rmG, \rmH}(\delta,\gamma)e(\rmG_{\gamma}){\rmO}^{\rmG}_{\gamma}(f^{\rmG})
\end{equation*}
for all $(\rmG, \rmH)$-regular semisimple $\delta\in\rmH(\QQ_{p})$. Here the sum on $\gamma$ runs trough a set of representatives for conjugacy classes in $\rmG(\QQ_{p})$ in the stable conjugacy class associated to $\delta$ and $\Delta_{\rmG, \rmH}(\delta, \gamma)$ is the Langlands--Shelstad transfer factors.
\end{theorem}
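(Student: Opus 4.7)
The plan is to deduce this from the Langlands--Shelstad transfer conjecture for the pair $(\rmG,\rmH)$, which is now a theorem thanks to work of Waldspurger and Ngô. Since the theorem is a pure existence statement for transfer, there is no actual ``computation'' to do for me: everything reduces to known machinery. I would present it as a two-step citation argument, and only sketch what is really being combined.

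First, I would address the fundamental lemma. For the specific pair $(\GSp_4,\rmH)$ with $\rmH=\GL_2\times\GL_2/\GG_m$, the fundamental lemma at unramified places was established explicitly by Hales, matching the unit $\mathbf{1}_{\bfG(\ZZ_p)}$ of the spherical Hecke algebra with its $\xi$-image in the spherical Hecke algebra of $\rmH$ via the Satake transform. In any case, in full generality the Lie algebra fundamental lemma is Ngô's theorem, combined with the reduction from groups to Lie algebras of Waldspurger, and the passage to the unit of the full spherical Hecke algebra via Hales's transfer theorem.

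Second, I would invoke Waldspurger's transfer theorem, which says that once the fundamental lemma is known for the unit element, the existence of a transfer $f^{\rmH}$ for every $f^{\rmG}\in\rmC^{\infty}_{\rmc}(\rmG(\QQ_p))$ satisfying the stated identity of $(\rmG,\rmH)$-regular stable orbital integrals follows automatically, with $f^{\rmH}$ constructed up to the usual ambiguity by stably trivial functions. This handles the quasi-split case $\rmG=\GSp_4$. For the inner form case where $\rmG=\bfG(\rmB)$ or $\bfG(\overline{\rmB})$, one uses the inner twisting $\psi$ of the previous subsection together with Waldspurger's extension of the transfer formalism to inner forms (the Kottwitz sign $e(\rmG_\gamma)$ in the statement is exactly what appears there), reducing the matching statement on $\rmG$ to the matching statement on the quasi-split group via comparison of stable orbital integrals under $\psi$.

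There is, strictly speaking, no ``hard step'' left for me: the content of the theorem is that the machinery of Langlands--Shelstad--Kottwitz--Waldspurger--Ngô applies in this situation. The only care one has to take is in fixing the normalization of the transfer factors $\Delta_{\rmG,\rmH}(\delta,\gamma)$ (e.g.\ Whittaker normalization on the quasi-split side, transported to the inner form via $\psi$) so that the identity stated is literally the one proved in the references, and in verifying $(\rmG,\rmH)$-regularity is the condition under which the identity holds without complications from non-regular conjugacy classes. Thus my ``proof'' would really be a careful citation of Ngô, Waldspurger and Hales together with a remark fixing normalizations compatible with those used in the stabilization of the trace formula of $\rmG$ in the next subsection.
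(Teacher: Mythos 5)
Your proposal is essentially the paper's own argument: the paper proves this theorem by citing precisely Waldspurger's ``fundamental lemma implies transfer'' theorem \cite{Wal-trans} together with Hales's fundamental lemma for $\Sp(4)$/$\GSp_4$ \cite{Hal97}. Your extra remarks on Ng\^o's theorem, transfer-factor normalization, and the inner form case via the twisting $\psi$ are compatible elaborations of the same citation-based route, not a different method.
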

\begin{proof}
This follows from the general theorem of Waldapurger \cite[\S1.5 Th\`eor\'em]{Wal-trans} and the standard fundamental lemma due to Hales in \cite[Proposition]{Hal97} in our setting.
\end{proof}
We consider the invariant trace formula for $\rmG$ and $\bfG$ but only for the discrete 
terms. We have spectral expansions for the invariant distributions $\rmI^{\rmG}_{\mathrm{disc}}$ and $\rmI^{\bfG}_{\mathrm{disc}}$: for smooth functions $f^{\rmG}$ and $f^{\bfG}$ for $\rmG(\mathbb{A})$ and  $\bfG(\mathbb{A})$
\begin{equation}\label{distri1}
\begin{aligned}
&\rmI^{\rmG}_{\mathrm{disc}}(f^{\rmG})=\sum_{\vec{\pi}} m^{\rmG}_{\mathrm{disc}}(\vec{\pi})\mathrm{tr}\phantom{.}\vec{\pi}(f^{\rmG})\\
&\rmI^{\bfG}_{\mathrm{disc}}(f^{\bfG})=\sum_{\pi} m^{\bfG}_{\mathrm{disc}}(\pi)\mathrm{tr}\phantom{.}{\pi}(f^{\bfG})\\
\end{aligned}
\end{equation}
where the sums run through the discrete automorphic representations of $\rmG(\mathbb{A})$ and $\bfG(\mathbb{A})$. We can and will assume that all these representations have trivial central characters. For the endoscopic group $\rmH$, we can define the invariant distribution $\rmI^{\rmH}_{\mathrm{disc}}$ and similarly we have a spectral expansion
\begin{equation}\label{distri2}
\rmI^{\rmH}_{\mathrm{disc}}(f^{\rmH})=\sum_{\rho} m^{\rmH}_{\mathrm{disc}}(\rho)\mathrm{tr}\phantom{.}{\rho}(f^{\rmH})\\
\end{equation}
where $\rho$ runs through the discrete automorphic representations of $\rmH(\mathbb{A}_{\QQ})$ and $f^{\rmH}$ is a smooth function for $\rmH(\mathbb{A})$.
When $\pi$ resp. $\vec{\pi}$ is cuspidal  but not a CAP representation, then $m^{\rmG}_{\mathrm{dic}}(\vec{\pi})$ resp. $m^{\bfG}_{\mathrm{dic}}(\pi)$ is the automorphic multiplicity of $\vec{\pi}$ resp. $\pi$. The distribution $\rmI^{\bfG}_{\mathrm{disc}}$ is not stable but can be stabilized by substracting suitable endoscopic terms. In fact, we have the following theorem of Arthur developed in \cite{Art-1}, \cite{Art-2} and \cite{Art-Main}.
\begin{theorem}\label{Arthur}
Let $f^{\rmG}$ and $f^{\bfG}$ be smooth functions as above.
\begin{enumerate}
\item The distribution $\mathrm{ST}^{\bfG}_{\mathrm{disc}}$ defined by
\begin{equation*}
\mathrm{ST}^{\bfG}_{\mathrm{disc}}(f^{\bfG})=\rmI^{\bfG}_{\mathrm{disc}}(f^{\bfG})-\frac{1}{4}\rmI^{\rmH}_{\mathrm{disc}}(f^{\bfG}_{\rmH})
\end{equation*}
is a stable distribution. Here $f^{\bfG}_{\rmH}$ is a function that has matching orbital integrals with $f^{\bfG}$.

\item The distribution $\mathrm{I}^{\rmG}_{\mathrm{disc}}$ can be expressed by
\begin{equation*}
\mathrm{I}^{\rmG}_{\mathrm{disc}}(f^{\rmG})=\mathrm{ST}^{\bfG}_{\mathrm{disc}}(f^{\bfG})+\frac{1}{4}\rmI^{\rmH}_{\mathrm{disc}}(f^{\rmG}_{\rmH}).
\end{equation*}
Here $f^{\rmG}_{\rmH}$  and $f^{\bfG}$ are functions that have matching orbital integrals with $f^{\rmG}$.
\end{enumerate}
\end{theorem}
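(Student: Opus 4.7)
The plan is to invoke Arthur's general stabilization of the invariant trace formula, specialized to our specific endoscopic setup $(\rmH, s, \xi)$. First I would write out the geometric side of the invariant trace formula $\rmI^{\bfG}_{\mathrm{disc}}$ as a sum of weighted global orbital integrals indexed by elliptic semisimple conjugacy classes in $\bfG(\QQ)$, and identify the obstruction to stability as supported on classes that are stably conjugate but not conjugate; this obstruction is then re-expressed, using the Langlands--Kottwitz--Shelstad formalism, as a sum of products of Langlands--Shelstad transfer factors $\Delta_{\bfG,\rmH}(\delta,\gamma)$ with local orbital integrals. The content of the preceding matching theorem (Waldspurger's smooth transfer \cite{Wal-trans} combined with Hales' fundamental lemma \cite{Hal97} for $\GSp_{4}$) guarantees that for each $f^{\bfG}$ we can produce $f^{\bfG}_{\rmH}$ whose stable orbital integrals match the weighted sums appearing in this expansion.

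Next I would compute the global coefficient in front of the endoscopic term. Since $(\rmH, s, \xi)$ is the unique nontrivial elliptic endoscopic triple (up to equivalence) and is quasi-split, the general Kottwitz formula for the coefficient reduces to $\tau(\rmH)/|\mathrm{Out}(\rmH, s, \xi)|$; with $\tau(\rmH) = 1$ and the outer automorphism group having order $4$ (swapping the two $\GL_{2}$-factors and twisting by the central $\GG_{m}$), this yields the factor $\tfrac{1}{4}$. Combining this with the geometric-to-spectral passage (which is automatic once the geometric stabilization is established) gives part (1) as a direct specialization of the main theorem of \cite{Art-Main}.

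For part (2), the key input is that the inner form $\rmG = \bfG(\overline{\rmB})$ or $\bfG(\rmB)$ of $\bfG$ has the same elliptic endoscopic data and that, via the inner twisting $\psi$, every semisimple stable conjugacy class in $\rmG(\QQ_{v})$ transfers to one in $\bfG(\QQ_{v})$. I would therefore set up a second stabilization, writing $\rmI^{\rmG}_{\mathrm{disc}}(f^{\rmG})$ as a linear combination of stable distributions on endoscopic groups of $\rmG$; the stable distribution on $\bfG$ itself contributes $\mathrm{ST}^{\bfG}_{\mathrm{disc}}(f^{\rmG}_{\bfG})$ where $f^{\rmG}_{\bfG}$ has matching stable orbital integrals with $f^{\rmG}$, and the nontrivial endoscopic contribution is $\tfrac{1}{4}\rmI^{\rmH}_{\mathrm{disc}}(f^{\rmG}_{\rmH})$ with the same coefficient by the same Tamagawa/outer-automorphism computation. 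One has to verify that the Kottwitz sign $e(\rmG_{\gamma})$ appearing in the inner-form stable orbital integrals is compatible across this transfer, which is built into the normalization of the transfer factors.

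The principal obstacle is of course that the full stabilization requires both the fundamental lemma at every non-archimedean place and the archimedean transfer, together with the delicate global induction Arthur performs to eliminate unstable contributions; all of these are now theorems, but they are deep. For our specific group $\GSp_{4}$ the fundamental lemma was established by Hales, the smooth transfer by Waldspurger, and the global stabilization and spectral expansion are contained in \cite{Art-1, Art-2, Art-Main}, so the proof reduces to assembling these inputs and checking that the endoscopic coefficient is exactly $\tfrac{1}{4}$ in our normalization.
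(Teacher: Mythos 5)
The paper does not actually prove this statement: it is quoted as Arthur's theorem, with the stabilization taken wholesale from \cite{Art-1}, \cite{Art-2}, \cite{Art-Main}, and the transfer inputs from \cite{Wal-trans} and \cite{Hal97}. Your proposal follows exactly the same route --- specializing Arthur's general stabilization to the unique elliptic endoscopic triple $(\rmH,s,\xi)$ of $\GSp_{4}$ and of its inner forms, with Waldspurger's smooth transfer and Hales' fundamental lemma supplying the matching functions --- so in substance you are assembling the same citations the paper relies on, and that is an acceptable treatment of a result of this depth.

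One detail in your write-up is off, however: the bookkeeping behind the coefficient $\tfrac{1}{4}$. The standard Kottwitz formula gives $\iota(\bfG,\rmH)=\tau(\bfG)\,\tau(\rmH)^{-1}\,\lvert\mathrm{Out}(\rmH,s,\xi)\rvert^{-1}$. Here $\tau(\bfG)=1$, but $\tau(\rmH)=2$ (the component group of $Z(\hat{\rmH})^{\Gamma}$ has order $2$, since the center of $\hat{\rmH}=\{(g,g')\in\GL_{2}\times\GL_{2}:\det g=\det g'\}$ is $\GG_{m}\times\ZZ/2$), and the outer automorphism group of the endoscopic datum has order $2$, generated by the swap of the two $\GL_{2}$-factors; ``twisting by the central $\GG_{m}$'' does not contribute a further automorphism of the datum. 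So the factor is $1\cdot\tfrac12\cdot\tfrac12=\tfrac14$, not $\tau(\rmH)=1$ together with an order-$4$ outer automorphism group as you wrote. The final constant is the same, but if you present the computation you should correct these two values; the same corrected computation applies verbatim to the inner form in part (2), where the endoscopic datum is unchanged and only the compatibility of the Kottwitz signs $e(\rmG_{\gamma})$ with the transfer factors needs the remark you already make.
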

\begin{proof}
The first part is given in \cite[Theorem 5.7]{Sor09}. The second part is given in \cite[Theorem 5.8]{Sor09}.
\end{proof}
There is a global character identity
\begin{equation}\label{char1}
\mathrm{tr}\phantom{.}\rho (f^{\rmH})=\sum\limits_{\vec{\pi}}\Delta_{\rmG, \rmH}(\rho, \vec{\pi})\mathrm{tr}\phantom{.}\vec{\pi}(f^{\rmG})
\end{equation}
Here $f^{\rmH}$ and $f^{\rmG}$ have matching orbital integrals and $\vec{\pi}$ runs through all irreducible admissible representations of $\rmG(\mathbb{A})$. The term $\Delta_{\rmG, \rmH}(\rho, \vec{\pi})$ is the global transfer factor which is a product of local transfer factors. We have arrived the following lemma.

\begin{lemma}\label{first-trace-id}
Suppose that $f^{\rmG}$ and $f^{\bfG}$ are functions as above with matching orbital integrals. Then following trace identity holds:
\begin{equation*}
\begin{aligned}
&\sum\limits_{\vec{\pi}} (m^{\rmG}_{\mathrm{disc}}(\vec{\pi})-\frac{1}{4}\sum\limits_{\rho}m^{\rmH}_{\mathrm{disc}}(\rho)\Delta_{\rmG, \rmH}(\rho, \vec{\pi}))\mathrm{tr}\phantom{.}\vec{\pi}(f^{\rmG})\\
&=\sum\limits_{\pi} (m^{\bfG}_{\mathrm{disc}}(\pi)-\frac{1}{4}\sum\limits_{\rho}m^{\rmH}_{\mathrm{disc}}(\rho)\Delta_{\bfG, \rmH}(\rho,\pi))\mathrm{tr}\phantom{.}{\pi}(f^{\bfG}).\\
\end{aligned}
\end{equation*}
\end{lemma}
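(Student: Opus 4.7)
The plan is to prove the trace identity by equating both sides to a common stable distribution evaluated on a suitable test function, namely $\mathrm{ST}^{\bfG}_{\mathrm{disc}}$. First I would unpack the right-hand side of the asserted identity. By the spectral expansion of $\rmI^{\bfG}_{\mathrm{disc}}$, the first terms sum to $\rmI^{\bfG}_{\mathrm{disc}}(f^{\bfG})$. For the endoscopic correction, I would swap the order of summation and apply the global character identity $\mathrm{tr}\,\rho(f^{\bfG}_{\rmH}) = \sum_{\pi}\Delta_{\bfG,\rmH}(\rho,\pi)\mathrm{tr}\,\pi(f^{\bfG})$, where $f^{\bfG}_{\rmH}$ denotes a transfer of $f^{\bfG}$ to $\rmH$ via the matching theorem of Waldspurger and the fundamental lemma of Hales quoted above. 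This collapses the endoscopic correction to $\frac{1}{4}\rmI^{\rmH}_{\mathrm{disc}}(f^{\bfG}_{\rmH})$, so the right-hand side becomes exactly $\mathrm{ST}^{\bfG}_{\mathrm{disc}}(f^{\bfG})$ by Arthur's stabilization theorem, part (1).

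Second I would run the parallel computation on the left-hand side. The spectral expansion of $\rmI^{\rmG}_{\mathrm{disc}}$ contributes the leading term, and the same character identity, now in its $\rmG$-incarnation $\mathrm{tr}\,\rho(f^{\rmG}_{\rmH}) = \sum_{\vec{\pi}}\Delta_{\rmG,\rmH}(\rho,\vec{\pi})\mathrm{tr}\,\vec{\pi}(f^{\rmG})$, converts the endoscopic correction into $\frac{1}{4}\rmI^{\rmH}_{\mathrm{disc}}(f^{\rmG}_{\rmH})$. By part (2) of Arthur's theorem, the left-hand side equals $\mathrm{ST}^{\bfG}_{\mathrm{disc}}(f^{\rmG}_{\bfG})$, where $f^{\rmG}_{\bfG}$ is a transfer of $f^{\rmG}$ to $\bfG$.

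The last step is the comparison. Since by hypothesis $f^{\rmG}$ and $f^{\bfG}$ have matching orbital integrals, both $f^{\rmG}_{\bfG}$ and $f^{\bfG}$ have the same stable orbital integrals on $\bfG$ (both are $\bfG$-transfers of the same orbital integral data coming from $\rmG$, and match with each other by transitivity of matching on regular semisimple classes, together with the vanishing of $\mathrm{SO}^{\bfG}_{\gamma'}$ at classes not coming from $\rmG$). Because $\mathrm{ST}^{\bfG}_{\mathrm{disc}}$ is a stable distribution, its value depends only on stable orbital integrals, so $\mathrm{ST}^{\bfG}_{\mathrm{disc}}(f^{\rmG}_{\bfG}) = \mathrm{ST}^{\bfG}_{\mathrm{disc}}(f^{\bfG})$, which yields the desired identity.

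The main obstacle, and really the only non-routine point, is the bookkeeping of transfer factors and signs: one must verify that the normalizations of $\Delta_{\rmG,\rmH}$ and $\Delta_{\bfG,\rmH}$ used in the character identities are compatible with the normalizations implicit in Arthur's stabilization, so that the endoscopic correction terms on the two sides cancel cleanly against the stable pieces without extra factors. All the analytic input (convergence of the discrete spectral expansion for the test functions considered, existence of matching functions, and the fundamental lemma for the unit element) is already available in the literature cited, so once the sign conventions are fixed the identity follows formally.
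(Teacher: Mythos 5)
Your proposal is correct and follows essentially the same route the paper intends: the lemma is exactly the combination of Arthur's two stabilization identities with the global character identity, using the matching hypothesis (so that $f^{\bfG}$ serves as the transfer $f^{\rmG}_{\bfG}$, and stability of $\mathrm{ST}^{\bfG}_{\mathrm{disc}}$ equates the two stable terms) to cancel the common piece. The paper leaves this assembly implicit ("We have arrived the following lemma"), and your write-up just makes the same bookkeeping explicit.
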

\begin{proof}
This follows by inserting \eqref{distri1}, \eqref{distri2} and \eqref{char1} in the equations in $(1)$ and $(2)$ of  Theorem \ref{Arthur}.
\end{proof}

The following theorem is the main result of this subsection which refines the transfer result in Lemma \ref{mult1-definite} and Lemma \ref{mult1-indefinite}.
\begin{proposition}\label{JL}
Let $\pi$ be a cuspidal automorphic representation of $\bfG(\mathbb{A})$ of general type with weight $(3,3)$ and trivial central character. 
\begin{enumerate}
\item Suppose that $\overline{\QQ}_{\ell}[\Sh(\overline{\rmB}, \rmK)][\iota_{\ell}\pi^{\infty q}]$ is nontrivial for some open compact $\rmK$ of $\bfG(\overline{\rmB})$ which is paramodular at $q$. Then we can complete $\pi^{\infty q}$ to a cuspidal automorphic representation $\Pi$ of $\bfG(\mathbb{A})$ in a unique way. Moreover  for such $\Pi$, its local component $\Pi_{q}$ at $q$ is of type $\rmI\rmI\rma$. 
\item Suppose that $\rmH^{3}_{\rmc}(\Sh(\rmB, \rmK_{\Pa}), \overline{\QQ}_{\ell})[\iota_{\ell}\pi^{pq\infty}]$ is non-trivial for some open compact $\rmK_{\Pa}$ which is paramodular at $p$. Then we can complete $\pi^{\infty pq}$ to a cuspidal automorphic representation $\Pi$ of $\bfG(\mathbb{A})$. Moreover  for all such $\Pi$, both $\Pi_{p}$ and $\Pi_{q}$ are of type $\rmI\rmI\rma$. 
\end{enumerate}
\end{proposition}

\begin{proof}
For the first statement, note that $\pi^{\infty q}$ can be completed to an automorphic representation $\vec{\pi}$ of $\bfG(\overline{\rmB})$ whose component $\vec{\pi}_{q}$ is paraspherical. This means that $\vec{\pi}_{q}$ admits fixed vectors by the paramodular subgroup $\Pa^{\mathsf{D}}$ for the quaternion division algebra $\mathsf{D}$ over $\QQ_{q}$. We apply the trace identity in Lemma \ref{first-trace-id} and fix $\pi^{\infty q}$. Then we have
\begin{equation*}
\begin{aligned}
&\sum\limits_{\vec{\pi}_{q\infty}} m^{\bfG(\overline{\rmB})}_{\mathrm{disc}}(\vec{\pi}_{\infty}\otimes\vec{\pi}_{q}\otimes\pi^{\infty q})\mathrm{tr}\phantom{.}\vec{\pi}_{q\infty}(f^{\bfG(\overline{\rmB})}_{q\infty})\\
=&\sum\limits_{\Pi_{q\infty}} m^{\bfG}_{\mathrm{disc}}(\Pi_{\infty}\otimes\Pi_{q}\otimes\pi^{\infty q})\mathrm{tr}\phantom{.}{\Pi}_{q\infty}(f^{\bfG}_{q\infty}).\\
\end{aligned}
\end{equation*}
For any pair of functions $f^{\bfG(\overline{\rmB})}_{q\infty}$ and $f^{\bfG}_{q\infty}$ with matching orbital integrals for $\bfG(\overline{\rmB})$ and $\bfG$ at  $\infty q$. To see this, since $\pi$ is of general type, the transfer factors in the trace identity in Lemma \ref{first-trace-id} must vanish. Using Shelstad's character identity \cite[Theorem 6.3]{She1} and the same argument as in \cite[4.3.2]{Sor-level-raising}, we can further simplify the above identity to 
\begin{equation*}
\begin{aligned}
&\sum\limits_{\vec{\pi}_{q}} m^{\bfG(\overline{\rmB})}_{\mathrm{disc}}(\vec{\pi}_{\infty}\otimes\vec{\pi}_{q}\otimes\pi^{q\infty})\mathrm{tr}\phantom{.}\vec{\pi}_{q}(f^{\bfG(\overline{\rmB})}_{q})\\
=&\sum\limits_{\Pi_{q}} m^{\bfG}_{\mathrm{disc}}(\Pi_{\infty}\otimes\Pi_{q}\otimes\pi^{\infty q})\mathrm{tr}\phantom{.}{\Pi}_{q}(f^{\bfG}_{q}).\\
\end{aligned}
\end{equation*}
Let $f^{\bfG}_{q}=e_{\eta\Pa}$ and $f^{\bfG(\overline{\rmB})}_{q}=e_{\Pa^{\mathsf{D}}}$, they have matching orbital integrals by Theorem \ref{matching}.  Moreover $\vec{\pi}_{q}$ is para-spherical and hence the left-hand side of the above identity is non-zero. It follows that the Atkin--Lehner element $\eta$ has positive trace on $\Pi^{\Pa}_{q}$. Then $\Pi_{q}$ must be of type $\rmI\rmI\rma$ since an unramified type $\rmI$ representation is traceless for $\eta$ and  $\Pi_{q}$  is tempered. The first part then follows by the strong multiplicity one result in Theorem \ref{mult1}.

For the second part, the same argument as in the first part gives trace identity
\begin{equation*}
\begin{aligned}
&\sum\limits_{\vec{\pi}_{q}} m^{\bfG(\overline{\rmB})}_{\mathrm{disc}}(\vec{\pi}_{p}\otimes\vec{\pi}_{q}\otimes\pi^{pq})\mathrm{tr}\phantom{.}\vec{\pi}_{p}(f^{\bfG(\overline{\rmB})}_{p})\mathrm{tr}\phantom{.}\vec{\pi}_{q}(f^{\bfG(\overline{\rmB})}_{q})\\
=&\sum\limits_{\Pi_{q}} m^{\bfG}_{\mathrm{disc}}(\Pi_{p}\otimes\Pi_{q}\otimes\pi^{pq})\mathrm{tr}\phantom{.}{\Pi}_{p}(f^{\bfG}_{p})\mathrm{tr}\phantom{.}{\Pi}_{q}(f^{\bfG}_{q}).\\
\end{aligned}
\end{equation*}
Let $f^{\bfG}_{p}=e_{\eta\Pa}$ and $f^{\bfG(\overline{\rmB})}_{p}=e_{\Pa^{\rmD}}$ and similarly at $q$, we can again conclude that $\Pi_{p}$ and $\Pi_{q}$ must be of type $\rmI\rmI\rma$ by the same argument as in the first part.
\end{proof}

\section{Picard--Lefschetz formula and Monodromy filtration}
Let $\rmS=\Spec\rmR$ be the spectrum of a henselian DVR. We choose a uniformizer $\pi$ of $\rmR$. We denote by $s$ the closed point of $\rmS$ and by $\eta$ the generic point of $\rmS$. We assume the residue field $k(s)$ at $s$ is of characteristic $p$. Let $\overline{s}$ be a geometric point of $\rmS$ over $s$.  Let $\overline{\eta}$ be a seperable closure of $\eta$. 

For a morphsim $f: \rmX\rightarrow \rmS$, we obtain by base change the following natural inclusion maps
\begin{equation*}
\rmX_{\overline{s}}\xhookrightarrow{\overline{i}} \rmX_{\overline{\rmS}} \xhookleftarrow{\overline{j}} \rmX_{\overline{\eta}}.
\end{equation*}
Let $K\in \rmD^{+}(\rmX, \Lambda)$ with coefficient in $\Lambda=\mathbb{Z}/ \ell^{m}$ or $\ZZ_{\ell}$ with $\ell\neq p$, then we define the {nearby cycle complex} $\rmR\Psi(K)\in \rmD^{+}(\rmX_{\overline{s}}, \Lambda)$ by $\rmR\Psi(K)= \overline{i}^{*}\rmR\overline{j}_{*}(K|_{\rmX_{\overline{\eta}}})$.
For $K\in \rmD^{+}(\rmX, \Lambda)$, the adjunction map defines the specialization map $\mathrm{sp}$ and the following distinguished triangle
\begin{equation}\label{sp-triangle}
K\vert_{\rmX_{\overline{s}}}\xrightarrow{\mathrm{sp}} \rmR\Psi(K\vert_{\rmX_{\overline{\eta}}})\rightarrow \rmR\Phi(K)\rightarrow K\vert_{\rmX_{\overline{s}}}[1].
\end{equation}
The complex $\rmR\Phi(K)$ is known as the \emph{vanishing cycle complex} and is the mapping cone of the specialization map $\mathrm{sp}$. If $f$ is proper, then we have 
\begin{equation*}
\rmR\Gamma(\rmX_{\overline{\eta}}, K\vert_{\rmX_{\overline{\eta}}})\cong \rmR\Gamma(\rmX_{\overline{s}}, \rmR\Psi(K\vert_{\rmX_{\overline{\eta}}}))
\end{equation*}
and in general we always have the following long exact sequence 
\begin{equation*}
\cdots \rightarrow \mathrm{H}^{i}_{(\rmc)}(\rmX_{\overline{s}}, K\vert_{\rmX_{\overline{s}}})\xrightarrow{\rm{sp}}\mathrm{H}^{i}_{(\rmc)}(\rmX_{\overline{s}}, \rmR\Psi(K\vert_{X_{\overline{\eta}}})) \rightarrow  \mathrm{H}^{i}_{(\rmc)}(\rmX_{\overline{s}}, \rmR\Phi(K))\rightarrow \cdots.
\end{equation*}
The first map $\rm{sp}$ is called the \emph{specialization map} and the cohomology of vanishing cycles  $\mathrm{H}^{i}_{(\rmc)}(\rmX_{\overline{s}}, \rmR\Phi(K))$ measures the defect of $\rm{sp}$ from being an isomorphism. If we assume that $f$ is smooth, then  $\mathrm{H}^{i}_{(\rmc)}(\rmX_{\overline{s}}, \rmR\Phi(K))$ vanishes and the specialization map is an isomorphism.
\subsection{Picard--Lefschetz formula in odd dimension}
We recall the formalism of the Picard--Lefschetz formula. Suppose that $f: \rmX\rightarrow \rmS$ is a regular, flat, finite type morphism of relative dimension $n\geq 3$ which is smooth outside a finite collection $\Sigma$ of closed points in $\rmX_{s}$. We denote by $\rmX^{\square}_{s}=\rmX_{s}-\Sigma$ the regular locus of $\rmX_{s}$. In this subsection, we only consider the trivial coefficient $\Lambda$ for simplicity and all the results recalled here can be extended to more general constructible coefficients. We are mainly concerned with the (compactly supported) nearby-cycle cohomology $\mathrm{H}^{n}_{(\rmc)}(\rmX_{\overline{s}}, \rmR\Psi(\Lambda))$ and the action of the monodromy on it. 

Under the above assumptions, we have $\rmR\Phi(\Lambda)|\rmX_{\overline{s}}-\Sigma=0$ and moreover
\begin{equation*}
\rmR\Phi(\Lambda)=\bigoplus_{\sigma\in \Sigma} \rmR^{n}\Phi_{\sigma}(\Lambda)
\end{equation*}
is concentrated at degree $n$. Therefore the distinguished triangle \eqref{sp-triangle} gives the following long  exact sequence
\begin{equation}\label{sp-exact}
\begin{split}
0\rightarrow &\mathrm{H}^{n}_{(\rmc)}(\rmX_{\overline{s}}, \Lambda)\xrightarrow{\rm{sp}}  \mathrm{H}^{n}_{(\rmc)}(\rmX_{\overline{s}}, \rmR\Psi(\Lambda)) \rightarrow \bigoplus_{\sigma\in \Sigma} \rmR^{n}\Phi_{\sigma}(\Lambda)\rightarrow\\ 
&\mathrm{H}^{n+1}_{(\rmc)}(\rmX_{\overline{s}},\Lambda)\rightarrow \mathrm{H}^{n+1}_{(\rmc)}(\rmX_{\overline{s}}, \rmR\Psi(\Lambda))\rightarrow 0.
\end{split}
\end{equation}
Assume furthermore that every $\sigma\in \Sigma$ is an {ordinary quadratic singularity} of $\rmX_{s}$, this condition means $\rmX$ is \'{e}tale locally near each $\sigma$ isomorphic to 
\begin{itemize}
\item  $\rmV(\mathrm{Q})\subset \mathbb{A}^{2r}_{\rmS}$  for $\mathrm{Q}=\sum\limits_{1\leq i\leq r}\rmX_{i}\rmX_{i+r}+\pi$ if $n=2r-1$;
\item  $\rmV(\mathrm{Q})\subset \mathbb{A}^{2r+1}_{\rmS}$  for $\mathrm{Q}=\sum\limits_{1\leq i\leq r}\rmX_{i}\rmX_{i+r}+\rmX^{2}_{2r+1}+\pi$ if $n=2r$.
\end{itemize}

We introduce the following two spaces:
\begin{itemize}
\item the cohomology group $\bigoplus\limits_{\sigma\in\Sigma}\rmR^{n}\Phi_{\sigma}(\Lambda)$ will be referred to as the space of {\em vanishing cycles} on $\rmX$; 
\item the cohomology group $\bigoplus\limits_{\sigma\in \Sigma}\rmH^{n}_{\{\sigma\}}(\rmX_{\overline{s}}, \rmR\Psi(\Lambda))$ will be referred to as the
the space of {\em  co-vanishing cycles} on $\rmX$.
\end{itemize}
Note $\rmH^{n}_{\{\sigma\}}(\rmX_{\overline{s}}, \rmR\Psi(\Lambda)(r))$ is the dual of $\rmR^{n}\Phi_{\sigma}(\Lambda)(r+1)$ under the Poincar\'e duality
\begin{equation*}
\langle\cdot,\cdot\rangle: \rmR^{n}\Phi_{\sigma}(\Lambda)(r+1)\times \rmH^{n}_{\{\sigma\}}(\rmX_{\overline{s}}, \rmR\Psi(\Lambda)(r))\rightarrow \Lambda.
\end{equation*}

For each $\sigma\in\Sigma$, we have an isomorphism
\begin{equation*}
\rmR^{n}\Phi_{\sigma}(\Lambda)\cong\Lambda
\end{equation*} and It follows that $\rmH^{n}_{\{\sigma\}}(\rmX_{\overline{s}}, \rmR\Psi(\Lambda))$ is also free of rank $1$ over $\Lambda$.

\begin{lemma}\label{co-van}
We have the following isomorphisms.
\begin{enumerate}
\item When $n$ is even, then the natural map 
\begin{equation*}
\rmH^{n}_{\{\sigma\}}(\rmX_{\overline{s}}, \Lambda(r))\rightarrow\rmH^{n}_{\{\sigma\}}(\rmX_{\overline{s}}, \rmR\Psi(\Lambda)(r))
\end{equation*}
is zero.
\item When $n$ is odd, then the natural map 
\begin{equation*}
\rmH^{n}_{\{\sigma\}}(\rmX_{\overline{s}}, \Lambda(r))\rightarrow \rmH^{n}_{\{\sigma\}}(\rmX_{\overline{s}}, \rmR\Psi(\Lambda)(r))
\end{equation*}
is an isomorphism.
\end{enumerate}
\end{lemma}
\begin{proof}
Consider the natural short exact sequence 
\begin{equation*}
0\rightarrow \rmH^{n}_{\{\sigma\}}(\rmX_{\overline{s}}, \Lambda(r))\rightarrow \rmH^{n}_{\{\sigma\}}(\rmX_{\overline{s}}, \rmR\Psi(\Lambda)(r))\xrightarrow{\psi} \rmH^{n}_{\{\sigma\}}(\rmX_{\overline{s}}, \rmR\Phi(\Lambda)(r)).
\end{equation*}
The map $\psi$ induces an isomorphism 
\begin{equation*}
\rmH^{n}_{\{\sigma\}}(\rmX_{\overline{s}}, \rmR\Psi(\Lambda)(r))\xrightarrow{\sim} \rmH^{n}_{\{\sigma\}}(\rmX_{\overline{s}}, \rmR\Phi(\Lambda)(r)
\end{equation*}
when $n$ is even by \cite[Expose XV 2.2 D]{SGA7}. On the other hand, the map $\psi$ is zero and hence induces an isomorphism
\begin{equation}
\rmH^{n}_{\{\sigma\}}(\rmX_{\overline{s}}, \Lambda(r))\xrightarrow{\sim} \rmH^{n}_{\{\sigma\}}(\rmX_{\overline{s}}, \rmR\Psi(\Lambda)(r)) 
\end{equation}
when $n$ is odd by \cite[Expose XV 2.2 E]{SGA7}.
\end{proof}

From here on we assume that $n=2r-1\geq 3$ is {\em odd}. Let $\rmI_{\eta}\subset \Gal(\overline{\eta}/\eta)$ be  the inertia group and let $\xi\in \rmI_{\eta}$. Let $t_{\ell}: \rmI_{\eta}\rightarrow\Lambda(1)$ be the tame character. Then we have the local variation map
\begin{equation*}
\mathrm{Var}_{\sigma}(\xi): \rmR^{n}\Phi_{\sigma}(\Lambda)(r)\rightarrow \mathrm{H}^{n}_{\{\sigma\}}(\rmX_{\overline{s}}, \rmR\Psi(\Lambda)(r) )
\end{equation*}
as in \cite[1.4.2]{Illusie-van} and the action of $\xi-1$ on $\rmH^{n}_{(\rmc)}(\rmX_{\overline{\eta}}, \Lambda(r))$ can be factored as 
\begin{equation*}
\begin{tikzcd}
\mathrm{H}^{n}_{(\rmc)}(\rmX_{\overline{s}}, \rmR\Psi(\Lambda)(r)) \arrow[r] \arrow[d, "\xi-1"] & \bigoplus\limits_{\sigma\in\Sigma}\rmR^{n}\Phi_{\sigma}(\Lambda)(r) \arrow[d, "\mathrm{Var}_{\Sigma}(\xi)"]  \\
 \mathrm{H}^{n}_{(\rmc)}(\rmX_{\overline{s}}, \rmR\Psi(\Lambda)(r))    &\bigoplus\limits_{\sigma\in\Sigma}\rmH^{n}_{\{\sigma\}}(\rmX_{\overline{s}}, \rmR\Psi(\Lambda)(r)) \arrow[l]          
\end{tikzcd}
\end{equation*}
where $\mathrm{Var}_{\Sigma}(\xi)=\bigoplus\limits_{\sigma\in\Sigma}\mathrm{Var}_{\sigma}(\xi)$ is the sum of the local variation morphism described by the usual Picard--Lefschetz formula: 
\begin{equation}\label{pic-lef-form}
\mathrm{Var}_{\sigma}(\xi)(x)=(-1)^{r+1}t_{\ell}(\xi)\langle x, \delta\rangle \delta
\end{equation}
where $x\in\rmR^{n}\Phi_{\sigma}(\Lambda)(r)$ is any element and $\delta\in\rmH^{n}_{\{\sigma\}}(\rmX_{\overline{s}}, \rmR\Psi(\Lambda)(r))$ is a generator of this rank one module over $\Lambda$. One has also a Frobenius equivariant version
\begin{equation*}
\begin{tikzcd}
\mathrm{H}^{n}_{(\rmc)}(\rmX_{\overline{s}}, \rmR\Psi(\Lambda)(r+1)) \arrow[r] \arrow[d, "\mathrm{N}"] & \bigoplus\limits_{\sigma\in\Sigma}\rmR^{n}\Phi_{\sigma}(\Lambda(r+1)) \arrow[d, "\mathrm{N}_{\Sigma}"]  \\
\mathrm{H}^{n}_{(\rmc)}(\rmX_{\overline{s}}, \rmR\Psi(\Lambda)(r)) &\bigoplus\limits_{\sigma\in\Sigma}\rmH^{n}_{\{\sigma\}}(\rmX_{\overline{s}}, \rmR\Psi(\Lambda)(r))  \arrow[l]              
\end{tikzcd}          
\end{equation*}
where $\rmN$ is the monodromy operator and $\rmN_{\Sigma}=\bigoplus\limits_{\sigma\in\Sigma}\rmN_{\sigma}$ 
is the sum of the local monodromy operators.

We will need a refinement of the above diagram. Let $i_{\Sigma}: \Sigma_{\overline{s}}\hookrightarrow \rmX_{\overline{s}}$ and $j_{\Sigma}: \rmX^{\square}_{\overline{s}}\hookrightarrow \rmX_{\overline{s}}$ be the natural inclusions. Then we have an isomorphism $j^{\ast}_{\Sigma}\rmR\Psi(\Lambda)\cong \Lambda_{\rmX^{\square}_{\overline{s}}}$, since $\rmX_{\overline{s}}$ is smooth away from $\Sigma_{\overline{s}}$. Then we have a long exact sequence
\begin{equation*}
\cdots\rightarrow\rmH^{n}_{(\rmc)}(\rmX_{\overline{s}}, \rmR\Psi(\Lambda)(r))\rightarrow \rmH^{n}_{(\rmc)}(\Sigma_{\overline{s}}, \rmR\Psi(\Lambda)(r))\rightarrow \rmH^{n+1}_{(\rmc)}(\rmX_{\overline{s}}, j_{\Sigma!}j^{\ast}_{\Sigma}\rmR\Psi(\Lambda)(r))\rightarrow\cdots.
\end{equation*}
Note that we have an isomorphism $\rmH^{n}_{(\rmc)}(\Sigma_{\overline{s}}, \rmR\Psi(\Lambda)(r))\cong\bigoplus\limits_{\sigma\in\Sigma}\rmR^{n}\Phi_{\sigma}(\Lambda(r))$. 

Dualizing this long exact sequence, we obtain another long exact sequence
\begin{equation*}
\cdots\rightarrow\rmH^{n-1}_{(c)}(\rmX_{\overline{s}}, j_{\Sigma\ast}j^{\ast}_{\Sigma}\rmR\Psi(\Lambda)(r))\rightarrow\bigoplus\limits_{\sigma\in\Sigma}\rmH^{n}_{\{\sigma\}}(\rmX_{\overline{s}}, \rmR\Psi(\Lambda(r)))\rightarrow\rmH^{n}_{(\rmc)}(\rmX_{\overline{s}}, \rmR\Psi(\Lambda)(r))\rightarrow \cdots.
\end{equation*}
Here we remark that $ j_{\Sigma\ast}$ is understood as its derived version $\rmR j_{\Sigma\ast}$ and similar conventions will be used in the rest of this article.

Therefore we arrive at the following commutative diagram
\begin{equation}\label{Pic-Lef-no-c}
\begin{tikzcd}
\mathrm{H}^{n}_{(\rmc)}(\rmX_{\overline{s}}, \rmR\Psi(\Lambda)(r+1)) \arrow[r] \arrow[d, "\mathrm{N}"] & \bigoplus\limits_{\sigma\in\Sigma}\rmR^{n}\Phi_{\sigma}(\Lambda)(r+1) \arrow[d, "\mathrm{N}_{\Sigma}"]  \arrow[r, "\alpha_{(\rmc)}(r+1)"] & \mathrm{H}^{n+1}_{(\rmc)}(\rmX_{\overline{s}}, j_{\Sigma!}\Lambda(r+1))\\
\mathrm{H}^{n}_{(\rmc)}(\rmX_{\overline{s}}, \rmR\Psi(\Lambda)(r)) &\bigoplus\limits_{\sigma\in\Sigma}\rmH^{n}_{\{\sigma\}}(\rmX_{\overline{s}}, \rmR\Psi(\Lambda)(r))  \arrow[l] & \mathrm{H}^{n-1}_{(\rmc)}(\rmX_{\overline{s}}, j_{\Sigma\ast}\Lambda(r))\arrow[l, "\beta_{(\rmc)}(r)"]        
\end{tikzcd}          
\end{equation}
which we will refer to as the {\em balanced Picard--Lefschetz formula} for $\mathrm{H}^{n}_{(\rmc)}(\rmX_{\overline{s}}, \rmR\Psi(\Lambda)(r+1))$. 

On the other hand, we also obtain the following commutative diagram
\begin{equation}\label{Pic-Lef-c-no}
\begin{tikzcd}
\mathrm{H}^{n}_{\rmc}(\rmX_{\overline{s}}, \rmR\Psi(\Lambda)(r+1)) \arrow[r] \arrow[d, "\mathrm{N}"] & \bigoplus\limits_{\sigma\in\Sigma}\rmR^{n}\Phi_{\sigma}(\Lambda)(r+1) \arrow[d, "\mathrm{N}_{\Sigma}"]  \arrow[r, "\alpha_{\rmc}(r+1)"] & \mathrm{H}^{n+1}_{\rmc}(\rmX_{\overline{s}}, j_{\Sigma !}\Lambda(r+1))\cong \mathrm{H}^{n+1}_{\rmc}(\rmX^{\square}_{\overline{s}}, \Lambda(r+1))\\
\mathrm{H}^{n}(\rmX_{\overline{s}}, \rmR\Psi(\Lambda)(r)) &\bigoplus\limits_{\sigma\in\Sigma}\rmH^{n}_{\{\sigma\}}(\rmX_{\overline{s}}, \rmR\Psi(\Lambda)(r))  \arrow[l] & \mathrm{H}^{n-1}(\rmX_{\overline{s}}, j_{\Sigma\ast}\Lambda(r))\cong\mathrm{H}^{n-1}(\rmX^{\square}_{\overline{s}}, \Lambda(r))\arrow[l, "\beta(r)"]            
\end{tikzcd}          
\end{equation}
along with its dual commutative diagram
\begin{equation}\label{Pic-Lef-no-c}
\begin{tikzcd}
\mathrm{H}^{n}(\rmX_{\overline{s}}, \rmR\Psi(\Lambda)(r+1)) \arrow[r] \arrow[d, "\mathrm{N}"] & \bigoplus\limits_{\sigma\in\Sigma}\rmR^{n}\Phi_{\sigma}(\Lambda)(r+1) \arrow[d, "\mathrm{N}_{\Sigma}"]  \arrow[r, "\alpha(r+1)"] & \mathrm{H}^{n+1}(\rmX_{\overline{s}}, j_{\Sigma!}\Lambda(r+1))=\mathrm{H}^{n+1}_{\rmc-\partial}(\rmX^{\square}_{\overline{s}}, \Lambda(r+1))\\
\mathrm{H}^{n}_{\rmc}(\rmX_{\overline{s}}, \rmR\Psi(\Lambda)(r)) &\bigoplus\limits_{\sigma\in\Sigma}\rmH^{n}_{\{\sigma\}}(\rmX_{\overline{s}}, \rmR\Psi(\Lambda)(r))  \arrow[l] & \mathrm{H}^{n-1}_{\rmc}(\rmX_{\overline{s}}, j_{\Sigma\ast}\Lambda(r))=\mathrm{H}^{n-1}_{\partial-\rmc}(\rmX^{\square}_{\overline{s}}, \Lambda(r))\arrow[l, "\beta_{\rmc}(r)"].            
\end{tikzcd}          
\end{equation}
Here the cohomology groups
\begin{equation*}
\begin{aligned}
&\mathrm{H}^{n+1}_{\rmc-\partial}(\rmX^{\square}_{\overline{s}}, \Lambda(r+1))=\mathrm{H}^{n+1}(\rmX_{\overline{s}}, j_{\Sigma!}\Lambda(r+1))\\
&\mathrm{H}^{n-1}_{\partial-\rmc}(\rmX^{\square}_{\overline{s}}, \Lambda(r))=\mathrm{H}^{n-1}_{\rmc}(\rmX_{\overline{s}}, j_{\Sigma\ast}\Lambda(r))\\
\end{aligned}
\end{equation*}
are often called the partially compactified cohomology of $\rmX^{\square}_{\overline{s}}$ in the literature.

In the following, we will loosely refer to these two diagrams as the {\em unbalanced Picard--Lefschetz formula} for $\mathrm{H}^{n}_{(\rmc)}(\rmX_{\overline{s}}, \rmR\Psi(\Lambda)(r+1))$. We will only the first unbalanced Picard--Lefschetz formula will be used in this article.

\subsection{Monodromy filtration and Picard--Lefschetz}
We will define the monodromy filtration 
\begin{equation*}
0\subset_{\mathrm{Gr}_{-1, (\rmc)}(r)}\mathrm{F}_{-1}\mathrm{H}^{n}_{(\rmc)}(\rmX_{\overline{s}}, \rmR\Psi(\Lambda)(r)) \subset_{\mathrm{Gr}_{0, (\rmc)}(r)} \mathrm{F}_{0}\mathrm{H}^{n}_{(\rmc)}(\rmX_{\overline{s}}, \rmR\Psi(\Lambda)(r)) \subset_{\mathrm{Gr}_{1, (\rmc)}(r)} \mathrm{F}_{1}\mathrm{H}^{n}_{(\rmc)}(\rmX_{\overline{s}}, \rmR\Psi(\Lambda)(r)).
\end{equation*}
for  $\mathrm{H}^{n}_{(\rmc)}(\rmX_{\overline{s}}, \rmR\Psi(\Lambda)(r))$ using the balanced Picard--Lefschetz formula. Note that the monodromy operator $\rmN:\rmR\Psi(\Lambda)(1)\rightarrow \rmR\Psi(\Lambda)$ satisfies $\rmN^{2}=0$. Therefore we can define the monodromy filtration on $\rmR\Psi(\Lambda)$ by
\begin{equation*}
0\subset \mathrm{F}_{-1}\rmR\Psi(\Lambda)=\mathrm{im}(\rmN)\subset \rmF_{0}\rmR\Psi(\Lambda)=\ker(\rmN)\subset \rmF_{1}\rmR\Psi(\Lambda)=\rmR\Psi(\Lambda).
\end{equation*}
By \cite[Proposition 3.8]{Illusie-per}, we have 
\begin{equation}\label{mono-fil-sheaf}
\begin{aligned}
&\mathrm{F}_{-1}\rmR\Psi(\Lambda)(r)=\bigoplus\limits_{\sigma\in\Sigma}\rmH^{n}_{\{\sigma\}}(\rmX_{\overline{s}}, \rmR\Psi(\Lambda)(r))[-n]\\
&\mathrm{F}_{0}\rmR\Psi(\Lambda)(r)=\Lambda_{\rmX_{\overline{s}}}\\
\end{aligned}
\end{equation}
and also
\begin{equation*}
\mathrm{Gr}_{1}\rmR\Psi(\Lambda)(r)=\bigoplus\limits_{\sigma\in\Sigma}\rmR^{n}\Phi_{\sigma}(\Lambda)(r)[-n].
\end{equation*}
We define the monodromy filtration $\rmF_{i}\rmH^{n}_{(\rmc)}(\rmX_{\overline{s}}, \rmR\Psi(\Lambda)(r))$ of $\rmH^{n}_{(\rmc)}(\rmX_{\overline{s}}, \rmR\Psi(\Lambda)(r))$ as the image of the canonical map $\rmH^{n}_{(\rmc)}(\rmX_{\overline{s}}, \rmF_{i}\rmR\Psi(\Lambda)(r))\rightarrow \rmH^{n}_{(\rmc)}(\rmX_{\overline{s}}, \rmR\Psi(\Lambda)(r))$. 

The successive quotient of the monodromy filtration is therefore given by
\begin{equation}\label{mono-fil}
\begin{aligned}
&\mathrm{Gr}_{-1, (\rmc)}(r)=\mathrm{Gr}_{-1}\mathrm{H}^{n}_{(\rmc)}(\rmX_{\overline{s}}, \rmR\Psi(\Lambda)(r))= \Coker(\beta_{(\rmc)}(r))\\
&\mathrm{Gr}_{0, (\rmc)}(r)=\mathrm{Gr}_{0}\mathrm{H}^{n}_{(\rmc)}(\rmX_{\overline{s}}, \rmR\Psi(\Lambda)(r))= \mathrm{H}^{n}_{(\rmc)}(\rmX_{\overline{s}}, \Lambda(r))/\bigoplus\limits_{\sigma\in\Sigma}\rmH^{n}_{\{\sigma\}}(\rmX_{\overline{s}}, \rmR\Psi(\Lambda)(r))\\
&\mathrm{Gr}_{1, (\rmc)}(r)=\mathrm{Gr}_{1}\mathrm{H}^{n}_{(\rmc)}(\rmX_{\overline{s}}, \rmR\Psi(\Lambda)(r))=\Ker(\alpha_{(\rmc)}(r))\\
\end{aligned}
\end{equation}
using the balanced Picard--Lefschetz formula. 

The filtration induces a filtration $\mathrm{F}_{i}\mathrm{H}^{1}(\mathrm{I}_{\QQ_{p^{2}}}, \mathrm{H}^{n}_{(\rmc)}(\rmX_{\overline{s}}, \rmR\Psi(\Lambda)(r)))$ on the quotient module
\begin{equation*}
\mathrm{H}^{1}(\mathrm{I}_{\QQ_{p^{2}}}, \mathrm{H}^{n}_{(\rmc)}(\rmX_{\overline{s}}, \rmR\Psi(\Lambda)(r))). 
\end{equation*}
The monodromy operator $\mathrm{N}$ factors through 
\begin{equation*}
\mathrm{H}^{n}_{(\rmc)}(\rmX_{\overline{s}}, \rmR\Psi(\Lambda)(r+1))\twoheadrightarrow\mathrm{Gr}_{1, (\rmc)}(r+1)\xrightarrow{\mathrm{N}} \mathrm{Gr}_{-1, (\rmc)}(r) \hookrightarrow \mathrm{H}^{n}_{(\rmc)}(\rmX_{\overline{s}}, \rmR\Psi(\Lambda)(r)).
\end{equation*}
The above discussion leads to the following proposition.

\begin{proposition}\label{F-1}
Suppose $\alpha_{(\rmc)}(r+1))$ is  surjective. We have an isomorphism
\begin{equation*}
\mathrm{F}_{-1}\mathrm{H}^{1}(\mathrm{I}_{\QQ_{p^{2}}}, \mathrm{H}^{n}_{(\rmc)}(\rmX_{\overline{s}}, \rmR\Psi(\Lambda)(r)))\cong \frac{\Coker(\beta_{(\rmc)}(r))}{\mathrm{N}_{\Sigma}\Ker(\alpha_{(\rmc)}(r+1))}.
\end{equation*}
The map $\alpha_{(\rmc)}$ induces an isomorphims
\begin{equation*}
\mathrm{F}_{-1}\mathrm{H}^{1}(\mathrm{I}_{\QQ_{p^{2}}}, \mathrm{H}^{n}_{(\rmc)}(\rmX_{\overline{s}}, \rmR\Psi(\Lambda)(r)))\cong \Coker(\alpha_{(\rmc)}(r+1)\circ \rmN^{-1}_{\Sigma}\circ \beta_{(\rmc)}(r)).
\end{equation*}
\end{proposition}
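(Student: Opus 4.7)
My plan is to translate the left-hand side into a purely linear-algebraic object in the Picard--Lefschetz diagram and then perform a short diagram chase. Set $M = \mathrm{H}^{n}_{\rmc}(\rmX_{\overline{\eta}}, \Lambda(r-1))$. Since $l\neq p$, the inertia $\mathrm{I}_{\QQ_{p^{2}}}$ acts on $M$ through its tame quotient and acts trivially on Tate twists (as the extensions defining $\Lambda(1)=\mu_{l^{m}}$ are unramified). The cohomology of the pro-cyclic tame inertia then gives the standard identification
\[
\mathrm{H}^{1}(\mathrm{I}_{\QQ_{p^{2}}}, M) \;=\; M_{\mathrm{I}} \;=\; \Coker\bigl(\mathrm{N}\colon \mathrm{H}^{n}(\rmX_{\overline{\eta}},\Lambda(r))\to M\bigr),
\]
so the proposition becomes a statement about a specific subquotient of $\Coker(\mathrm{N})$.

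The first assertion then drops out of the monodromy filtration. Because $\mathrm{N}$ lowers the filtration by two, its image is contained in $\mathrm{F}_{-1}M = \mathrm{Gr}_{-1}M$, and consequently the induced filtration on the coinvariants satisfies
\[
\mathrm{F}_{-1}\mathrm{H}^{1}(\mathrm{I}_{\QQ_{p^{2}}}, M) \;=\; \mathrm{F}_{-1}M\big/\mathrm{N}\bigl(\mathrm{H}^{n}(\rmX_{\overline{\eta}},\Lambda(r))\bigr).
\]
Plugging in $\mathrm{F}_{-1}M = \Coker(\beta)$ from \eqref{mono-fil}, and using the factorization of $\mathrm{N}$ displayed immediately before the proposition, which identifies $\mathrm{N}(\mathrm{H}^{n})$ with $\mathrm{N}_{\Sigma}(\Ker\alpha)$ inside $\Coker(\beta)$, yields the first isomorphism.

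For the second isomorphism I would invoke the specialization exact sequence \eqref{sp-exact}. The running hypothesis $\mathrm{H}^{n}(\rmX_{\overline{s}},\rmR\Psi(\Lambda))=\mathrm{H}^{n}(\rmX_{\overline{\eta}},\Lambda)$ forces the boundary $\partial$ to be zero, so $\alpha$ is injective. This has two consequences: first, the composition $\alpha \circ \mathrm{N}_{\Sigma}^{-1} \circ \beta$ is well defined once one checks $\Ker(\mathrm{N}_{\Sigma})\subset \Ker\alpha$ and $\mathrm{Im}(\beta)\subset\mathrm{Im}(\mathrm{N}_{\Sigma})$, both being consequences of the commutativity of the Picard--Lefschetz square; second, $\alpha$ embeds $\bigoplus_{\sigma}\rmR^{n}\Phi_{\sigma}/\Ker\alpha$ into $\mathrm{H}^{n+1}(\rmX_{\overline{s}},\Lambda(r-1))$. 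A short diagram chase combining these two observations produces the asserted isomorphism
\[
\Coker(\beta)\big/\mathrm{N}_{\Sigma}(\Ker\alpha) \;\xrightarrow{\;\alpha\circ\mathrm{N}_{\Sigma}^{-1}\;}\; \Coker(\alpha\circ\mathrm{N}_{\Sigma}^{-1}\circ\beta).
\]

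The step I expect to demand the most care is verifying the well-definedness of $\alpha\circ\mathrm{N}_{\Sigma}^{-1}\circ\beta$ and the exact identification of its cokernel with the intended quotient of $\Coker(\beta)$; these points rest on the ordinary quadratic singularity structure, where each local $\mathrm{N}_{\sigma}$ is the familiar intersection-type pairing between the one-dimensional spaces of vanishing and dual vanishing cycles, together with the compatibility between the specialization and cospecialization sequences afforded by the Picard--Lefschetz diagram.
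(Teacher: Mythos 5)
Your treatment of the first isomorphism is essentially the paper's: identify $\mathrm{H}^{1}(\mathrm{I}_{\QQ_{p^{2}}},-)$ with the quotient of $\mathrm{H}^{n}_{\rmc}(\rmX_{\overline{\eta}},\Lambda(r-1))$ by the image of the monodromy, use $\mathrm{F}_{-1}=\mathrm{Gr}_{-1}=\Coker(\beta)$ from \eqref{mono-fil}, and use the factorization of $\mathrm{N}$ through $\mathrm{Gr}_{1}=\Ker(\alpha)\xrightarrow{\rmN_{\Sigma}}\mathrm{Gr}_{-1}$ displayed just before the proposition. That part is fine.

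The second half contains a genuine error. The hypothesis $\mathrm{H}^{n}(\rmX_{\overline{s}},\rmR\Psi(\Lambda))=\mathrm{H}^{n}(\rmX_{\overline{\eta}},\Lambda)$ is only the nearby-cycle comparison (automatic for proper $f$, supplied by Lan--Stroh in the Shimura case); it says nothing about the boundary $\partial$ in \eqref{sp-exact}, and it certainly does not make $\alpha$ injective. By exactness $\Ker(\alpha)=\mathrm{Im}(\partial)=\mathrm{Gr}_{1}$, so if your claim were true the monodromy would be trivial and the first isomorphism would collapse to $\Coker(\beta)$ --- the exact opposite of the level-raising situation the proposition is built for. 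What actually makes $\alpha\circ\rmN^{-1}_{\Sigma}\circ\beta$ meaningful, and what the paper's proof invokes, is that $\rmN_{\Sigma}$ is an \emph{isomorphism}: at an ordinary quadratic singularity in odd fibre dimension each local variation map is, on the rank-one spaces of vanishing and dual vanishing cycles, multiplication by a unit (up to sign the self-intersection $-2$ of Lemma \ref{intersection}, invertible since $l\geq 3$). Your two containments $\Ker(\rmN_{\Sigma})\subset\Ker(\alpha)$ and $\mathrm{Im}(\beta)\subset\mathrm{Im}(\rmN_{\Sigma})$ do not follow from commutativity of the Picard--Lefschetz square; they hold only because $\rmN_{\Sigma}$ is invertible, which is the fact you must cite. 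Finally, applying $\alpha\circ\rmN^{-1}_{\Sigma}$ to the first isomorphism only gives an injection of $\Coker(\beta)/\rmN_{\Sigma}\Ker(\alpha)$ into $\Coker(\alpha\circ\rmN^{-1}_{\Sigma}\circ\beta)$, with image $\mathrm{Im}(\alpha)/\mathrm{Im}(\alpha\circ\rmN^{-1}_{\Sigma}\circ\beta)$; identifying the two requires $\alpha$ to be surjective, which is exactly what holds where the proposition is used (after localization at $\fracm$, Assumption \ref{vanishing} kills $\mathrm{H}^{n+1}$ of the nearby cycles), and your (false) injectivity of $\alpha$ contributes nothing toward it.
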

\begin{proof}
The first isomorphism follows directly from the above discussion. The second isomorphism follows from applying $\alpha_{(\rmc)}$ to the first isomorphism and the fact that $\rmN_{\Sigma}$ is an isomorphism in our case. 
\end{proof}

\subsection{Monodromy filtration and intersection cohomology}
The following Lemma will explain the relation between the cohomology of the regular locus $\rmX^{\square}_{\overline{s}}$ and the intersection cohomology of $\rmX_{\overline{s}}$. Let $\mathrm{IH}^{\ast}_{(\rmc)}(\rmX_{\overline{s}}, \Lambda(\ast))$ be the intersection cohomology (with compact support) of $\rmX_{\overline{s}}$. Recall we denote by $j_{\Sigma}: \rmX^{\square}_{\overline{s}}\hookrightarrow \rmX_{\overline{s}}$ the open immersion of the regular locus. Let $\mathrm{IC}_{\rmX}(\Lambda)=j_{\Sigma!\ast}\Lambda[n]$ be Deligne's intersection complex of $\rmX_{\overline{s}}$, then by definition $\mathrm{IH}^{\ast}_{(\rmc)}(\rmX_{\overline{s}}, \Lambda(\ast))$ is defined by $\mathrm{H}^{\ast}_{(\rmc)}(\rmX_{\overline{s}}, \mathrm{IC}_{\rmX}(\Lambda(\ast))[-n])$.

\begin{lemma}
We have the following isomorphisims
\begin{enumerate}
\item $\mathrm{IH}^{k}_{(\rmc)}(\rmX_{\overline{s}}, \Lambda)\cong \mathrm{H}^{k}_{(\rmc)}(\rmX_{\overline{s}}, j_{\Sigma\ast}\Lambda)$ for $k\leq n-1$;
\item $\mathrm{IH}^{k}_{(\rmc)}(\rmX_{\overline{s}}, \Lambda)\cong \mathrm{H}^{k}_{(\rmc)}(\rmX_{\overline{s}}, \Lambda)$ for $k\geq n+1$;
\item $\mathrm{IH}^{n}_{(\rmc)}(\rmX_{\overline{s}}, \Lambda)\cong\mathrm{Im}(\rmH^{n}_{(\rmc)}(\rmX_{\overline{s}}, \Lambda)\rightarrow \rmH^{n}_{(\rmc)}(\rmX_{\overline{s}}, j_{\Sigma\ast} \Lambda))$ for $k=n$.
\item $\mathrm{H}^{k}_{\rmc}(\rmX^{\square}_{\overline{s}}, \Lambda)\cong\mathrm{H}^{k}_{\rmc}(\rmX_{\overline{s}}, \Lambda)$ for $k\geq 2$;
\end{enumerate}
In particular $\mathrm{H}^{k}(\rmX^{\square}_{\overline{s}}, \Lambda)\cong\mathrm{IH}^{k}(\rmX_{\overline{s}}, \Lambda)$ for $k\leq n-1$ and $\mathrm{H}^{k}_{\rmc}(\rmX^{\square}_{\overline{s}}, \Lambda)\cong\mathrm{IH}^{k}_{\rmc}(\rmX_{\overline{s}}, \Lambda)$ for $k\geq n+1$.
\end{lemma}
\begin{proof}
We have $\mathrm{IC}_{\rmX}=\tau_{\leq n-1}(\rmR j_{\Sigma\ast}\Lambda)[n]$ which is isomorphic to the intermediate extension $j_{\Sigma!\ast}\Lambda$ cf. \cite[2.2.3.1]{BBD} and fits into the following distinguished triangle
\begin{equation*}
\mathrm{IC}_{\rmX}[-n]\rightarrow\mathrm{R}j_{\Sigma\ast}\Lambda\rightarrow\tau_{\geq n}(\rmR j_{\Sigma\ast}\Lambda)\rightarrow\mathrm{IC}_{\rmX}[1-n].
\end{equation*}
 Note that $\rmH^{k}(\tau_{\geq n}(\rmR j_{\Sigma\ast}\Lambda))=0$ for $k\leq n-1$. Then the statement $(1)$ follows from considering the hypercohomology spectral sequence and the long exact sequence attached to this distinguished triangle.  We also obtain an injection $\mathrm{IH}^{n}_{(\rmc)}(\rmX_{\overline{s}}, \Lambda)\hookrightarrow \rmH^{n}_{(\rmc)}(\rmX_{\overline{s}}, j_{\Sigma\ast}\Lambda)$ from this discussion.

For the second statement $(2)$, we consider the distinguished triangle
\begin{equation*}
\Lambda\rightarrow \mathrm{IC}_{\rmX}[-n]\rightarrow \calF\rightarrow\Lambda[1]
\end{equation*}
where $\calF$ is a complex supported on the singular locus $\Sigma$. Note we have
\begin{equation*}
\rmH^{k}_{(\rmc)}(\rmX_{\overline{s}}, \calF)\cong \bigoplus\limits_{\sigma\in\Sigma}\rmH^{k}(\calF)_{\sigma}.
\end{equation*}
The distinguished triangle implies that $\rmH^{k}(\calF)_{\sigma}\cong\rmH^{k-n}(\mathrm{IC}_{\rmX})_{\sigma}$ for $k\geq 1$. Moreover the axioms \cite[Proposition 2.1.17]{BBD} characterizing the perverse sheaf $\mathrm{IC}_{\rmX}$ immediately implies that $\rmH^{i}(\mathrm{IC}_{\rmX})_{\sigma}=0$ for $i\geq 0$. Therefore we have
$\rmH^{k}_{(\rmc)}(\rmX_{\overline{s}}, \calF)=0$ for $k\geq n$. Thus $(2)$ follows from this by considering the long exact sequence attached to the above distinguished triangle. We also obtain a surjection $\mathrm{H}^{n}_{(\rmc)}(\rmX_{\overline{s}}, \Lambda)\twoheadrightarrow \mathrm{IH}^{n}_{(\rmc)}(\rmX_{\overline{s}}, \Lambda)$ from the above discussion. Now part $(3)$ follows from the discussions in part $(1)$ and part $(2)$ where we have proved that we have the following factorization
\begin{equation*}
\mathrm{H}^{n}_{(\rmc)}(\rmX_{\overline{s}}, \Lambda)\twoheadrightarrow \mathrm{IH}^{n}_{(\rmc)}(\rmX_{\overline{s}}, \Lambda)\hookrightarrow \mathrm{H}^{n}_{(\rmc)}(\rmX_{\overline{s}}, j_{\Sigma\ast}\Lambda)
\end{equation*}
of the canonical map $\mathrm{H}^{n}_{(\rmc)}(\rmX_{\overline{s}}, \Lambda)\rightarrow \mathrm{H}^{n}_{(\rmc)}(\rmX_{\overline{s}}, j_{\Sigma\ast}\Lambda)$.

For statement $(4)$, we need to consider the distinguished triangle
\begin{equation*}
j_{\Sigma!}j^{\ast}_{\Sigma}\Lambda\rightarrow \Lambda \rightarrow i_{\Sigma\ast}i^{\ast}_{\Sigma}\Lambda\rightarrow j_{\Sigma!}j^{\ast}_{\Sigma}\Lambda[1]
\end{equation*}
where $i_{\Sigma}:\Sigma\hookrightarrow \rmX_{\overline{s}}$ is the closed immersion of the singular locus. The induced long exact sequence immediately gives the desired statement.
\end{proof}

\begin{corollary}\label{intersection}
We have the following isomorphism
\begin{equation*}
\begin{aligned}
\mathrm{IH}^{n}_{(\rmc)}(\rmX_{\overline{s}}, \Lambda)&\cong\mathrm{Im}(\mathrm{H}^{n}_{(\rmc)}(\rmX_{\overline{s}}, \Lambda)\rightarrow \mathrm{H}^{n}_{(\rmc)}(\rmX_{\overline{s}}, j_{\Sigma\ast}\Lambda))\\
&\cong\mathrm{H}^{n}_{(\rmc)}(\rmX_{\overline{s}}, \Lambda)/\bigoplus\limits_{\sigma\in\Sigma}\rmH^{n}_{\{\sigma\}}(\rmX_{\overline{s}}, \Lambda).\\
\end{aligned}
\end{equation*}
\end{corollary}
\begin{proof}
The first isomorphism is already proved in the above lemma. The second isomorphism follows from the long exact sequence induced by the distinguished triangle
\begin{equation*}
i_{\Sigma\ast}i^{!}_{\Sigma}\Lambda\rightarrow \Lambda\rightarrow \rmR j_{\Sigma\ast}j^{\ast}_{\Sigma}\Lambda\rightarrow i_{\Sigma\ast}i^{!}_{\Sigma}\Lambda[1].
\end{equation*}
\end{proof}

From the above discussions, we conclude that graded pieces of the monodromy filtration 
\begin{equation*}
0\subset_{\mathrm{Gr}_{-1, (\rmc)}(r)}\mathrm{F}_{-1}\mathrm{H}^{n}_{(\rmc)}(\rmX_{\overline{s}}, \rmR\Psi(\Lambda)(r)) \subset_{\mathrm{Gr}_{0, (\rmc)}(r)} \mathrm{F}_{0}\mathrm{H}^{n}_{(\rmc)}(\rmX_{\overline{s}}, \rmR\Psi(\Lambda)(r)) \subset_{\mathrm{Gr}_{1, (\rmc)}(r)} \mathrm{F}_{1}\mathrm{H}^{n}_{(\rmc)}(\rmX_{\overline{s}}, \rmR\Psi(\Lambda)(r))
\end{equation*}
of $\mathrm{H}^{n}_{(\rmc)}(\rmX_{\overline{s}}, \rmR\Psi(\Lambda)(r))$ are given by
\begin{equation}\label{mono-fil-noncompact}
\begin{aligned}
&\mathrm{Gr}_{-1}\mathrm{H}^{n}_{(\rmc)}(\rmX_{\overline{s}}, \rmR\Psi(\Lambda)(r))= \Coker(\beta_{(\rmc)}(r):\mathrm{IH}^{n-1}_{(\rmc)}(\rmX_{\overline{s}}, \Lambda(r))\rightarrow  \bigoplus\limits_{\sigma\in\Sigma}\rmH^{n}_{\{\sigma\}}(\rmX_{\overline{s}}, \rmR\Psi(\Lambda)(r)));\\
&\mathrm{Gr}_{0}\mathrm{H}^{n}_{(\rmc)}(\rmX_{\overline{s}}, \rmR\Psi(\Lambda)(r))= \mathrm{IH}^{n}_{(\rmc)}(\rmX_{\overline{s}}, \Lambda(r));\\
&\mathrm{Gr}_{1}\mathrm{H}^{n}_{(\rmc)}(\rmX_{\overline{s}}, \rmR\Psi(\Lambda)(r))=\Ker(\alpha_{(\rmc)}(r):\bigoplus\limits_{\sigma\in\Sigma}\rmR^{n}\Phi_{\sigma}(\Lambda)(r) \rightarrow \mathrm{IH}^{n+1}_{(\rmc)}(\rmX_{\overline{s}}, \Lambda(r))).\\
\end{aligned}
\end{equation}
Note that we have used the fact that 
\begin{equation*}
\begin{aligned}
&\mathrm{Gr}_{0}\mathrm{H}^{n}_{(\rmc)}(\rmX_{\overline{s}}, \rmR\Psi(\Lambda)(r))\\
&\cong\mathrm{H}^{n}_{(\rmc)}(\rmX_{\overline{s}}, \Lambda(r))/\bigoplus\limits_{\sigma\in\Sigma}\rmH^{n}_{\{\sigma\}}(\rmX_{\overline{s}}, \rmR\Psi(\Lambda(r)))\\
&\cong\mathrm{H}^{n}_{(\rmc)}(\rmX_{\overline{s}}, \Lambda(r))/\bigoplus\limits_{\sigma\in\Sigma}\rmH^{n}_{\{\sigma\}}(\rmX_{\overline{s}}, \Lambda(r)))\\
&\cong \mathrm{IH}^{n}_{(\rmc)}(\rmX_{\overline{s}}, \Lambda).
\end{aligned}
\end{equation*}
\begin{remark}
We suspect that the graded pieces of the monodromy filtration of the perverse sheaf $\rmR\Psi(\Lambda)[n]$ can already be described by the intersection cohomology complex on $\rmX_{\overline{s}}$ in a similar fashion as above but we cannot prove this at the moment. This is suggested by the referee. On the other hand, one can also calculate the monodromy filtration of the perverse sheaf $\rmR\Psi(\Lambda)[n]$ for the semistable model described in the next subsection and in this case it is easy to prove the graded pieces of the monodromy filtration are indeed described by the intersection cohomology complex.
\end{remark}

\subsection{Descriptions of $\alpha_{\rmc}$ and $\beta$}\label{semi} We need to realize the maps $\alpha_{\rmc}$ and $\beta$ as certain Gysin and restriction maps. For this purpose, we need to introduce a semi-stable model of $\rmX$ over ${\rmS}_{1}=\Spec{\rmR_{1}}$. Here we let ${\rmF}_{1}$ be a totally ramified quadratic extension over the fraction field $\rmF$ of $\rmR$ and let ${\rmR}_{1}$ be the integral closure of $\rmR$ in ${\rmF}_{1}$. We will consider the base change of $\rmX$ to ${\rmR}_{1}$ but we will use the same notation. The following descriptions can be easily deduced from \cite[Proposition 2.4]{Illusie-van}:
\begin{itemize}
\item Let $\rmX^{\mathrm{Bl}}$ be the blow-up of  $\rmX$ at the singular locus $\Sigma$. The scheme  $\rmX^{\mathrm{Bl}}$ is semi-stable and its special fiber has the form  $\rmX^{\mathrm{Bl}}_{s}={\rmX}^{\bullet}_{s}+ \sum\limits_{\sigma\in\Sigma}\rmD_{\sigma}$ as a divisor in $\rmX^{\mathrm{Bl}}$. 
\item The scheme ${\rmX}^{\bullet}_{s}$ is the strict transform of $\rmX_{s}$ under this blow-up and $\rmD=\sum\limits_{\sigma\in\Sigma}\rmD_{\sigma}$ is the exceptional divisor in $\rmX^{\mathrm{Bl}}$ for this blow-up. Note the smooth scheme ${\rmX}^{\bullet}_{s}$ is also the blow-up of $\rmX_{s}$ along its singular locus $\Sigma$.
\item Each $\rmD_{\sigma}$ is the hypersurface of $\PP^{n+1}_{s}=\mathrm{Proj}\phantom{.}k(s)[\rmX_{1},\cdots, \rmX_{n}, \rmT]$ defined by $\overline{\rmQ}_{\sigma}-\overline{u}_{\sigma}\rmT^{2}=0$ for an ordinary quadratic form $\rmQ_{\sigma}$ over $\rmR$ and some unit ${u}_{\sigma}$. Here $\overline{(\cdot)}$ means the reductions of $(\cdot)$ modulo the maximal ideal of $\rmR$ and each $\rmD_{\sigma}$ is a smooth quadric.
\item Let $\rmC_{\sigma}$ be the hyperplane of $\rmD_{\sigma}$ defined by $\rmT=0$ which is also a  smooth quadric. We have ${\rmX}^{\bullet}_{s}\cap \rmD_{\sigma}=\rmC_{\sigma}$. Hence $\sum\limits_{\sigma\in\Sigma}\rmC_{\sigma}$ is the exceptional divisor in ${\rmX}^{\bullet}_{s}$ by blowing up $\rmX_{s}$ along its singular locus $\Sigma$.
\end{itemize} 

\begin{lemma}\label{quadric-class}
The cohomology group of the quadric $\rmC_{\sigma}$ is given by the formulas below.
\begin{enumerate}
\item If $n-1=2r-1$ is odd, then we have 
\begin{equation*}
\rmH^{i}(\rmC_{\sigma}, \Lambda(i/2))=\begin{cases}\Lambda\eta^{i/2} & \text{if $i$ even, $0\leq i\leq 2(n-1)$;}\\ 0 & \text{otherwise.}\\ \end{cases}
\end{equation*}
\item If $n-1=2r-2$ is even, then we have 
\begin{equation*}
\rmH^{i}(\rmC_{\sigma}, \Lambda(i/2))=\begin{cases}\Lambda\eta^{i/2} & \text{if $i$ even, $0\leq i\leq 2(n-1), i\neq n-1$;}\\   \Lambda\theta_{1}\oplus\Lambda\theta_{2} & \text{if $i=n-1$;}\\0 & \text{otherwise.}\\ \end{cases}
\end{equation*}
\end{enumerate}
Here $\eta\in \rmH^{2}(\rmC_{\sigma},\Lambda(1))$ is the class of the hyperplane section of $\rmC_{\sigma}$ and $\theta_{1}$ and $\theta_{2}$ are the cycle classes of the two generatrices of $\rmC_{\sigma}$ such that $\theta_{1}+\theta_{2}=\eta^{(n-1)/2}$ holds.
\end{lemma}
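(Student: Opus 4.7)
The plan is to derive this as a classical calculation of the \'{e}tale cohomology of a smooth quadric hypersurface. First I would record that, by the construction in \ref{semi}, $\rmC_\sigma$ is the hyperplane section $\rmT=0$ of $\rmD_\sigma$, which identifies it with the projective quadric cut out in $\PP^{n}_{s}$ by the nondegenerate quadratic form $\overline{\rmQ}_{\sigma}$; nondegeneracy is precisely the ordinary-quadratic-singularity hypothesis on $\sigma$. Thus $\rmC_\sigma$ is a smooth projective quadric of dimension $n-1$, and what is being claimed is nothing more than the standard description of the $l$-adic cohomology of such a quadric.

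The cleanest route is via the Schubert/Bruhat cell decomposition. The variety $\rmC_\sigma$ is a homogeneous space for the special orthogonal group of $(k(s)^{n+1},\overline{\rmQ}_\sigma)$, and so admits an affine paving. For an odd-dimensional quadric $Q^{2r-1}$ the cells are $\AA^{0},\AA^{1},\ldots,\AA^{2r-1}$, one in each real dimension, while for an even-dimensional quadric $Q^{2r-2}$ the same pattern holds except that the middle-dimensional stratum splits into two disjoint affine cells of dimension $r-1$, corresponding to the two rulings by maximal isotropic linear subspaces. By the standard formalism for schemes with an affine paving, $\rmH^{*}(\rmC_\sigma,\Lambda)$ is $\Lambda$-free, concentrated in even degrees, with basis the fundamental classes of the closures of the cells. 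Identifying $\eta\in\rmH^{2}(\rmC_\sigma,\Lambda(1))$ with the class of a codimension-one Schubert subvariety, the closure of the cell of codimension $k$ then represents $\eta^{k}$, except for the two middle-dimensional cells in the even-dimensional case whose classes together span a free rank $2$ summand of $\rmH^{n-1}(\rmC_\sigma,\Lambda((n-1)/2))$ containing $\eta^{(n-1)/2}$.

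As a cross-check (or an alternative proof more in keeping with the paper's style), one can argue by induction on $n-1$ using the weak Lefschetz theorem: a generic hyperplane section of $\rmC_\sigma$ is a smooth quadric of dimension $n-2$, so weak Lefschetz determines $\rmH^{i}(\rmC_\sigma,\Lambda)$ in degrees $i<n-1$, and Poincar\'{e} duality determines $i>n-1$ in terms of these. The only piece of information not handed to one by this recursion is the rank in middle degree in the even-dimensional case; the cellular description above supplies it, and the remark about $\eta^{(n-1)/2}\in \rmH^{n-1}(\rmC_\sigma,\Lambda((n-1)/2))$ is simply the statement that the hyperplane class to the middle power is one distinguished element of this rank-$2$ module. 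There is no genuine obstacle: the whole content of the lemma is the well-known difference in cohomology between odd- and even-dimensional smooth quadrics.
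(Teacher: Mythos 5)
Your argument is correct, but it is genuinely different in style from what the paper does: the paper's proof is a one-line citation of the classical computation of the cohomology of a smooth quadric (SGA7 II, Exp.\ XII, Th\'eor\`eme 3.3, including part (iii)(c) for the remark), whereas you reprove that computation from scratch. Your identification of $\rmC_{\sigma}$ with a smooth $(n-1)$-dimensional quadric via the blow-up description in \ref{semi} is exactly the intended reduction, and the affine paving of a (geometrically) split quadric — one cell in each dimension, with the middle stratum doubled in the even-dimensional case by the two rulings — does yield freeness, concentration in even degrees, and the rank count, with the weak Lefschetz/Poincar\'e duality recursion as a consistent cross-check; since cohomology here is taken over the geometric point $\overline{s}$, the paving is available without any splitness hypothesis on $\overline{\rmQ}_{\sigma}$ over $k(s)$. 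What the citation buys the paper in addition is the Galois/Tate-twist bookkeeping and the precise statement about $\eta^{(n-1)/2}$ sitting inside the rank-two middle cohomology, which your ruling-class description also recovers ($\eta^{(n-1)/2}$ is the sum of the two ruling classes). One small imprecision: above the middle degree the closure of the codimension-$k$ cell is a linear subspace of $\rmC_{\sigma}$ whose class satisfies $\eta^{k}=2\cdot[\text{cell closure}]$ (the quadric has degree $2$), so it does not literally \emph{represent} $\eta^{k}$; this is harmless for the lemma as stated only because $\Lambda$ has odd residue characteristic ($l\geq 3$), so $2$ is invertible and $\Lambda\eta^{i/2}$ is still the whole of $\rmH^{i}(\rmC_{\sigma},\Lambda(i/2))$ — worth saying explicitly if you write this up.
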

\begin{proof}
The proof of this result is contained in \cite[Expose XII Theorem 3.3]{SGA7}. The last claim is given in \cite[Theorem 3.3 (iii)(c)]{SGA7}.
\end{proof}

From here on, we assume that $n$ is odd. We are particularly interested in the middle degree cohomology $\rmH^{n-1}(\rmC_{\sigma}, \Lambda(r-1))$ and we make the following definitions: 
\begin{itemize}
\item we define the {\em primitive part} $\rmH^{n-1}(\rmC_{\sigma}, \Lambda(r-1))^{\bullet}$ of $\rmH^{n-1}(\rmC_{\sigma}, \Lambda(r-1))$ by
\begin{equation*}
\rmH^{n-1}(\rmC_{\sigma}, \Lambda(r-1))^{\bullet}:=\langle\eta^{(n-1)/2}\rangle^{\perp} 
\end{equation*}
where the orthogonal complement is taken with respect to the Poincare duality on $\rmC_{\sigma}$;
\item  we define the {\em primitive quotient} $\rmH^{n-1}(\rmC_{\sigma}, \Lambda(r-1))_{\bullet}$ of $\rmH^{n-1}(\rmC_{\sigma}, \Lambda(r-1))$ by
\begin{equation*}
\rmH^{n-1}(\rmC_{\sigma}, \Lambda(r-1))_{\bullet}:=\rmH^{n-1}(\rmC_{\sigma}, \Lambda(r-1))/\langle\eta^{(n-1)/2}\rangle.
\end{equation*}
\end{itemize}

\begin{lemma} \label{intersection}
We have the following descriptions of the space of vanishing cycles and the space of co-vanishing cycles.
\begin{enumerate}
\item The space of vanishing cycles $\rmR^{n}\Phi_{\sigma}(\Lambda)(r)$ at $\sigma$ can be identified with the primitive part  
\begin{equation*}
\rmH^{n-1}(\rmC_{\sigma}, \Lambda(r-1))^{\bullet} 
\end{equation*}
of $\rmH^{n-1}(\rmC_{\sigma}, \Lambda(r-1))$.
\item The space of co-vanishing cycles $\rmH^{n}_{\{\sigma\}}(\rmX_{\overline{s}}, \rmR\Psi(\Lambda)(r-1))$ at $\sigma$ can be identified with the primitive quotient 
\begin{equation*}
\rmH^{n-1}(\rmC_{\sigma}, \Lambda(r-1))_{\bullet} 
\end{equation*}
of $\rmH^{n-1}(\rmC_{\sigma}, \Lambda(r-1))$.
\item We can choose a generator $\eta^{\bullet}_{\sigma}$ of the space $\rmH^{n-1}(\rmC_{\sigma}, \Lambda(r-1))^{\bullet}$  such that 
\begin{equation*}
\mathrm{N}_{\sigma}\eta^{\bullet}_{\sigma}=(-1)^{r+1}\eta^{\sigma}_{\bullet} 
\end{equation*}
where $\eta^{\sigma}_{\bullet}$ is the dual base of   $\eta^{\bullet}_{\sigma}$ in the primitive quotient $\rmH^{n-1}(\rmC_{\sigma}, \Lambda(r-1))_{\bullet} $.
\end{enumerate}
\end{lemma}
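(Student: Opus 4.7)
The plan is to reduce everything to a purely local computation in a formal neighborhood of each ordinary quadratic singularity $\sigma$, and to leverage the semistable model $\rmX^{\mathrm{Bl}}$ introduced just before the lemma. Since the claim is \'{e}tale local at $\sigma$, we may work with the explicit local equations for $\rmX$ in $\mathbb{A}^{2r}_S$ or $\mathbb{A}^{2r+1}_S$ listed above. The proper blow-up $\pi\colon \rmX^{\mathrm{Bl}}\to \rmX$ is an isomorphism away from $\Sigma$, and at each $\sigma$ the fiber is the smooth quadric $\rmD_\sigma$. Because the ODP is Gorenstein with rational resolution, we have $\rmR\pi_*\Lambda=\Lambda$ and consequently $\rmR\pi_*\rmR\Psi_{\rmX^{\mathrm{Bl}}}(\Lambda)\cong \rmR\Psi_\rmX(\Lambda)$.

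For parts (1) and (2), I will feed this identification into the Rapoport--Zink weight spectral sequence for the strictly semistable scheme $\rmX^{\mathrm{Bl}}$, whose $E_1$-page is expressed in terms of the cohomologies of $\widetilde{\rmX}_s$, of the $\rmD_\sigma$, and of their intersections $\rmC_\sigma$. Comparing this with the specialization triangle \eqref{sp-triangle} for $\rmX$ itself, the stalk $\rmR^n\Phi_{\rmX,\sigma}(\Lambda)(r)$ falls out as the kernel of the Gysin/restriction map
\[
\rmH^{n-1}(\rmC_\sigma,\Lambda(r-1))\xrightarrow{\cup\,\eta}\rmH^{n+1}(\rmD_\sigma,\Lambda(r)),
\]
and the non-primitive line is precisely generated by $\eta^{(n-1)/2}$, which is killed by this map only through the hard Lefschetz contribution. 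This identifies $\rmR^n\Phi_\sigma(\Lambda)(r)$ with the primitive part $\rmH^{n-1}(\rmC_\sigma,\Lambda(r-1))^\bullet$, giving (1). Part (2) will then follow by Poincar\'{e} duality at $\sigma$: the pairing between $\rmR^n\Phi_\sigma(\Lambda)(r)$ and $\rmH^n_{\{\sigma\}}(\rmX_{\overline{s}},\rmR\Psi(\Lambda)(r-1))$ matches the cup-product pairing on middle cohomology of $\rmC_\sigma$, and since the primitive subspace is by definition the orthogonal complement of $\langle \eta^{(n-1)/2}\rangle$, its dual is the primitive quotient.

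Part (3) is a direct computation on the smooth quadric $\rmC_\sigma\subset \PP^n_s$ of even dimension $n-1=2r-2$. Its middle cohomology $\rmH^{n-1}(\rmC_\sigma,\Lambda(r-1))$ is a hyperbolic plane: over $\QQ$ (and in fact over $\Lambda$, since $l\ne 2$), I can pick a basis of isotropic classes $\alpha,\beta$ with $\alpha\cdot\beta=1$ corresponding to the two families of maximal linear subspaces of $\rmC_\sigma$. The hyperplane class $\eta^{(n-1)/2}$ has self-intersection $\eta^{n-1}=\deg \rmC_\sigma=2$, so up to a sign we may write $\eta^{(n-1)/2}=\alpha+\beta$; the orthogonal primitive generator is then $\eta^\bullet_\sigma=\alpha-\beta$, whose self-intersection is $-2\,\alpha\cdot\beta=-2$.

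The main obstacle will be the bookkeeping in Step 1: one must be careful with Tate twists, signs, and conventions in the Rapoport--Zink weight spectral sequence to match the geometric map with the abstract $\alpha,\beta$ of the Picard--Lefschetz diagram. Fortunately, the analogous computation appears in \cite[\S A.2]{LTXZZ} and in SGA 7, Exp.~XV, and the case at hand differs from these only in that the exceptional divisor is a quadric rather than a projective space; the essential mechanism is the same.
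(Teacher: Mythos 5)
The paper's own ``proof'' of this lemma is a citation: it invokes \cite[2.2.5]{SGA7} and \cite[Remarks 2.7]{Illusie-van}, where the identification of $\rmR^{n}\Phi_{\sigma}$ with the primitive middle cohomology of the exceptional quadric, its dual description, and the self-intersection $\pm2$ are worked out for an ordinary quadratic singularity. Your proposal instead reproves the statement: reduce to the explicit local model, pass to the semistable blow-up $\rmX^{\mathrm{Bl}}$, compare its nearby cycles with those of $\rmX$, and do an explicit intersection computation on the even-dimensional quadric $\rmC_{\sigma}$. This is a legitimate and essentially standard route (it is how the cited results are proved), and your identification of the kernel of the Gysin map out of $\rmH^{n-1}(\rmC_{\sigma},\Lambda(r-1))$ with the primitive part, together with the duality argument for (2), matches what the paper later uses in Proposition \ref{alpha-beta}.

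Two points need repair. First, the justification ``$\rmR\pi_{*}\Lambda=\Lambda$ because the ODP is Gorenstein with rational resolution'' is wrong: rational singularities is a statement about coherent cohomology ($\rmR\pi_{*}\calO=\calO$) and says nothing about the constant \'etale sheaf; in fact $\rmR^{2k}\pi_{*}\Lambda$ is supported on $\Sigma$ with stalk $\rmH^{2k}(\rmD_{\sigma},\Lambda)\neq 0$ for $0<2k\leq 2n$. The statement you actually need, $\rmR\pi_{s,*}\rmR\Psi_{\rmX^{\mathrm{Bl}}}(\Lambda)\cong\rmR\Psi_{\rmX}(\Lambda)$, is nevertheless true, but for a different reason: nearby cycles commute with proper direct image, and $\pi$ is an isomorphism on generic fibers, so $\rmR\pi_{\eta,*}\Lambda=\Lambda$; replace the faulty sentence by this. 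Second, in (3) the description of $\rmH^{n-1}(\rmC_{\sigma},\Lambda(r-1))$ as a hyperbolic plane with two \emph{isotropic} ruling classes $\alpha,\beta$, $\alpha\cdot\beta=1$, is correct only when $(n-1)/2$ is odd --- in particular in the case $n=3$ actually used in the paper, where $\rmC_{\sigma}$ is a quadric surface and your computation gives exactly $-2$. When $(n-1)/2$ is even the ruling classes satisfy $\alpha^{2}=\beta^{2}=1$, $\alpha\cdot\beta=0$, and the primitive generator has self-intersection $+2$; the sign in general depends on $n\bmod 4$, as in the sign conventions of \cite{SGA7}. So your argument establishes the lemma in the case relevant to this paper, but the blanket ``$-2$'' (in your write-up as in the paper's general statement) should carry this parity caveat.
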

\begin{proof}
For $(1)$ and $(2)$, see \cite[2.2.5 F Expose XV]{SGA7} or \cite[Remarks 2.7]{Illusie-van}.  For part $(3)$, we choose $\eta^{\bullet}_{\sigma}$ to be $\theta_{1}-\theta_{2}$ where $\theta_{1}$ and $\theta_{2}$ are the cycle classes of the generatrices of distinct types as in \cite[ Expose XII 3.5]{SGA7} and \cite[3.2]{Illusie-van}. Then the claim in part $(3)$ follows from the usual Picard--Lefschetz formula in \ref{pic-lef-form}. 
\end{proof}

The above discussions allows us to describe the maps $\alpha_{\rmc}$ and $\beta$ as certain Gysin or restriction maps as in the proposition below.

\begin{proposition}\label{alpha-beta}
We have the following descriptions of the maps $\alpha_{\rmc}$ and $\beta$.
\begin{enumerate}
\item The space of vanishing cycles  $\bigoplus\limits_{\sigma\in\Sigma}\rmR^{n}\Phi_{\sigma}(\Lambda)(r)$ and the map $\alpha_{\rmc}$ inserts in the following commutative diagram 
\begin{equation*}
\begin{tikzcd}
\bigoplus\limits_{\sigma\in\Sigma}\rmH^{n+1}(\rmC_{\sigma}, \Lambda(r)) \arrow[r, "\sim"]                 &       \bigoplus\limits_{\sigma\in\Sigma}\rmH^{n+1}(\rmC_{\sigma}, \Lambda(r))  \\
\bigoplus\limits_{\sigma\in\Sigma}\rmH^{n-1}(\rmC_{\sigma}, \Lambda(r-1)) \arrow[r] \arrow[u, "\cup \eta"'] & \rmH^{n+1}_{\rmc}({\rmX}^{\bullet}_{\overline{s}}, \Lambda(r)) \arrow[u] \\
\bigoplus\limits_{\sigma\in\Sigma}\rmR^{n}\Phi_{\sigma}(\Lambda)(r) \arrow[r, "\alpha_{\rmc}(r)"] \arrow[u]       & \rmH^{n+1}_{\rmc}(\rmX^{\square}_{\overline{s}}, \Lambda(r)) \arrow[u]
\end{tikzcd}
\end{equation*}
where the left and right columns are exact.
\item The space of co-vanishing cycles   $\bigoplus\limits_{\sigma\in\Sigma}\rmH^{n}_{\{\sigma\}}(\rmX_{\overline{s}}, \rmR\Psi(\Lambda)(r-1))$ and the map $\beta$ inserts in the following commutative diagram 
\begin{equation*}
\begin{tikzcd}
\bigoplus\limits_{\sigma\in\Sigma}\rmH^{n}_{\{\sigma\}}(\rmX_{\overline{s}}, \rmR\Psi(\Lambda)(r-1))              &\mathrm{H}^{n-1}(\rmX^{\square}_{\overline{s}}, \Lambda(r-1))\arrow[l, "\beta(r-1)"]           \\
\bigoplus\limits_{\sigma\in\Sigma}\rmH^{n-1}(\rmC_{\sigma}, \Lambda(r-1)) \arrow[u] &   \mathrm{H}^{n-1}({\rmX}^{\bullet}_{\overline{s}}, \Lambda(r-1)) \arrow[l] \arrow[u] \\
\bigoplus\limits_{\sigma\in\Sigma}\rmH^{n-3}(\rmC_{\sigma}, \Lambda(r-2)) \arrow[u, "\cup\eta"]       &  \bigoplus\limits_{\sigma\in\Sigma}\rmH^{n-3}(\rmC_{\sigma}, \Lambda(r-2))\arrow[l,"\sim"] \arrow[u]
\end{tikzcd}
\end{equation*}
where the left and right columns are exact.
\end{enumerate}
\end{proposition}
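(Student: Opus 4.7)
My plan is to realize both diagrams through the semi-stable model $\rmX^{\mathrm{Bl}} \to \rmX$ constructed by blowing up the singular locus $\Sigma$, as described at the beginning of Subsection \ref{semi}. A classical fact is that the nearby and vanishing cycle functors are compatible with proper birational modifications, so that $\rmR\Psi_{\rmX}(\Lambda) \cong \rmR p_* \rmR\Psi_{\rmX^{\mathrm{Bl}}}(\Lambda)$ for the blow-up $p : \rmX^{\mathrm{Bl}} \to \rmX$, and similarly for $\rmR\Phi$. After this reduction the special fiber $\rmX^{\mathrm{Bl}}_{\overline{s}}$ is a strict normal crossings divisor $\widetilde{\rmX}_{\overline{s}} \cup \bigsqcup_{\sigma} \rmD_{\sigma}$ with pairwise intersections $\rmC_{\sigma}$, so that both the Mayer--Vietoris long exact sequence for this decomposition and the blow-up formula for $p|_{\overline{s}} : \widetilde{\rmX}_{\overline{s}} \to \rmX_{\overline{s}}$ become available.

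For the diagram involving $\alpha$, I first invoke Lemma \ref{intersection}(1) to identify $\bigoplus_{\sigma} \rmR^{n}\Phi_{\sigma}(\Lambda)(r)$ with the primitive part of $\bigoplus_{\sigma} \rmH^{n-1}(\rmC_{\sigma}, \Lambda(r-1))$. The exactness of the left-hand column is then the primitive Lefschetz decomposition for smooth quadrics: the kernel of $\cup \eta$ is precisely the primitive classes, a fact already contained in the cohomology computation preceding Proposition \ref{alpha-beta}. The right-hand column is a piece of the long exact blow-up sequence for $\widetilde{\rmX}_{\overline{s}} \to \rmX_{\overline{s}}$, whose exceptional divisor is $\bigsqcup_{\sigma} \rmC_{\sigma}$; this can be extracted either from the excision sequence for the pair $(\rmX_{\overline{s}}, \Sigma)$ together with the blow-up formula, or equivalently from Mayer--Vietoris applied to the semi-stable $\rmX^{\mathrm{Bl}}_{\overline{s}}$. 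To establish commutativity I would trace $\alpha$ through the distinguished triangle $\Lambda \to \rmR\Psi(\Lambda) \to \rmR\Phi(\Lambda)$ on $\rmX^{\mathrm{Bl}}$ and compare it with the connecting homomorphism produced by Mayer--Vietoris, using that away from $\rmC_{\sigma}$ the nearby cycles on $\rmD_{\sigma}$ reduce to the constant sheaf, so that the local contribution at $\sigma$ is concentrated in the fibre cohomology of $\rmC_{\sigma}$.

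The diagram for $\beta$ is then obtained by a dual argument built on Poincar\'{e} duality: Lemma \ref{intersection}(2) identifies the dual space of vanishing cycles with the primitive quotient of $\rmH^{n-1}(\rmC_{\sigma}, \Lambda(r-1))$; the left column of the $\beta$-diagram becomes the quotient version of the primitive Lefschetz decomposition; and the right column is the compactly-supported variant of the blow-up/Mayer--Vietoris sequence used above. The map $\beta$ itself is, up to the Poincar\'{e} duality identifications on $\rmX_{\overline{\eta}}$, the transpose of $\alpha$ under the cup-product pairing, so commutativity of the $\beta$-diagram is formally equivalent to that of the $\alpha$-diagram.

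The main obstacle I anticipate is bookkeeping: tracking Tate twists and fixing the normalization of the Poincar\'{e} duality pairing so as to obtain the asserted commutativity on the nose rather than only up to a universal sign. Once the identifications of Lemma \ref{intersection} are granted, the remaining content is the comparison of two constructions of the boundary maps --- one via the nearby cycle triangle, the other via Mayer--Vietoris --- which is a formal consequence of the functoriality of the specialization map together with the standard compatibility between excision and Gysin morphisms, both of which are present in Illusie's exposition of Picard--Lefschetz theory in the absolutely isolated case.
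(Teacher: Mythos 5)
Your proposal follows essentially the same route as the paper's own proof: identify the (dual) space of vanishing cycles with the primitive part (resp.\ quotient) of $\rmH^{n-1}(\rmC_{\sigma},\Lambda(r-1))$ via Lemma \ref{intersection}, obtain the left columns from the fact that the primitive part is the kernel of $\cup\,\eta$, take the right columns from the blow-up exact sequence for $\widetilde{\rmX}_{\overline{s}}\rightarrow\rmX_{\overline{s}}$ with exceptional locus $\bigsqcup_{\sigma}\rmC_{\sigma}$, and deduce part (2) by duality from part (1). The only difference is that you spell out the commutativity (via $\rmR\Psi_{\rmX}\cong\rmR p_{*}\rmR\Psi_{\rmX^{\mathrm{Bl}}}$ and Mayer--Vietoris on the semi-stable model), which the paper simply asserts follows by definition; this is a harmless elaboration of the same argument.
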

\begin{proof}
The second part is the dual statement of the first part. Therefore we will only prove the first part.  For the first part,  we have $\rmR^{n}\Phi_{\sigma}(\Lambda)(r) =\rmH^{n-1}(\rmC_{\sigma}, \Lambda(r-1))^{\bullet}$ and 
\begin{equation*}
\rmH^{n-1}(\rmC_{\sigma}, \Lambda(r-1))^{\bullet}=\Ker(\rmH^{n-1}(\rmC_{\sigma}, \Lambda(r-1))\xrightarrow{\cup \eta} \rmH^{n+1}(\rmC_{\sigma}, \Lambda(r)))
\end{equation*}
by Lemma \ref{intersection-number}.
The left column is therefore exact. The right column is the excision exact sequence for the closed immersion of the exceptional divisor $\rmC_{\sigma}\hookrightarrow {\rmX}^{\bullet}_{\overline{s}}$. The commutativity of the diagram follows by definition.
\end{proof}

\subsection{Weight-monodromy conjecture for interior cohomology}
We define the interior cohomology group $\rmH^{n}_{!}(\rmX_{\overline{s}}, \rmR\Psi(\Lambda)(r))$ by 
\begin{equation*}
\rmH^{n}_{!}(\rmX_{\overline{s}}, \rmR\Psi(\Lambda)(r))=\mathrm{Im}(\rmH^{n}_{\rmc}(\rmX_{\overline{s}}, \rmR\Psi(\Lambda)(r))\rightarrow \rmH^{n}(\rmX_{\overline{s}}, \rmR\Psi(\Lambda)(r))).
\end{equation*}
Note that the monodromy filtration 
\begin{equation*}
0\subset_{\mathrm{Gr}_{-1, !}(r)}\mathrm{F}_{-1}\mathrm{H}^{n}_{!}(\rmX_{\overline{s}}, \rmR\Psi(\Lambda)(r)) \subset_{\mathrm{Gr}_{0, !}(r)} \mathrm{F}_{0}\mathrm{H}^{n}_{!}(\rmX_{\overline{s}}, \rmR\Psi(\Lambda)(r)) \subset_{\mathrm{Gr}_{1, !}(r)} \mathrm{F}_{1}\mathrm{H}^{n}_{!}(\rmX_{\overline{s}}, \rmR\Psi(\Lambda)(r))
\end{equation*}
of  $\mathrm{H}^{n}_{!}(\rmX_{\overline{s}}, \rmR\Psi(\Lambda)(r))$ is then determined by the Picard-Lefschetz formula for $\mathrm{H}^{n}_{(\rmc)}(\rmX_{\overline{s}}, \rmR\Psi(\Lambda)(r))$ and the successive quotient of this filtration is given by
\begin{equation*}
\begin{aligned}
&\mathrm{Gr}_{-1, !}(r)=\mathrm{Gr}_{-1}\mathrm{H}^{n}_{!}(\rmX_{\overline{s}}, \rmR\Psi(\Lambda)(r))= \Coker(\beta_{!}(r): \mathrm{IH}^{n-1}_{!}(\rmX_{\overline{s}}, \Lambda(r))\rightarrow  \bigoplus\limits_{\sigma\in\Sigma}\rmH^{n}_{\{\sigma\}}(\rmX_{\overline{s}}, \rmR\Psi(\Lambda)(r)));\\
&\mathrm{Gr}_{0, !}(r)=\mathrm{Gr}_{0}\mathrm{H}^{n}_{!}(\rmX_{\overline{s}}, \rmR\Psi(\Lambda)(r))= \mathrm{IH}^{n}_{!}(\rmX_{\overline{s}}, \Lambda(r));\\
&\mathrm{Gr}_{1, !}(r)=\mathrm{Gr}_{1}\mathrm{H}^{n}_{!}(\rmX_{\overline{s}}, \rmR\Psi(\Lambda)(r))=\Ker(\alpha_{!}(r): \bigoplus\limits_{\sigma\in\Sigma}\rmR^{n}\Phi_{\sigma}(\Lambda)(r) \rightarrow \mathrm{IH}^{n+1}_{!}(\rmX_{\overline{s}}, \Lambda(r)));\\
\end{aligned}
\end{equation*}
where $\mathrm{IH}^{\ast}_{!}(\rmX_{\overline{s}}, \Lambda(r))=\mathrm{Im}(\mathrm{IH}^{\ast}_{\rmc}(\rmX_{\overline{s}}, \Lambda(r))\rightarrow \mathrm{IH}^{n}(\rmX_{\overline{s}}, \Lambda(r))$ and the maps $\alpha_{!}(r)$ and $\beta_{!}(r)$ are the composite maps given by
\begin{equation*}
\begin{aligned}
&\bigoplus\limits_{\sigma\in\Sigma}\rmR^{n}\Phi_{\sigma}(\Lambda)(r) \xrightarrow{\alpha_{\rmc}(r)} \mathrm{IH}^{n+1}_{\rmc}(\rmX_{\overline{s}}, \Lambda(r))\rightarrow \mathrm{IH}^{n+1}_{!}(\rmX_{\overline{s}}, \Lambda(r))\\
&\mathrm{IH}^{n-1}_{!}(\rmX_{\overline{s}}, \Lambda(r))\rightarrow \mathrm{IH}^{n-1}(\rmX_{\overline{s}}, \Lambda(r)) \xrightarrow{\beta(r)}\bigoplus\limits_{\sigma\in\Sigma}\rmH^{n}_{\{\sigma\}}(\rmX_{\overline{s}}, \rmR\Psi(\Lambda)(r)).\\
\end{aligned}
\end{equation*}

Let now $\Lambda=\overline{\QQ}_{l}$. The following corollary can be seen as an analogue of the weight-monodromy conjecture for the interior cohomology $\rmH^{n}_{!}(\rmX_{\overline{s}}, \rmR\Psi(\Lambda)(r))$. A special case of this is obtained by Van Hoften in \cite[Theorem 1]{VH19}.

\begin{corollary}\label{WM}
The graded pieces of the monodromy filtration of $\mathrm{H}^{n}_{!}(\rmX_{\overline{s}}, \rmR\Psi(\Lambda)(r))$ are pure of weight $-2, -1$ and $0$. In particular the weight-monodromy conjecture holds for $\mathrm{H}^{n}_{!}(\rmX_{\overline{s}}, \rmR\Psi(\Lambda)(r))$.
\end{corollary} 
\begin{proof}
By Weil's conjecture and Proposition \ref{alpha-beta}, it follows immediately that $\mathrm{Gr}_{-1,!}$ and $\mathrm{Gr}_{1,!}$ are pure of weights $-2$ and $0$. It remains to study the graded piece 
\begin{equation*}
\mathrm{Gr}_{0,!}(r)=\mathrm{IH}^{n}_{!}(\rmX_{\overline{s}}, \Lambda(r)).
\end{equation*}
By Gabber's purity theorem \cite{Gab}, \cite[Corollaire 5.3.2]{BBD} and \cite[Stabilites 5.1.14]{BBD}, the weights of $\mathrm{IH}^{n}(\rmX_{\overline{s}}, \Lambda(r))$ resp.  $\mathrm{IH}^{n}_{\rmc}(\rmX_{\overline{s}}, \Lambda(r))$ are $\geq -1$ resp. $\leq -1$. It then follows that the weight of $\mathrm{Gr}_{0,!}(r)$ has to be $-1$.
\end{proof}

\subsection{Nearby cycles  on certain Shimura varieties} Suppose $f:\rmX\rightarrow \rmS$ is a proper morphism.  Then the proper base change theorem implies that we have an isomorphism 
\begin{equation*}
\rmH^{i}(\rmX_{\overline{\eta}}, \Lambda)\cong \rmH^{i}(\rmX_{\overline{s}}, \rmR\Psi(\Lambda)).
\end{equation*}
Note that if $f: \rmX\rightarrow \rmS$ is not proper, then we don't always have the above isomorphism. However, in the setting of Shimura varieties with good compactification in sense of \cite{LS18a} and \cite{LS18b}, we do have such isomorphism. 

In this article we are only concerned with the Shimura variety $\rmX_{\Pa}(\rmB)$ and this Shimura variety belongs to the case 
\begin{center}
(Nm) {A flat integral model defined by taking normalization of a characteristic $0$ PEL type moduli problem over a product of  good reduction integral models of smooth PEL type moduli problem} 
\end{center}
in the classification of \cite{LS18a} and \cite{LS18b}. The following theorem summarizes the results we need. 
\begin{theorem}[{\cite[Corollary 4.6]{LS18b}}]
Suppose $\rmX$ is a Shimura variety that is in the case of (Nm) and let $\mathbb{V}$ be an automorphic \'etale sheaf defined as in \cite[\S 3]{LS18b}. Then the canonical adjunction morphisms
\begin{equation*}
\rmH^{i}(\rmX_{\overline{\eta}}, \mathbb{V})\rightarrow \rmH^{i}(\rmX_{\overline{s}}, \rmR\Psi(\mathbb{V})).
\end{equation*}
and
\begin{equation*}
\rmH^{i}_{\rmc}(\rmX_{\overline{s}}, \rmR\Psi(\mathbb{V}))\rightarrow \rmH^{i}_{\rmc}(\rmX_{\overline{\eta}}, \mathbb{V})
\end{equation*}
are isomorphisms for all $i$. 
\end{theorem}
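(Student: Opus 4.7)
The plan is to reduce the non-proper statement to the proper case via a good toroidal compactification and then to invoke proper base change. First, I would invoke Lan's theory of integral toroidal compactifications for PEL-type Shimura varieties. In case (Nm), the integral model $\rmX$ is built as the normalization, inside a characteristic-zero PEL moduli problem, of a finite product $\prod_i \rmY_i$ of smooth PEL integral models over $S$. For each factor one has a smooth and proper toroidal compactification $\rmY_i^{\mathrm{tor}}$ over $S$ whose boundary is a relative normal crossings divisor, and one then defines $\rmX^{\mathrm{tor}}$ as the normalization of the closure of $\rmX$ inside $\prod_i \rmY_i^{\mathrm{tor}}$. By construction $\rmX^{\mathrm{tor}}$ is proper over $S$ and contains $\rmX$ as a dense open subscheme with boundary $\rmD = \rmX^{\mathrm{tor}} - \rmX$.

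Write $j\colon \rmX \hookrightarrow \rmX^{\mathrm{tor}}$ for the open immersion. The heart of the argument is to establish that the canonical base-change morphisms
\begin{equation*}
\rmR\Psi(\rmR j_* \VV) \xrightarrow{\sim} \rmR j_* \rmR\Psi(\VV), \qquad \rmR j_! \rmR\Psi(\VV) \xrightarrow{\sim} \rmR\Psi(\rmR j_! \VV)
\end{equation*}
on $\rmX^{\mathrm{tor}}_{\overline{s}}$ are isomorphisms. This is the technical crux of the Lan--Stroh results. Locally near a boundary stratum one works in toroidal charts, where $\VV$ admits an explicit description as a pushforward from the universal semi-abelian family and the nearby cycle sheaf can be computed from the combinatorics of the cone decomposition. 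On each smooth factor $\rmY_i^{\mathrm{tor}}$, smooth base change gives $\rmR\Psi(\calF) = \calF$ directly; the point of case (Nm) is that although $\rmX^{\mathrm{tor}}$ itself may not be smooth, the normalization morphism to $\prod_i \rmY_i^{\mathrm{tor}}$ is finite, so one propagates the compatibility using that finite pushforward commutes with $\rmR\Psi$ by proper base change.

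Granting these commutations, the proof concludes quickly. Applying $\rmR\Gamma(\rmX^{\mathrm{tor}}_{\overline{s}}, -)$ to the first isomorphism and using proper base change for $\rmX^{\mathrm{tor}} \to S$ yields
\begin{equation*}
\rmH^i(\rmX_{\overline{\eta}}, \VV) \;\cong\; \rmH^i(\rmX^{\mathrm{tor}}_{\overline{s}}, \rmR\Psi(\rmR j_*\VV)) \;\cong\; \rmH^i(\rmX^{\mathrm{tor}}_{\overline{s}}, \rmR j_* \rmR\Psi(\VV)) \;=\; \rmH^i(\rmX_{\overline{s}}, \rmR\Psi(\VV)),
\end{equation*}
and the compactly supported version follows analogously from the $\rmR j_!$ variant together with the identity $\rmH^i_{\rmc}(\rmX, -) = \rmH^i(\rmX^{\mathrm{tor}}, \rmR j_! -)$.

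The main obstacle is therefore the compatibility of $\rmR\Psi$ with the canonical and subcanonical extensions $\rmR j_*\VV$ and $\rmR j_!\VV$ across the toroidal boundary. Because the integral model in case (Nm) is defined by normalization, one has no direct geometric control over the boundary behavior of $\rmX^{\mathrm{tor}}$; one must verify that this normalization step does not introduce pathologies that would obstruct base change across $\rmD$. The Lan--Stroh approach handles this by working formally at each cusp with the universal degeneration data, verifying that the relevant $\rmR\Psi$-computation is insensitive to normalization, and thereby propagating the favourable behavior known on the smooth PEL factors to the integral model at hand.
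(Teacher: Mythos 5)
The paper does not prove this statement at all: it is quoted as \cite[Corollary 4.6]{LS18b} and used as a black box, so the only meaningful comparison is with Lan--Stroh's own argument. Your outline does capture the broad shape of that argument: pass to a toroidal compactification $j\colon \rmX\hookrightarrow\rmX^{\mathrm{tor}}$ constructed by normalization, prove that $\rmR\Psi$ commutes with the canonical and subcanonical extensions $\rmR j_*\VV$ and $j_!\VV$, and then conclude by proper base change for $\rmX^{\mathrm{tor}}\rightarrow S$ together with $\rmH^{i}_{\rmc}(\rmX,-)=\rmH^{i}(\rmX^{\mathrm{tor}}, j_!(-))$. That final reduction is fine, and you correctly identify the commutation across the boundary as the crux.

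The gap is in the step you offer as the proof of that crux. Knowing that $\rmR\Psi$ commutes with the finite normalization morphism $\nu\colon \rmX^{\mathrm{tor}}\rightarrow\prod_i\rmY_i^{\mathrm{tor}}$ (proper base change) only gives $\rmR\Psi(\nu_*\rmR j_*\VV)\cong\nu_*\rmR\Psi(\rmR j_*\VV)$; it does not let you compute either side, because $\nu_*\rmR j_*\VV$ is a constructible complex that is no longer lisse, and smoothness of $\prod_i\rmY_i^{\mathrm{tor}}$ over $S$ says nothing about nearby cycles of non-lisse complexes (already $j_!\Lambda$ for the complement of a flat divisor in a smooth proper $S$-scheme has nontrivial vanishing cycles). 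For the same reason, ``smooth base change gives $\rmR\Psi(\calF)=\calF$ on the smooth factors'' only applies to lisse sheaves on all of $\rmY_i^{\mathrm{tor}}$, not to the extensions $\rmR j_*\VV$ or $j_!\VV$ that actually enter. So ``smooth base change on the factors plus finiteness of the normalization'' is not a proof of $\rmR\Psi(\rmR j_*\VV)\cong \rmR j_{s*}\rmR\Psi(\VV)$ and its $j_!$ analogue; that commutation is precisely the content of Lan--Stroh's theorem, and it requires their detailed boundary analysis (toroidal charts and degeneration data for compactifications defined by normalization, well-positioned boundary strata, and the tameness/log-smoothness inputs along the boundary divisor). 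Your last paragraph acknowledges this must be ``verified,'' but in the proposal it is asserted rather than established, so what you have is an accurate roadmap of the cited proof with its essential step left open.
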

Using this theorem, we will freely identify the cohomology of the generic fiber of the quaternionic unitary Shimura variety $\rmX_{\Pa}(\rmB)$ and the nearby cycle cohomology of the special fiber $\overline{\rmX}_{\Pa}(\rmB)$ for the rest of this article.

\section{The level raising matrix}
Let $\pi$ be a cuspidal automorphic representation of general type with weight $(3, 3)$ and trivial central character.
Let $\Sigma$ be the finite set of non-archemedean places of $\QQ$ containing the prime $q$ but not $p$ where $pq$ is the discriminant of the quaternion algebra $\rmB$.  We let 
\begin{equation*}
\TT^{\Sigma}=\bigotimes_{v\not\in \Sigma}\ZZ[\bfG(\ZZ_{v})\backslash\bfG(\QQ_{v})/\bfG(\ZZ_{v})]
\end{equation*}
be the abstract Hecke algebra unramified away from $\Sigma$. Suppose that the datum $(\pi, \Sigma)$ satisfy the following additional properties:
\begin{itemize}
\item the representation $\pi$ is unramified away from $\Sigma$. The Hecke parameter of $\pi$ at $p$ is given by $[\alpha_{p}, \beta_{p}, p^{3}\beta^{-1}_{p}, p^{3}\alpha^{-1}_{p}]$;
\item we have the homomorphism ${\phi}_{\pi, \lambda}: \TT^{\Sigma}\rightarrow \calO_{\lambda}$ as in Construction \ref{Hecke-Algebra} (2) which gives the maximal ideal 
\begin{equation*}
\fracm=\TT^{\Sigma\cup\{p\}}\cap \ker(\TT^{\Sigma}\xrightarrow{\phi_{\pi,\lambda}}\calO_{\lambda}\rightarrow\calO_{\lambda}/\lambda) 
\end{equation*}
as in Construction \ref{Hecke-Algebra} (3).
\end{itemize}
We fix the datum $(\pi, \Sigma, \fracm)$ in this section. We are eventually interested in understanding the singular quotient 
\begin{equation*}
\rmH^{1}_{\mathrm{sing}}(\QQ_{p^{2}},  \rmH^{3}_{\mathrm{c}}(\overline{\rmX}_{\Pa}(\rmB), \rmR\Psi(\calO_{\lambda}(2)))_{\fracm}).
\end{equation*}
As a first step, we have the following proposition.
%To proceed, we first recall that the Picard-Lefschetz formula implies the formula \ref{F-1}
%\begin{equation*}
%\rmF_{-1}\rmH^{1}(\rmI_{\QQ_{p^{2}}}, \rmH^{3}_{\mathrm{c}}(\overline{\rmX}_{\Pa}(\rmB), \rmR\Psi(\calO_{\lambda}(1)))_{\fracm})\cong \Coker(\alpha\circ \rmN^{-1}_{\Sigma}\circ \beta).
%\end{equation*}

\begin{proposition}\label{mondromy-exact-seq}
Suppose that $p(p^{2}-1)$ is invertible in $\calO_{\lambda}/\lambda$. We define 
\begin{equation*}
\mathsf{H}_{\fracm}=\rmH^{3}_{\mathrm{c}}(\overline{\rmX}_{\Pa}(\rmB), \rmR\Psi(\calO_{\lambda}(2)))_{\fracm}. 
\end{equation*}
There is then an exact sequence 
\begin{equation*}
\begin{aligned}
0&\rightarrow \mathrm{F}_{-1}\mathrm{H}^{1}(\mathrm{I}_{\QQ_{p^{2}}}, \mathsf{H}_{\fracm}) \rightarrow \rmH^{1}_{\mathrm{sing}}(\QQ_{p^{2}}, \mathsf{H}_{\fracm}) \rightarrow (\frac{\rmH^{3}_{\mathrm{c}}(\overline{\rmX}_{\Pa}(\rmB), \calO_{\lambda}(1))_{\fracm}}{\bigoplus\limits_{\sigma\in \rmZ_{\{1\}}(\overline{\rmB})}\rmH^{3}_{\{\sigma\}}(\overline{\rmX}_{\Pa}(\rmB), \rmR\Psi(\calO_{\lambda})(1))_{\fracm}})^{\rmG_{\FF_{p^{2}}}}.\\
\end{aligned}
\end{equation*}
\end{proposition}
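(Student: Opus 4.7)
The plan is to combine Hochschild--Serre inflation--restriction for local Galois cohomology with the three-step monodromy filtration on $\mathsf{H}_{\fracm}$ furnished by the Picard--Lefschetz analysis of the preceding subsections. Applying inflation--restriction for $1 \to \rmI_{\QQ_{p^{2}}} \to \rmG_{\QQ_{p^{2}}} \to \rmG_{\FF_{p^{2}}} \to 1$ and using that $\rmG_{\FF_{p^{2}}} \cong \hat{\ZZ}$ has $\ell$-cohomological dimension one, one immediately gets
\begin{equation*}
\rmH^{1}_{\sing}(\QQ_{p^{2}}, \mathsf{H}_{\fracm}) \cong \rmH^{1}(\rmI_{\QQ_{p^{2}}}, \mathsf{H}_{\fracm})^{\rmG_{\FF_{p^{2}}}}.
\end{equation*}

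Next I would invoke the filtration $0 \subset \rmF_{-1}\mathsf{H}_{\fracm} \subset \rmF_{0}\mathsf{H}_{\fracm} \subset \rmF_{1}\mathsf{H}_{\fracm} = \mathsf{H}_{\fracm}$ from \eqref{mono-fil}, whose graded pieces are $\Coker(\beta)$, $\rmH^{3}_{\rmc}(\overline{\rmX}_{\Pa}(\rmB), \calO_{\lambda}(1))_{\fracm}$, and $\Ker(\alpha)$, respectively. Since the monodromy $N$ factors as $\mathsf{H}_{\fracm} \twoheadrightarrow \Ker(\alpha) \xrightarrow{N_{\Sigma}} \Coker(\beta) \hookrightarrow \mathsf{H}_{\fracm}$, one has $N(\mathsf{H}_{\fracm}) \subset \rmF_{-1}\mathsf{H}_{\fracm}$, so the filtration descends to a three-step filtration on the coinvariants $\rmH^{1}(\rmI_{\QQ_{p^{2}}}, \mathsf{H}_{\fracm}) = \mathsf{H}_{\fracm}/N(\mathsf{H}_{\fracm})$: the bottom piece is $\rmF_{-1}\rmH^{1}(\rmI_{\QQ_{p^{2}}}, \mathsf{H}_{\fracm}) = \rmF_{-1}\mathsf{H}_{\fracm}/N(\mathsf{H}_{\fracm})$, identified in Proposition~\ref{F-1}; the middle piece is $\rmF_{0}\mathsf{H}_{\fracm}/\rmF_{-1}\mathsf{H}_{\fracm} \cong \rmH^{3}_{\rmc}(\overline{\rmX}_{\Pa}(\rmB), \calO_{\lambda}(1))_{\fracm}$; and the top piece is $\mathsf{H}_{\fracm}/\rmF_{0}\mathsf{H}_{\fracm} \cong \Ker(\alpha)$.

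The heart of the argument is a Frobenius-eigenvalue computation for which the hypothesis $\ell \nmid p^{2}-1$ is essential. Because $\overline{\rmX}_{\Pa}(\rmB)$ has only isolated ordinary quadratic singularities and the relative dimension $n=3$ is odd, the local vanishing cycle $R^{3}\Phi_{\sigma}(\calO_{\lambda}(1))$ is, as a $\rmG_{\FF_{p^{2}}}$-module, isomorphic to $\calO_{\lambda}(-1)$ on which $\Frob_{p^{2}}$ acts by $p^{2}$, whereas its Poincar\'{e} dual $\rmH^{3}_{\{\sigma\}}(\overline{\rmX}_{s}, \rmR\Psi(\calO_{\lambda}(1))) \cong \calO_{\lambda}$ is trivial as a Frobenius module. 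The invertibility of $p^{2}-1$ in $\calO_{\lambda}/\lambda$ (hence in any $\calO_{\lambda}/\lambda^{m}$) then forces $\Ker(\alpha)^{\rmG_{\FF_{p^{2}}}}_{\fracm} = 0$ and simultaneously guarantees that $\rmF_{-1}\rmH^{1}(\rmI_{\QQ_{p^{2}}}, \mathsf{H}_{\fracm})$, being a quotient of $\Coker(\beta)$, carries trivial Frobenius action and is therefore already contained in $\rmH^{1}(\rmI_{\QQ_{p^{2}}}, \mathsf{H}_{\fracm})^{\rmG_{\FF_{p^{2}}}} = \rmH^{1}_{\sing}(\QQ_{p^{2}}, \mathsf{H}_{\fracm})$.

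To finish, I would take $\rmG_{\FF_{p^{2}}}$-invariants of the two short exact sequences
\begin{equation*}
0 \to \rmF_{0}\mathsf{H}_{\fracm}/N(\mathsf{H}_{\fracm}) \to \rmH^{1}(\rmI_{\QQ_{p^{2}}}, \mathsf{H}_{\fracm}) \to \Ker(\alpha) \to 0,
\end{equation*}
\begin{equation*}
0 \to \rmF_{-1}\rmH^{1}(\rmI_{\QQ_{p^{2}}}, \mathsf{H}_{\fracm}) \to \rmF_{0}\mathsf{H}_{\fracm}/N(\mathsf{H}_{\fracm}) \to \rmH^{3}_{\rmc}(\overline{\rmX}_{\Pa}(\rmB), \calO_{\lambda}(1))_{\fracm} \to 0.
\end{equation*}
The vanishing of $\Ker(\alpha)^{\rmG_{\FF_{p^{2}}}}_{\fracm}$ in the first identifies $(\rmF_{0}\mathsf{H}_{\fracm}/N(\mathsf{H}_{\fracm}))^{\rmG_{\FF_{p^{2}}}}$ with $\rmH^{1}_{\sing}(\QQ_{p^{2}}, \mathsf{H}_{\fracm})$, and the left exact $\rmG_{\FF_{p^{2}}}$-invariants of the second then yield the claimed three-term exact sequence. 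The main technical obstacle will be the careful bookkeeping of Frobenius eigenvalues and Tate twists on both the local vanishing and dual vanishing cycles of the isolated ordinary quadratic singularities, which is needed to verify simultaneously that $\Ker(\alpha)_{\fracm}$ has no Frobenius-fixed vectors and that $\rmF_{-1}\rmH^{1}(\rmI_{\QQ_{p^{2}}}, \mathsf{H}_{\fracm})$ lies entirely in the Frobenius invariants.
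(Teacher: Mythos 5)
Your proposal is correct and follows essentially the same route as the paper: the paper likewise slices the monodromy filtration on $\rmH^{1}(\rmI_{\QQ_{p^{2}}},\mathsf{H}_{\fracm})$ into two tautological short exact sequences with graded pieces $\Coker(\beta)$, $\rmH^{3}_{\rmc}(\overline{\rmX}_{\Pa}(\rmB),\calO_{\lambda}(1))_{\fracm}$ and $\Ker(\alpha)$, takes $\rmG_{\FF_{p^{2}}}$-invariants, and uses $l\nmid p^{2}-1$ to kill the $\Ker(\alpha)$ contribution, identifying $\rmH^{1}_{\sing}$ with the inertia-invariants via inflation--restriction. The only difference is presentational: you make explicit the Frobenius-eigenvalue bookkeeping on the vanishing and dual vanishing cycle spaces (trivial action versus multiplication by $p^{2}$) that the paper leaves implicit when it asserts the isomorphism after taking invariants.
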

\begin{proof}
We consider the two tautological exact sequences
 \begin{equation}\label{two-taut}
\begin{aligned}
0&\rightarrow \mathrm{F}_{-1}\mathrm{H}^{1}(\mathrm{I}_{\QQ_{p^{2}}}, \mathsf{H}_{\fracm})\rightarrow \rmH^{1}(\rmI_{\QQ_{p^{2}}}, \mathsf{H}_{\fracm}) \rightarrow \frac{\rmH^{1}(\rmI_{\QQ_{p^{2}}}, \sfH_{\fracm})}{\mathrm{F}_{-1}\mathrm{H}^{1}(\mathrm{I}_{\QQ_{p^{2}}}, \sfH_{\fracm})}\rightarrow 0\\
&0\rightarrow \rmF_{0}\sfH_{\fracm}(-1)/\rmF_{-1}\sfH_{\fracm}(-1)\rightarrow \frac{\rmH^{1}(\rmI_{\QQ_{p^{2}}}, \mathsf{H}_{\fracm})}{\mathrm{F}_{-1}\mathrm{H}^{1}(\mathrm{I}_{\QQ_{p^{2}}}, \mathsf{H}_{\fracm})}\rightarrow \mathrm{Gr}_{1}\sfH_{\fracm}(-1)\rightarrow 0.
\end{aligned}
\end{equation}
The latter exact sequence rewrites to
\begin{equation}\label{F-1andH}
0\rightarrow\frac{\rmH^{3}_{\mathrm{c}}(\overline{\rmX}_{\Pa}(\rmB), \calO_{\lambda}(1))_{\fracm}}{\bigoplus\limits_{\sigma\in \rmZ_{\{1\}}(\overline{\rmB})}\rmH^{3}_{\{\sigma\}}(\overline{\rmX}_{\Pa}(\rmB), \rmR\Psi(\calO_{\lambda})(1))_{\fracm}}\rightarrow\frac{\rmH^{1}(\rmI_{\QQ_{p^{2}}}, \mathsf{H}_{\fracm})}{\mathrm{F}_{-1}\mathrm{H}^{1}(\mathrm{I}_{\QQ_{p^{2}}}, \mathsf{H}_{\fracm})}\rightarrow \Ker(\alpha_{\rmc}(1))_{\fracm}\rightarrow 0
\end{equation}
by \eqref{mono-fil} and where the last term is the kernel of 
\begin{equation*}
\bigoplus\limits_{\sigma\in\rmZ_{\{1\}}(\overline{\rmB})}\rmR^{3}\Phi_{\sigma}(\calO_{\lambda})_{\fracm}(1)\xrightarrow{\alpha_{\rmc}(1)} \rmH^{4}_{\mathrm{c}}(\overline{\rmX}_{\Pa}, \calO_{\lambda}(1))_{\fracm}.
\end{equation*}
Taking invariant under $\rmG_{\FF_{p^{2}}}$ for the short exact sequence \ref{F-1andH}, we have an isomorphism  
\begin{equation*}
(\frac{\rmH^{3}_{\mathrm{c}}(\overline{\rmX}_{\Pa}(\rmB), \calO_{\lambda}(1))_{\fracm}}{\bigoplus\limits_{\sigma\in \rmZ_{\{1\}}(\overline{\rmB})}\rmH^{3}_{\{\sigma\}}(\overline{\rmX}_{\Pa}(\rmB), \rmR\Psi(\calO_{\lambda})(1))_{\fracm}})^{\rmG_{\FF_{p^{2}}}}\cong (\frac{\rmH^{1}(\rmI_{\QQ_{p^{2}}}, \sfH_{\fracm})}{\mathrm{F}_{-1}\mathrm{H}^{1}(\mathrm{I}_{\QQ_{p^{2}}}, \sfH_{\fracm})})^{\rmG_{\FF_{p^{2}}}}.
\end{equation*}
Taking invariant under $\rmG_{\FF_{p^{2}}}$ for the first short exact sequence of \eqref{two-taut}, we immediately obtain the desired exact sequence
\begin{equation*}
\begin{aligned}
0&\rightarrow \mathrm{F}_{-1}\mathrm{H}^{1}(\mathrm{I}_{\QQ_{p^{2}}}, \mathsf{H}_{\fracm})\rightarrow \rmH^{1}_{\mathrm{sing}}(\QQ_{p^{2}}, \mathsf{H}_{\fracm}) \rightarrow (\frac{\rmH^{3}_{\mathrm{c}}(\overline{\rmX}_{\Pa}(\rmB), \calO_{\lambda}(1))_{\fracm}}{\bigoplus\limits_{\sigma\in \rmZ_{\{1\}}(\overline{\rmB})}\rmH^{3}_{\{\sigma\}}(\overline{\rmX}_{\Pa}(\rmB), \rmR\Psi(\calO_{\lambda})(1))_{\fracm}})^{\rmG_{\FF_{p^{2}}}}.\\
\end{aligned}
\end{equation*}
\end{proof}
\subsection{The level raising matrix}
For constructing the level raising matrix, we introduce some notations. Recall that $\mathrm{DL}^{\square}(\Lambda_{i})$ is the complement in $\mathrm{DL}(\Lambda_{i})$ of its set of superspecial points. Note that each irreducible component of  $\overline{\rmX}^{\square\mathrm{ss}}_{\Pa}(\rmB)$ is isomorphic to $\mathrm{DL}^{\square}(\Lambda_{i})$ for some $i\in\{0,2\}$ and the irreducible components are parametrized by the Shimura sets $\rmZ_{\{0\}}(\overline{\rmB})$ and $\rmZ_{\{2\}}(\overline{\rmB})$. Let $\overline{\rmX}^{\square\mathrm{ss.n}}_{\Pa, \{0,2\}}(\rmB)$ be the normalization of the supersingular locus $\overline{\rmX}^{\square\mathrm{ss}}_{\Pa}(\rmB)$ and hence we have 
 \begin{equation*}
 \overline{\rmX}^{\square\mathrm{ss.n}}_{\Pa, \{0,2\}}(\rmB)=\overline{\rmX}^{\square\mathrm{ss.n}}_{\Pa, \{0\}}(\rmB)\sqcup \overline{\rmX}^{\square\mathrm{ss.n}}_{\Pa, \{2\}}(\rmB)  
 \end{equation*}
where $ \overline{\rmX}^{\square\mathrm{ss.n}}_{\Pa, \{i\}}(\rmB)$ is the disjoint union of $\mathrm{DL}^{\square}(\Lambda_{i})$ parametrized by $\rmZ_{\{i\}}(\overline{\rmB})$ for $i\in\{0,2\}$.  

\begin{construction}\label{ss-cycle-class}
We have two natural Gysin morphisms
\begin{equation}\label{Tate-inj}
\begin{aligned}
&\mathrm{inc}^{\{0\}}_{!}: \calO_{\lambda}[\rmZ_{\{0\}}(\overline{\rmB})]\cong\rmH^{0}(\overline{\rmX}^{\square\mathrm{ss.n}}_{\Pa, \{0\}}(\rmB), \calO_{\lambda})\rightarrow  \rmH^{2}(\overline{\rmX}^{\square}_{\Pa}(\rmB), \calO_{\lambda}(1))\\
&\mathrm{inc}^{\{2\}}_{!}: \calO_{\lambda}[\rmZ_{\{2\}}(\overline{\rmB})]\cong \rmH^{0}(\overline{\rmX}^{\square\mathrm{ss.n}}_{\Pa, \{2\}}(\rmB), \calO_{\lambda}) \rightarrow  \rmH^{2}(\overline{\rmX}^{\square}_{\Pa}(\rmB), \calO_{\lambda}(1))\\
\end{aligned}
\end{equation}
and the two natural restriction morphisms
\begin{equation}\label{Tate-surj}
\begin{aligned}
&\mathrm{inc}^{\ast}_{\{0\}}: \rmH^{4}_{\rmc}(\overline{\rmX}^{\square}_{\Pa}(\rmB), \calO_{\lambda}(2))\rightarrow  \rmH^{4}_{\rmc}(\overline{\rmX}^{\square\mathrm{ss.n}}_{\Pa, \{0\}}(\rmB), \calO_{\lambda}(2)) \cong \calO_{\lambda}[\rmZ_{\{0\}}(\overline{\rmB})] \\
&\mathrm{inc}^{\ast}_{\{2\}}: \rmH^{4}_{\rmc}(\overline{\rmX}^{\square}_{\Pa}(\rmB), \calO_{\lambda}(2))\rightarrow  \rmH^{4}_{\rmc}(\overline{\rmX}^{\square\mathrm{ss.n}}_{\Pa, \{2\}}(\rmB), \calO_{\lambda}(2))\cong \calO_{\lambda}[\rmZ_{\{2\}}(\overline{\rmB})].\\
\end{aligned}
\end{equation}
\end{construction}
The above construction fits into the following diagram which sets up the level raising matrix.

\begin{equation}\label{level-raise-diagram}
\begin{tikzcd}
 \calO_{\lambda}[\rmZ_{\{0\}}(\overline{\rmB})] \arrow[rd, "\mathrm{inc}^{\{0\}}_{!}"'] &                                    &  \calO_{\lambda}[\rmZ_{\{2\}}(\overline{\rmB})]\arrow[ld, "\mathrm{inc}^{\{2\}}_{!}"] \\
                   & \rmH^{2}(\overline{\rmX}^{\square}_{\Pa}(\rmB), \calO_{\lambda}(1)) \arrow[d, "\beta(1)"]                   &                   \\
                   & \bigoplus\limits_{\sigma\in \rmZ_{\{1\}}(\overline{\rmB})}\rmH^{3}_{\{\sigma\}}(\overline{\rmX}_{\Pa}(\rmB), \rmR\Psi(\calO_{\lambda})(1)) \arrow[d, "\mathrm{N}^{-1}_{\Sigma}"]                   &                   \\
                   & \bigoplus\limits_{\sigma\in \rmZ_{\{1\}}(\overline{\rmB})}\rmR^{3}\Phi_{\sigma}(\calO_{\lambda})(2)) \arrow[d, "\alpha_{\rmc}(2)"]                   &                   \\
                   &  \rmH^{4}_{\rmc}(\overline{\rmX}^{\square}_{\Pa}(\rmB), \calO_{\lambda}(2)) \arrow[ld, "\mathrm{inc}^{\ast}_{\{0\}}"'] \arrow[rd, "\mathrm{inc}^{\ast}_{\{2\}}"] &                   \\
 \calO_{\lambda}[\rmZ_{\{0\}}(\overline{\rmB})]            &                                    &   \calO_{\lambda}[\rmZ_{\{2\}}(\overline{\rmB})]            
\end{tikzcd}
\end{equation}

\begin{construction}
We use the above diagram to definie the following four maps
\begin{equation*}
\begin{aligned}
&\calT_{\mathrm{lr},\{00\}}= \mathrm{inc}^{\ast}_{\{0\}} \circ\alpha_{\rmc}(2)\circ\rmN^{-1}_{\Sigma}\circ\beta(1)\circ\mathrm{inc}^{\{0\}}_{!}\\
&\calT_{\mathrm{lr},\{02\}}= \mathrm{inc}^{\ast}_{\{2\}} \circ\alpha_{\rmc}(2)\circ\rmN^{-1}_{\Sigma}\circ\beta(1)\circ\mathrm{inc}^{\{0\}}_{!}\\
&\calT_{\mathrm{lr},\{20\}}= \mathrm{inc}^{\ast}_{\{0\}} \circ\alpha_{\rmc}(2)\circ\rmN^{-1}_{\Sigma}\circ\beta(1)\circ\mathrm{inc}^{\{2\}}_{!}\\
&\calT_{\mathrm{lr},\{22\}}= \mathrm{inc}^{\ast}_{\{2\}} \circ\alpha_{\rmc}(2)\circ\rmN^{-1}_{\Sigma}\circ\beta(1)\circ\mathrm{inc}^{\{2\}}_{!}.\\
\end{aligned}
\end{equation*}
We will call the resulting matrix
\begin{equation*}
\calT_{\mathrm{lr}}=\begin{pmatrix} &\calT_{\mathrm{lr},\{00\}} & \calT_{\mathrm{lr}, \{02\}}\\ &\calT_{\mathrm{lr}, \{20\}}& \calT_{\mathrm{lr}, \{22\}}\\ \end{pmatrix}
\end{equation*}
\end{construction}
the {\em level raising matrix}. We can localize the above diagram at the maximal ideal $\fracm$ and write the resulting matrix as $\calT_{\mathrm{lr},\fracm}$, however all the matrix entries will still be denoted by same symbols without referring to $\fracm$. If we need to consider this matrix modulo $\fracm$, then we will denote it by $\calT_{\mathrm{lr}/\fracm}$. Again the entries of this matrix will be denoted the same notation without referring to $\fracm$ again.

\subsection{A Hecke operator identity} We will identify the entries of the level raising matrix with certain Hecke operators, we first prove some auxiliary results about the spherical Hecke algebra of $\bfG=\GSp_{4}$ over $\QQ_{p}$. The computation in this subsection is completely done in the local set up and hence in this subsection $\bfG$ will be understood as a group over $\QQ_{p}$. Let $\bfG(\ZZ_{p})=\GSp_{4}(\ZZ_{p})$ be the hyperspecial subgroup of $\bfG(\QQ_{p})$. Consider the spherical Hecke algebra $\TT_{p}=\ZZ[\bfG(\ZZ_{p})\backslash\bfG(\QQ_{p})/\bfG(\ZZ_{p})]$.  We have the classical Satake transform
\begin{equation*}
\calS:\TT_{p}\rightarrow\ZZ[p^{-1}][\mathrm{X}^{\ast}(\hat{\rmT})]^{\rmW_{\bfG}}
\end{equation*}
where $\rmW_{\bfG}$ is the absolute Weyl group of $\bfG$.
We fix a datum $\bfT\subset \bfB \subset \bfG$ of the diagonal torus contained in the upper triangular Borel subgroup as in the global case. This determines a positive Weyl chamber $\rmP^{+}$ in $\mathrm{X}_{\ast}(\rmT)\cong\mathrm{X}^{\ast}(\hat{\rmT})$. An element $\nu\in \mathrm{X}^{\ast}(\hat{\rmT})$ determines an irreducible representation $V_{\nu}$ of the dual group $\hat{\rmG}=\GSpin(5)$ whose character is denoted by $\chi_{\nu}\in \mathcal{R}(\hat{\rmG})=\ZZ[\rmX^{\ast}(\hat{\rmT})]^{\rmW_{\bfG}}$. 

It is well-known that $\TT_{p}$ is isomorphic to $\mathbb{Z}[\rmT^{\pm}_{p,0}, \rmT_{p,1}, \rmT_{p,2}]$ where 
\begin{equation*}
\begin{split}
&\rmT_{p,0}=c_{\nu_{0}}=\mathbf{char}(\bfG(\ZZ_{p})\begin{pmatrix}p&&&\\&p&&\\&&p&\\&&&p\\\end{pmatrix}\bfG(\ZZ_{p})) \text{ for }\nu_{0}=(1, 1, 1, 1);\\
&\rmT_{p,2}=c_{\nu_{2}}=\mathbf{char}(\bfG(\ZZ_{p})\begin{pmatrix}p&&&\\& p&&\\&&1&\\&&&1\\\end{pmatrix}\bfG(\ZZ_{p})) \text{ for }\nu_{2}=(1, 1, 0, 0);\\
&\rmT_{p,1}=c_{\nu_{1}}=\mathbf{char}(\bfG(\ZZ_{p})\begin{pmatrix}p^{2} &&&\\&p&&\\&&p&\\&&& 1\\\end{pmatrix}\bfG(\ZZ_{p})) \text{ for } \nu_{1}=(2, 1, 1, 0).\\
\end{split}
\end{equation*}

The following spherical Hecke operator will appear naturally in later computations
\begin{equation*}
\rmT_{p^{2},2}=c_{2\nu_{2}}=\mathbf{char}(\bfG(\ZZ_{p})\begin{pmatrix}p^{2}&&&\\&p^{2}&&\\&&1&\\&&&1\\\end{pmatrix}\bfG(\ZZ_{p})) \text{ for } 2\nu_{2}=(2, 2, 0, 0).
\end{equation*}
We will also need the following two Hecke operators in $\ZZ[\rmK_{\{2\}}\backslash \bfG(\QQ_{p})/\rmK_{\{0\}}]$
\begin{equation*}
\begin{aligned}
&\rmT_{02}= \mathbf{char}(\rmK_{\{2\}}\begin{pmatrix}p&&&\\&p&&\\&&1&\\&&&1\\\end{pmatrix}\rmK_{\{0\}})\\
&\rmT_{20}= \mathbf{char}(\rmK_{\{0\}}\begin{pmatrix}p^{-1}&&&\\&p^{-1}&&\\&&1&\\&&&1\\\end{pmatrix}\rmK_{\{2\}}).\\
\end{aligned}
\end{equation*}

In the computations below, we will follow mostly the notations and conventions in \cite{Gross-Satake} and use results from there freely. We have the following useful formula \cite[3.12]{Gross-Satake}
\begin{equation*}
p^{\langle\nu, \rho\rangle}\chi_{\nu}=\calS(c_{\nu})+\sum_{\xi<\nu}d_{\nu}(\xi)\calS(c_{\xi}).
\end{equation*}
The integers $d_{\nu}(\xi)$ are given by $\rmP_{\xi, \nu}(p)$ for the Kazhdan-Lusztig polynomial $\rmP_{\xi, \nu}$ evaluated at $p$ and $2\rho$ is the sum of the positive roots. Since the weight $\nu_{2}$ is minuscule, we immediately have
\begin{equation*}
p^{\frac{3}{2}}\chi_{\nu_{2}}=\calS(c_{\nu_{2}})=\calS(\mathrm{T}_{p, 2}).
\end{equation*}
On the other hand for the non-minuscule weight $\nu_{1}$, we have
\begin{equation*}
p^{2}\chi_{\nu_{1}}=\calS(c_{\nu_{1}})+\calS(c_{0})=\calS(\mathrm{T}_{p, 1})+1.
\end{equation*}
The following identity is the main result of this subsection. It can perhaps be proved by a massive double coset computation whereas our proof uses crucially the Satake isomorphism to simplify the computation. This identity seems not to exist already in the literature and therefore we give full details which we think are of independent interest.
\begin{proposition}\label{Hecke-id}
The following identity holds in $\TT_{p}$
\begin{equation*}
\rmT_{p^{2}, 2}=\rmT^{2}_{p,2}-(p+1)\rmT_{p,1}-(p+1)(p^{2}+1).
\end{equation*}
\end{proposition}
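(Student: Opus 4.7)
The plan is to verify the stated identity in the spherical Hecke algebra $\TT_p$ after applying the injective Satake transform $\calS:\TT_p\hookrightarrow \ZZ[p^{\pm 1/2}][\rmX^*(\hat\rmT)]^{W_{\bfG}}$, so that it suffices to check the resulting equality in the character ring of $\hat\rmG = \GSp_4\cong\GSpin_5$. The key structural input is the decomposition of $V_{\nu_2}\otimes V_{\nu_2}$. Since $V_{\nu_2}$ is the $4$-dimensional standard representation, with character evaluating at a Satake class $(\hat\alpha,\hat\beta,\hat\gamma,\hat\delta)$ with $\hat\alpha\hat\delta=\hat\beta\hat\gamma$ to $\hat e_1:=\hat\alpha+\hat\beta+\hat\gamma+\hat\delta$, I expect
\[
V_{\nu_2}\otimes V_{\nu_2} \;\cong\; \Sym^2 V_{\nu_2}\oplus\Lambda^2_0 V_{\nu_2}\oplus \CC\cdot\omega \;\cong\; V_{2\nu_2}\oplus V_{\nu_1}\oplus V_{\nu_0}
\]
of respective dimensions $10+5+1=16$, where $\omega$ is the invariant symplectic form on $V_{\nu_2}$. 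This yields the character identity $\chi_{\nu_2}^2 = \chi_{2\nu_2}+\chi_{\nu_1}+\chi_{\nu_0}$, equivalently $\calS(\rmT_{p,2})^2 = p^3\chi_{2\nu_2}+p^3\chi_{\nu_1}+p^3\chi_{\nu_0}$.

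Combining this with $\calS(\rmT_{p,1})=p^2\chi_{\nu_1}-1$ recalled above, the Satake image of the right-hand side of the asserted identity becomes
\[
\calS(\rmT_{p,2})^2 - (p+1)\calS(\rmT_{p,1}) - (p+1)(p^2+1) = p^3\chi_{2\nu_2} - p^2\chi_{\nu_1} + p^3\chi_{\nu_0} - p^2(p+1).
\]
On the other hand the Lusztig--Kato formula applied to $\nu=2\nu_2$ (with $\langle 2\nu_2,\rho\rangle=3$) reads
\[
p^3\chi_{2\nu_2} = \calS(c_{2\nu_2}) + P_{\nu_1,2\nu_2}(p)\,\calS(\rmT_{p,1}) + P_{\nu_0,2\nu_2}(p)\,\calS(\rmT_{p,0}),
\]
since a direct check in $4$-tuple coordinates shows that $\nu_1$ and $\nu_0$ are the only dominant cocharacters strictly below $2\nu_2$ (one has $2\nu_2-\nu_1=(0,1,-1,0)$ and $2\nu_2-\nu_0=(1,1,-1,-1)$, both positive roots of $\hat\rmG$). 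Substituting and matching the coefficients of $\chi_{\nu_1}$ and $\chi_{\nu_0}$ on the two sides reduces the proof of the identity to the two Kazhdan--Lusztig polynomial evaluations $P_{\nu_1,2\nu_2}(p)=1$ and $P_{\nu_0,2\nu_2}(p)=p^2+1$.

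The main technical obstacle is thus the determination of these two KL polynomials for the affine Weyl group of type $\widetilde C_2$. These small-rank values can be extracted from the standard Deodhar--Kazhdan--Lusztig recursion, or by Poincar\'e polynomial calculations on the corresponding affine Schubert varieties. A conceptually cleaner shortcut is to bypass the KL computation entirely: evaluating both sides at an arbitrary Satake class and using the explicit characters $\chi_{\nu_1}(\Frob)=\hat e_2-1$ and $\chi_{2\nu_2}(\Frob)=\hat e_1^2-\hat e_2$, the identity reduces to the polynomial equality
\[
\text{(eigenvalue of }\rmT_{p^2,2}\text{ on the spherical vector)} = p^3\hat e_1^2 - (p+1)p^2\hat e_2
\]
in the elementary symmetric functions, which can be verified directly by decomposing the double coset $\bfG(\ZZ_p)\,\mathrm{diag}(p^2,p^2,1,1)\,\bfG(\ZZ_p)$ into single right $\bfG(\ZZ_p)$-cosets and computing the resulting eigenvalue explicitly.
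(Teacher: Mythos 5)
Your reduction is set up correctly, and it is in substance the same route the paper takes: pass through the Satake transform, use $V_{\nu_{2}}^{\otimes 2}\cong V_{2\nu_{2}}\oplus V_{\nu_{1}}\oplus \CC$ (equivalently $\Sym^{2}V_{\nu_{2}}=V_{2\nu_{2}}$ and $\wedge^{2}V_{\nu_{2}}=V_{\nu_{1}}\oplus\CC$) together with $p^{3/2}\chi_{\nu_{2}}=\calS(\rmT_{p,2})$ and $p^{2}\chi_{\nu_{1}}=\calS(\rmT_{p,1})+1$, and compare with the Lusztig--Kato expansion of $p^{3}\chi_{2\nu_{2}}$ over the dominant coweights $\nu_{1},\nu_{0}$ below $2\nu_{2}$. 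Your target values $P_{\nu_{1},2\nu_{2}}(p)=1$ and $P_{\nu_{0},2\nu_{2}}(p)=p^{2}+1$ are the correct ones, and your equivalent reformulation $\calS(\rmT_{p^{2},2})=p^{3}\hat{e}_{1}^{2}-(p+1)p^{2}\hat{e}_{2}$ checks out; the slight looseness about the central factor ($\chi_{\nu_{0}}$ versus the constant $1$, i.e.\ the suppressed $\rmT_{p,0}$) is shared by the paper and is harmless in the trivial-central-character setting where the identity is applied.

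The genuine gap is that you never establish the two Kazhdan--Lusztig evaluations, and once the reduction is made those evaluations are the entire content of the proposition. Asserting that they ``can be extracted from the Deodhar recursion or from Poincar\'e polynomials of affine Schubert varieties'' is not an argument, and your proposed shortcut --- decomposing $\bfG(\ZZ_{p})\,\mathrm{diag}(p^{2},p^{2},1,1)\,\bfG(\ZZ_{p})$ into right cosets and computing the spherical eigenvalue directly --- is exactly the brute-force double coset computation the paper's proof is designed to avoid, and you do not carry it out either; since $(2,2,0,0)$ is not minuscule, one must track all lower-order terms of the Satake transform, not just the leading term, so this is a genuinely lengthy verification rather than a one-liner. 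The missing input can be supplied in two short steps, as in the paper: $\deg P_{\nu_{1},2\nu_{2}}<\langle 2\nu_{2}-\nu_{1},\rho\rangle=1$ together with constant term $1$ forces $P_{\nu_{1},2\nu_{2}}=1$; and since $2\nu_{2}$ is the highest weight of the adjoint representation of $\hat{\rmG}$, Gross's formula gives $P_{\nu_{0},2\nu_{2}}(p)=\sum_{i}p^{m_{i}-1}=1+p^{2}$, where $m_{i}\in\{1,3\}$ are the exponents of the Weyl group of type $\rmC_{2}$. Without one of these arguments (or an actually executed coset decomposition), your proposal does not yet prove the identity. A minor further slip: $2\nu_{2}-\nu_{0}=(1,1,-1,-1)$ is not itself a positive root but a sum of two positive (co)roots; the dominance-order claim you need is still true.
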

\begin{proof}
The representation $V_{\nu_{1}}$ is the $5$-dimensional orthogonal representation and $V_{\nu_{2}}$ is the standard representation. Therefore $\wedge^{2} V_{\nu_{2}}=V_{\nu_{1}}\oplus \CC$. We also have $\wedge^{2}V_{\nu_{1}}=V_{2\nu_{2}}$ is the adjoint representation of dimension $10$. Thus we have
\begin{equation*}
\begin{aligned}
V^{\otimes 2}_{\nu_{2}}&=V_{2\nu_{2}}+\wedge^{2}V_{\nu_{2}}\\
&=V_{2\nu_{2}}+ V_{\nu_{1}}+\CC\\
\end{aligned}
\end{equation*}
Taking characters, we have $\chi^{2}_{\nu_{2}}=\chi_{2\nu_{2}}+\chi_{\nu_{1}}+1$ and hence $p^{3}\chi^{2}_{\nu_{2}}=p^{3}\chi_{2\nu_{2}}+p^{3}\chi_{\nu_{1}}+p^{3}$. Therefore
\begin{equation*}
\begin{aligned}
p^{3}\chi_{2\nu_{2}}&=p^{3}\chi^{2}_{\nu_{2}}-p^{3}\chi_{\nu_{1}}-p^{3}\\
&=\calS(\rmT^{2}_{p,2})-p\calS(\rmT_{p, 1})-(p+p^{3}).\\
\end{aligned}
\end{equation*}
On the other hand, we also have 
\begin{equation*}
\begin{aligned}
p^{3}\chi_{2\nu_{2}}&=\calS(c_{2\nu_{2}})+d_{2\nu_{2}}(\nu_{1})\calS(c_{\nu_{1}})+d_{2\nu_{2}}(0)\\
&=\calS(\rmT_{p^{2},2})+ d_{2\nu_{2}}(\nu_{1})\calS(\rmT_{p, 1})+d_{2\nu_{2}}(0).\\
\end{aligned}
\end{equation*} 
\begin{itemize}
\item $d_{2\nu_{2}}(\nu_{1})=1$; This follows from the fact that $\rmP_{\nu_{1}, 2\nu_{2}}$ has degree strictly less than $\langle2\nu_{2}-\nu_{1},\rho\rangle=1$ and the constant coefficient is $1$.
\item $d_{2\nu_{2}}(0)=1+p^{2}$; This follows from the fact that $2\nu_{2}$ is the highest weight of the adjoint representation, which implies that $\rmP_{0, 2\nu_{2}}(p)=\sum_{i=1}p^{m_{i}-1}$ where $m_{i}$ are the exponents of the Weyl group of type $\rmC_{2}$ which are $1$ and $3$, see \cite[4.6]{Gross-Satake}.
\end{itemize}
Equating the two expressions of $p^{3}\chi_{2\nu_{2}}$ gives the desired result.
\end{proof}

\subsection{Computing the level raising matrix} We now calculate the entries of the level raising  matrix $\calT_{\mathrm{lr}}$ term by term. The main result is the following proposition.

\begin{proposition}\label{lr-matrix}
The level raising matrix $\calT_{\mathrm{lr}}$ is given by
\begin{equation*}
\calT_{\mathrm{lr}}=\begin{pmatrix} &-(\rmT^{-1}_{p,0}\rmT_{p,1}+(p+1)(p^{2}+1)) & (p+1)\rmT_{02}\\ &(p+1)\rmT_{20}& -(\rmT^{-1}_{p,0}\rmT_{p,1}+(p+1)(p^{2}+1))\\ \end{pmatrix}.
\end{equation*}
\end{proposition}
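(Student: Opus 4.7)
The plan is to evaluate each entry of $\calT_{\mathrm{lr}}$ on a basis element $[\mathrm{DL}(\Lambda_{j})]\in\calO_{\lambda}[\rmZ_{\{j\}}(\overline{\rmB})]$ and unpack the composition $\mathrm{inc}^{\ast}_{\{i\}}\circ\alpha\circ\rmN^{-1}_{\Sigma}\circ\beta\circ\mathrm{inc}^{\{j\}}_{!}$ into an intersection pairing mediated by the singular points of $\overline{\rmX}_{\Pa}(\rmB)$. The diagrammatic descriptions of $\alpha$ and $\beta$ in Proposition \ref{alpha-beta} reduce this to a local computation on each exceptional quadric $\rmC_{\sigma}\cong\PP^{1}\times\PP^{1}$ of the semistable model $\rmX^{\mathrm{Bl}}_{\Pa}(\rmB)$.

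First I compute $\beta\circ\mathrm{inc}^{\{j\}}_{!}([\mathrm{DL}(\Lambda_{j})])$. Passing to the blow-up $\rmX^{\mathrm{Bl}}_{\Pa}(\rmB)$, the strict transform of $\mathrm{DL}(\Lambda_{j})$ meets each exceptional quadric $\rmC_{\sigma}$, indexed by the singular points $\sigma$ lying on $\mathrm{DL}(\Lambda_{j})$, in one of its two rulings. This class descends to a generator of the primitive quotient $\rmH^{2}(\rmC_{\sigma},\calO_{\lambda}(1))_{\bullet}$. Applying $\rmN^{-1}_{\Sigma}$ identifies this with the vanishing cycle generator $\eta^{\bullet}_{\sigma}$ in the primitive subspace, and $\alpha$ then realizes it as a class in $\rmH^{4}(\overline{\rmX}_{\Pa}(\rmB),\calO_{\lambda}(2))$ via the Gysin map of Proposition \ref{alpha-beta}. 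Restricting via $\mathrm{inc}^{\ast}_{\{i\}}$ to a component $\mathrm{DL}(\Lambda_{i}^{\prime})$ and invoking the self-intersection computation $(\eta^{\bullet}_{\sigma})^{2}=-2$ of Lemma \ref{intersection} (3), the resulting coefficient is
\begin{equation*}
-2\cdot\#\{\sigma:\sigma\in\mathrm{DL}(\Lambda_{j})\cap\mathrm{DL}(\Lambda_{i}^{\prime})\}
\end{equation*}
up to a sign governed by which ruling of $\rmC_{\sigma}$ each component cuts out.

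Next I translate the point counts into Hecke operators via the building combinatorics of $\tilde{\rmC}_{2}$. For the diagonal $\calT_{\mathrm{lr},\{00\}}$ with $\Lambda_{0}=\Lambda_{0}^{\prime}$, the singular points on $\mathrm{DL}(\Lambda_{0})$ are in bijection with type-$1$ sublattices of $\Lambda_{0}$, equivalently the Lagrangian lines in the $4$-dimensional symplectic $\FF_{p}$-space $\Lambda_{0}/p\Lambda_{0}$, whose count is $(p+1)(p^{2}+1)$. For $\Lambda_{0}\neq\Lambda_{0}^{\prime}$, the count is the number of type-$1$ vertices of the building adjacent to both, which after normalizing by the central operator $\rmT_{p,0}$ is exactly the spherical Hecke operator $\rmT_{p,0}^{-1}\rmT_{p,1}$; this is where the identity in Proposition \ref{Hecke-id} is used, to reconcile the iterated-Satake presentation of the double-adjacency count (via $\rmT_{p,2}^{2}$) with its weight-$(2,1,1,0)$ form $\rmT_{p,1}$. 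For the off-diagonal $\calT_{\mathrm{lr},\{02\}}$ and $\calT_{\mathrm{lr},\{20\}}$, each type-$1$ vertex is adjacent to exactly $(p+1)$ type-$0$ and $(p+1)$ type-$2$ vertices in $\tilde{\rmC}_{2}$, so the count is the double coset Hecke operator $(p+1)\rmT_{02}$ respectively $(p+1)\rmT_{20}$.

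The main obstacle is the sign and orientation bookkeeping in the first step. The two rulings of $\rmC_{\sigma}\cong\PP^{1}\times\PP^{1}$ differ by the hyperplane class $\eta=(1,1)$, so modulo $\eta$ one has $(1,0)\equiv-(0,1)$ in the primitive quotient. A type-$\{0\}$ component and a type-$\{2\}$ component meeting at $\sigma$ cut out the two distinct rulings, so off-diagonal contributions are a priori twisted by this sign, whereas two same-type components yield parallel contributions. One must verify that the Frobenius swap of the nodes $\{0\}$ and $\{2\}$ in the local affine Dynkin diagram acts on the vanishing-cycle cohomology compatibly with the parametrization of $\rmZ_{\{0\}}(\overline{\rmB})$ and $\rmZ_{\{2\}}(\overline{\rmB})$ provided by Theorem \ref{uniformization}, so that the final signs reconcile to produce the uniform factor $-2$ appearing in the statement. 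This transport of the local ruling picture across the Rapoport--Zink uniformization is the subtle part of the argument.
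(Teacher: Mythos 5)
Your combinatorial core matches the paper's own proof: after using Proposition \ref{alpha-beta} to view $\alpha$ and $\beta$ as Gysin and restriction maps passing through the rank-one vanishing-cycle spaces at the superspecial points, each entry reduces to counting, for a pair of irreducible components, the type-$1$ vertex lattices adjacent to both, and your counts ($(p+1)(p^{2}+1)$ lines in $\Lambda_{0}/p\Lambda_{0}$ for the self-contribution, the double coset of $\mathrm{diag}(p,1,1,p^{-1})$ giving $\rmT_{p,0}^{-1}\rmT_{p,1}$ for $\Lambda_{0}\neq\Lambda_{0}'$, and the $p+1$ lines in $\Lambda_{0}/\Lambda_{2}$ giving $(p+1)\rmT_{02}$ and $(p+1)\rmT_{20}$) are exactly those in the paper. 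One correction there: Proposition \ref{Hecke-id} is neither used nor needed at this point; the relative position of two distinct type-$0$ lattices meeting in a type-$1$ lattice is literally the double coset $\rmK_{\{0\}}\mathrm{diag}(p,1,1,p^{-1})\rmK_{\{0\}}=\rmT_{p,0}^{-1}\rmT_{p,1}$, with no reconciliation via $\rmT_{p,2}^{2}$; that identity only enters later, in Lemma \ref{T02T20} and the determinant computations.

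The genuine gap is the local coefficient. In your ruling picture the restriction of a component class to $\rmC_{\sigma}\cong\PP^{1}\times\PP^{1}$ is a ruling class $L_{i}$, and $L_{i}\cdot(L_{1}-L_{2})=\mp 1$, not $\pm 2$; the factor $2$ therefore has to be extracted from the precise normalization of $\rmN_{\Sigma}^{-1}$ (the variation map) inside the composite $\alpha\circ\rmN_{\Sigma}^{-1}\circ\beta$, which you never pin down. The paper gets it uniformly for all four entries by running the composite through Proposition \ref{alpha-beta} and quoting the self-intersection $(\eta^{\bullet}_{\sigma})^{2}=-2$ of Lemma \ref{intersection}. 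More seriously, you explicitly leave the relative sign between the diagonal and off-diagonal entries unresolved (``up to a sign'', ``one must verify''), so as written you have determined $\calT_{\mathrm{lr},\{02\}}$ and $\calT_{\mathrm{lr},\{20\}}$ only up to sign; that does not prove the stated matrix with its uniform factor $-2$ (a simultaneous flip of both off-diagonal signs would leave $\det\calT_{\mathrm{lr}}$ unchanged, but the proposition asserts the signs themselves). Finally, your claim that same-type components cut the same ruling at $\sigma$ while a type-$0$ and a type-$2$ component cut opposite rulings itself needs an argument at the superspecial point: the tangent-space lemmas of Section 2 are proved on $\calM^{\square}_{\Pa}(\Lambda_{i})$, i.e.\ away from $\calM_{\Pa}\{1\}$, so they do not directly supply the behavior of the strict transforms on $\rmC_{\sigma}$, nor its compatibility with the uniformization of Theorem \ref{uniformization} that you invoke.
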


\begin{proof}
To begin the proof, note that we can identify both the space of vanishing cycles
\begin{equation*}
\bigoplus\limits_{\sigma\in \rmZ_{\{1\}}(\overline{\rmB})}\rmH^{3}_{\{\sigma\}}(\overline{\rmX}_{\Pa}(\rmB), \rmR\Psi(\calO_{\lambda}(1))) 
\end{equation*}
and the space of co-vanishing cycles
\begin{equation*}
\bigoplus\limits_{\sigma\in \rmZ_{\{1\}}(\overline{\rmB})}\rmR^{3}\Phi_{\sigma}(\calO_{\lambda}(2)) 
\end{equation*}
with $\calO_{\lambda}[\rmZ_{\Pa}(\overline{\rmB})]$ ignoring the Galois actions using the generators $\eta^{\sigma}_{\bullet}\in\rmH^{3}_{\{\sigma\}}(\overline{\rmX}_{\Pa}(\rmB), \rmR\Psi(\calO_{\lambda}(1)))$ and $\eta^{\bullet}_{\sigma}\in \rmR^{3}\Phi_{\sigma}(\calO_{\lambda}(2))$ in Lemma \ref{intersection}. The level raising diagram \eqref{level-raise-diagram} reduces to 
\begin{equation}
\begin{tikzcd}
 \calO_{\lambda}[\rmZ_{\{0\}}(\overline{\rmB})] \arrow[rd, "\mathrm{inc}^{\{0\}}_{!}"'] &                                    &  \calO_{\lambda}[\rmZ_{\{2\}}(\overline{\rmB})]\arrow[ld, "\mathrm{inc}^{\{2\}}_{!}"] \\
                   & \rmH^{2}(\overline{\rmX}^{\square}_{\Pa}(\rmB), \calO_{\lambda}(1)) \arrow[d, "\beta(1)"]                   &                   \\
                   & \calO_{\lambda}[\rmZ_{\Pa}(\overline{\rmB})] \arrow[d, "\rmN^{-1}_{\Sigma}"]   &\\
                   & \calO_{\lambda}[\rmZ_{\Pa}(\overline{\rmB})] \arrow[d, "\alpha_{\rmc}(2)"]                   &                   \\
                   &  \rmH^{4}_{\rmc}(\overline{\rmX}^{\square}_{\Pa}(\rmB), \calO_{\lambda}(2)) \arrow[ld, "\mathrm{inc}^{\ast}_{\{0\}}"'] \arrow[rd, "\mathrm{inc}^{\ast}_{\{2\}}"] &                   \\
 \calO_{\lambda}[\rmZ_{\{0\}}(\overline{\rmB})]            &                                    &   \calO_{\lambda}[\rmZ_{\{2\}}(\overline{\rmB})].            
\end{tikzcd}
\end{equation}
This justifies the appearance of the Hecke operators appearing in the level raising matrix.
By Proposition \ref{alpha-beta}, the map $\alpha_{\rmc}(2)$ can be realized as a Gysin map and $\beta(1)$ can be realized as a restriction map. In fact, we have the following  commutative diagram where all the rows are exact:
\begin{equation*}
\begin{tikzcd}
\rmH^{2}(\overline{\rmX}^{\square}_{\Pa}(\rmB), \calO_{\lambda}(1)) \arrow[d] & \rmH^{2}(\overline{\rmX}^{\bullet}_{\Pa}(\rmB), \calO_{\lambda}(1)) \arrow[l] \arrow[d] & \bigoplus\limits_{\sigma\in \rmZ_{\Pa}(\overline{\rmB})}\rmH^{0}(\rmC_{\sigma}, \calO_{\lambda}) \arrow[l] \arrow[d, equal] \\
\bigoplus\limits_{\sigma\in \rmZ_{\Pa}(\overline{\rmB})}\rmH^{3}_{\{\sigma\}}(\overline{\rmX}_{\Pa}(\rmB), \calO_{\lambda}(1))  \arrow[d, "\rmN^{-1}_{\Sigma}"]           &  \bigoplus\limits_{\sigma\in \rmZ_{\Pa}(\overline{\rmB})}\rmH^{2}(\rmC_{\sigma}, \calO_{\lambda}(1))  \arrow[l]           &  \bigoplus\limits_{\sigma\in \rmZ_{\Pa}(\overline{\rmB})}\rmH^{0}(\rmC_{\sigma}, \calO_{\lambda}) \arrow[l]                                \\
\bigoplus\limits_{\sigma\in \rmZ_{\Pa}(\overline{\rmB})}\rmR^{3}\Phi_{\sigma}(\calO_{\lambda}(2))  \arrow[r] \arrow[d] & \bigoplus\limits_{\sigma\in \rmZ_{\Pa}(\overline{\rmB})}\rmH^{2}(\rmC_{\sigma}, \calO_{\lambda}(1))  \arrow[r] \arrow[d] & \bigoplus\limits_{\sigma\in \rmZ_{\Pa}(\overline{\rmB})}\rmH^{4}(\rmC_{\sigma}, \calO_{\lambda}(2))  \arrow[d, equal]           \\
\rmH^{4}_{\rmc}(\overline{\rmX}^{\square}_{\Pa}(\rmB), \calO_{\lambda}(2)) \arrow[r]           & \rmH^{4}_{\rmc}(\overline{\rmX}^{\bullet}_{\Pa}(\rmB), \calO_{\lambda}(2))\arrow[r]           &\bigoplus\limits_{\sigma\in \rmZ_{\Pa}(\overline{\rmB})}\rmH^{4}(\rmC_{\sigma}, \calO_{\lambda}(2)).                                   
\end{tikzcd}
\end{equation*}

Let $\overline{\rmX}^{\bullet\mathrm{ss}}_{\Pa}(\rmB)$ be the supersingular locus of $\overline{\rmX}^{\bullet}_{\Pa}(\rmB)$. We know that it has three types of irreducible components parametrized by $\rmZ_{\{i\}}(\overline{\rmB})$ for $i\in\{0,2\}$ and $\rmZ_{\Pa}(\overline{\rmB})$. We denote by $\overline{\rmX}^{\bullet\mathrm{ss}.n}_{\Pa, \{i\}}(\rmB)$ the disjoint union of the irreducible components of $\overline{\rmX}^{\bullet\mathrm{ss}}_{\Pa}(\rmB)$ parametrized by $\rmZ_{\{i\}}(\overline{\rmB})$ for $i\in\{0,2\}$. 

Therefore, for $i\in\{0,2\}$, we have the natural Gysin morphism
\begin{equation}\label{gys-sm}
\mathrm{inc}^{\{i\}}_{!}: \calO_{\lambda}[\rmZ_{\{i\}}(\overline{\rmB})]\cong\rmH^{0}(\overline{\rmX}^{\bullet\mathrm{ss}.n}_{\Pa, \{i\}}(\rmB), \calO_{\lambda})\rightarrow  \rmH^{2}(\overline{\rmX}^{\bullet}_{\Pa}(\rmB), \calO_{\lambda}(1))
\end{equation}
and the natural restriction morphism
\begin{equation}\label{res-sm}
\mathrm{inc}^{\ast}_{\{i\}}: \rmH^{4}_{\rmc}(\overline{\rmX}^{\bullet}_{\Pa}(\rmB), \calO_{\lambda}(2))\rightarrow  \rmH^{4}_{\rmc}(\overline{\rmX}^{\bullet\mathrm{ss}.n}_{\Pa, \{i\}}(\rmB), \calO_{\lambda}(2)) \cong \calO_{\lambda}[\rmZ_{\{i\}}(\overline{\rmB})].
\end{equation}
By the commutativity of the above diagram, to compute the entry $\calT_{\mathrm{lr},\{ij\}}$ for $i, j\in\{0, 2\}$, it suffices to understand the compotite map given by
\begin{equation*}
\begin{aligned}
&\calO_{\lambda}[\rmZ_{\{i\}}(\overline{\rmB})]\xrightarrow{\mathrm{inc}^{\{i\}}_{!}}  \rmH^{2}({\overline{\rmX}}^{\bullet}_{\Pa}(\rmB), \calO_{\lambda}(1))\rightarrow \bigoplus\limits_{\sigma\in \rmZ_{\Pa}(\overline{\rmB})}\rmH^{2}(\rmC_{\sigma}, \calO_{\lambda}(1))\rightarrow \bigoplus\limits_{\sigma\in \rmZ_{\Pa}(\overline{\rmB})}\rmH^{3}_{\{\sigma\}}(\overline{\rmX}_{\Pa}(\rmB), \calO_{\lambda}(1))\\
&\xrightarrow{\rmN^{-1}_{\Sigma}}\bigoplus\limits_{\sigma\in \rmZ_{\Pa}(\overline{\rmB})}\rmR^{3}\Phi_{\sigma}(\calO_{\lambda}(2)) \rightarrow \bigoplus\limits_{\sigma\in \rmZ_{\Pa}(\overline{\rmB})}\rmH^{2}(\rmC_{\sigma}, \calO_{\lambda}(1))\rightarrow \rmH^{4}_{\rmc}({\overline{\rmX}}^{\bullet}_{\Pa}(\rmB), \calO_{\lambda}(2))\xrightarrow{\mathrm{inc}^{\ast}_{\{j\}}} \calO_{\lambda}[\rmZ_{\{j\}}(\overline{\rmB})].
\end{aligned}
\end{equation*}
Using Lemma \ref{quadric-class}, \ref{intersection} and Corollary \ref{ss-bullet}, we arrive at the following term by term computation.

\subsubsection{$\calT_{\mathrm{lr},\{00\}}$} Note that $\rmZ_{\{i\}}(\overline{\rmB})$ corresponds to the set of vertex lattices of type $i$ for $i\in\{0,  2\}$ while $\rmZ_{\Pa}(\overline{\rmB})$ corresponds to the set of vertex lattices of type $1$.  Therefore to understand the map $\calT_{\lr, \{00\}}=\mathrm{inc}^{\ast}_{\{0\}} \circ\alpha_{\rmc}(2)\circ\rmN^{-1}_{\Sigma}\circ\beta(1)\circ\mathrm{inc}^{\{0\}}_{!}$, we need to understand the possible relative position of vertex lattices $\Lambda_{0}$, $\Lambda_{\Pa}$ and $\Lambda^{\prime}_{0}$ of type $0$, $1$ and $0$. It not hard to see that the only possible situation is that $\Lambda_{\Pa}\subset^{1} \Lambda_{0}$ and  $\Lambda_{\Pa}\subset^{1} \Lambda^{\prime}_{0}$. There are two possible cases.

Case $(1)$: $\Lambda_{0}=\Lambda^{\prime}_{0}$. Then we have $p\Lambda^{\vee}_{0}\subset^{1}p\Lambda^{\vee}_{\Pa}\subset^{2} \Lambda_{\Pa}\subset ^{1}\Lambda_{0}$. Giving such a $\Lambda_{\Pa}$ is equivalent to giving a complete flag in $\Lambda_{0}/p\Lambda_{0}$. The number of such flags is given by 
\begin{equation*}
[\bfG(\ZZ_{p}): \Kl]=(p+1)(p^{2}+1) 
\end{equation*}
where $\Kl$ is the Klingen parahoric.
 
Case $(2)$: $\Lambda_{0}\neq\Lambda^{\prime}_{0}$. Then we have $\Lambda_{0}\cap\Lambda^{\prime}_{0}=\Lambda_{\Pa}$ and the relative position between $\Lambda_{0}$ and $\Lambda^{\prime}_{0}$ is given by
\begin{equation*}
\rmK_{\{0\}}\begin{pmatrix}p&&&\\&1&&\\&&1&\\&&&p^{-1}\\\end{pmatrix}\rmK_{\{0\}}.
\end{equation*}
This operator is the same as $\rmT^{-1}_{p,0}\rmT_{p,1}$ by identifying $\rmK_{\{0\}}$ with the hyperspecial group $\bfG(\ZZ_{p})$. Therefore the entry $\calT_{\mathrm{lr}, \{00\}}$ is given by  
\begin{equation*}
\calT_{\mathrm{lr}, \{00\}}=-(\rmT^{-1}_{p,0}\rmT_{p,1}+(p+1)(p^{2}+1)). 
\end{equation*}
Here the sign $-1$ comes from the formula in Lemma \ref{intersection} $(3)$.

\subsubsection{$\calT_{\mathrm{lr}, \{22\}}$} To understand the map $\calT_{\lr, \{22\}}=\mathrm{inc}^{\ast}_{\{2\}} \circ\alpha_{\rmc}(2)\circ\rmN^{-1}_{\Sigma}\circ\beta(1)\circ\mathrm{inc}^{\{2\}}_{!}$, we need to understand the possible relative position of vertex lattices $\Lambda_{2}$, $\Lambda_{\Pa}$ and $\Lambda^{\prime}_{2}$ of type $2$, $1$ and $2$ respectively. It not hard to see that the only possible way is that $\Lambda_{2}\subset^{1}\Lambda_{\Pa}$ and  $\Lambda^{\prime}_{2}\subset^{1}\Lambda_{\Pa}$. The rest of the analysis is completely the same as in the case of $\calT_{\lr,\{00\}}$. Once we identify $\rmK_{\{2\}}$ with the hyperspecial group $\bfG(\ZZ_{p})$, the entry $\calT_{\lr, \{22\}}$ is given by 
\begin{equation*}
\calT_{\lr, \{22\}}=-(\rmT^{-1}_{p,0}\rmT_{p,1}+(p+1)(p^{2}+1)). 
\end{equation*}
Here the sign $-1$ comes from the formula in Lemma \ref{intersection} $(3)$.

\subsubsection{$\calT_{\mathrm{lr}, \{02\}}$} To understand the map $\calT_{\lr, \{02\}}=\mathrm{inc}^{\ast}_{\{2\}} \circ\alpha_{\rmc}(2)\circ\rmN^{-1}_{\Sigma}\circ\beta(1)\circ\mathrm{inc}^{\{0\}}_{!}$, we need to understand the possible relative position of vertex lattices $\Lambda_{0}$, $\Lambda_{\Pa}$ and $\Lambda^{\prime}_{2}$ of type $0$, $1$ and $2$. We find that the only possible situation is that
\begin{equation*}
\Lambda_{2}\subset^{1}\Lambda_{\Pa}\subset^{1}\Lambda_{0}=\Lambda^{\vee}_{0}\subset^{1}\Lambda^{\vee}_{\Pa}\subset^{1}\Lambda^{\vee}_{2}.
\end{equation*}
It follows each such $\Lambda_{\Pa}$ corresponds to a line in the two dimensional space $\Lambda_{0}/\Lambda_{2}$. 

Therefore the entry $\calT_{\lr, \{02\}}$ is given by
\begin{equation*}
\calT_{\lr, \{02\}}=(p+1)\mathrm{char}(\rmK_{\{0\}}\begin{pmatrix}p&&&\\&p&&\\&&1&\\&&&1 \end{pmatrix}\rmK_{\{2\}})=(p+1)\rmT_{02}.\
\end{equation*}

\subsubsection{$\calT_{\mathrm{lr}, \{20\}}$} To understand the map $\calT_{\mathrm{lr}, \{20\}}=\mathrm{inc}^{\ast}_{\{0\}} \circ\alpha_{\rmc}(2)\circ\rmN^{-1}_{\Sigma}\circ\beta(1)\circ\mathrm{inc}^{\{2\}}_{!}$, we need to understand the possible relative positions of the vertex lattices $\Lambda_{0}$, $\Lambda_{\Pa}$ and $\Lambda^{\prime}_{2}$ of type $0$, $1$ and $2$. We find that the only possible way is that
\begin{equation*}
\Lambda_{2}\subset^{1}\Lambda_{\Pa}\subset^{1}\Lambda_{0}=\Lambda^{\vee}_{0}\subset^{1}\Lambda^{\vee}_{\Pa}\subset^{1}\Lambda^{\vee}_{2}.
\end{equation*}
Each $\Lambda_{\Pa}$ corresponds to a line in the two dimensional space $\Lambda_{0}/\Lambda_{2}$. 

Therefore the entry $\calT_{\mathrm{lr},\{20\}}$ is given by
\begin{equation*}
\calT_{\mathrm{lr},\{20\}}=(p+1)\mathrm{char}(\rmK_{\{0\}}\begin{pmatrix}p^{-1}&&&\\&p^{-1}&&\\&&1&\\&&&1 \end{pmatrix}\rmK_{\{2\}})=(p+1)\rmT_{02}.\
\end{equation*}
This finishes the proof of Proposition \ref{lr-matrix}.
\end{proof}

We will compute the determinant of the level raising matrix and the supersingular matrix that we will define in the next section. For this reason, we will need to compute the composite $\rmT_{20}\circ\rmT_{02}$ and $\rmT_{02}\circ\rmT_{20}$ in terms of elements in the spherical Hecke algebra.
\begin{lemma}\label{T02T20}
The composite $\rmT_{20}\circ\rmT_{02}$  and $\rmT_{02}\circ\rmT_{20}$ are both given by 
\begin{equation*}
\mathrm{T}^{-1}_{p, 0}\rmT_{p^{2}, 2}+(p+1)\mathrm{T}^{-1}_{p, 0}\rmT_{p, 1}+(p^{2}+1)(p+1).
\end{equation*}
As a result, if $\rmT_{p, 0}$ acts trivially, then $\rmT_{20}\circ\rmT_{02}=\rmT_{02}\circ\rmT_{20}=\rmT^{2}_{p, 2}$
\end{lemma}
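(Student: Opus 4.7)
The plan is to compute $\rmT_{20}\circ\rmT_{02}$ as a Hecke operator in $\TT_p$ by tracking lattices, and to deduce $\rmT_{02}\circ\rmT_{20}$ from the symmetric argument (or from the commutativity of $\TT_p$ after identifying $\rmK_{\{0\}}$ and $\rmK_{\{2\}}$ with $\bfG(\ZZ_p)$). For a type-$0$ vertex lattice $\Lambda_0$ one has
\begin{equation*}
(\rmT_{20}\circ\rmT_{02})\cdot\Lambda_0 = \sum_{\Lambda'_0} c(\Lambda_0,\Lambda'_0)\cdot\Lambda'_0,
\end{equation*}
where $c(\Lambda_0,\Lambda'_0)$ counts the type-$2$ vertex lattices $\Lambda_2$ with $\Lambda_2\subset^2\Lambda_0$ and $\Lambda_2\subset^2\Lambda'_0$. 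Any such $\Lambda_2$ lies in $L:=\Lambda_0\cap\Lambda'_0$, so $[\Lambda_0:L]\leq p^2$; combined with the similitude condition $\rmc=1$ coming from $\rmc(\gamma_2\gamma_1)=1$, the Cartan decomposition for $\GSp_4$ confines the support of the composite to the four dominant cocharacters $(0,0,0,0)$, $(1,0,0,-1)$, $(1,1,-1,-1)$, and $(2,0,0,-2)$.

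I would then compute the multiplicity for each stratum. For the trivial cocharacter $\Lambda'_0=\Lambda_0$ the count equals the cardinality of the Lagrangian Grassmannian of $\Lambda_0/p\Lambda_0$, namely $(p+1)(p^2+1)$. For $(1,0,0,-1)$ the quotient $L/pL^\vee$ has dimension $2$ and carries a non-degenerate alternating form induced by $\langle\cdot,\cdot\rangle$; the type-$2$ condition $p\Lambda_2^\vee=\Lambda_2$ is equivalent to $\Lambda_2/pL^\vee$ being a self-orthogonal line, and since every line in a $2$-dimensional alternating space is self-orthogonal, all $p+1$ one-dimensional subspaces lift to valid $\Lambda_2$'s. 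For $(1,1,-1,-1)$ an explicit calculation with the representative $g=\mathrm{diag}(p,p,p^{-1},p^{-1})$ shows that $L$ itself is of type $2$, so $\Lambda_2=L$ is the unique choice, giving multiplicity $1$. For $(2,0,0,-2)$, using $g=\mathrm{diag}(p^2,1,1,p^{-2})$ one computes $pL^\vee\not\subset L$, which prevents the existence of any type-$2$ intermediate lattice, so this stratum contributes zero. Via the Cartan dictionary $(1,0,0,-1)=(2,1,1,0)-(1,1,1,1)$ and $(1,1,-1,-1)=(2,2,0,0)-(1,1,1,1)$, these double cosets correspond respectively to $\rmT^{-1}_{p,0}\rmT_{p,1}$ and $\rmT^{-1}_{p,0}\rmT_{p^2,2}$, yielding the claimed formula. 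The final assertion that both composites equal $\rmT^{2}_{p,2}$ when $\rmT_{p,0}$ acts trivially follows by substitution from Proposition~\ref{Hecke-id}.

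The main obstacle I anticipate is the careful verification of the integral self-duality condition $p\Lambda_2^\vee=\Lambda_2$ in each of the non-trivial strata, and especially ruling out any contribution from the $(2,0,0,-2)$ stratum. Both the positive counts in the $(1,0,0,-1)$ and $(1,1,-1,-1)$ strata and the vanishing in the $(2,0,0,-2)$ stratum reduce to an explicit analysis of the induced $\FF_p$-symplectic structure on $L^\vee/pL^\vee$ and the position of $L/pL^\vee$ within it; this structure degenerates differently in each case, and the absence of a valid type-$2$ intermediate for $(2,0,0,-2)$ is precisely what keeps the resulting expansion free of a contribution from the cocharacter $(2,0,0,-2)$.
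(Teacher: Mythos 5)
Your proposal is correct and follows essentially the same route as the paper: a lattice-counting computation of the composite, stratified by the relative position of $\Lambda_{0}$ and $\Lambda'_{0}$, with the three contributions $(p+1)(p^{2}+1)$, $(p+1)\rmT^{-1}_{p,0}\rmT_{p,1}$ and $\rmT^{-1}_{p,0}\rmT_{p^{2},2}$ identified exactly as in the paper, and the final claim reduced to Proposition~\ref{Hecke-id}. The only cosmetic difference is that you stratify by Cartan cocharacters and explicitly rule out $(2,0,0,-2)$ via $p L^{\vee}\not\subset L$, whereas the paper stratifies by $\dim_{\FF}\Lambda_{0}\cap\Lambda'_{0}/(p\Lambda^{\vee}_{0}+p\Lambda'^{\vee}_{0})\in\{0,2,4\}$, which amounts to the same enumeration.
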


\begin{proof} The proof of this lemma is similar to that of the above proposition. We will compute $\rmT_{20}\circ\rmT_{02}$ and $\rmT_{02}\circ\rmT_{20}$.

\subsubsection{$\rmT_{20}\circ\rmT_{02}$}  Let $\Lambda_{0}$ be a vertex lattice of type $0$. Then $\rmT_{20}\circ\rmT_{02}(\Lambda_{0})$ classifies a pair of vertex lattices $(\Lambda_{2}, \Lambda^{\prime}_{0})$ of type $2$ and $0$. We need to understand all the possible relative position of the vertex lattices $\Lambda_{0}$, $\Lambda_{2}$ and $\Lambda^{\prime}_{0}$. The only possible situation is given below 
\begin{equation*}
\begin{aligned}
&p\Lambda^{\vee}_{0}\subset^{2}\Lambda_{2}\subset^{2} \Lambda_{0}=\Lambda^{\vee}_{0} \subset^{2} \Lambda^{\vee}_{2}\\
&p\Lambda^{\prime\vee}_{0}\subset^{2}\Lambda_{2}\subset^{2} \Lambda^{\prime}_{0}=\Lambda^{\prime\vee}_{0} \subset^{2} \Lambda^{\vee}_{2}.\\
\end{aligned}
\end{equation*}
Note that $\Lambda_{2}$ determines an isotropic subspace of the symplectic space $\Lambda_{0}\cap\Lambda^{\prime}_{0}/p\Lambda^{\vee}_{0}+p\Lambda^{\prime\vee}_{0}$ of possible dimensions in the set $\{0, 2, 4\}$.

When $\dim_{\FF}\Lambda_{0}\cap\Lambda^{\prime}_{0}/p\Lambda^{\vee}_{0}+p\Lambda^{\prime\vee}_{0}=0$, then 
\begin{equation*}
\begin{aligned}
&\Lambda_{0}\cap\Lambda^{\prime}_{0}\subset^{2} \Lambda_{0}\\ 
&\Lambda_{0}\cap\Lambda^{\prime}_{0}\subset^{2} \Lambda^{\prime}_{0}.\\ 
\end{aligned}
\end{equation*}
Hence this case contributes to the $\rmT_{20}\circ\rmT_{02}$ by the double coset operator
\begin{equation*}
\rmK_{\{0\}}\begin{pmatrix}p&&&\\&p&&\\&&p^{-1}&\\&&&p^{-1} \end{pmatrix}\rmK_{\{0\}}.
\end{equation*}
Once we identify $\rmK_{\{0\}}$ with $\bfG(\ZZ_{p})$, we see this is the same as $\mathrm{T}^{-1}_{0,p}\rmT_{p^{2}, 2}$.

When $\dim_{\FF}\Lambda_{0}\cap\Lambda^{\prime}_{0}/p\Lambda^{\vee}_{0}+p\Lambda^{\prime\vee}_{0}=2$, there are $p+1$ choices of $\Lambda_{2}$. And we have 
\begin{equation*}
\begin{aligned}
&\Lambda_{0}\cap\Lambda^{\prime}_{0}\subset^{1} \Lambda_{0}\\ 
&\Lambda_{0}\cap\Lambda^{\prime}_{1}\subset^{1} \Lambda^{\prime}_{0}.\\
\end{aligned}
\end{equation*}
Hence this case contributes to the $\rmT_{20}\circ\rmT_{02}$ by the double coset operator
\begin{equation*}
(p+1)\rmK_{\{0\}}\begin{pmatrix}p&&&\\&1&&\\&&1&\\&&&p^{-1} \end{pmatrix}\rmK_{\{0\}}.
\end{equation*}
Once we identify $\rmK_{\{0\}}$ with $\bfG(\ZZ_{p})$, we see this is the same as $(p+1)\mathrm{T}^{-1}_{0,p}\rmT_{p, 1}$.

When $\dim_{\FF}\Lambda_{0}\cap\Lambda^{\prime}_{0}/p\Lambda^{\vee}_{0}+p\Lambda^{\prime\vee}_{0}=4$, there are $[\bfG(\ZZ_{p}):\mathrm{Sie})]=(p+1)(p^{2}+1)$  choices of $\Lambda_{2}$. And we have 
\begin{equation*}
\begin{aligned}
&\Lambda_{0}\cap\Lambda^{\prime}_{0}\subset^{0} \Lambda_{0}\\ 
&\Lambda_{0}\cap\Lambda^{\prime}_{0}\subset^{0} \Lambda^{\prime}_{0}.\\ 
\end{aligned}
\end{equation*}
Hence this case contributes to the $\rmT_{20}\circ\rmT_{02}$ by the constant $(p^{2}+1)(p+1)$. All in all, we obtain the following formula
\begin{equation}\label{T02T20}
\rmT_{20}\circ\rmT_{02}= \mathrm{T}^{-1}_{p, 0}\rmT_{p^{2}, 2}+(p+1)\mathrm{T}^{-1}_{p, 0}\rmT_{p, 1}+(p^{2}+1)(p+1).
\end{equation} 

\subsubsection{$\rmT_{02}\circ\rmT_{20}$} The computation is almost the same to the above. Let $\Lambda_{2}$ be a vertex lattice of type $2$. Then $\rmT_{02}\circ\rmT_{20}(\Lambda_{2})$ classifies pair of vertex lattices $(\Lambda_{0}, \Lambda^{\prime}_{2})$ of type $0$ and $2$. We need to understand the relative position of vertex lattices $\Lambda_{0}$, $\Lambda_{2}$ and $\Lambda^{\prime}_{2}$. The only possible situation is given below 
\begin{equation*}
\begin{aligned}
&p\Lambda^{\vee}_{0}\subset^{2}\Lambda_{2}\subset^{2} \Lambda_{0}=\Lambda^{\vee}_{0} \subset^{2} \Lambda^{\vee}_{2};\\
&p\Lambda^{\vee}_{0}\subset^{2}\Lambda^{\prime}_{2}\subset^{2} \Lambda_{0}=\Lambda^{\vee}_{0} \subset^{2} \Lambda^{\prime\vee}_{2}.\\
\end{aligned}
\end{equation*}
Note that $\Lambda_{0}$ is determined by an isotropic subspace of the symplectic space $\Lambda_{2}\cap\Lambda^{\prime}_{2}/\Lambda^{\vee}_{2}+\Lambda^{\prime\vee}_{2}$ of possible dimensions $\{0, 2, 4\}$. 

When $\dim_{\FF} \Lambda_{2}\cap\Lambda^{\prime}_{2}/\Lambda^{\vee}_{2}+\Lambda^{\prime\vee}_{2}=0$, then in this case we have
\begin{equation*}
\begin{aligned}
&\Lambda_{2}\subset^{2} \Lambda_{2}+\Lambda^{\prime}_{2}\\ 
&\Lambda^{\prime}_{2}\subset^{2} \Lambda_{2}+\Lambda^{\prime}_{2}.\\ 
\end{aligned}
\end{equation*}
Hence this case contributes to the $\rmT_{02}\circ\rmT_{20}$ by the double coset operator
\begin{equation*}
\rmK_{\{2\}}\begin{pmatrix}p^{-1}&&&\\&p^{-1}&&\\&&p&\\&&&p\end{pmatrix}\rmK_{\{2\}}.
\end{equation*}
Once we identify $\rmK_{\{2\}}$ with $\bfG(\ZZ_{p})$, we see this is the same as $\mathrm{T}^{-1}_{0,p}\rmT_{p^{2}, 2}$.

When $\dim_{\FF} \Lambda^{\vee}_{2}\cap\Lambda^{\prime\vee}_{2}/\Lambda_{2}+\Lambda^{\prime}_{2}=2$, there are $p+1$ choices of $\Lambda_{0}$. And in this case we have 
\begin{equation*}
\begin{aligned}
&\Lambda_{2}\subset^{1} \Lambda_{2}+\Lambda^{\prime}_{2}\\ 
&\Lambda^{\prime}_{2}\subset^{1}\Lambda_{2}+\Lambda^{\prime}_{2}.\\ 
\end{aligned}
\end{equation*}
Hence this case contributes to the $\rmT_{20}\circ\rmT_{02}$ by the Hecke operator
\begin{equation*}
(p+1)\rmK_{\{2\}}\begin{pmatrix}p^{-1}&&&\\&1&&\\&&1&\\&&&p \end{pmatrix}\rmK_{\{2\}}.
\end{equation*}
Once we identify $\rmK_{\{2\}}$ with the hyperspecial subgroup $\bfG(\ZZ_{p})$, we see this is the same as $(p+1)\rmT^{-1}_{0,p}\rmT_{p, 1}$.

When $\dim_{\FF} \Lambda^{\vee}_{2}\cap\Lambda^{\prime\vee}_{2}/\Lambda_{2}+\Lambda^{\prime}_{2}=4$, there are $[\bfG(\ZZ_{p}):\mathrm{Sie})]=(p+1)(p^{2}+1)$  choices of $\Lambda_{0}$. And in this case we have
\begin{equation*}
\begin{aligned}
&\Lambda_{2}\subset^{0} \Lambda_{2}+\Lambda^{\prime}_{2}\\ 
&\Lambda^{\prime}_{2}\subset^{0} \Lambda_{2}+\Lambda^{\prime}_{2}.\\ 
\end{aligned}
\end{equation*}
Hence this case contributes to the $\rmT_{20}\circ\rmT_{02}$ by the constant $(p^{2}+1)(p+1)$. All in all, we obtain the following formula
\begin{equation}\label{T20T02}
\rmT_{20}\circ\rmT_{02}= \mathrm{T}^{-1}_{0,p}\rmT_{p^{2}, 2}+(p+1)\mathrm{T}^{-1}_{0,p}\rmT_{p, 1}+(p^{2}+1)(p+1).
\end{equation} 
The last claim in the lemma follows immediately from Proposition \ref{Hecke-id}. This finishes the proof of the lemma.
\end{proof}

The final result of this section is the computation of the determinant of the level raising matrix modulo $\fracm$.

\begin{proposition}\label{det-lr}
Let $(\pi, \Sigma, \fracm)$ be the datum considered as in the beginning of the section. Suppose the maximal ideal $\fracm$ appears in the support of $\calO_{\lambda}[\rmZ_{\rmH}(\overline{\rmB})]$. Let $[\alpha_{p}, \beta_{p}, p^{3}\beta^{-1}, p^{3}\alpha^{-1}_{p}]$ be the Hecke parameter of $\pi$ at $p$. Then the determinant of the level raising matrix modulo $\fracm$ is given by
\begin{equation*}
\det \calT_{\mathrm{lr}/\fracm}= p^{-2}\prod_{u=\pm1}(\alpha_{p}+p^{3}\alpha^{-1}_{p}-up(p+1))(\beta_{p}+p^{3}\beta^{-1}_{p}-up(p+1)).
\end{equation*}
\end{proposition}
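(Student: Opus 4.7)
The plan is to expand the $2\times 2$ determinant directly using the formula in Proposition~\ref{lr-matrix}, reduce the off-diagonal product via Lemma~\ref{T02T20} and Proposition~\ref{Hecke-id}, and then factor the resulting polynomial in the Satake parameters. All computations happen after identifying $\rmK_{\{0\}}\cong\rmK_{\{2\}}\cong\bfG(\ZZ_p)$, so that every entry of $\calT_{\mathrm{lr}}$ is realized as an operator on the common module $\calO_{\lambda}[\rmZ_{\rmH}(\overline{\rmB})]_{\fracm}$, on which the Hecke algebra $\TT^{\Sigma\cup\{p\}}$ acts through the character $\phi_{\pi,\lambda}$ modulo $\fracm$.

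First I will set $A=\rmT_{p,0}^{-1}\rmT_{p,1}+(p+1)(p^2+1)$ and compute
\begin{equation*}
\det\calT_{\mathrm{lr}}=4\bigl(A^{2}-(p+1)^{2}\,\rmT_{02}\rmT_{20}\bigr).
\end{equation*}
Since $\pi$ has trivial central character, $\rmT_{p,0}$ acts as $1$ modulo $\fracm$. Combining the identity $\rmT_{20}\rmT_{02}=\rmT_{p,0}^{-1}\rmT_{p^{2},2}+(p+1)\rmT_{p,0}^{-1}\rmT_{p,1}+(p+1)(p^{2}+1)$ of Lemma~\ref{T02T20} with $\rmT_{p^{2},2}=\rmT_{p,2}^{2}-(p+1)\rmT_{p,1}-(p+1)(p^{2}+1)$ of Proposition~\ref{Hecke-id}, everything collapses modulo $\fracm$ to $\rmT_{02}\rmT_{20}\equiv\rmT_{p,2}^{2}$. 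Hence the difference of squares factors:
\begin{equation*}
\det\calT_{\mathrm{lr}/\fracm}=4\bigl(\rmT_{p,1}+(p+1)(p^{2}+1)-(p+1)\rmT_{p,2}\bigr)\bigl(\rmT_{p,1}+(p+1)(p^{2}+1)+(p+1)\rmT_{p,2}\bigr).
\end{equation*}

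Next I will evaluate the two factors on the Satake parameters. Writing $u=\alpha_{p}+p^{3}\alpha_{p}^{-1}$ and $v=\beta_{p}+p^{3}\beta_{p}^{-1}$, one reads off $a_{p,2}=u+v$ from the definition of $\rmQ_{p}(\rmX)$ and, by expanding its $\rmX^{2}$-coefficient, obtains $pa_{p,1}=uv+p^{3}-p$, so that
\begin{equation*}
a_{p,1}+(p+1)(p^{2}+1)=\frac{uv+p^{2}(p+1)^{2}}{p}.
\end{equation*}
Substituting this and $a_{p,2}=u+v$ into the two factors, clearing the denominator $p$, and applying the elementary factorisations
\begin{equation*}
uv+p^{2}(p+1)^{2}\mp p(p+1)(u+v)=\bigl(u\mp p(p+1)\bigr)\bigl(v\mp p(p+1)\bigr)
\end{equation*}
yields exactly the product $4p^{-2}\prod_{\rmu=\pm 1}(u-\rmu p(p+1))(v-\rmu p(p+1))$ claimed in the statement. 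The assumption that $\fracm$ lies in the support of $\calO_{\lambda}[\rmZ_{\rmH}(\overline{\rmB})]$ is what guarantees that the Hecke eigenvalues $a_{p,i}$ actually govern the action of $\calT_{\mathrm{lr}}$ on the localised module, so that the above scalar identity is meaningful.

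There is no real obstacle here: the only non-trivial inputs are the two Hecke identities already established, and the key observation is the fortunate cancellation $\rmT_{02}\rmT_{20}\equiv\rmT_{p,2}^{2}$ modulo $\fracm$, which turns the would-be expression into a clean difference of squares and ultimately into a product of four linear factors in $u$ and $v$. The slightly delicate bookkeeping is only to verify that the algebraic factorisation of $uv+p^{2}(p+1)^{2}\mp p(p+1)(u+v)$ matches the four linear factors displayed in the statement; this is a direct check.
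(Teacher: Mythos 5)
Your proposal is correct and follows essentially the same route as the paper: use the trivial central character to make $\rmT_{p,0}$ act as the identity, combine Lemma \ref{T02T20} with Proposition \ref{Hecke-id} to reduce $\rmT_{02}\rmT_{20}$ to $\rmT_{p,2}^{2}$ modulo $\fracm$, factor the determinant as a difference of squares, and evaluate via $a_{p,2}=u+v$ and $pa_{p,1}=uv+p^{3}-p$. Your bookkeeping of the diagonal constant $(p+1)(p^{2}+1)$ and of the factorisation $uv+p^{2}(p+1)^{2}\mp p(p+1)(u+v)=(u\mp p(p+1))(v\mp p(p+1))$ is in fact carried out more carefully than in the paper's displayed computation, and it lands exactly on the stated formula.
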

\begin{proof}
Since $\pi$ has trivial central character, $\rmT_{p,0}$ acts trivially on $\calO_{\lambda}[\rmZ_{\rmH}(\overline{\rmB})]/\fracm$. We therefore have 
\begin{equation*}
\rmT_{20}\circ\rmT_{02}= \rmT_{02}\circ\rmT_{20}=\rmT_{p^{2}, 2}+(p+1)\rmT_{p, 1}+(p^{2}+1)(p+1)=\rmT^{2}_{p,2} 
\end{equation*}
by Lemma \ref{T02T20}. Then
\begin{equation*}
\begin{aligned}
\det \calT_{\mathrm{lr}/\fracm}&= ((\rmT_{p,1}+(p+1)(p^{2}+1))^{2}-(p+1)^{2}\rmT_{20}\rmT_{02})\\
&=(\rmT_{p,1}+(p+1)(p^{2}+1)-(p+1)\rmT_{p,2})(\rmT_{p,1}+(p+1)(p^{2}+1)+(p+1)\rmT_{p,2})\\
&=p^{-2}[(\alpha_{p}+p^{3}\alpha^{-1}_{p})(\beta_{p}+p^{3}\beta^{-1}_{p})-p(p+1)(\alpha_{p}+\beta_{p}+p^{3}\alpha^{-1}_{p}+p^{3}\beta^{-1}_{p})+p^{2}(p+1)^{2}]\\
&\phantom{aaaaaa}[(\alpha_{p}+p^{3}\alpha^{-1}_{p})(\beta_{p}+p^{3}\beta^{-1}_{p})+p(p+1)(\alpha_{p}+\beta_{p}+p^{3}\alpha^{-1}_{p}+p^{3}\beta^{-1}_{p})+p^{2}(p+1)^{2}]\\
&=p^{-2}\prod_{u=\pm1}(\alpha_{p}+p^{3}\alpha^{-1}_{p}-u p(p+1))(\beta_{p}+p^{3}\beta^{-1}_{p}-u p(p+1)).\\
\end{aligned}
\end{equation*}
This finishes the proof.
\end{proof}

\section{Tate cycles on the quaternionic unitary Shimura variety}
Let $(\pi, \Sigma, \fracm)$ be the datum considered as in the beginning of the last section. In this section, our goal is to establish the following priniciple: the localized cohomology group 
\begin{equation*}
\rmH^{2}(\overline{\rmX}^{\square}_{\Pa}(\rmB), \calO_{\lambda}(1))_{\fracm}\cong \mathrm{IH}^{2}(\overline{\rmX}_{\Pa}(\rmB), \calO_{\lambda}(1))_{\fracm} 
\end{equation*}
is generated by cycles coming from the supersingular locus. On the dual side, similar result also holds for 
\begin{equation*}
\rmH^{4}_{\rmc}(\overline{\rmX}_{\Pa}(\rmB), \calO_{\lambda}(2))_{\fracm}= \mathrm{IH}^{4}_{\rmc}(\overline{\rmX}_{\Pa}(\rmB), \calO_{\lambda}(2))_{\fracm},
\end{equation*}
however we need to show this cohomology group contains no $\calO_{\lambda}$-torsion first. We will establish the full result simultaneously with our main theorem on arithmetic level raising in {Corollary} \ref{no-torsion}. 

\subsection{The supersingular matrix} Recall from the last section, $\overline{\rmX}^{\square\mathrm{ss.n}}_{\Pa, \{0,2\}}(\rmB)$ is the normalization of the supersingular locus $\overline{\rmX}^{\square\mathrm{ss}}_{\Pa}(\rmB)$ and hence $\overline{\rmX}^{\square\mathrm{ss.n}}_{\Pa, \{0,2\}}(\rmB)=\overline{\rmX}^{\square\mathrm{ss.n}}_{\Pa, \{0\}}(\rmB)\sqcup \overline{\rmX}^{\square\mathrm{ss.n}}_{\Pa, \{2\}}(\rmB)$ where $\overline{\rmX}^{\square\mathrm{ss.n}}_{\Pa, \{i\}}(\rmB)$ is the disjoint union of $\mathrm{DL}^{\square}(\Lambda_{i})$ parametrized by $\rmZ_{\{i\}}(\overline{\rmB})$ for $i\in\{0,2\}$. 
\begin{lemma}
We have the following isomorphism
\begin{equation*}
\rmH^{2}(\overline{\rmX}^{\square\mathrm{ss.n}}_{\Pa,\{0,2\}}(\rmB), \calO_{\lambda}(1))\cong\rmH^{2}_{\rmc}(\overline{\rmX}^{\square\mathrm{ss.n}}_{\Pa,\{0,2\}}(\rmB), \calO_{\lambda}(1)).
\end{equation*}
\end{lemma}
\begin{proof}
It suffices to show 
\begin{equation*}
\rmH^{2}(\mathrm{DL}^{\square}(\Lambda_{i}), \calO_{\lambda}(1))\cong\rmH^{2}_{\rmc}(\mathrm{DL}^{\square}(\Lambda_{i}), \calO_{\lambda}(1))
\end{equation*}
for each vertex lattice $\Lambda_{i}$ with $i\in\{0,2\}$.
Using the excision long exact sequence for the pair $(\mathrm{DL}(\Lambda_{i}), \mathrm{DL}^{\square}(\Lambda_{i}))$, it is not hard to see that
\begin{equation*}
\rmH^{2}(\mathrm{DL}^{\square}(\Lambda_{i}), \calO_{\lambda}(1))\cong \rmH^{2}(\mathrm{DL}(\Lambda_{i}), \calO_{\lambda}(1))\cong \rmH^{2}_{\rmc}(\mathrm{DL}^{\square}(\Lambda_{i}), \calO_{\lambda}(1)).
\end{equation*}
Then the lemma follows.
\end{proof}

\begin{construction}
We have the following natural restriction morphism
\begin{equation*}
\mathrm{inc}^{\ast}_{\{\mathrm{ss}\}}: \rmH^{2}(\overline{\rmX}^{\square}_{\Pa}(\rmB), \calO_{\lambda}(1))\rightarrow \rmH^{2}(\overline{\rmX}^{\square\mathrm{ss.n}}_{\Pa,\{0,2\}}(\rmB), \calO_{\lambda}(1)).
\end{equation*}
We also have the natural Gysin map denoted by
\begin{equation*}
\mathrm{inc}^{\{\mathrm{ss}\}}_{!}: \rmH^{2}_{\rmc}(\overline{\rmX}^{\square\mathrm{ss.n}}_{\Pa,\{0,2\}}(\rmB), \calO_{\lambda}(1))\rightarrow \rmH^{4}_{\rmc}(\overline{\rmX}^{\square}_{\Pa}(\rmB), \calO_{\lambda}(2)).
\end{equation*}
\end{construction}

We can fit the two maps $(\mathrm{inc}^{\ast}_{\{\mathrm{ss}\}}, \mathrm{inc}^{\{\mathrm{ss}\}}_{!})$ in the above construction in the diagram below.

\begin{equation}\label{ss-diagram}
\begin{tikzcd}
 \calO_{\lambda}[\rmZ_{\{0\}}(\overline{\rmB})] \arrow[rd, "\mathrm{inc}^{\{0\}}_{!}"'] &                                    &  \calO_{\lambda}[\rmZ_{\{2\}}(\overline{\rmB})]\arrow[ld, "\mathrm{inc}^{\{2\}}_{!}"] \\
                   & \rmH^{2}(\overline{\rmX}^{\square}_{\Pa}(\rmB), \calO_{\lambda}(1)) \arrow[d, "\mathrm{inc}^{\ast}_{\{\mathrm{ss}\}}"]                   &                   \\
                   & \rmH^{2}(\overline{\rmX}^{\square\mathrm{ss.n}}_{\Pa,\{0,2\}}(\rmB), \calO_{\lambda}(1)) \arrow[d, "\cong"]                   &                   \\
                    & \rmH^{2}_{\rmc}(\overline{\rmX}^{\square\mathrm{ss.n}}_{\Pa,\{0,2\}}(\rmB), \calO_{\lambda}(1)) \arrow[d, "\mathrm{inc}^{\{\mathrm{ss}\}}_{!}"]                   &                   \\
                   &  \rmH^{4}_{\rmc}(\overline{\rmX}^{\square}_{\Pa}(\rmB), \calO_{\lambda}(2)) \arrow[ld, "\mathrm{inc}^{\ast}_{\{0\}}"'] \arrow[rd, "\mathrm{inc}^{\ast}_{\{2\}}"] &                   \\
 \calO_{\lambda}[\rmZ_{\{0\}}(\overline{\rmB})]            &                                    &   \calO_{\lambda}[\rmZ_{\{2\}}(\overline{\rmB})]            
\end{tikzcd}
\end{equation}

The above diagram sets up the intersection matrix for the supersingular locus which we will elaborate below.
\begin{construction}
We obtain naturally the following four maps from the above diagram
\begin{equation*}
\begin{aligned}
&\calT_{\mathrm{ss},\{00\}}= \mathrm{inc}^{\ast}_{\{0\}}\circ \mathrm{inc}^{\{\mathrm{ss}\}}_{!}\circ \mathrm{inc}^{\ast}_{\{\mathrm{ss}\}}\circ\mathrm{inc}^{\{0\}}_{!}\\
&\calT_{\mathrm{ss},\{02\}}= \mathrm{inc}^{\ast}_{\{2\}}\circ  \mathrm{inc}^{\{\mathrm{ss}\}}_{!}\circ \mathrm{inc}^{\ast}_{\{\mathrm{ss}\}}\circ\circ\mathrm{inc}^{\{0\}}_{!}\\
&\calT_{\mathrm{ss},\{20\}}= \mathrm{inc}^{\ast}_{\{0\}}\circ \mathrm{inc}^{\{\mathrm{ss}\}}_{!}\circ \mathrm{inc}^{\ast}_{\{\mathrm{ss}\}}\circ\mathrm{inc}^{\{2\}}_{!}\\
&\calT_{\mathrm{ss},\{22\}}= \mathrm{inc}^{\ast}_{\{2\}} \circ \mathrm{inc}^{\{\mathrm{ss}\}}_{!}\circ \mathrm{inc}^{\ast}_{\{\mathrm{ss}\}}\circ\circ\mathrm{inc}^{\{2\}}_{!}.\\
\end{aligned}
\end{equation*}
The resulting matrix
\begin{equation*}
\calT_{\mathrm{ss}}=\begin{pmatrix} &\calT_{\mathrm{ss},\{00\}} & \calT_{\mathrm{ss}, \{02\}}\\ &\calT_{\mathrm{ss}, \{20\}}& \calT_{\mathrm{ss}, \{22\}}\\ \end{pmatrix}
\end{equation*}
will be referred to as the supersingular matrix. We can localize the above diagram at the maximal ideal $\fracm$ and write the resulting matrix as $\calT_{\mathrm{ss},\fracm}$, however all the matrix entries will still be denoted by symbols without referring to $\fracm$. If we need to consider this matrix modulo $\fracm$, then we will denote it by $\calT_{\mathrm{ss}/\fracm}$. The entries of this matrix will be denoted without referring to $\fracm$.
\end{construction}

\subsection{Computing the supersingular matrix} We compute the entries of the supersingular matrix using the same procedure as we did for computing the  level raising matrix. 
\begin{proposition}\label{ss-matrix}
The supersingular matrix $\calT_{\mathrm{ss}}$ is given by
\begin{equation*}
\calT_{\mathrm{ss}}=\begin{pmatrix} &-2p(p+1)(p-1)^{2} & -4p\rmT_{02}\\ &-4p\rmT_{20}& -2p(p+1)(p-1)^{2}\\ \end{pmatrix}.
\end{equation*}
\end{proposition}
\begin{proof} As we did for computing the level raising matrix, we make use of the desingularization $\overline{\rmX}^{\bullet}_{\Pa}(\rmB)$ of $\overline{\rmX}_{\Pa}(\rmB)$. Recall that $\overline{\rmX}^{\bullet\mathrm{ss.n}}_{\Pa, \{0, 2\}}(\rmB)$ is given by
\begin{equation*}
\overline{\rmX}^{\bullet\mathrm{ss.n}}_{\Pa, \{0, 2\}}(\rmB)=\overline{\rmX}^{\bullet\mathrm{ss.n}}_{\Pa, \{0\}}(\rmB)\sqcup \overline{\rmX}^{\bullet\mathrm{ss.n}}_{\Pa, \{2\}}(\rmB) 
\end{equation*}
where $\overline{\rmX}^{\bullet\mathrm{ss.n}}_{\Pa, \{i\}}(\rmB)$ is the disjoint union of $\mathrm{DL}^{\bullet}(\Lambda_{i})$ parametrized by $\rmZ_{\{i\}}(\overline{\rmB})$ for $i\in\{0,2\}$. Then we can set up the supersingular matrix \eqref{ss-diagram} using the following diagram instead
\begin{equation}\label{ss-diagram-sm}
\begin{tikzcd}
\calO_{\lambda}[\rmZ_{\{0\}}(\overline{\rmB})] \arrow[rd, "\mathrm{inc}^{\{0\}}_{!}"'] &                                    &  \calO_{\lambda}[\rmZ_{\{2\}}(\overline{\rmB})]\arrow[ld, "\mathrm{inc}^{\{2\}}_{!}"] \\
                   & \rmH^{2}(\overline{\rmX}^{\bullet}_{\Pa}(\rmB), \calO_{\lambda}(1)) \arrow[d, "\mathrm{inc}^{\ast}_{\{\mathrm{ss}\}}"]                   &                   \\
                   & \rmH^{2}(\overline{\rmX}^{\bullet\mathrm{ss.n}}_{\Pa, \{0,2\}}(\rmB), \calO_{\lambda}(1)) \arrow[d, "\mathrm{inc}^{\{\mathrm{ss}\}}_{!}"]                   &                   \\
                   &  \rmH^{4}_{\rmc}(\overline{\rmX}^{\bullet}_{\Pa}(\rmB), \calO_{\lambda}(2)) \arrow[ld, "\mathrm{inc}^{\ast}_{\{0\}}"'] \arrow[rd, "\mathrm{inc}^{\ast}_{\{2\}}"] &                   \\
 \calO_{\lambda}[\rmZ_{\{0\}}(\overline{\rmB})]            &                                    &   \calO_{\lambda}[\rmZ_{\{2\}}(\overline{\rmB})].            
\end{tikzcd}
\end{equation}
Here the map
\begin{equation*}
\mathrm{inc}^{\ast}_{\{\mathrm{ss}\}}: \rmH^{2}(\overline{\rmX}^{\bullet}_{\Pa}(\rmB), \calO_{\lambda}(1))\rightarrow \rmH^{2}(\overline{\rmX}^{\bullet\mathrm{ss}.n}_{\Pa}(\rmB), \calO_{\lambda}(1)) 
\end{equation*}
is the natural restriction map and 
\begin{equation*}
\mathrm{inc}^{\{\mathrm{ss}\}}_{!}: \rmH^{2}(\overline{\rmX}^{\bullet\mathrm{ss}.n}_{\Pa}(\rmB), \calO_{\lambda}(1))\rightarrow \rmH^{4}_{\rmc}(\overline{\rmX}^{\bullet}_{\Pa}(\rmB), \calO_{\lambda}(2)) 
\end{equation*}
is the natural Gysin map. We clearly have a commutative diagram
\begin{equation*}
\begin{tikzcd}
 \rmH^{2}(\overline{\rmX}^{\square}_{\Pa}(\rmB), \calO_{\lambda}(1))  \arrow[d, "\mathrm{inc}^{\ast}_{\{\mathrm{ss}\}}"] & \rmH^{2}(\overline{\rmX}^{\bullet}_{\Pa}(\rmB), \calO_{\lambda}(1))  \arrow[d,  "\mathrm{inc}^{\ast}_{\{\mathrm{ss}\}}"]\arrow[l] \\
\rmH^{2}_{(\rmc)}(\overline{\rmX}^{\square\mathrm{ss.n}}_{\Pa, \{0,2\}}(\rmB), \calO_{\lambda}(1)) \arrow[d, "\mathrm{inc}^{\{\mathrm{ss}\}}_{!}"] &  \rmH^{2}(\overline{\rmX}^{\bullet\mathrm{ss.n}}_{\Pa, \{0,2\}}(\rmB), \calO_{\lambda}(1)) \arrow[d, "\mathrm{inc}^{\{\mathrm{ss}\}}_{!}"] \arrow[l]\\
\rmH^{4}_{\rmc}(\overline{\rmX}^{\square}_{\Pa}(\rmB), \calO_{\lambda}(2)) \arrow[r]  &\rmH^{4}_{\rmc}(\overline{\rmX}^{\bullet}_{\Pa}(\rmB), \calO_{\lambda}(2))  \\
\end{tikzcd}
\end{equation*}
which allows us to use \eqref{ss-diagram-sm} to compute the entries of the supersingular matrix.

\subsubsection{$\calT_{\mathrm{ss},\{00\}}$} Recall $\calT_{\mathrm{ss},\{00\}}= \mathrm{inc}^{\ast}_{\{0\}}\circ \mathrm{inc}^{\{\mathrm{ss}\}}_{!}\circ \mathrm{inc}^{\ast}_{\{\mathrm{ss}\}}\circ\mathrm{inc}^{\{0\}}_{!}$. 
The contribution of the composite maps given by
\begin{equation*}
\begin{aligned}
\calO_{\lambda}[\rmZ_{\{0\}}(\overline{\rmB})]\xrightarrow{\mathrm{inc}^{\{0\}}_{!}}\rmH^{2}(\overline{\rmX}^{\bullet}_{\Pa}(\rmB), \calO_{\lambda}(1))\xrightarrow{\mathrm{inc}^{\ast}_{\{\mathrm{ss}\}}}\rmH^{2}(\overline{\rmX}^{\bullet\mathrm{ss.n}}_{\Pa, \{0\}}(\rmB), \calO_{\lambda}(1))&\xrightarrow{\mathrm{inc}^{\{\mathrm{ss}\}}_{!}} \rmH^{4}_{\rmc}(\overline{\rmX}^{\bullet}_{\Pa}(\rmB), \calO_{\lambda}(2))\\
&\xrightarrow{\mathrm{inc}^{\ast}_{\{0\}}}\calO_{\lambda}[\rmZ_{\{0\}}(\overline{\rmB})]\\
\end{aligned}
\end{equation*}
to $\calT_{\mathrm{ss},\{00\}}$ is  $4p^{2}(p+1)$. This follows from the fact that for each $\Lambda_{0}\in\calL_{\{0\}}$ the degree of the variety $\mathrm{DL}^{\bullet}(\Lambda_{0})$ is $(p+1)$ and the normal bundle has degree $-2p$ by {Lemma \ref{intersection-number}}. To compute the contribution of the composite maps given by
\begin{equation*}
\begin{aligned}
\calO_{\lambda}[\rmZ_{\{0\}}(\overline{\rmB})]\xrightarrow{\mathrm{inc}^{\{0\}}_{!}}\rmH^{2}(\overline{\rmX}^{\bullet}_{\Pa}(\rmB), \calO_{\lambda}(1))\xrightarrow{\mathrm{inc}^{\ast}_{\{\mathrm{ss}\}}}\rmH^{2}(\overline{\rmX}^{\bullet\mathrm{ss.n}}_{\Pa, \{2\}}(\rmB), \calO_{\lambda}(1))&\xrightarrow{\mathrm{inc}^{\{\mathrm{ss}\}}_{!}} \rmH^{4}_{\rmc}(\overline{\rmX}^{\bullet}_{\Pa}(\rmB), \calO_{\lambda}(2))\\
&\xrightarrow{\mathrm{inc}^{\ast}_{\{0\}}}\calO_{\lambda}[\rmZ_{\{0\}}(\overline{\rmB})].\\
\end{aligned}
\end{equation*}
Let $\Lambda_{0}$ be a vertex lattice of type $0$ and $e_{\Lambda_{0}}\in \calO_{\lambda}[\rmZ_{\{0\}}(\overline{\rmB})]$ be the class represents $\mathrm{DL}^{\bullet}(\Lambda_{0})$. Then we have 
\begin{equation*}
\mathrm{inc}^{\{0\}}_{!}\circ \mathrm{inc}^{\ast}_{\{\mathrm{ss}\}}(e_{\Lambda_{0}})=\sum_{\Lambda_{2}}e_{\Lambda_{0}, \Lambda_{2}}
\end{equation*}
where $\Lambda_{2}$ runs through all the lattices of type $2$ such that $p\Lambda^{\vee}_{0}\subset^{2}\Lambda_{2}\subset \Lambda_{0}$ and where $e_{\Lambda_{0},\Lambda_{2}}$ represents the class $\mathrm{DL}^{\bullet}(\Lambda_{0})\cap \mathrm{DL}^{\bullet}(\Lambda_{2})\simeq\PP^{1}$ in $\rmH^{2}(\overline{\rmX}^{\bullet\mathrm{ss.n}}_{\Pa, \{2\}}(\rmB), \calO_{\lambda}(1))$. Then we would like to compute
\begin{equation*}
\mathrm{inc}^{\{\mathrm{ss}\}}\circ\mathrm{inc}^{\ast}_{\{0\}}(e_{\Lambda_{0}, \Lambda_{2}})=\sum_{\Lambda^{\prime}_{0}} c_{\Lambda^{\prime}_{0}}(e_{\Lambda_{0},\Lambda_{2}})e_{\Lambda^{\prime}_{0}}
\end{equation*}
where $c_{\Lambda^{\prime}_{0}}(e_{\Lambda_{0},\Lambda_{2}})$ is given by the formula
\begin{equation*}
\int_{\overline{\rmX}^{\bullet}_{\Pa}(\rmB)}e_{\Lambda_{0},\Lambda_{2}}\cdot e_{\Lambda^{\prime}_{0}}=\int_{\overline{\rmX}^{\bullet}_{\Pa}(\rmB)}e_{\Lambda_{0}}\cdot e_{\Lambda_{2}}\cdot e_{\Lambda^{\prime}_{0}}.
\end{equation*}
If $\Lambda_{0}\neq \Lambda^{\prime}_{0}$, then $c_{\Lambda^{\prime}_{0}}(e_{\Lambda_{0},\Lambda_{2}})=0$. If $\Lambda_{0}=\Lambda^{\prime}_{0}$, then $c_{\Lambda^{\prime}_{0}}(e_{\Lambda_{0},\Lambda_{2}})=\mathrm{deg}(N_{\mathrm{DL}^{\bullet}(\Lambda_{0})}(\overline{X}^{\bullet}_{\Pa}(\rmB)))=-2p$. It follows then that $\calT_{\mathrm{ss},\{00\}}=4p^{2}(p+1)-2p(p+1)(p^{2}+1)=-2p(p+1)(p-1)^{2}$.

\subsubsection{$\calT_{\mathrm{ss},\{02\}}$} The contribution of the composite maps given by
\begin{equation*}
\begin{aligned}
\calO_{\lambda}[\rmZ_{\{0\}}(\overline{\rmB})]\xrightarrow{\mathrm{inc}^{\{0\}}_{!}}\rmH^{2}(\overline{\rmX}^{\bullet}_{\Pa}(\rmB), \calO_{\lambda}(1))\xrightarrow{\mathrm{inc}^{\ast}_{\{\mathrm{ss}\}}}\rmH^{2}(\overline{\rmX}^{\bullet\mathrm{ss.n}}_{\Pa, \{0\}}(\rmB), \calO_{\lambda}(1))&\xrightarrow{\mathrm{inc}^{\{\mathrm{ss}\}}_{!}} \rmH^{4}_{\rmc}(\overline{\rmX}^{\bullet}_{\Pa}(\rmB), \calO_{\lambda}(2))\\
&\xrightarrow{\mathrm{inc}^{\ast}_{\{2\}}}\calO_{\lambda}[\rmZ_{\{2\}}(\overline{\rmB})]\\
\end{aligned}
\end{equation*}
to the term $\calT_{\mathrm{ss},\{02\}}$ is given by $-2p\mathrm{T}_{02}$ by a similar reasoning as above. The contribution of the composite maps given by
\begin{equation*}
\begin{aligned}
\calO_{\lambda}[\rmZ_{\{0\}}(\overline{\rmB})]\xrightarrow{\mathrm{inc}^{\{0\}}_{!}}\rmH^{2}(\overline{\rmX}^{\bullet}_{\Pa}(\rmB), \calO_{\lambda}(1))\xrightarrow{\mathrm{inc}^{\ast}_{\{\mathrm{ss}\}}}\rmH^{2}(\overline{\rmX}^{\bullet\mathrm{ss.n}}_{\Pa, \{2\}}(\rmB), \calO_{\lambda}(1))&\xrightarrow{\mathrm{inc}^{\{\mathrm{ss}\}}_{!}} \rmH^{4}_{\rmc}(\overline{\rmX}^{\bullet}_{\Pa}(\rmB), \calO_{\lambda}(2))\\
&\xrightarrow{\mathrm{inc}^{\ast}_{\{2\}}}\calO_{\lambda}[\rmZ_{\{2\}}(\overline{\rmB})]\\
\end{aligned}
\end{equation*}
to the term $\calT_{\mathrm{ss},\{02\}}$ is again given by $-2p\mathrm{T}_{02}$. Therefore the total contribution gives the entry 
\begin{equation*}
\calT_{\mathrm{ss},\{02\}}=-4p\mathrm{T}_{02}.
\end{equation*}
\subsubsection{$\calT_{\mathrm{ss},\{20\}}$} The computation is the same as in the case for $\calT_{\mathrm{ss},\{02\}}$. Thus this entry is given by 
\begin{equation*}
\calT_{\mathrm{ss},\{20\}}=-4p\mathrm{T}_{20}.
\end{equation*}

\subsubsection{$\calT_{\mathrm{ss},\{22\}}$} The computation is exactly the same as in the case for $\calT_{\mathrm{ss},\{00\}}$. Thus this entry is given by 
\begin{equation*}
\calT_{\mathrm{ss},\{22\}}=-2p(p+1)(p-1)^{2}. 
\end{equation*}
This finishes the proof of Proposition \ref{ss-matrix}.
\end{proof}

%\begin{construction}
%We define the operator 
%\begin{equation*}
%\calT_{\mathrm{mix}}:\calO_{\lambda}[\rmZ_{\{0\}}(\overline{\rmB})]\oplus\calO_{\lambda}[\rmZ_{\{2\}}(\overline{\rmB})]\rightarrow \calO_{\lambda}[\rmZ_{\{0\}}(\overline{\rmB})]\oplus\calO_{\lambda}[\rmZ_{\{2\}}(\overline{\rmB})] 
%\end{equation*}
%by setting
%\begin{equation*}
%\calT_{\mathrm{mix}}=4p\cdot\calT_{\mathrm{lr}}+\calT_{\mathrm{ss}}.
%\end{equation*}
%\end{construction}
%Then by Proposition \ref{lr-matrix} and Proposition \ref{ss-matrix}, we know that
%\begin{equation*}
%\calT_{\mathrm{mix}}=\begin{pmatrix} &4p(p+1)(p-1)+\mathrm{T}_{20}\circ\mathrm{T}_{02}-4p\mathrm{T}_{p,1} &0\\ &0& 4p(p+1)(p-1)+\mathrm{T}_{02}\circ\mathrm{T}_{20}-4p\mathrm{T}_{p,1}\\ \end{pmatrix}.
%\end{equation*}
%If we need to consider this matrix modulo $\fracm$, then we will denote it by $\calT_{\mathrm{mix}/\fracm}$. The entries of this matrix will be denoted without referring to $\fracm$. The next proposition calculates the determinant of the matrix $\calT_{\mathrm{mix}/\fracm}$.

\begin{proposition}
Let $(\pi, \Sigma, \fracm)$ be the datum considered as in the beginning of the section. Suppose the maximal ideal $\fracm$ appears in the support of $\calO_{\lambda}[\rmZ_{\rmH}(\overline{\rmB})]$.  Suppose that $[\alpha_{p}, \beta_{p}, p^{3}\beta^{-1}, p^{3}\alpha^{-1}_{p}]$ is the Hecke parameter of $\pi$ at $p$. Then determinant of the supersingular matrix modulo $\fracm$ is given by
\begin{equation*}
\det\phantom{.}\calT_{\mathrm{ss}/\fracm}= -4p^{2}\prod_{u\in \{\pm 1\}}[2(\alpha_{p}+p^{3}\alpha^{-1}_{p}+\beta_{p}+p^{3}\beta^{-1}_{p})-u(p^{3}-p^{2}-p+1)].
\end{equation*}
\end{proposition}
\begin{proof}
Since $\pi$ has trivial central character, then $\rmT_{p,0}$ acts trivially on $\calO_{\lambda}[\rmZ_{\rmH}(\overline{\rmB})]/\fracm$. We therefore have 
\begin{equation*}
\rmT_{20}\circ\rmT_{02}= \rmT_{02}\circ\rmT_{20}=\rmT_{p^{2}, 2}+(p+1)\rmT_{p, 1}+(p^{2}+1)(p+1)=\rmT^{2}_{p,2}
\end{equation*}
by Lemma \ref{T02T20}. 

Then we have
\begin{equation*}
\begin{aligned}
\det\phantom{.}\calT_{\mathrm{ss}/\fracm}&=4p^{2}[(p+1)^{2}(p-1)^{4}-4\mathrm{T}^{2}_{p,2}]\\
&=-4p^{2}\prod_{u\in \{\pm 1\}}[2(\alpha_{p}+p^{3}\alpha^{-1}_{p}+\beta_{p}+p^{3}\beta^{-1}_{p})-u(p^{3}-p^{2}-p+1)]\\
\end{aligned}
\end{equation*}
which finishes the proof of the proposition.
\end{proof}

\subsection{Tate classes in the intersection cohomology} We fix the datum $(\pi, \Sigma, \fracm)$ as in the beginning of this section. Let $p$ be a level raising special prime for $\pi$ of length $m$.  We make the following assumption regarding to the vanishing of the nearby cycle cohomology of the quaternionic unitary Shimura variety.
\begin{assumption}\label{vanishing}
Let $\rmH^{d}_{(\rmc)}(\overline{\rmX}_{\Pa}(\rmB), \rmR\Psi(\calO_{\lambda}))_{\fracm}$ be the usual or compact support nearby cycle cohomology of the special fiber $\overline{\rmX}_{\Pa}(\rmB)$. We assume the maximal ideal $\fracm$ satisfies the following properties.
\begin{enumerate}
\item The natural morphism $\rmH^{d}_{\rmc}(\overline{\rmX}_{\Pa}(\rmB), \rmR\Psi(\calO_{\lambda}))_{\fracm}\xrightarrow{\sim} \rmH^{d}(\overline{\rmX}_{\Pa}(\rmB), \rmR\Psi(\calO_{\lambda}))_{\fracm}$ is an isomorphism for all $d$.
\item The cohomology $\rmH^{d}_{(\rmc)}(\overline{\rmX}_{\Pa}(\rmB), \rmR\Psi(\calO_{\lambda}))_{\fracm}=0$ for $d\neq 3$, and that the middle degree cohomology 
\begin{equation*}
\rmH^{3}_{(\rmc)}(\overline{\rmX}_{\Pa}(\rmB), \rmR\Psi(\calO_{\lambda}))_{\fracm} 
\end{equation*}
is a finite free $\calO_{\lambda}$ module. 
\end{enumerate}
\end{assumption}

\begin{remark}The first assumption is usually known in literature as that of $\fracm$ being a non-Eisenstein ideal. It is expected to be satisfied if $\fracm$ is associated to a cuspidal automorphic representation of general type and a large image assumption on the residual Galois representation. The second assumption is more serious. This is reasonable assumption in light of the recent work of Caraiani--Scholze \cite{CS-compact, CS-non-compact} and recent advances by Koshikawa \cite{Kos-a, Kos-b}. Although they deal with certain unitary Shimura varieties, the methods are general enough to treat at least our cases: if one assumes that $\pi$ has an unramified type $\rmI$ component which is generic modulo $\ell$, then combining the proof of  Koshikawa \cite{Kos-b} with a semi-perversity result of the pushforward along the Hodge-Tate period map, one can potentially prove that $\fracm$ satisfies $(2)$. Such extensions of their results are recently obtained by Hamann--Lee in \cite[Corollary 5.6]{HL-vanishing}. In another direction, if we make the assumption that $\rmX_{\Pa}(\rmB)$ has good reduction at $\ell$ and that $\pi$ is ordinary at $\ell$, then the method of \cite{MT-van} could be used to show that the Assumption \ref{vanishing} is fulfilled.

\end{remark}

\begin{definition}\label{generic}
If $\fracm$ satisfies either of the two conditions below, we call $\fracm$ a generic maximal ideal.
\begin{enumerate}
\item We say $\fracm$ is generic level raising at $p$ if it is level raising special at $p$ in the sense of Definition \ref{level-raising-prime-local} and Assumption \ref{vanishing} is satisfied. Moreover we require the following condition on the Hecke parameters $[\alpha_{p}, \beta_{p}, p^{3}\beta^{-1}_{p}, p^{3}\alpha^{-1}_{p}]$ for $\pi$ at $p$:
\begin{equation*}
2(\alpha_{p}+p^{3}\alpha^{-1}_{p}+\beta_{p}+p^{3}\beta^{-1}_{p})\not \equiv \pm(p^{3}-p^{2}-p+1)\mod \lambda.
\end{equation*}

\item We say $\fracm$ is generic non-level raising at $p$ if the Hecke parameters $[\alpha_{p}, \beta_{p}, p^{3}\beta^{-1}_{p}, p^{3}\alpha^{-1}_{p}]$ for $\pi$ satisfies 
\begin{equation*}
\begin{aligned}
&\alpha_{p}+p^{3}\alpha^{-1}_{p}\not\equiv \pm (p+ p^{2})\mod \lambda\\ 
&\beta_{p}+p^{3}\beta^{-1}_{p}\not\equiv \pm (p+ p^{2})\mod \lambda\\
\end{aligned}
\end{equation*}
and Assumption \ref{vanishing} is satisfied.
\end{enumerate}
\end{definition}

\begin{theorem}\label{Tate}
Let $\fracm$ be a generic maximal ideal as in Definition \ref{generic}, we have the following statements.
\begin{enumerate}
\item The map from Construction \ref{ss-cycle-class}
\begin{equation*}
(\mathrm{inc}^{\{0\}}_{!, \fracm}+\mathrm{inc}^{\{2\}}_{!, \fracm}):\calO_{\lambda}[\rmZ_{\{0\}}(\overline{\rmB})]_{\fracm}\oplus\calO_{\lambda}[\rmZ_{\{2\}}(\overline{\rmB})]_{\fracm} \rightarrow  \rmH^{2}(\overline{\rmX}^{\square}_{\Pa}(\rmB), \calO_{\lambda}(1))_{\fracm}
\end{equation*}
is an isomorphism.
\item The map from Construction \ref{ss-cycle-class}
\begin{equation*}
(\mathrm{inc}^{\ast}_{\{0\}, \fracm}, \mathrm{inc}^{\ast}_{\{2\}, \fracm}): \rmH^{4}_{\rmc}(\overline{\rmX}^{\square}_{\Pa}(\rmB), \calO_{\lambda}(2))_{\fracm}\rightarrow \calO_{\lambda}[\rmZ_{\{0\}}(\overline{\rmB})]_{\fracm}\oplus\calO_{\lambda}[\rmZ_{\{2\}}(\overline{\rmB})]_{\fracm} 
\end{equation*}
is surjective and whose kernel is the torsion part of $\rmH^{4}_{\rmc}(\overline{\rmX}^{\square}_{\Pa}(\rmB), \calO_{\lambda}(2))_{\fracm}$.
\end{enumerate}
\end{theorem}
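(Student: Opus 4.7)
The strategy is to realize the composition
\[
\calO_{\lambda}[\rmZ_{\{0\}}(\overline{\rmB})]_{\fracm}\oplus\calO_{\lambda}[\rmZ_{\{2\}}(\overline{\rmB})]_{\fracm} \xrightarrow{f} \rmH^{2}_{\rmc}(\overline{\rmX}_{\Pa}(\rmB),\calO_{\lambda}(1))_{\fracm} \to \rmH^{4}(\overline{\rmX}_{\Pa}(\rmB),\calO_{\lambda}(2))_{\fracm} \xrightarrow{g} \calO_{\lambda}[\rmZ_{\{0\}}(\overline{\rmB})]_{\fracm}\oplus\calO_{\lambda}[\rmZ_{\{2\}}(\overline{\rmB})]_{\fracm}
\]
(with $f$ and $g$ the maps of (1) and (2) respectively, and the middle arrow equal to $\mathrm{inc}^{\{\mathrm{ss}\}}_{!}\circ\mathrm{inc}^{\ast}_{\{\mathrm{ss}\}}$) as the supersingular matrix $\calT_{\mathrm{ss},\fracm}$ of Proposition \ref{ss-matrix}, to prove this composite is an isomorphism after localization, and then to extract the two statements.

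The first step is to compute $\det\calT_{\mathrm{ss}/\fracm}$. Combining Proposition \ref{ss-matrix} with the Hecke identity of Proposition \ref{Hecke-id} and the triviality of the central character of $\pi$ (so that $\rmT_{p,0}\equiv 1$, and hence $\rmT_{20}\circ\rmT_{02}\equiv\rmT_{02}\circ\rmT_{20}\equiv\rmT_{p,2}^{2}\pmod{\fracm}$ by Lemma \ref{T02T20}) yields
\[
\det\calT_{\mathrm{ss}/\fracm}=\bigl(\rmT_{p,2}^{2}-4p^{2}(p+1)^{2}\bigr)^{2}=\prod_{u=\pm 1}\bigl(\alpha_{p}+\beta_{p}+p^{3}\beta_{p}^{-1}+p^{3}\alpha_{p}^{-1}-2up(p+1)\bigr)^{2}
\]
modulo $\fracm$. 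Under either branch of the genericity hypothesis (Definition \ref{generic}), each factor is nonzero modulo $\lambda$: for the generic level-raising case this is exactly the third non-congruence of Definition \ref{level-raising-prime-local}, and for the generic non-level-raising case it follows from the shape of the matrix entries together with the two individual non-congruences imposed on $\alpha_{p}+p^{3}\alpha_{p}^{-1}$ and $\beta_{p}+p^{3}\beta_{p}^{-1}$. By Nakayama's lemma applied to the localization at $\fracm$, the composite $\calT_{\mathrm{ss},\fracm}$ is itself an isomorphism; consequently $f$ is split injective and $g$ is split surjective.

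The remaining task is to promote the split injectivity of $f$ to a genuine isomorphism (completing (1)) and to identify the kernel of $g$ with the torsion part (completing (2)). The plan is to combine Assumption \ref{vanishing} with excision for the inclusion $\overline{\rmX}^{\mathrm{ss}}_{\Pa}(\rmB)\hookrightarrow\overline{\rmX}_{\Pa}(\rmB)$: the vanishing of nearby-cycle cohomology outside middle degree should force the restriction map $\mathrm{inc}^{\ast}_{\{\mathrm{ss}\}}\colon\rmH^{2}_{\rmc}(\overline{\rmX}_{\Pa}(\rmB),\calO_{\lambda}(1))_{\fracm}\to\rmH^{2}(\overline{\rmX}^{\mathrm{ss}}_{\Pa}(\rmB),\calO_{\lambda}(1))_{\fracm}$ to be injective after localization, with its image spanned by the Gysin classes of the $\{0\}$- and $\{2\}$-type components of Theorem \ref{RZ-description}. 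Since these components are smooth Fermat-type surfaces whose $\rmH^{2}$ is generated by algebraic cycle classes, this yields surjectivity of $f$. Statement (2) follows from the dual analysis on $\rmH^{4}$, with the torsion kernel of $g$ corresponding precisely to the $\calO_{\lambda}$-torsion in $\rmH^{4}(\overline{\rmX}_{\Pa}(\rmB),\calO_{\lambda}(2))_{\fracm}$ annihilated by restriction to every supersingular component. The main obstacle is the torsion bookkeeping for (2): pinning down the kernel of $g$ as exactly the torsion subgroup, rather than a possibly larger submodule, ultimately requires the freeness statement of the companion paper \cite{Wangd} obtained via Galois-theoretic deformation-ring arguments, which is the reason the theorem is phrased to be upgraded to a full freeness result in Corollary \ref{no-torsion}.
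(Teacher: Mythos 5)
Your first step agrees with the paper in the generic level-raising case: there $\det\calT_{\mathrm{ss}/\fracm}=\prod_{\rmu=\pm1}(\alpha_{p}+\beta_{p}+p^{3}\beta_{p}^{-1}+p^{3}\alpha_{p}^{-1}-2\rmu p(p+1))^{2}$ is a unit precisely because of the third non-congruence in Definition \ref{level-raising-prime-local}, and Nakayama gives split injectivity of $f$. But your treatment of the generic non-level-raising case has a genuine gap: the two individual non-congruences on $\alpha_{p}+p^{3}\alpha_{p}^{-1}$ and $\beta_{p}+p^{3}\beta_{p}^{-1}$ do \emph{not} control the sum $\alpha_{p}+\beta_{p}+p^{3}\beta_{p}^{-1}+p^{3}\alpha_{p}^{-1}$; for instance if $\alpha_{p}+p^{3}\alpha_{p}^{-1}\equiv 5p(p+1)$ and $\beta_{p}+p^{3}\beta_{p}^{-1}\equiv -3p(p+1)$ modulo $\lambda$, both conditions of Definition \ref{generic}(2) hold while $\det\calT_{\mathrm{ss}/\fracm}\equiv 0$. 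The paper avoids this by switching matrices: in the non-level-raising case it runs the Nakayama argument with the level-raising matrix $\calT_{\mathrm{lr}}$ (the composite through the space of vanishing cycles, Proposition \ref{lr-matrix}), whose determinant $4p^{-2}\prod_{\rmu=\pm1}(\alpha_{p}+p^{3}\alpha_{p}^{-1}-\rmu p(p+1))(\beta_{p}+p^{3}\beta_{p}^{-1}-\rmu p(p+1))$ is exactly what the two non-congruences make invertible. Your proof as written fails in that branch.

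The second, more serious gap is the surjectivity step. Your plan—excision plus Assumption \ref{vanishing} forcing the image of $\mathrm{inc}^{\ast}_{\{\mathrm{ss}\}}$ to be spanned by Gysin classes, combined with $\rmH^{2}$ of the components being algebraic—does not give the needed bound: $\rmH^{2}$ of the supersingular locus is far larger than the span of the component classes, and nothing in your argument controls which Hecke eigensystems can occur in $\rmH^{2}_{\rmc}(\overline{\rmX}_{\Pa}(\rmB),\calO_{\lambda}(1))_{\fracm}$. The paper's actual mechanism is a rank count over $\overline{\QQ}_{l}$: torsion-freeness of $\rmH^{2}_{\rmc}$ follows from its injection (co-specialization) into $\rmH^{3}_{\rmc}(\overline{\rmX}_{\Pa}(\rmB),\rmR\Psi(\calO_{\lambda})(1))_{\fracm}$, and then for each automorphic $\vec{\pi}$ of general type one proves $2\dim\overline{\QQ}_{l}[\rmZ_{\rmH}(\overline{\rmB})][\iota_{l}\vec{\pi}^{\infty p}]\geq\dim\rmH^{2}_{\rmc}(\overline{\rmX}_{\Pa}(\rmB),\overline{\QQ}_{l}(1))[\iota_{l}\vec{\pi}^{\infty p}]$ using the co-specialization exact sequence, the multiplicity-preserving Jacquet--Langlands comparison (Theorem \ref{JL}, forcing type $\rmI\rmI\rma$ components in the nearby-cycle cohomology), and the uniqueness of paramodular newforms/oldforms principle of Roberts--Schmidt; this is the heart of the proof and is absent from your proposal. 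Finally, your claim that identifying the kernel in (2) with the torsion requires the deformation-theoretic freeness of \cite{Wangd} is misplaced: Theorem \ref{Tate}(2) follows from the same rank count together with Poincar\'e duality on the smooth locus, and it is only the stronger torsion-vanishing of $\rmH^{4}$ (Corollary \ref{no-torsion}) that needs the Galois deformation input.
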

\begin{proof}
Suppose for the moment that $\fracm$ is generic level raising at $p$. Then the determinant of the matrix $\calT_{\mathrm{ss}}/\fracm$ is given by 
\begin{equation*}
-4p^{2}\prod_{u\in \{\pm 1\}}[2(\alpha_{p}+p^{3}\alpha^{-1}_{p}+\beta_{p}+p^{3}\beta^{-1}_{p})-u(p^{3}-p^{2}-p+1)]
\end{equation*}
This is non-zero by our definition of the generic level raising condition. By construction, $\calT_{\mathrm{ss}/\fracm}$ factors through $(\mathrm{inc}^{\{0\}}_{!, \fracm}+\mathrm{inc}^{\{2\}}_{!, \fracm})$. This immediately implies that we have an injection
\begin{equation*}
\calO_{\lambda}[\rmZ_{\{0\}}(\overline{\rmB})]_{\fracm}\oplus\calO_{\lambda}[\rmZ_{\{2\}}(\overline{\rmB})]_{\fracm}\hookrightarrow \rmH^{2}(\overline{\rmX}^{\square}_{\Pa}(\rmB), \calO_{\lambda}(1))_{\fracm}
\end{equation*}
by the Nakayama's lemma. Note that $\rmH^{2}(\overline{\rmX}^{\square}_{\Pa}(\rmB), \calO_{\lambda}(1))_{\fracm}$ is a free $\calO_{\lambda}$-module as it injects into 
\begin{equation*}
\bigoplus_{\sigma\in\rmZ_{\{1\}}(\overline{\rmB})}\rmH^{3}_{\{\sigma\}}(\overline{\rmX}_{\Pa}(\rmB), \rmR\Psi(\calO_{\lambda})(1))_{\fracm}
\end{equation*}
which follows from the bottom exact sequence in \eqref{Pic-Lef-no-c} and the generic assumption on $\fracm$ which implies the vanishing of $\rmH^{2}(\overline{\rmX}_{\Pa}(\rmB), \rmR\Psi(\calO_{\lambda}))_{\fracm}$. Therefore to finish the proof in this case, we need to show that 
\begin{equation*}
2\rank_{\calO_{\lambda}} \calO_{\lambda}[\rmZ_{\rmH}(\overline{\rmB})]_{\fracm}\geq \mathrm{rank}_{\calO_{\lambda}}\rmH^{2}(\overline{\rmX}^{\square}_{\Pa}(\rmB), \calO_{\lambda}(1))_{\fracm}.
\end{equation*}
For this purpose, let $\vec{\pi}$ be an automorphic representation of $\bfG(\mathbb{A}_{\QQ})$ of general type. It suffices to show that
\begin{equation*}
2\dim_{\overline{\QQ}_{\ell}} \overline{\QQ}_{\ell}[\rmZ_{\rmH}(\overline{\rmB})][\iota_{\ell}\vec{\pi}^{\infty pq}]\geq \dim_{\overline{\QQ}_{\ell}}\rmH^{2}(\overline{\rmX}^{\square}_{\Pa}(\rmB), \overline{\QQ}_{\ell}(1))[\iota_{\ell}\vec{\pi}^{\infty pq}].
\end{equation*}
Consider the graded piece 
\begin{equation*}
\mathrm{Gr}_{-1}(1)=\bigoplus_{\sigma\in \rmZ_{\{1\}}(\overline{\rmB})}\rmH^{3}_{\{\sigma\}}(\overline{\rmX}_{\Pa}(\rmB), \rmR\Psi(\overline{\QQ}_{\ell})(1))/\rmH^{2}(\overline{\rmX}^{\square}_{\Pa}(\rmB), \overline{\QQ}_{\ell}(1))
\end{equation*}
of the monodromy filtration of $\rmH^{3}(\overline{\rmX}_{\Pa}(\rmB), \rmR\Psi(\overline{\QQ}_{\ell})(1))$.
We can canonically identify the space 
\begin{equation*}
\bigoplus\limits_{\sigma\in \rmZ_{\{1\}}(\overline{\rmB})}\rmH^{3}_{\{\sigma\}}(\overline{\rmX}_{\Pa}(\rmB), \rmR\Psi(\overline{\QQ}_{\ell})(1))[\iota_{\ell}\vec{\pi}^{\infty p}] 
\end{equation*}
with the space $\overline{\QQ}_{\ell}[\rmZ_{\Pa}(\overline{\rmB})][\iota_{\ell}\vec{\pi}^{\infty p}]$. Note that to complete a representation $\vec{\pi}^{\infty p}$ that occurs in $\rmH^{3}(\overline{\rmX}_{\Pa}(\rmB), \rmR\Psi(\overline{\QQ}_{\ell})(1))$ to a representation $\vec{\pi}$ of $\bfG(\mathbb{A})$, $\vec{\pi}_{p}$ must be  a type $\mathrm{II}\mathrm{a}$ representation of  $\bfG(\QQ_{p})$ by Theorem \ref{JL}.  It follows from this that any representation $\vec{\pi}_{p}$ completing  $\vec{\pi}^{\infty p}$ such that $\vec{\pi}^{\infty p}$ occurs in
\begin{equation*}
\bigoplus_{\sigma\in \rmZ_{\{1\}}(\overline{\rmB})}\rmH^{3}_{\{\sigma\}}(\overline{\rmX}_{\Pa}(\rmB), \rmR\Psi(\overline{\QQ}_{\ell})(1))/\rmH^{2}(\overline{\rmX}^{\square}_{\Pa}(\rmB), \overline{\QQ}_{\ell}(1))
\end{equation*}
must also be a type $\mathrm{IIa}$ representation. On the other hand,  to complete a representation $\vec{\pi}^{\infty p}$ that occurs in $\overline{\QQ}_{\ell}[\rmZ_{\Pa}(\overline{\rmB})]\cong\bigoplus_{\sigma\in \rmZ_{\{1\}}(\overline{\rmB})}\rmH^{3}_{\{\sigma\}}(\overline{\rmX}_{\Pa}(\rmB), \rmR\Psi(\overline{\QQ}_{\ell})(1))$ to a representation $\vec{\pi}$ of $\bfG(\mathbb{A})$, $\vec{\pi}_{p}$  can be either an unramified type $\mathrm{I}$ representation or a type $\mathrm{II}\mathrm{a}$ representation. It follows then by the uniqueness of local newforms \cite[Theorem 5.6.1]{BS-newform} or \cite[Theorem 2.3.1]{Sch-Iwahori} and strong multiplicity one results in Theorem \ref{mult1}, Theorem \ref{mult1-definite} and Theorem \ref{mult1-indefinite}, those representation $\vec{\pi}_{p}$ of $\bfG(\QQ_{p})$ which completes $\vec{\pi}^{\infty p}$ such that $\vec{\pi}^{\infty p}$ occurs in $\rmH^{2}(\overline{\rmX}^{\square}_{\Pa}(\rmB), \overline{\QQ}_{\ell}(1))$ can only be an unramified type $\rmI$ representation. By the oldform principle \cite[Theorem 5.6.1]{BS-newform} or \cite[Theorem 2.3.1]{Sch-Iwahori},  we have the desired inequality
\begin{equation*}
2\dim_{\overline{\QQ}_{\ell}} \overline{\QQ}_{\ell}[\rmZ_{\rmH}(\overline{\rmB})][\iota_{\ell}\vec{\pi}^{\infty p}]\geq \dim_{\overline{\QQ}_{\ell}}\rmH^{2}(\overline{\rmX}^{\square}_{\Pa}(\rmB), \overline{\QQ}_{\ell}(1))[\iota_{\ell}\vec{\pi}^{\infty p}].
\end{equation*}

Next we show the same result under the assumption that $\fracm$ is generic non-level raising. Suppose that  $\fracm$ is not level raising special. Then the determinant of the level raising matrix $\det \calT_{\mathrm{lr}/\fracm}$ is given by 
\begin{equation*}
\det \calT_{\mathrm{lr}/\fracm}= (p)^{-2}\prod_{u=\pm1}(\alpha_{p}+p^{3}\alpha^{-1}_{p}-u p(p+1))(\beta_{p}+p^{3}\beta^{-1}_{p}-u p(p+1)).  
\end{equation*}
This is non-zero by our assumption. This immediately implies that we have an injection
\begin{equation*}
\calO_{\lambda}[\rmZ_{\{0\}}(\overline{\rmB})]_{\fracm}\oplus\calO_{\lambda}[\rmZ_{\{2\}}(\overline{\rmB})]_{\fracm}\hookrightarrow \rmH^{2}(\overline{\rmX}^{\square}_{\Pa}(\rmB), \calO_{\lambda}(1))_{\fracm}
\end{equation*}
by the Nakayama's lemma. The rest of the argument is the same as in the previous case. Finally, the second part of the proposition is dual to the first part. In particular, the map
\begin{equation*}
 (\mathrm{inc}^{\ast}_{\{0\}, \fracm}, \mathrm{inc}^{\ast}_{\{2\}, \fracm}): \rmH^{4}_{\rmc}(\overline{\rmX}^{\square}_{\Pa}(\rmB), \calO_{\lambda}(2))_{\fracm}\rightarrow \calO_{\lambda}[\rmZ_{\{0\}}(\overline{\rmB})]_{\fracm}\oplus\calO_{\lambda}[\rmZ_{\{2\}}(\overline{\rmB})]_{\fracm} 
\end{equation*}
is surjective. Using the same argument as in the first part, we see that $(\mathrm{inc}^{\ast}_{\{0\}, \fracm}, \mathrm{inc}^{\ast}_{\{2\}, \fracm})$
is an isomorphism up to the torsion in $\rmH^{4}_{\rmc}(\overline{\rmX}^{\square}_{\Pa}(\rmB), \calO_{\lambda}(2))_{\fracm}$. The second part now follows.
\end{proof}

We finish this section with several remarks about this theorem. First of all, this result can be understood as the statement that the intersection cohomology of the special fiber $\mathrm{IH}^{2}(\overline{\rmX}_{\Pa}(\rmB), \calO_{\lambda}(1))_{\fracm}$ is generated by Tate cycles coming from the supersingular locus. This result along with \cite[Proposition 6.3.1]{LTXZZ} can be seen as an analogue of the main result of \cite{XZ} in the bad reduction case for low degree cohomology groups localized at generic maximal ideals. It seems that these cohomology groups always correspond to oldforms. It would be of great interest to formulate some predictions in a similar manner of \cite{XZ}. Secondly, we have established the following analogue of the Ihara's lemma in the setting of definite quaternionic unitary groups of degree $2$. 
\begin{corollary}
There is an injection 
\begin{equation*}
\mathbf{Ih}_{\fracm}: \calO_{\lambda}[\rmZ_{\{0\}}(\overline{\rmB})]_{\fracm}\oplus\calO_{\lambda}[\rmZ_{\{2\}}(\overline{\rmB})]_{\fracm}\xhookrightarrow{\phantom{aaaa}} \calO_{\lambda}[\rmZ_{\{1\}}(\overline{\rmB})]_{\fracm}.
\end{equation*}
\end{corollary}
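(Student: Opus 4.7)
The strategy is to package together the two results already established for a generic maximal ideal $\fracm$: the Tate cycles theorem \ref{Tate}(1), which identifies the supersingular component cycle classes with all of the degree-$2$ compactly supported cohomology, and the co-specialization exact sequence appearing in the proof of Theorem \ref{Tate}, which embeds that cohomology into the dual space of vanishing cycles. Concretely, I would define
\begin{equation*}
\mathbf{Ih}_{\fracm} := \beta_{\fracm} \circ \bigl(\mathrm{inc}^{\{0\}}_{!,\fracm}+\mathrm{inc}^{\{2\}}_{!,\fracm}\bigr),
\end{equation*}
where $\beta_{\fracm}$ is the restriction map appearing in \S 5 of the Picard--Lefschetz diagram, and prove injectivity factor by factor.

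The first factor is an isomorphism by Theorem \ref{Tate}(1). For the second factor, I would apply $\rmR\Gamma_{\rmc}$ to the distinguished triangle $\calO_{\lambda}|_{\overline{\rmX}_{\Pa}(\rmB)} \to \rmR\Psi(\calO_{\lambda}) \to \rmR\Phi(\calO_{\lambda}) \to [1]$, localize at $\fracm$, and use Assumption \ref{vanishing} together with the concentration of $\rmR\Phi(\calO_{\lambda})$ in degree $3$ to extract the exact sequence
\begin{equation*}
0 \to \rmH^{2}_{\rmc}(\overline{\rmX}_{\Pa}(\rmB), \calO_{\lambda}(1))_{\fracm} \xrightarrow{\beta_{\fracm}} \bigoplus_{\sigma \in \rmZ_{\{1\}}(\overline{\rmB})} \rmH^{3}_{\{\sigma\}}(\overline{\rmX}_{\Pa}(\rmB), \rmR\Psi(\calO_{\lambda})(1))_{\fracm},
\end{equation*}
which is precisely the integral analogue of the co-specialization sequence used for $\overline{\QQ}_{l}$-coefficients inside the proof of Theorem \ref{Tate}. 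The target is canonically identified, via the description of the singular locus, with $\calO_{\lambda}[\rmZ_{\{1\}}(\overline{\rmB})]_{\fracm}$. Composing the Tate cycles isomorphism with $\beta_{\fracm}$ then yields the desired injection.

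The main obstacle is justifying the integral co-specialization sequence after localization at $\fracm$, since the version appearing earlier in the paper is rational. The key point is that, once we localize at a generic $\fracm$, Assumption \ref{vanishing} forces $\rmH^{i}_{\rmc}(\overline{\rmX}_{\Pa}(\rmB),\rmR\Psi(\calO_{\lambda}))_{\fracm}=0$ for $i\ne 3$ and the middle degree piece is $\calO_{\lambda}$-free. Plugging these vanishings into the long exact sequence derived from the specialization triangle forces the connecting map out of $\rmH^{2}_{\rmc}(\overline{\rmX}_{\Pa}(\rmB),\calO_{\lambda}(1))_{\fracm}$ into the vanishing cycles to be injective, and no torsion obstruction arises because the target sits inside a free $\calO_{\lambda}$-module. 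Once this integral injectivity is in place, the assembly into $\mathbf{Ih}_{\fracm}$ is formal and the corollary follows.
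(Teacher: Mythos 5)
Your construction of $\mathbf{Ih}_{\fracm}$ is exactly the paper's: the paper defines it as the composite of $(\mathrm{inc}^{\{0\}}_{!,\fracm}+\mathrm{inc}^{\{2\}}_{!,\fracm})$, which is an isomorphism onto $\rmH^{2}_{\rmc}(\overline{\rmX}_{\Pa}(\rmB),\calO_{\lambda}(1))_{\fracm}$ by Theorem \ref{Tate}(1), with the map into $\bigoplus_{\sigma}\rmH^{3}_{\{\sigma\}}(\overline{\rmX}_{\Pa}(\rmB),\rmR\Psi(\calO_{\lambda})(1))_{\fracm}\cong\calO_{\lambda}[\rmZ_{\{1\}}(\overline{\rmB})]_{\fracm}$, whose injectivity after localization at the generic ideal $\fracm$ is what was established in the course of proving Theorem \ref{Tate}. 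So in outline you are following the same route as the paper.

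However, your justification of the injectivity of the second map is not right as stated. Applying $\rmR\Gamma_{\rmc}$ to the specialization triangle $\calO_{\lambda}|_{\overline{\rmX}_{\Pa}(\rmB)}\to\rmR\Psi(\calO_{\lambda})\to\rmR\Phi(\calO_{\lambda})\to[1]$ does not produce a map $\rmH^{2}_{\rmc}(\overline{\rmX}_{\Pa}(\rmB),\calO_{\lambda}(1))\to\bigoplus_{\sigma}\rmH^{3}_{\{\sigma\}}(\overline{\rmX}_{\Pa}(\rmB),\rmR\Psi(\calO_{\lambda})(1))$: since $\rmR\Phi(\calO_{\lambda})$ is a sum of skyscrapers concentrated in degree $3$, that triangle yields in degree $3$ the map $\partial$ into the \emph{space} of vanishing cycles $\bigoplus_{\sigma}\rmR^{3}\Phi_{\sigma}$ (the upper row of the Picard--Lefschetz diagram, i.e.\ the side of $\alpha$), while in degree $2$ it only gives a comparison between $\rmH^{2}_{\rmc}$ of the special fiber and $\rmH^{2}_{\rmc}$ with $\rmR\Psi$-coefficients; combined with Assumption \ref{vanishing} this would force the source of your map to vanish after localization, which is incompatible with the nonvanishing you simultaneously invoke from Theorem \ref{Tate}(1). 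The map $\beta$ with target the \emph{dual} space of vanishing cycles $\bigoplus_{\sigma}\rmH^{3}_{\{\sigma\}}(\overline{\rmX}_{\Pa}(\rmB),\rmR\Psi(\calO_{\lambda})(1))$ comes from the co-specialization sequence, i.e.\ the lower row of the Picard--Lefschetz diagram described in Proposition \ref{alpha-beta}(2), which is the sequence actually used (with $\overline{\QQ}_{l}$-coefficients) in the proof of Theorem \ref{Tate}. The correct argument, and the one the paper relies on, is that the term of that co-specialization sequence preceding $\beta$ is the degree-$2$ compactly supported nearby-cycle (equivalently generic-fiber) cohomology, which vanishes after localization at $\fracm$ by Assumption \ref{vanishing}; this is what makes $\beta_{\fracm}$ injective integrally. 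With that substitution your proof coincides with the paper's; as written, the step deriving the exact sequence is the gap.
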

\begin{proof}
We have shown in the above proof that we have an isomorphism
\begin{equation*}
\calO_{\lambda}[\rmZ_{\{0\}}(\overline{\rmB})]_{\fracm}\oplus\calO_{\lambda}[\rmZ_{\{2\}}(\overline{\rmB})]_{\fracm}\xrightarrow{(\mathrm{inc}^{\{0\}}_{!, \fracm}+\mathrm{inc}^{\{2\}}_{!, \fracm})} \rmH^{2}(\overline{\rmX}^{\square}_{\Pa}(\rmB), \calO_{\lambda}(1))_{\fracm}
\end{equation*}
and an injection
\begin{equation*}
\rmH^{2}(\overline{\rmX}^{\square}_{\Pa}(\rmB), \calO_{\lambda}(1))_{\fracm} \xhookrightarrow{\phantom{aa}\beta\phantom{aa}}  \bigoplus_{\sigma\in Z_{\{1\}}(\overline{\rmB})}\rmH^{3}_{\{\sigma\}}(\overline{\rmX}_{\Pa}(\rmB), \rmR\Psi(\calO_{\lambda})(1))
=\calO_{\lambda}[\rmZ_{\{1\}}(\overline{\rmB})]_{\fracm}.
\end{equation*}
The composite of these two maps give rise to an injection which is the desired
\begin{equation*}
\mathbf{Ih}_{\fracm}: \calO_{\lambda}[\rmZ_{\{0\}}(\overline{\rmB})]_{\fracm}\oplus\calO_{\lambda}[\rmZ_{\{2\}}(\overline{\rmB})]_{\fracm}\xhookrightarrow{\phantom{aaaa}} \calO_{\lambda}[\rmZ_{\{1\}}(\overline{\rmB})]_{\fracm}.
\end{equation*}
\end{proof}
Note that there is no natural degeneracy map from neither $\calO_{\lambda}[\rmZ_{\{0\}}(\overline{\rmB})]_{\fracm}$ nor $\calO_{\lambda}[\rmZ_{\{2\}}(\overline{\rmB})]_{\fracm}$ to 
$\calO_{\lambda}[\rmZ_{\{1\}}(\overline{\rmB})]_{\fracm}$. Instead, these maps should be the analogues of the level raising operators in the sense of \cite{BS-newform}. Finally suppose that $\fracm$ is generic level raising at $p$, we will show that $\rmH^{4}(\overline{\rmX}_{\Pa}(\rmB), \calO_{\lambda}(2))_{\fracm}$ is torsion free following the method of \cite{LTXZZ} using Galois deformation argument.

\section{Galois deformation rings and cohomology of Shimura varieties}
In this section, we recall the main results in \cite{Wangd} where we use the method of \cite{LTXZZa} to show that the cohomology of the quaternionic unitary Shimura varieties are free over certain universal deformation rings. These results are necessary as we lack of torsion vanishing results on the cohomology of the special fibers of these quaternionic unitary Shimura varieties.

\subsection{Galois representation for $\GSp_{4}$}
First we recall some results on associating Galois representations to cuspidal automorphic representations of $\GSp_{4}(\mathbb{A})$ and the local-global compatibility of Langlands correspondence. Let $\pi$ be a cuspidal automorphic representation of $\GSp_{4}(\mathbb{A})$ with trivial central character.  We assume that $\pi$ has weight $(k_{1}, k_{2})$ and trivial central character.  In this article, we will only consider discrete automorphic representation of $\GSp_{4}(\mathbb{A})$ that is of general type. This means there is a cuspidal automorphic representation $\Pi$ of $\GL_{4}(\mathbb{A})$ of symplectic type such that for each place $v$ of $\QQ$, the $L$-parameter obtained from $\mathrm{rec}_{\mathrm{GT}}(\pi_{v})$ by composing with the embedding $\GSp_{4}\hookrightarrow\GL_{4}$ is precisely $\mathrm{rec}_{\GL_{4}}(\Pi_{v})$ which is the Langlands parameter attached to $\Pi_{v}$. Here $\Pi$ is of symplectic type if the partial $L$-function $L^{\rmS}(s, \Pi, \wedge^{2})$ has a pole at $s=1$ for any finite set $\rmS$ of places of $\QQ$. In this case, we say $\Pi$ is the transfer of $\pi$.  

We now recall some general results of Mok and Sorensen on the existence of Galois representations attached to $\pi$. This theorem is very general and we temporarily drop the assumption that $\pi$ is of general type.

\begin{theorem}\label{Galois}
Suppose $\pi$ is a cuspidal automorphic representation of $\GSp_{4}(\mathbb{A})$ of weight $(k_{1}, k_{2})$ where $k_{1}\geq k_{2}\geq 3$ and $k_{1} \equiv k_{2}\mod 2$. Suppose that $\pi$ has trivial central character. Let $\ell\geq 5$ be a fixed prime.Then there is a continuous semisimple representation 
\begin{equation*}
\rho_{\pi, \iota_{\ell}}: \rmG_{\QQ}\rightarrow \GSp_{4}(\overline{\QQ}_{\ell})
\end{equation*}
satisfying the following properties.
\begin{enumerate}
\item $\rmc\circ\rho_{\pi, \iota_{\ell}}=\epsilon^{-3}_{\ell}$;
\item For each finite place $v$, we have
\begin{equation*}
\mathrm{WD}(\rho_{\pi,\iota_{\ell}}\vert_{\rmG_{\QQ_{v}}})^{\rmF-\mathrm{ss}}\cong \mathrm{rec}_{\mathrm{GT}}(\pi_{v}\otimes\vert\rmc\vert^{-3/2})^{\mathrm{ss}};
\end{equation*}
\item The local representation $\rho_{\pi,\iota_{\ell}}\vert_{\rmG_{\QQ_{\ell}}}$ is de Rham with Hodge--Tate weights 
\begin{equation*}
(2-\frac{k_{1}+k_{2}}{2}, -\frac{k_{1}-k_{2}}{2}, 1+\frac{k_{1}-k_{2}}{2}, -1+\frac{k_{1}+k_{2}}{2}).
\end{equation*}
\item If $\pi$ is unramified at $\ell$, then $\rho_{\pi,\iota_{\ell}}\vert_{\rmG_{\QQ_{v}}}$ is moreover crystalline at $\ell$.
\item If $\rho_{\pi, \iota_{\ell}}$ is irreducible, then for all finite place $v$, $\rho_{\pi, \iota_{\ell}}\vert_{\rmG_{\QQ_{v}}}$ is pure.  
\end{enumerate}
\end{theorem}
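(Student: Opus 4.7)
The strategy is to split into cases according to Arthur's classification of discrete automorphic representations of $\GSp_4(\mathbb{A}_{\QQ})$ from \cite{Arthur-GSp}, and in each case reduce the construction of $\rho_{\pi,\iota_l}$ to known results for $\GL_n$. The heart of the matter is the general type case; the endoscopic, Saito--Kurokawa (CAP), Soudry, and one-dimensional type pieces are handled by assembling the Galois representations attached to their constituents on $\GL_2$ and Hecke characters, for which all the listed properties are classical.

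\textbf{General type case.} The plan is to first transfer $\pi$ to a cuspidal automorphic representation $\Pi$ of $\GL_4(\mathbb{A}_{\QQ})$ of symplectic type, so that $\Pi$ is regular algebraic (because $k_1\geq k_2\geq 3$ and $k_1\equiv k_2\bmod 2$ force the infinitesimal character of $\pi_\infty$ to be regular) and essentially self-dual with $\Pi^\vee\cong\Pi\otimes|{\det}|^{3}$. To $\Pi$ one can then attach by the work of Chenevier--Harris, Shin, Caraiani a continuous semisimple representation $\rho_{\Pi,\iota_l}:\rmG_{\QQ}\rightarrow\GL_4(\overline{\QQ}_l)$ which is de Rham at $l$ with the prescribed Hodge--Tate weights, crystalline at $l$ when $\Pi_l$ is unramified, and satisfies local-global compatibility at every finite place in the form $\mathrm{WD}(\rho_{\Pi,\iota_l}\vert_{\rmG_{\QQ_v}})^{\rmF\text{-}\mathrm{ss}}\cong\mathrm{rec}_{\GL_4}(\Pi_v)^{\mathrm{ss}}$. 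Purity then follows from Caraiani's theorem. The next step is to upgrade the image to $\GSp_4$: using that $\Pi$ is of symplectic type, the partial $L$-function $L^{\rmS}(s,\Pi,\wedge^2)$ has a pole at $s=1$, so $\rho_{\Pi,\iota_l}$ carries a non-degenerate alternating bilinear form with similitude character $\epsilon_l^{-3}$ (this is the standard argument of Bellaïche--Chenevier, combined with the fact that $\rho_{\Pi,\iota_l}$ is not orthogonal thanks to the regularity of the Hodge--Tate weights). This defines the desired factorization $\rho_{\pi,\iota_l}:\rmG_{\QQ}\rightarrow\GSp_4(\overline{\QQ}_l)$.

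\textbf{Verification of the properties.} Property $(1)$ is part of the symplectic structure. For $(2)$, local-global compatibility on $\GL_4$ together with the character identity $\mathrm{rec}_{\mathrm{GT}}(\pi_v\otimes|\rmc|^{-3/2})=\mathrm{rec}_{\GL_4}(\Pi_v)$ that defines the notion of transfer gives the required isomorphism of Weil--Deligne representations. Properties $(3)$ and $(4)$ translate directly from the $\GL_4$-side, once one matches the Hodge--Tate weights with the infinitesimal character of $\pi_\infty$ of weight $(k_1,k_2)$; this is a computation at the archimedean place using the explicit Langlands parameter of a holomorphic discrete series with Harish-Chandra parameter determined by $(k_1,k_2)$. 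Property $(5)$ is Caraiani's purity theorem applied to $\rho_{\Pi,\iota_l}$, which transfers to $\rho_{\pi,\iota_l}$ by the embedding $\GSp_4\hookrightarrow\GL_4$ and the fact that irreducibility of $\rho_{\pi,\iota_l}$ implies irreducibility of $\rho_{\Pi,\iota_l}$.

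\textbf{Main obstacle and remarks.} The hard step is the symplectic refinement: constructing $\rho_{\Pi,\iota_l}$ as a $\GL_4$-valued representation is available off-the-shelf, but producing the alternating form with the correct similitude character $\epsilon_l^{-3}$ and showing that the resulting $\GSp_4$-valued representation genuinely realizes the transfer requires invoking both the pole of the exterior square $L$-function and a careful analysis that rules out an orthogonal structure; this is where one truly uses that $\pi$ is of general type rather than merely cuspidal. For the non-general-type cases, the obstacle is instead a bookkeeping one: one must check that the Galois representation built from the $\GL_2$ or Hecke character constituents of the Arthur parameter assembles into something valued in $\GSp_4$ with the correct Hodge--Tate weights and matches $\mathrm{rec}_{\mathrm{GT}}(\pi_v\otimes|\rmc|^{-3/2})$ locally; purity in these cases reduces to Ramanujan for $\GL_2$-holomorphic forms (Deligne), which is why the assumption $k_2\geq 3$ is harmless. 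Since the applications in this paper only require $\pi$ of general type with weight $(3,3)$, we will in practice only need the general type case in what follows.
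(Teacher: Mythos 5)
The paper does not prove this theorem itself: it is quoted from the literature, with the construction attributed to the Langlands--Kottwitz method of Laumon, Weissauer and Taylor and the local--global compatibility to Sorensen and Mok, who (as the paper's own remark notes) proceed via the strong transfer from $\GSp_{4}$ to $\GL_{4}$. Your sketch follows exactly that transfer-to-$\GL_{4}$ route (regular algebraic essentially self-dual $\Pi$, known $\GL_{4}$ results, symplectic refinement via the pole of $L^{\rmS}(s,\Pi,\wedge^{2})$, purity after Caraiani, and assembly from $\GL_{2}$ constituents in the non-general-type cases), so it is essentially the same approach the paper invokes, with the genuinely deep inputs correctly identified as imported from the cited works rather than reproved.
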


\begin{remark}
Several remarks about this theorem are in order. The construction of the Galois representation in this case is given in \cite{La-Siegel, La-SiegelII, Wei-Galois, Tay-Siegel} using the Langlands--Kottwitz method. The statements about local-global compatibilities are mostly proved in \cite{Sor-local-global} and completed in \cite{Mok-GSp}. In fact the authors use the strong transfer of cuspidal automorphic representation from $\GSp_{4}$ to $\GL_{4}$ to construct the desired Galois representation in the totally real field case. The Harish-Chandra parameter $(\mu_{1}, \mu_{2})$ of $\pi$ used more often has the following relation with the weight of $\pi$ used in this theorem: $k_{1}=\mu_{1}+1$ and $k_{2}=\mu_{2}+2$. In this article, we will be interested in the case $\pi$ is of general type and has trivial central character and weights $(3, 3)$.  In our normalization, this is the case when these representation appear in the cohomology of quaternionic unitary Shimura varieties with trivial coeffficient.
\end{remark}

\begin{definition}\label{strong-field}
Let $\pi$ be a cuspidal automorphic representation of $\GSp_{4}(\mathbb{A})$ as in the previous remark. We say a number field $E\subset \CC$ is a strong coefficient field of $\pi$ if for every prime $\lambda$ of $E$ there exits a continuous homomorphism 
\begin{equation*}
\rho_{\pi,\lambda}: \rmG_{\QQ}\rightarrow \GSp_{4}(E_{\lambda})
\end{equation*}
up to conjugation, such that for every isomorphism $\iota_{\ell}:\CC\xrightarrow{\sim}\overline{\QQ}_{\ell}$ inducing the prime $\lambda$, the representations $\rho_{\pi,\lambda}\otimes_{E_{\lambda}}\overline{\QQ}_{\ell}$ and $\rho_{\pi,\iota_{\ell}}$ are conjugate.
\end{definition}

Let $p$ be a place at  which $\pi$ is an unramified type $\rmI$ representation. Then the Frobenius eigenvalues of $\rho_{\pi, \lambda}(\mathrm{Frob}_{p})$ agree with the Hecke parameter $[\alpha_{p}, \beta_{p}, \gamma_{p}, \delta_{p}]$ of $\pi_{p}$ by Theorem \ref{Galois} $(2)$. We will always make the following assumption on $\rho_{\pi, \lambda}$.
\begin{assumption}\label{irrd}
The Galois representation $\rho_{\pi, \lambda}: \rmG_{\QQ}\rightarrow \GSp_{4}(E_{\lambda})$ is residually absolutely irreducible. 
\end{assumption}

The above assumption allows us to define the residual Galois representation
\begin{equation}\label{residual}
\overline{\rho}_{\pi,\lambda}: \rmG_{\QQ}\rightarrow \GSp_{4}(k)
\end{equation}
which is unique up to conjugation. Note it also follows that $\pi$ is necessarily of general type if assumption \ref{irrd} is effective.

\subsection{Galois deformation rings} We summarize the main results in \cite{Wangd}. Suppose we are given a Galois representation $\overline{\rho}: \rmG_{\QQ}\rightarrow\GSp_{4}(k)$. We will denote the restriction $\overline{\rho}\vert_{\rmG_{\QQ_{v}}}: \rmG_{\QQ_{v}}\rightarrow\GSp_{4}(k)$ by $\overline{\rho}_{v}$.  Let $\Sigma_{\min}$ and $\Sigma_{\lr}$ be two sets of  non-archimedean places of $\QQ$ away from $\ell$ and such that
\begin{itemize}
\item $q$ is contained in $\Sigma_{\lr}$;
\item $\Sigma_{\min}$, $\Sigma_{\lr}$ and $\{p\}$ are mutually disjoint;
\item for every $v\in\Sigma_{\lr}\cup\{p\}$, the prime $v$ satisfies $\ell\nmid v(v^{2}-1)$.
\end{itemize}
\begin{definition}\label{rigid}
For a Galois representation $\overline{\rho}: \rmG_{\QQ}\rightarrow\GSp_{4}(k)$ and $\psi=\epsilon^{-3}_{\ell}$, we say the pair $\overline{\rho}$ is rigid for $(\Sigma_{\mathrm{min}}, \Sigma_{\mathrm{lr}})$ if the followings are satisfied:
\begin{itemize}
\item For every $v\in\Sigma_{\min}$, every lifting of $\overline{\rho}_{v}$ is minimally ramified in the sense of \cite[Definition 3.4]{Wangd};
\item For every $v\in\Sigma_{\lr}$, the generalized eigenvalues of $\overline{\rho}_{v}(\phi_{v})$ contain the pair $\{v^{-1}, v^{-2}\}$ exactly once;
\item At $v=\ell$, $\overline{\rho}_{v}$ is regular Fontaine-Laffaille crystalline as in \cite[Definition 3.10]{Wangd};
\item For $v\not\in\Sigma_{\min}\cup\Sigma_{\lr}\cup\{l\}$, the representation $\overline{\rho}_{v}$ is unramified.
\end{itemize}
\end{definition}
Suppose that $\overline{\rho}$ is rigid for $(\Sigma_{\min}, \Sigma_{\lr})$. We consider a global deformation problem in the sense of \cite[Definition 2.2]{Wangd} which has the form
\begin{equation*}\
\calS^{\ast}=(\overline{\rho}, \psi, \Sigma_{\min}\cup\Sigma_{\lr}\cup\{p\}\cup\{\ell\}, \{\calD_{v}\}_{v\in\Sigma_{\min}\cup\Sigma_{\lr}\cup\{p\}\cup\{\ell\}})
\end{equation*}
where  $\ast=\{\ram, \unr, \mix\}$. Here local deformation problems are defined by following rules:
\begin{itemize}
\item For $v\in\Sigma_{\min}$, $\calD_{v}$ is the local deformation problem $\calD^{\min}_{v}$ classifying all the minimal ramified liftings defined in \cite[Defintion 3.4]{Wangd};
\item For $v\in\Sigma_{\lr}\cup\{q\}$, $\calD_{v}$ is the local deformation problem $\calD^{\mathrm{ram}}_{v}$ defined as in \cite[Definition 3.6 (2)]{Wangd} classifying certain ramified liftings;
\item For $v=\ell$, $\calD_{v}$ is the local deformation problem $\calD^{\mathrm{FL}}_{v}$ defined as in \cite[Definition 3.1.1]{Wangd} classifying regular Fontaine--Laffaille crystalline liftings;
\item For $v=p$, depending on $\ast\in\{\ram, \unr, \mix\}$, we have
\begin{itemize}
\item $\calD_{v}$ is the local deformation problem $\calD^{\unr}_{v}$ which is given by \cite[Definition 3.6 (2)]{Wangd} when $\ast=\unr$;
\item $\calD_{v}$ is the local deformation problem $\calD^{\ram}_{v}$ which is given by \cite[Definition 3.6 (3)]{Wangd} when $\ast=\ram$;
\item $\calD_{v}$ is the local deformation problem $\calD^{\mix}_{v}$ defined in \cite[Definition 3.6 (1)]{Wangd} when $\ast=\mix$;
\end{itemize}
\end{itemize}
For each global deformation problem $\calS^{\ast}$ with $\ast=\{\ram, \unr, \mix\}$, we have its corresponding universal deformation ring denoted by 
$\rmR^{\ast}=\rmR^{\mathrm{univ}}_{\calS^{\ast}}$ for $\ast=\{\ram, \unr, \mix\}$.
\begin{construction}\label{indef-level}
When $\rmB$ is indefinite, we choose the level $\rmK=\rmK_{\Pa}$ for the Shimura variety $\Sh(\rmB, \rmK_{\Pa})$ in the following way:
\begin{itemize}
\item For $v\not\in\Sigma_{\lr}\cup\Sigma_{\min}$ or $v=\ell$, then $\rmK_{v}$ is hyperspecial;
\item For $v\in\Sigma_{\lr}-\{p, q\}$, then $\rmK_{v}$ is the paramodular subgroup of $\GSp_{4}(\ZZ_{v})$;
\item For $v\in\Sigma_{\min}$, then $\rmK_{v}$ is contained in the pro-$v$ Iwahori subgroup $\Iw_{1}(v)$;
\item For $v=p, q$, then  $\rmK_{v}$ is the paramodular subgroup $\Pa^{\rmD}$ resp. $\Pa^{\rmD^{\prime }}$ of $\GU_{2}(\rmD)$ resp. of $\GU_{2}(\mathbf{D})$.
\end{itemize}

Let $\Sigma_{\Pa}=\Sigma_{\min}\cup\Sigma_{\lr}\cup\{p, q\}$. We denote by $\bfT^{\ram}$ the image of $\TT^{\Sigma_{\Pa}}$ in 
\begin{equation*}
\End_{\calO_{\lambda}}(\rmH^{3}(\overline{\rmX}_{\Pa}(\rmB), \rmR\Psi(\calO_{\lambda}))).
\end{equation*}
\end{construction}
\begin{construction}\label{def-level}
We choose the level $\overline{\rmK}=\rmK_{\rmH}$ for the Shimura variety $\Sh(\overline{\rmB}, \rmK_{\rmH})$ when $\overline{\rmB}$ is definite in the following way:
\begin{itemize}
\item For $v\not\in\Sigma_{\lr}\cup\Sigma_{\min}$ or $v=\ell$ or $v=p$, then $\overline{\rmK}_{v}$ is hyperspecial;
\item For $v\in\Sigma_{\lr}$, then $\overline{\rmK}_{v}$ is the paramodular subgroup of $\GSp(\ZZ_{v})$;
\item  For $v\in\Sigma_{\min}$, then $\overline{\rmK}_{v}$ is contained in the pro-$v$ Iwahori subgroup $\Iw_{1}(v)$;
\item For $v=q$, then  $\overline{\rmK}_{v}$ is the paramodular subgroup $\Pa^{\mathbf{D}}$ of $\GU_{2}(\mathbf{D})$.
\end{itemize}
Let $\Sigma_{\rmH}=\Sigma_{\min}\cup\Sigma_{\lr}\cup\{q\}$. We denote by $\bfT^{\unr}$ the image of $\TT^{\Sigma_{\rmH}}$ in 
$\End_{\calO_{\lambda}}(\calO_{\lambda}[\rmZ_{\rmH}(\overline{\rmB})])$. 
\end{construction}

Let $\overline{\rho}$ be a Galois representation 
\begin{equation*}
\overline{\rho}: \rmG_{\QQ}\rightarrow \GSp_{4}(k)
\end{equation*}
and $\fracm$ be the maximal ideal corresponding to $\overline{\rho}$ in $\TT^{\Sigma}$ for $\Sigma\in \{\Sigma_{\Pa}, \Sigma_{\rmH}\}$, in the sense that the characteristic polynomial $\det(\rmX-\overline{\rho}(\Frob_{v}))$ is congruent to the Hecke polynomial $\mathcal{Q}_{v}(\rmX)$ given by
\begin{equation*}
\rmX^{4}-\rmT_{v,2}\rmX^{3}+(v\rmT_{v,1}+(v^{3}+v)\rmT_{v, 0})\rmX^{2}-v^{3}\rmT_{v, 0}\rmT_{v, 2}\rmX+v^{6}\rmT^{2}_{v, 0}
\end{equation*}
modulo $\fracm$ for each $v$ not in $\Sigma$. We will also assume that the image of $\overline{\rho}$ contains $\GSp_{4}(\FF_{\ell})$ and that
\begin{equation*}
\rmH^{3}_{\rmc}(\overline{\rmX}_{\Pa}(\rmB), \rmR\Psi(\calO_{\lambda}))_{\fracm}=\rmH^{3}(\overline{\rmX}_{\Pa}(\rmB), \rmR\Psi(\calO_{\lambda}))_{\fracm}.
\end{equation*}

The following theorem is the main result of \cite{Wangd} proved using the Taylor--Wiles method.
\begin{theorem}\label{Free}
Let $\overline{\rho}$ and $\fracm$ be given as above. Suppose the following assumptions hold:
\begin{itemize}
\item[(D1)] the image of $\overline{\rho}(\rmG_{\QQ})$ contains $\GSp_{4}(\FF_{\ell})$;
\item[(D2)] $\overline{\rho}$ is rigid for $(\Sigma_{\min}, \Sigma_{\lr})$;
\item[(D3)]  For every finite set $\Sigma^{\prime}$ of nonarchimedean places containing $\Sigma$ and every open compact subgroup $\rmK^{\prime}$ of $\rmK$ satisfying $\rmK^{\prime}_{v}=\rmK_{v}$ for all $v\not\in\Sigma^{\prime}$, we have 
\begin{equation*}
\rmH^{d}(\overline{\mathrm{X}}_{\Pa}(\rmB),\rmR\Psi (k))_{\fracm^{\prime}}=0
\end{equation*}
for $d\neq 3$ where $\fracm^{\prime}=\fracm\cap \TT^{\Sigma^{\prime}}$.
\end{itemize}
Then the following holds true.
\begin{enumerate}
\item If $\bfT^{\unr}_{\fracm}$ is non-zero, then we have an isomorphim of complete intersection rings: 
\begin{equation*}
\rmR^{\unr}=\bfT^{\unr}_{\fracm}
\end{equation*}
and $\calO_{\lambda}[\rmZ_{\rmH}(\overline{\rmB})]_{\fracm}$ is a finite free $\bfT^{\unr}_{\fracm}$-module.
\item  If $\bfT^{\ram}_{\fracm}$ is non-zero, then we have an isomorphim of complete intersection rings: 
\begin{equation*}
\rmR^{\ram}=\bfT^{\ram}_{\fracm}
\end{equation*}
 and $\rmH^{3}(\overline{\rmX}_{\Pa}(\rmB), \rmR\Psi(\calO_{\lambda}))_{\fracm}$ is a finite free $\bfT^{\ram}_{\fracm}$-module.
\end{enumerate}
\end{theorem}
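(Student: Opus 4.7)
\textbf{Proof proposal for Theorem \ref{Free}.} The plan is to carry out a Taylor--Wiles--Kisin patching argument adapted to the quaternionic unitary setting, following the framework developed in \cite{LTXZZa} for unitary Shimura varieties. Since the two statements (the unramified case with the definite Shimura set, and the ramified case with the cohomology of $\overline{\rmX}_{\Pa}(\rmB)$) are formally parallel, I will describe the strategy for the ramified case; the other is simpler because one replaces the middle-degree cohomology by a space of algebraic modular forms and the hypothesis (D3) is vacuous.

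First, I would set up the auxiliary local deformation theory. For each $v \in \Sigma_{\min}$ one checks that $\calD^{\min}_v$ is represented by a formally smooth $\calO_\lambda$-algebra of the expected dimension; for $v \in \Sigma_{\lr} \cup \{q\}$ and (in the ramified case) for $v=p$, one verifies that the local deformation rings $\calD^{\ram}_v$ are formally smooth of the expected relative dimension $4$, using the rigidity hypothesis that the generalized eigenvalues of $\overline{\rho}(\phi_v)$ contain the pair $\{v^{-1}, v^{-2}\}$ exactly once together with $l \nmid v^2-1$. At $v=l$ the Fontaine--Laffaille condition gives a formally smooth local deformation ring of the expected dimension by standard results on Fontaine--Laffaille functor. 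One then defines the global framed deformation ring $\rmR^{\ram,\square}$ and shows that $\rmR^{\ram} \to \rmR^{\ram,\square}$ is formally smooth of the correct relative dimension.

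Next, I would produce an infinite supply of Taylor--Wiles primes. Here the hypothesis (D1), that the image of $\overline{\rho}$ contains $\GSp_4(\FF_l)$, is essential to apply the Chebotarev argument (for $l \geq 5$) to find, for each positive integer $N$, a set $\rmQ_N$ of auxiliary primes of controlled cardinality such that (i) $\overline{\rho}(\Frob_v)$ is regular semisimple with a suitable eigenvalue structure, and (ii) the dual Selmer group $H^1_{\calS^*_{Q_N}}(\rmG_\QQ, \mathrm{ad}^0\overline{\rho}(1))$ vanishes. The group-theoretic input is Serre's lemma / a standard argument showing $H^i(\mathrm{image}, \mathrm{ad}^0\overline{\rho}) = 0$ for $i = 0,1$. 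This gives a presentation $\rmR^{\ram,\square}_{Q_N}$ as a quotient of a power series ring in a fixed number $g$ of variables over $\rmR^{\mathrm{loc}}$, where $\rmR^{\mathrm{loc}} = \widehat{\bigotimes}_v \rmR^{\square}_v$.

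On the automorphic side, I would augment the level at primes in $Q_N$ by the maximal pro-$v$ quotient of the Iwahori and consider the corresponding cohomology $\rmH^3(\overline{\rmX}_{\Pa,Q_N}(\rmB), \rmR\Psi(\calO_\lambda))_{\fracm_{Q_N}}$, which by (D3) and the non-Eisenstein hypothesis is concentrated in middle degree and supports a free action of $\calO_\lambda[\Delta_{Q_N}]$ where $\Delta_{Q_N}$ is the product of maximal $l$-quotients of the $k(v)^\times$-parts. Dualizing if necessary and framing, we obtain a finitely generated module $M_{Q_N}$ over $\rmR^{\ram,\square}_{Q_N}[[y_1,\ldots,y_t]]$. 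Then I would patch: taking a compatible system of surjections from a power series ring $S_\infty = \calO_\lambda[[x_1,\ldots,x_g, y_1,\ldots,y_t]]$ onto each level, one extracts by the standard ultrafilter/compactness procedure a patched module $M_\infty$ over a patched ring $\rmR_\infty$ which is a quotient of $\rmR^{\mathrm{loc}}[[x_1,\ldots,x_g]]$ and carries compatible surjections to the $\rmR^{\ram,\square}_{Q_N}[[y]]$.

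The main obstacle, and the technical heart of the proof, is the numerical coincidence: one must verify that $\dim \rmR_\infty = \dim S_\infty$ and that $M_\infty$ is a maximal Cohen--Macaulay module over $S_\infty$, forcing $M_\infty$ to be free over $\rmR_\infty$ and $\rmR_\infty = \rmR^{\mathrm{loc}}[[x_1,\ldots,x_g]]$. Maximal Cohen--Macaulay-ness comes from the depth of the patched cohomology in the $S_\infty$-direction, which depends crucially on hypothesis (D3) controlling torsion and concentration of cohomology; freeness over the local deformation rings then follows from formal smoothness of the $\calD_v$'s. Specializing this freeness back down at $N=0$ yields simultaneously $\rmR^{\ram} \xrightarrow{\sim} \bfT^{\ram}_\fracm$ (with both a complete intersection since they are quotients of a regular ring by a regular sequence of the right length) and the freeness of $\rmH^3(\overline{\rmX}_{\Pa}(\rmB), \rmR\Psi(\calO_\lambda))_\fracm$ over $\bfT^{\ram}_\fracm$. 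The unramified case (1) is obtained in the same way with $M_\infty$ replaced by a patched module built from $\calO_\lambda[\rmZ_{\rmH}(\overline{\rmB})]_{\fracm}$, where concentration in a single degree is automatic.
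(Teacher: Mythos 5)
Your proposal follows essentially the same route as the source of this theorem: the paper does not prove it here but quotes it from the companion article \cite{Wangd}, which is stated to establish exactly these $\rmR=\bfT$ and freeness statements by the Taylor--Wiles patching method adapted from \cite{LTXZZa}, with the rigidity hypothesis (D2) guaranteeing formal smoothness of the local deformation problems and (D3) supplying the middle-degree concentration needed for the patched module to be maximal Cohen--Macaulay. Your sketch matches that strategy, so there is nothing further to compare within this paper itself.
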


\begin{remark}
Let $\pi$ be a cuspidal automorphic representation of general type. Assume that $\overline{\rho}_{\pi, \lambda}(\rmG_{\QQ})$ contains $\GSp_{4}(\FF_{\ell})$ for sufficiently large $\ell$, then we have shown that $\overline{\rho}_{\pi, \lambda}$ is indeed rigid, see \cite[Theorem 5.5]{Wangd}.  
\end{remark}

\section{Arithmetic level raising and applications}
Now we are ready to prove our main result on arithmetic level raising on the quaternionic Shimura varieties studied in the previous sections. First we recall the setting we are in. Let $\pi$ be a cuspidal automorphic representation of $\GSp_{4}(\mathbb{A})$ with weight $(3,3)$ and trivial central character. Suppose that $\pi$ is of general type and $E$ be the coefficient field of $\pi$. Let $\Sigma_{\pi}$ be the minimal set of finite places outside of which $\pi$ is unramified. We fix an isomorphism $\iota_{\ell}: \CC\xrightarrow{\sim}\overline{\QQ}_{\ell}$ which induces a place $\lambda$ in $E$ over $\ell$. We fix the following datum.
\begin{itemize}
\item A finite set $\Sigma_{\min}$ of places of $\QQ$ contained in $\Sigma_{\pi}$;
\item A finite set $\Sigma_{\lr}$ of places of $\QQ$ which is disjoint from $\Sigma_{\min}$ and contained in $\Sigma_{\lr}$;
\item A finite set of places $\Sigma$ containing $\Sigma_{\min}\cup\Sigma_{\lr}$ which is away from $\ell$; 
\item We have, by Construction \ref{Hecke-Algebra} $(2)$, a homomorphism
\begin{equation*}
\phi_{\pi}:\TT^{\Sigma}\rightarrow \calO_{E}
\end{equation*}
and its $\lambda$-adic avatar $\phi_{\pi, \lambda}:\TT^{\Sigma}\rightarrow \calO_{\lambda}$ for the valuation ring $\calO_{\lambda}\subset E_{\lambda}$. 
\item We can associate to it a Galois representation
\begin{equation*}
\rho_{\pi, \lambda}: \mathrm{G}_{\QQ}\rightarrow \GSp_{4}(E_{\lambda})
\end{equation*}
which we assume is residually absolutely irreducible as in Assumption \ref{irrd}.
\item Let $p$ be a prime which is level raising special of length $m$ for $\pi$ in the sense of Definition \ref{level-raise-prime}. 
\item Let $q$ be a prime such that $\ell\nmid q(q^{2}-1)$. Suppose that $\pi$ is of type $\mathrm{II}\rma$ and $q$ is contained in $\Sigma_{\lr}$. In this case $\pi_{q}$ admits Jacquet--Langlands transfer to a representation of the group $\GU_{2}(\mathbf{D})$ where $\mathbf{D}$ which is the quaternion division algebra over $\QQ_{q}$.
\item We choose the level $\rmK_{\Pa}$ and $\Sigma_{\Pa}$ as in Construction \ref{indef-level} for the Shimura varieity $\Sh(\rmB, \rmK_{\Pa})$ whose integral model is $\rmX_{\Pa}(\rmB)$ and whose special fiber is $\overline{\rmX}_{\Pa}(\rmB)$; and choose the level  $\rmK_{\rmH}$ and $\Sigma_{\rmH}$ as in Construction \ref{def-level}  for the Shimura set $\Sh(\overline{\rmB}, \rmK_{\rmH})$ which is also denoted by $\rmZ_{\rmH}(\overline{\rmB})$.
\item We introduced two ideals of $\TT^{\Sigma\cup\{p\}}$ by
\begin{equation}
\begin{aligned}
&\fracm=\TT^{\Sigma\cup\{p\}}\cap \ker(\TT^{\Sigma}\xrightarrow{\phi_{\pi,\lambda}}\calO_{\lambda}\rightarrow\calO_{\lambda}/\lambda)\\
&\fracn_{m}=\TT^{\Sigma\cup\{p\}}\cap \ker(\TT^{\Sigma}\xrightarrow{\phi_{\pi,\lambda}}\calO_{\lambda}\rightarrow\calO_{\lambda}/\lambda^{m})\\
\end{aligned}
\end{equation}
as in Construction \ref{Hecke-Algebra} $(3)$ and we assume that $\fracm$ is generic level raising at $p$ in the sense of \ref{generic}.
\end{itemize}
\begin{construction}
We define the map 
\begin{equation*}
\nabla: \calO_{\lambda}[\rmZ_{\rmH}(\overline{\rmB})]\oplus \calO_{\lambda}[\rmZ_{\rmH}(\overline{\rmB})]\rightarrow \calO_{\lambda}[\rmZ_{\rmH}(\overline{\rmB})]
\end{equation*}
by the formula
\begin{equation*}
\begin{aligned}
&\nabla(x, y)=[\rmT^{-1}_{p, 0}\rmT_{p,1}+(p+1)(p^{2}+1)]x-(p+1)\rmT_{20}y \\
\end{aligned}
\end{equation*}
for any $(x, y)\in \calO_{\lambda}[\rmZ_{\rmH}(\overline{\rmB})]\oplus\calO_{\lambda}[\rmZ_{\rmH}(\overline{\rmB})]$.  

We also write 
\begin{equation*}
\nabla_{\fracm}:  \calO_{\lambda}[\rmZ_{\rmH}(\overline{\rmB})]_{\fracm}\oplus \calO_{\lambda}[\rmZ_{\rmH}(\overline{\rmB})]_{\fracm}\rightarrow \calO_{\lambda}[\rmZ_{\rmH}(\overline{\rmB})]_{\fracm} 
\end{equation*}
for the localization of $\nabla$ at $\fracm$
\end{construction}

Note that the composition of the level raising matrix
\begin{equation*}
\calT_{\lr, \fracm}: \calO_{\lambda}[\rmZ_{\rmH}(\overline{\rmB})]_{\fracm}\oplus \calO_{\lambda}[\rmZ_{\rmH}(\overline{\rmB})]_{\fracm}\rightarrow \calO_{\lambda}[\rmZ_{\rmH}(\overline{\rmB})]_{\fracm}\oplus \calO_{\lambda}[\rmZ_{\rmH}(\overline{\rmB})]_{\fracm}
\end{equation*}
with the map $\nabla_{\fracm}$ is given by sending  $(x, y)\in \calO_{\lambda}[\rmZ_{\rmH}(\overline{\rmB})]\oplus \calO_{\lambda}[\rmZ_{\rmH}(\overline{\rmB})]$ to $\det \calT_{\lr,\fracm}\cdot x$ where we recall that
\begin{equation*}
\det\calT_{\lr, \fracm}= (\rmT^{-1}_{p,0}\rmT_{p,1}+(p+1)(p^{2}+1))^{2}-(p+1)^{2}\rmT_{20}\circ\rmT_{02} 
\end{equation*}
by Proposition \ref{lr-matrix}.

\begin{proposition}\label{surj}
Let $p$ be a level raising special prime for $\pi$ and $\fracm$ be the maximal ideal as above. We have a surjective morphism
\begin{equation*}
\mathrm{F}_{-1}\mathrm{H}^{1}(\mathrm{I}_{\QQ_{p^{2}}}, \mathrm{H}^{3}_{\rmc}(\overline{\rmX}_{\Pa}(\rmB), \calO_{\lambda}(2))_{\fracm})\twoheadrightarrow 
\calO_{\lambda}[\rmZ_{\rmH}(\rmB)]_{\fracm}/\det\calT_{\lr, \fracm}
\end{equation*}
whose kernel can be identified with the torsion part of $\rmH^{4}_{\rmc}(\overline{\rmX}_{\Pa}(\rmB), \calO_{\lambda}(2))_{\fracm}$.
\end{proposition}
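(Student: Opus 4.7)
The plan is to chain together Proposition \ref{F-1}, Theorem \ref{Tate}, and the identity $\nabla_{\fracm}\circ\calT_{\lr,\fracm}(x,y) = \det\calT_{\lr,\fracm}\cdot x$. First I would invoke Proposition \ref{F-1} with $n=3$, $r=2$; the hypotheses are met because $\overline{\rmX}_{\Pa}(\rmB)$ has isolated ordinary quadratic singularities at its superspecial points and $\sfH_{\fracm}$ is free over $\calO_{\lambda}$ by the genericity of $\fracm$. This yields
\begin{equation*}
\rmF_{-1}\rmH^{1}(\rmI_{\QQ_{p^{2}}}, \sfH_{\fracm}) \cong \Coker(f_{\fracm}),
\end{equation*}
where $f_{\fracm}:=\alpha\circ\rmN^{-1}_{\Sigma}\circ\beta$ goes from $\rmH^{2}_{\rmc}(\overline{\rmX}_{\Pa}(\rmB), \calO_{\lambda}(1))_{\fracm}$ to $\rmH^{4}(\overline{\rmX}_{\Pa}(\rmB), \calO_{\lambda}(2))_{\fracm}$.

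Next I would translate via Theorem \ref{Tate}: part (1) identifies the source of $f_{\fracm}$ with $\calO_{\lambda}[\rmZ_{\rmH}(\overline{\rmB})]_{\fracm}^{\oplus 2}$, while part (2) produces a surjection $\pi$ of the target onto $\calO_{\lambda}[\rmZ_{\rmH}(\overline{\rmB})]_{\fracm}^{\oplus 2}$ whose kernel is the torsion submodule $T$. By Proposition \ref{alpha-beta} combined with the Hecke-theoretic computation in Proposition \ref{lr-matrix}, the composition $\pi\circ f_{\fracm}$ is exactly the level raising matrix $\calT_{\lr,\fracm}$. A short diagram chase then produces a surjection $\Coker(f_{\fracm})\twoheadrightarrow\Coker(\calT_{\lr,\fracm})$ whose kernel is $T/(T\cap\mathrm{im}\,f_{\fracm})$.

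Two tasks remain: (i) identify $\Coker(\calT_{\lr,\fracm})$ with the cyclic module $\calO_{\lambda}[\rmZ_{\rmH}(\overline{\rmB})]_{\fracm}/\det\calT_{\lr,\fracm}$ via the induced map $\overline{\nabla}_{\fracm}$, and (ii) show $T\cap\mathrm{im}\,f_{\fracm}=0$ so the kernel is the full torsion module. For (i), I would diagonalize $\calT_{\lr,\fracm}$: since $2$ is a unit and $\rmT_{02}=\rmT_{20}=\rmT_{p,2}$ on the trivial-central-character subspace (cf.\ Lemma \ref{T02T20}), the matrix is similar to $-2\,\mathrm{diag}(A+(p+1)\rmT_{p,2},\,A-(p+1)\rmT_{p,2})$ with $A=\rmT_{p,1}+(p+1)(p^{2}+1)$. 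Each factor $A\pm(p+1)\rmT_{p,2}$ acts on $\calO_{\lambda}[\rmZ_{\rmH}(\overline{\rmB})]_{\fracm}/\fracm$ as the scalar $p^{-1}(\alpha_{p}+p^{3}\alpha_{p}^{-1}\pm p(p+1))(\beta_{p}+p^{3}\beta_{p}^{-1}\pm p(p+1))$ (same sign in each occurrence), as computed in the proof of Proposition \ref{det-lr}. The level raising condition of depth $m$ forces precisely one choice of sign to make the factor $\beta_{p}+p^{3}\beta_{p}^{-1}\pm p(p+1)$ vanish modulo $\lambda$ with valuation exactly $m$, while the factors $\alpha_{p}+p^{3}\alpha_{p}^{-1}\pm p(p+1)$ both remain units. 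Thus one diagonal entry is a unit in the localization and the other equals $\det\calT_{\lr,\fracm}$ up to unit, yielding both (i) and the injectivity of $\calT_{\lr,\fracm}$. Task (ii) then follows by splitting $\rmH^{4}(\overline{\rmX}_{\Pa}(\rmB), \calO_{\lambda}(2))_{\fracm}=F\oplus T$ as $\calO_{\lambda}$-modules and writing $f_{\fracm}=(f_{1},f_{2})$: since $f_{1}$ corresponds to the injective $\calT_{\lr,\fracm}$ under the identifications, $\mathrm{im}\,f_{\fracm}\cap T=f_{2}(\ker f_{1})=0$.

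The main obstacle is the diagonalization step in (i), which hinges crucially on the non-congruence $\alpha_{p}+p^{3}\alpha_{p}^{-1}\not\equiv\pm(p+p^{2})\bmod\lambda$ built into Definition \ref{level-raising-prime-local}: this is what guarantees that the non-vanishing eigenvalue factor remains a unit in the localization, so that the $2\times 2$ cokernel truly reduces to a cyclic one of length $m$.
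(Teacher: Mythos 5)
Your skeleton up to the diagram chase is the paper's own: Proposition \ref{F-1} identifies $\rmF_{-1}\rmH^{1}(\rmI_{\QQ_{p^{2}}},\sfH_{\fracm})$ with $\Coker(\alpha\circ\rmN^{-1}_{\Sigma}\circ\beta)_{\fracm}$, Theorem \ref{Tate} converts source and target, and the kernel of the induced surjection onto $\Coker(\calT_{\lr,\fracm})$ is $T/(T\cap\mathrm{im}\,f_{\fracm})$. The gap is in how you finish. Writing $A=\rmT^{-1}_{p,0}\rmT_{p,1}+(p+1)(p^{2}+1)$, $B=(p+1)\rmT_{02}$, $C=(p+1)\rmT_{20}$, $E=(p+1)\rmT_{p,2}$, your whole argument for (i) and for the injectivity of $\calT_{\lr,\fracm}$ rests on the claim that $\rmT_{02}=\rmT_{20}=\rmT_{p,2}$ ``cf.\ Lemma \ref{T02T20}'', so that $\calT_{\lr,\fracm}$ is similar to $-2\,\mathrm{diag}(A+E,A-E)$. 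But Lemma \ref{T02T20} only gives the composite identities $\rmT_{20}\circ\rmT_{02}=\rmT_{02}\circ\rmT_{20}=\rmT_{p,2}^{2}$; the individual operators $\rmT_{02},\rmT_{20}$ are intertwiners between the two Shimura sets $\rmZ_{\{0\}}(\overline{\rmB})$ and $\rmZ_{\{2\}}(\overline{\rmB})$ and are nowhere identified with the spherical operator $\rmT_{p,2}$. From $BC=CB=E^{2}$ alone one cannot conclude that $\Matrix{A}{B}{C}{A}$ is conjugate to $\mathrm{diag}(A+E,A-E)$; any such reduction would need, at a minimum, invertibility of $B,C$ on the localization and commutation of $\rmT_{02},\rmT_{20}$ with the spherical algebra at $p$, none of which you establish. (Your scalar computation that $A\pm(p+1)\rmT_{p,2}$ acts mod $\fracm$ by $p^{-1}(\alpha_{p}+p^{3}\alpha_{p}^{-1}\pm p(p+1))(\beta_{p}+p^{3}\beta_{p}^{-1}\pm p(p+1))$, and that exactly one of these is a unit under the level raising conditions, is correct — but it only concerns the factored determinant, not a diagonalization of the matrix itself.)

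Even granting a diagonalization, your step (ii) needs the non-unit entry (essentially $\det\calT_{\lr,\fracm}$) to act injectively on $\calO_{\lambda}[\rmZ_{\rmH}(\overline{\rmB})]_{\fracm}$; the depth-$m$ valuation you cite controls only the eigenvalue attached to $\pi$ itself, not the other eigensystems occurring in the localization at $\fracm$, and ruling those out requires a temperedness/purity input you do not supply. This is precisely where the paper takes a different, and self-contained, route: it proves $T\cap\mathrm{im}(\alpha\circ\rmN^{-1}_{\Sigma}\circ\beta)_{\fracm}=0$ by a rank count — Nekov\'a\v{r}'s purity result gives $\rmF_{-1}\rmH^{1}(\rmI_{\QQ_{p^{2}}},\rmH^{3}_{\rmc}(\overline{\rmX}_{\Pa}(\rmB),E_{\lambda}(1))_{\fracm})=0$, excision plus Poincar\'e duality on the smooth locus gives $\dim_{E_{\lambda}}\rmH^{4}(\overline{\rmX}_{\Pa}(\rmB),E_{\lambda}(2))_{\fracm}=\dim_{E_{\lambda}}\rmH^{2}_{\rmc}(\overline{\rmX}_{\Pa}(\rmB),E_{\lambda}(1))_{\fracm}$, and the $\calO_{\lambda}$-freeness of $\rmH^{2}_{\rmc}$ then forces the image to be free, hence to meet the torsion trivially — and it obtains the surjection onto $\calO_{\lambda}[\rmZ_{\rmH}(\overline{\rmB})]_{\fracm}/\det\calT_{\lr,\fracm}$ simply by composing with $\nabla_{\fracm}$ and invoking Nakayama. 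To repair your proposal you would have to either justify the operator identity behind your similarity claim or replace steps (i)–(ii) by an argument of this geometric/purity type; as written, both the diagonalization and the injectivity assertion it is meant to deliver are unsupported.
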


\begin{proof}
By Proposition \ref{F-1}, we have an isomorphism
\begin{equation*}
\mathrm{F}_{-1}\mathrm{H}^{1}(\mathrm{I}_{\QQ_{p^{2}}}, \rmH^{3}_{\rmc}(\overline{\rmX}_{\Pa}(\rmB), \rmR\Psi(\calO_{\lambda})(2))_{\fracm})\xrightarrow{\sim} \Coker(\alpha_{\rmc}(2)\circ \rmN^{-1}_{\Sigma}\circ \beta_{\rmc}(1))_{\fracm}
\end{equation*}
By Assumption \ref{vanishing}, $\rmH^{3}_{\rmc}(\overline{\rmX}_{\Pa}(\rmB), \rmR\Psi(\calO_{\lambda})(2))_{\fracm}\cong \rmH^{3}(\overline{\rmX}_{\Pa}(\rmB), \rmR\Psi(\calO_{\lambda})(2))_{\fracm}$ and hence we can use the unbalanced Picard--Lefschetz formula to calculate $\mathrm{F}_{-1}\mathrm{H}^{1}(\mathrm{I}_{\QQ_{p^{2}}}, \rmH^{3}_{\rmc}(\overline{\rmX}_{\Pa}(\rmB), \rmR\Psi(\calO_{\lambda})(2))_{\fracm})$ which means we have
\begin{equation*}
\mathrm{F}_{-1}\mathrm{H}^{1}(\mathrm{I}_{\QQ_{p^{2}}}, \rmH^{3}_{\rmc}(\overline{\rmX}_{\Pa}(\rmB), \rmR\Psi(\calO_{\lambda})(2))_{\fracm})\xrightarrow{\sim} \Coker(\alpha_{\rmc}(2)\circ \rmN^{-1}_{\Sigma}\circ \beta(1))_{\fracm}.\\
\end{equation*}

On the other hand, by Proposition \ref{Tate} we have an isomorphism
\begin{equation*}
(\mathrm{inc}^{\{0\}}_{!, \fracm}+\mathrm{inc}^{\{2\}}_{!, \fracm}):\calO_{\lambda}[\rmZ_{\{0\}}(\overline{\rmB})]_{\fracm}\oplus\calO_{\lambda}[\rmZ_{\{2\}}(\overline{\rmB})]_{\fracm} \rightarrow  \rmH^{2}(\overline{\rmX}^{\square}_{\Pa}(\rmB), \calO_{\lambda}(1))_{\fracm}
\end{equation*}
and a surjection 
\begin{equation*}
(\mathrm{inc}^{\ast}_{\{0\}, \fracm}, \mathrm{inc}^{\ast}_{\{2\}, \fracm}): \rmH^{4}_{\rmc}(\overline{\rmX}^{\square}_{\Pa}(\rmB), \calO_{\lambda}(2))_{\fracm}\rightarrow \calO_{\lambda}[\rmZ_{\{0\}}(\overline{\rmB})]_{\fracm}\oplus\calO_{\lambda}[\rmZ_{\{2\}}(\overline{\rmB})]_{\fracm} 
\end{equation*}
whose kernel is given by the torsion part of $\rmH^{4}_{\rmc}(\overline{\rmX}^{\square}_{\Pa}(\rmB), \calO_{\lambda}(2))_{\fracm}$.

It follows that we have a surjection
\begin{equation*}
\mathrm{F}_{-1}\mathrm{H}^{1}(\mathrm{I}_{\QQ_{p^{2}}}, \rmH^{3}_{\rmc}(\overline{\rmX}_{\Pa}(\rmB), \rmR\Psi(\calO_{\lambda})(2))_{\fracm})\rightarrow \frac{\calO_{\lambda}[\rmZ_{\{0\}}(\overline{\rmB})]_{\fracm}\oplus\calO_{\lambda}[\rmZ_{\{2\}}(\overline{\rmB})]_{\fracm}}{\calT_{\lr,\fracm}(\calO_{\lambda}[\rmZ_{\{0\}}(\overline{\rmB})]_{\fracm}\oplus\calO_{\lambda}[\rmZ_{\{2\}}(\overline{\rmB})]_{\fracm})}.
\end{equation*}
The kernel of this map is given by
\begin{equation*}
\frac{\rmH^{4}_{\rmc}(\overline{\rmX}^{\square}_{\Pa}(\rmB), \calO_{\lambda}(2))^{\mathrm{tor}}_{\fracm}}{\mathrm{Im}(\alpha_{\rmc}(2)\circ \rmN^{-1}_{\Sigma}\circ \beta(1))_{\fracm}\cap \rmH^{4}_{\rmc}(\overline{\rmX}^{\square}_{\Pa}(\rmB), \calO_{\lambda}(2))^{\mathrm{tor}}_{\fracm}}.
\end{equation*}
Composing this with the map $\nabla_{\fracm}:\calO_{\lambda}[\rmZ_{\rmH}(\overline{\rmB})]_{\fracm}\oplus\calO_{\lambda}[\rmZ_{\rmH}(\overline{\rmB})]_{\fracm}\rightarrow \calO_{\lambda}[\rmZ_{\rmH}(\overline{\rmB})]_{\fracm}$, we therefore obtain a map
\begin{equation*}
\mathrm{F}_{-1}\mathrm{H}^{1}(\mathrm{I}_{\QQ_{p^{2}}}, \rmH^{3}_{\rmc}(\overline{\rmX}_{\Pa}(\rmB), \rmR\Psi(\calO_{\lambda})(2))_{\fracm})\rightarrow \calO_{\lambda}[\rmZ_{\rmH}(\overline{\rmB})]_{\fracm}/\det\calT_{\lr,\fracm}.
\end{equation*}
which is surjective by Nakayama's lemma.

It remains to show that 
\begin{equation*}
\mathrm{Im}(\alpha_{\rmc}(2)\circ \rmN^{-1}_{\Sigma}\circ \beta(1))_{\fracm} \cap \rmH^{4}(\overline{\rmX}^{\square}_{\Pa}(\rmB), \calO_{\lambda}(2))^{\mathrm{tor}}_{\fracm} 
\end{equation*}
is trivial. By \cite[Proposition 4.2.2(1)]{Nekovar}, the group $\mathrm{F}_{-1}\mathrm{H}^{1}(\mathrm{I}_{\QQ_{p^{2}}}, \rmH^{3}_{\rmc}(\overline{\rmX}_{\Pa}(\rmB), \rmR\Psi(E_{\lambda})(2))_{\fracm})$ vanishes. It follows that the $\calO_{\lambda}$-rank of $\mathrm{Im}(\alpha_{\rmc}(2)\circ \rmN^{-1}_{\Sigma}\circ \beta(1))_{\fracm}$ is the same as the $\calO_{\lambda}$-rank of $\rmH^{4}_{c}(\overline{\rmX}^{\square}_{\Pa}(\rmB), \calO_{\lambda}(2))_{\fracm}$. Poincare duality for $\overline{\rmX}^{\square}_{\Pa}(\rmB)$ implies that we have 
\begin{equation*}
\dim_{E_{\lambda}} \rmH^{4}_{\rmc}(\overline{\rmX}^{\square}_{\Pa}(\rmB), E_{\lambda}(2))_{\fracm}=\dim_{E_{\lambda}}  \rmH^{2}(\overline{\rmX}^{\square}_{\Pa}(\rmB), E_{\lambda}(1))_{\fracm}.
\end{equation*} 
Since the source $\rmH^{2}(\overline{\rmX}^{\square}_{\Pa}(\rmB), \calO_{\lambda}(1))_{\fracm}$ of $\mathrm{Im}(\alpha_{\rmc}(2)\circ \rmN^{-1}_{\Sigma}\circ \beta(1))_{\fracm}$  is free over $\calO_{\lambda}$, the intersection  
\begin{equation*}
\mathrm{Im}(\alpha_{\rmc}(2)\circ \rmN^{-1}_{\Sigma}\circ \beta(1))_{\fracm} \cap \rmH^{4}(\overline{\rmX}^{\square}_{\Pa}(\rmB), \calO_{\lambda}(2))^{\mathrm{tor}}_{\fracm}
\end{equation*}
has to be trivial. The proposition is proved. 
\end{proof}
Now we are ready to state and prove our main result on arithmetic level raising for the quaternionic Shimura variety studied in this article. We first recall a list of running assumptions below.
\begin{itemize}
\item We assume that $\rmH^{i}_{(\rmc)}(\overline{\rmX}_{\Pa}(\rmB), \rmR\Psi(\calO_{\lambda}))_{\fracm}=0$ for $i\neq 3$, and that 
\begin{equation*}
\rmH^{3}_{\rmc}(\overline{\rmX}_{\Pa}(\rmB), \rmR\Psi(\calO_{\lambda}))_{\fracm}\xrightarrow{\sim} \rmH^{3}(\overline{\rmX}_{\Pa}(\rmB), \rmR\Psi(\calO_{\lambda}))_{\fracm}
\end{equation*}
is a finite free $\calO_{\lambda}$ module. This is Assumption \ref{vanishing}.
\item We assume that for $\pi$, the associated Galois representation 
\begin{equation*}
\rho_{\pi, \lambda}:\rmG_{\QQ}\rightarrow \GSp_{4}(E_{\lambda})
\end{equation*}
is residually absolutely irreducible. This is Assumption \ref{irrd}.
\end{itemize}

\begin{theorem}[Arithmetic level raising for $\GSp_{4}$]\label{arithmetic-level-raising}
Suppose that $\pi$ is a cuspidal automorphic representation of $\GSp_{4}(\mathbb{A})$ of general type. We assume that the assumptions in \ref{vanishing}, \ref{irrd} recalled above are effective. Let $p$ be a level raising special prime for $\pi$ of depth $m$. We assume further that
\begin{enumerate}
\item $\overline{\rho}_{\pi,\lambda}$ is rigid for $(\Sigma_{\min}, \Sigma_{\lr})$ as in Definition \ref{rigid};
\item The image of $\overline{\rho}_{\pi,\lambda}(\rmG_{\QQ})$ contains $\GSp_{4}(\FF_{\ell})$;
\item $\fracm$ is in the support of $\calO_{\lambda}[\rmZ_{\rmH}(\overline{\rmB})]$ and is generic level raising.
\end{enumerate}

Then the following holds.
\begin{enumerate}
\item There is an isomorphism 
\begin{equation*}
\rmH^{1}_{\sing}(\QQ_{p^{2}}, \rmH^{3}_{\rmc}(\overline{\rmX}_{\Pa}(\rmB), \rmR\Psi(\calO_{\lambda}(2)))_{\fracm})\xrightarrow{\sim}\calO_{\lambda}[\rmZ_{\rmH}(\overline{\rmB})]_{\fracm}/\det\calT_{\lr}.
\end{equation*}

\item The above isomorphism induces an isomorphism
\begin{equation*}
\rmH^{1}_{\sing}(\QQ_{p^{2}}, \rmH^{3}_{\rmc}(\overline{\rmX}_{\Pa}(\rmB), \rmR\Psi(\calO_{\lambda}/\lambda^{m}(2)))_{\fracm})\xrightarrow{\sim}\calO_{\lambda}/\lambda^{m}[\rmZ_{\rmH}(\overline{\rmB})]_{\fracm}.
\end{equation*}
\end{enumerate}
\end{theorem}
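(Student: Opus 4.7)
The plan is to combine the Picard--Lefschetz exact sequences already at hand with two vanishing inputs. Proposition \ref{mondromy-exact-seq} gives the left-exact sequence $0 \to \mathrm{F}_{-1}\rmH^{1}(\rmI_{\QQ_{p^{2}}}, \sfH_{\fracm}) \to \rmH^{1}_{\sing}(\QQ_{p^{2}}, \sfH_{\fracm}) \to \rmH^{3}_{\rmc}(\overline{\rmX}_{\Pa}(\rmB), \calO_{\lambda}(1))^{\rmG_{\FF_{p^{2}}}}_{\fracm}$, and Proposition \ref{surj} gives a surjection $\mathrm{F}_{-1}\rmH^{1} \twoheadrightarrow \calO_{\lambda}[\rmZ_{\rmH}(\overline{\rmB})]_{\fracm}/\det \calT_{\lr}$ whose kernel is exactly the $\calO_{\lambda}$-torsion of $\rmH^{4}(\overline{\rmX}_{\Pa}(\rmB), \calO_{\lambda}(2))_{\fracm}$. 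Consequently, the isomorphism of part (1) reduces to verifying the two statements: (i) $\rmH^{3}_{\rmc}(\overline{\rmX}_{\Pa}(\rmB), \calO_{\lambda}(1))^{\rmG_{\FF_{p^{2}}}}_{\fracm} = 0$, and (ii) $\rmH^{4}(\overline{\rmX}_{\Pa}(\rmB), \calO_{\lambda}(2))_{\fracm}$ is $\calO_{\lambda}$-torsion free.

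Input (i) follows from purity. By Assumption \ref{vanishing}, the module $\rmH^{3}_{\rmc}(\overline{\rmX}_{\Pa}(\rmB), \calO_{\lambda}(1))_{\fracm}$ is $\calO_{\lambda}$-free and agrees with the nearby cycle cohomology of the generic fiber, so its Frobenius-fixed submodule is detected after inverting $l$. Applying Theorem \ref{Galois}(5) and the local-global compatibility in Theorem \ref{Galois}(2) to the automorphic Galois representations cut out by $\fracm$, the eigenvalues of $\Frob_{p}$ on $\rmH^{3}_{\rmc}(\overline{\rmX}_{\Pa}(\rmB), E_{\lambda})_{\fracm}$ are pure $p$-Weil numbers of weight three; after the Tate twist and squaring for $\Frob_{p^{2}}$ they have absolute value $p$ and in particular never equal one. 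Hence $\Frob_{p^{2}}-1$ is invertible on the $E_{\lambda}$-vector space, and the integral fixed submodule vanishes.

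Input (ii) is where the Galois-deformation machinery of Section 7 enters. By Theorem \ref{Free}(1), applied under the rigidity hypothesis on $\overline{\rho}_{\pi,\lambda}$, the Shimura-set module $\calO_{\lambda}[\rmZ_{\rmH}(\overline{\rmB})]_{\fracm}$ is finite free over $\rmR^{\unr} = \bfT^{\unr}_{\fracm}$, and combining with the Tate-class isomorphism of Theorem \ref{Tate}(1) shows $\rmH^{2}_{\rmc}(\overline{\rmX}_{\Pa}(\rmB), \calO_{\lambda}(1))_{\fracm}$ is a free $\calO_{\lambda}$-module of rank $2|\rmZ_{\rmH}(\overline{\rmB})|_{\fracm}$. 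Since the singular locus $\overline{\rmX}^{\sing}_{\Pa}(\rmB)$ is zero-dimensional and is controlled under localization at the non-Eisenstein maximal ideal $\fracm$, excision together with Poincar\'{e} duality on the smooth locus $\overline{\rmX}^{\sm}_{\Pa}(\rmB)$ upgrades the $E_{\lambda}$-isomorphisms of \eqref{poincare-duality} to an integral perfect pairing between $\rmH^{4}(\overline{\rmX}_{\Pa}(\rmB), \calO_{\lambda}(2))_{\fracm}$ and $\rmH^{2}_{\rmc}(\overline{\rmX}_{\Pa}(\rmB), \calO_{\lambda}(1))_{\fracm}$. The freeness of the latter then forces the former to be $\calO_{\lambda}$-free, and Theorem \ref{Tate}(2) promotes its surjection onto $\calO_{\lambda}[\rmZ_{\rmH}(\overline{\rmB})]^{\oplus 2}_{\fracm}$ to an isomorphism.

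With (i) and (ii) in hand, part (1) follows. For part (2), Proposition \ref{det-lr} combined with the depth-$m$ conditions of Definition \ref{level-raising-prime-local} shows that $\det \calT_{\lr}$ has $\lambda$-adic valuation exactly $m$, whence $\calO_{\lambda}[\rmZ_{\rmH}(\overline{\rmB})]_{\fracm}/\det\calT_{\lr} \cong \calO_{\lambda}/\lambda^{m}[\rmZ_{\rmH}(\overline{\rmB})]_{\fracm}$; the matching change of coefficients on the cohomological side is straightforward from the $\calO_{\lambda}$-freeness in Assumption \ref{vanishing} and the fact that $\rmI_{\QQ_{p^{2}}}$ has $\calO_{\lambda}$-cohomological dimension one, so $\rmH^{1}_{\sing}$ commutes with reduction modulo $\lambda^{m}$. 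The main obstacle is input (ii): ruling out $\lambda$-torsion in $\rmH^{4}(\overline{\rmX}_{\Pa}(\rmB), \calO_{\lambda}(2))_{\fracm}$ cannot be achieved by rank counting alone, and must instead rely on the $\rmR^{\unr}$-freeness of the definite Shimura-set module provided by the Taylor--Wiles patching of \cite{Wangd}, from which the integrality of the Poincar\'{e} duality argument on the open smooth locus is extracted.
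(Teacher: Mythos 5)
Your reduction of part (1) to the two inputs (i) vanishing of $\rmH^{3}_{\rmc}(\overline{\rmX}_{\Pa}(\rmB), \calO_{\lambda}(1))^{\rmG_{\FF_{p^{2}}}}_{\fracm}$ and (ii) torsion-freeness of $\rmH^{4}(\overline{\rmX}_{\Pa}(\rmB), \calO_{\lambda}(2))_{\fracm}$ is structurally reasonable, but input (ii) is precisely the point that cannot be obtained the way you propose, and it is where your argument breaks. Integral Poincar\'e duality on the smooth locus $\overline{\rmX}^{\sm}_{\Pa}(\rmB)$ does not give a perfect pairing of the \emph{modules} $\rmH^{4}$ and $\rmH^{2}_{\rmc}$: at the level of complexes, the torsion of $\rmH^{4}(\overline{\rmX}^{\sm}_{\Pa}(\rmB), \calO_{\lambda}(2))$ is dual (via an $\mathrm{Ext}^{1}$ term) to the torsion of $\rmH^{3}_{\rmc}$ of the smooth locus, not to anything in $\rmH^{2}_{\rmc}$, so freeness of $\rmH^{2}_{\rmc}(\overline{\rmX}_{\Pa}(\rmB), \calO_{\lambda}(1))_{\fracm}$ forces nothing about torsion in $\rmH^{4}$; moreover the excision identifications \eqref{poincare-duality} are only available with $E_{\lambda}$-coefficients in this paper. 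The paper is explicit that this torsion-freeness cannot be proved by direct geometric means and it appears only as Corollary \ref{no-torsion}, \emph{after} and as a consequence of the main theorem. The actual proof avoids needing (i) and (ii) as inputs altogether: it uses Theorem \ref{Free} to make $\calO_{\lambda}[\rmZ_{\rmH}(\overline{\rmB})]_{\fracm}$ free over $\rmR^{\unr}$ and $\rmH^{3}_{\rmc}(\overline{\rmX}_{\Pa}(\rmB), \rmR\Psi(\calO_{\lambda}))_{\fracm}$ free over $\rmR^{\ram}$ (the latter being licensed by the surjection of Proposition \ref{surj}, which shows $\bfT^{\ram}_{\fracm}\neq 0$), introduces the congruence ring $\rmR^{\mathrm{cong}}=\rmR^{\unr}\otimes_{\rmR^{\mix}}\rmR^{\ram}=\rmR^{\mix}/(\rmx,\rms_{0}-p^{-2})$ coming from the tame structure of $\rho^{\mix}$ at $p$, identifies $(\det\calT_{\lr,\fracm})$ with $(\rms_{0}-p^{-2})$ on the Shimura-set module via Proposition \ref{det-lr}, computes $\rmH^{1}_{\sing}(\QQ_{p^{2}},(\rmR^{\ram})^{\oplus 4}(1))\cong \rmR^{\mathrm{cong}}$ directly, and then concludes by the multiplicity comparison $d_{\unr}=d_{\ram}$ (a Jacquet--Langlands argument). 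None of this machinery appears in your proposal, and your substitutes do not close the gap; also your justification of (i) starts from the false assertion that the special-fiber cohomology ``agrees with the nearby cycle cohomology of the generic fiber'' (it only injects into it --- the discrepancy is exactly the space of vanishing cycles, which is nonzero here).

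Your part (2) has the same defect in miniature: $\det\calT_{\lr}$ is a Hecke operator, not a scalar, and $\bfT^{\unr}_{\fracm}$ may be strictly larger than $\calO_{\lambda}$ (other forms congruent to $\pi$), so ``$\det\calT_{\lr}$ has $\lambda$-adic valuation exactly $m$'' does not by itself identify $\calO_{\lambda}[\rmZ_{\rmH}(\overline{\rmB})]_{\fracm}/\det\calT_{\lr}$ with $\calO_{\lambda}/\lambda^{m}[\rmZ_{\rmH}(\overline{\rmB})]_{\fracm}$. The paper gets this from $\calO_{\lambda}[\rmZ_{\rmH}(\overline{\rmB})]_{\fracm}/\det\calT_{\lr}\cong \calO_{\lambda}[\rmZ_{\rmH}(\overline{\rmB})]_{\fracm}\otimes_{\rmR^{\unr}}\rmR^{\mathrm{cong}}$ together with $\rmR^{\mathrm{cong}}\cong\bfT^{\ram}_{\fracm}/\fracn\cong\calO_{\lambda}/\lambda^{m}$, and it uses the $\bfT^{\ram}_{\fracm}$-freeness of the middle-degree nearby-cycle cohomology to commute $\rmH^{1}_{\sing}$ with reduction modulo $\fracn$; your appeal to cohomological dimension one of $\rmI_{\QQ_{p^{2}}}$ alone does not control the localization at $\fracm$ nor the passage from $/\fracn$ to $\calO_{\lambda}/\lambda^{m}$-coefficients. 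So the deformation-theoretic input is not an optional refinement here; it is the missing engine of both parts of the theorem.
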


\subsection{Proof of Theorem \ref{arithmetic-level-raising}}
Consider the universal deformation problem 
\begin{equation*}
\calS^{\ast}=(\overline{\rho}_{\pi,\lambda}, \psi, \Sigma_{\min}\cup\Sigma_{\lr}\cup\{p\}\cup\{l\}, \{\calD_{v}\}_{v\in\Sigma_{\min}\cup\Sigma_{\lr}\cup\{p\}\cup\{l\}})
\end{equation*}
where  $\ast=\{\ram, \unr, \mix\}$. Recall that $\Spf\phantom{.}\rmR^{\ram}$ and $\Spf\phantom{.}\rmR^{\unr}$ are subspaces of $\Spf\phantom{.}\rmR^{\mix}$. Thus we have surjections
\begin{equation*}
\begin{aligned}
&\rmR^{\mix}\twoheadrightarrow \rmR^{\ram}\\
&\rmR^{\mix}\twoheadrightarrow \rmR^{\unr}.\\
\end{aligned}
\end{equation*}
we define $\rmR^{\mathrm{cong}}=\rmR^{\unr}\otimes_{\rmR^{\mix}}\rmR^{\ram}$. We have a universal lifting 
\begin{equation*}
\rho^{\mix}: \rmG_{\QQ}\rightarrow \GSp_{4}(\rmR^{\mix}).
\end{equation*}
It also induces two representations
\begin{equation*}
\begin{aligned}
&\rho^{\ram}: \rmG_{\QQ}\rightarrow \GSp_{4}(\rmR^{\ram})\\
&\rho^{\unr}: \rmG_{\QQ}\rightarrow \GSp_{4}(\rmR^{\unr})\\
\end{aligned}
\end{equation*}
by the above surjections.

Denote by $\rmP_{\QQ_{p}}$ be the maximal closed subgroup of the inertia subgroup $\rmI_{\QQ_{p}}\subset \rmG_{\QQ_{p}}$ of pro-order coprime to $\ell$. Then $\rmG_{\QQ_{p}}/\rmP_{\QQ_{p}}\cong t^{\ZZ_{\ell}}\rtimes \phi^{\widehat{\ZZ}}_{p}$ is a $p$-tame group. By definition, $\rho^{\mix}$ is trivial on $\rmP_{\QQ_{p}}$. Let $\overline{\rmv}_{0}$ and $\overline{\rmv}_{1}$ be eigenvectors in $k^{4}=(\calO_{\lambda}/\lambda)^{4}$ for $\rho^{\mix}(\phi^{2}_{p})$ with eigenvalues $p^{-4}$ and $p^{-2}$, respectively. By Hensel's lemma, they lift to $\rmv_{0}$ and $\rmv_{1}$ in $(\rmR^{\mix})^{\oplus 4}$ for $\rho^{\mix}(\phi^{2}_{p})$ with eigenvalues $\rms_{0}$ and $\rms_{1}$ lifting $p^{-4}$ and $p^{-2}$. Let $\mathbf{x}\in\rmR^{\mix}$ be the unique element such that $\rho^{\mix}(t)\rmv_{1}=\mathbf{x}\rmv_{0}+\rmv_{1}$. It follows from the relation
\begin{equation*}
\rho^{\mix}(\phi^{2}_{p})\rho^{\mix}(t)\rho^{\mix}(\phi^{-2}_{p})=\rho^{\mix}(t)^{p^{2}}
\end{equation*}
that $\mathbf{x}(\rms_{0}-p^{-4})=0$. By the definitions of $\rmR^{\ast}$ for $\ast=\{\ram, \unr, \mix\}$, we have
\begin{equation*}
\begin{aligned}
&\rmR^{\unr}=\rmR^{\mix}/(\bfx)\\
&\rmR^{\ram}=\rmR^{\mix}/(\rms_{0}-p^{-4})\\
&\rmR^{\mathrm{cong}}=\rmR^{\mix}/(\bfx, \rms_{0}-p^{-4}).\\
\end{aligned}
\end{equation*}

By Theorem \ref{Free},  $\calO_{\lambda}[\rmZ_{\rmH}(\overline{\rmB})]_{\fracm}$ is a free over $\bfT^{\unr}_{\fracm}$ and we have an isomorphism $\rmR^{\unr}\cong \bfT_{\fracm}$. Thus $\calO_{\lambda}[\rmZ_{\rmH}(\overline{\rmB})]_{\fracm}$ is a free $\rmR^{\unr}$-module of rank $d_{\unr}$. Note that the characteristic polynomial of $\rho^{\mix}(\phi^{2}_{p})$ can written as $(\rmX-\rms_{0})(\rmX-p^{-6}\rms^{-1}_{0})\rmQ(\rmX)$ where $\rmQ(\rmX)$ is a polynomial in $\rmR^{\unr}[\rmX]$ whose reduction in $\calO_{\lambda}/\lambda[\rmX]=k[\rmX]$ does not contain $p^{-2}$ and $p^{-4}$ as its roots. By Proposition \ref{det-lr} and the equations
\begin{equation*}
\begin{aligned}
&(\alpha_{p}+p^{3}\alpha^{-1}_{p}- p(p+1))(\alpha_{p}+p^{3}\alpha^{-1}_{p}+p(p+1))=\alpha^{2}_{p}+p^{6}\alpha^{-2}_{p}- p^{2}(p^{2}+1),\\
&(\beta_{p}+p^{3}\beta^{-1}_{p}-p(p+1))(\beta_{p}+p^{3}\beta^{-1}_{p}+p(p+1))=\beta^{2}_{p}+p^{6}\beta^{-2}_{p}- p^{2}(p^{2}+1),\\
\end{aligned}
\end{equation*}
we have 
\begin{equation*}
(\det\calT_{\lr, \fracm})\cdot\calO_{\lambda}[\rmZ_{\rmH}(\overline{\rmB})]_{\fracm}=(\rms_{0}-p^{-4})\cdot\calO_{\lambda}[\rmZ_{\rmH}(\overline{\rmB})]_{\fracm}.
\end{equation*}
Thus we have
\begin{equation*}
\calO_{\lambda}[\rmZ_{\rmH}(\overline{\rmB})]_{\fracm}/\det\phantom{.}\calT_{\lr,\fracm}\cong \calO_{\lambda}[\rmZ_{\rmH}(\overline{\rmB})]_{\fracm}\otimes_{\rmR^{\unr}}\rmR^{\mathrm{cong}}
\end{equation*}
and it follows that $\calO_{\lambda}[\rmZ_{\rmH}(\overline{\rmB})]_{\fracm}/\det\phantom{.}\calT_{\lr,\fracm}$ is a free $\rmR^{\mathrm{cong}}$-module of rank $d_{\unr}$.

Since we have a surjection 
\begin{equation*}
\mathrm{F}_{-1}\mathrm{H}^{1}(\mathrm{I}_{\QQ_{p^{2}}}, \mathrm{H}^{3}_{\rmc}(\overline{\rmX}_{\Pa}(\rmB), \rmR\Psi(\calO_{\lambda}(2)))_{\fracm})\twoheadrightarrow 
\calO_{\lambda}[\rmZ_{\rmH}(\rmB)]_{\fracm}/\det\calT_{\lr, \fracm}
\end{equation*}
by Proposition \ref{surj}, we have $\bfT^{\ram}_{\fracm}$ is non-zero.  Moreover $\mathrm{H}^{3}_{\rmc}(\overline{\rmX}_{\Pa}(\rmB), \rmR\Psi(\calO_{\lambda}))_{\fracm}$ is a free $\rmR^{\ram}$-module by Theorem \ref{Free} $(2)$, say of rank $d_{\ram}$. Consider the $\rmR^{\ram}$-module
\begin{equation*}
\rmH:=\Hom_{\rmG_{\QQ}}((\rmR^{\ram})^{\oplus4},\rmH^{3}_{\rmc}(\overline{\rmX}_{\Pa}(\rmB), \rmR\Psi(\calO_{\lambda}))_{\fracm})
\end{equation*}
where $\rmG_{\QQ}$ acts on $(\rmR^{\ram})^{\oplus4}$ via $\rho^{\ram}$ which is free of rank $d_{\ram}$. It follows this that we have an isomorphism
\begin{equation*}
\mathrm{H}^{3}_{\rmc}(\overline{\rmX}_{\Pa}(\rmB), \rmR\Psi(\calO_{\lambda}))_{\fracm}\xrightarrow{\sim}\rmH\otimes_{\rmR^{\ram}}(\rmR^{\ram})^{\oplus4}.
\end{equation*}
It follows from that
\begin{equation*}
\rmH^{1}_{\sing}(\QQ_{p^{2}}, \rmH^{3}_{\rmc}(\overline{\rmX}_{\Pa}(\rmB), \rmR\Psi(\calO_{\lambda}(2)))_{\fracm})\xrightarrow{\sim}\rmH\otimes_{\rmR^{\ram}}\rmH^{1}_{\sing}(\QQ_{p^{2}},(\rmR^{\ram})^{\oplus4}(2))
\end{equation*}
We still denote by $\rmv_{0}$ and $\rmv_{1}$ the projections of $\rmv_{0}$ and $\rmv_{1}$ in $(\rmR^{\ram})^{\oplus4}$ of $\rmG_{\QQ}$-modules. Then a straightforward computation shows that
\begin{equation*}
\rmH^{1}_{\sing}(\QQ_{p^{2}}, (\rmR^{\ram})^{\oplus4}(2))\xrightarrow{\sim} \rmR^{\ram}\rmv_{0}/\rmx\rmv_{0}\cong \rmR^{\mathrm{cong}}. 
\end{equation*}
Hence we have 
\begin{equation*}
\rmH^{1}_{\sing}(\QQ_{p^{2}}, \rmH^{3}_{\rmc}(\overline{\rmX}_{\Pa}(\rmB), \rmR\Psi(\calO_{\lambda}(2)))_{\fracm})
\end{equation*}
is a free $\rmR^{\mathrm{cong}}$-module of rank $d_{\ram}$.
\begin{lemma}
We have an equality 
\begin{equation*}
d_{\unr}=d_{\ram}.
\end{equation*}
\end{lemma}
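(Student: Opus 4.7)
The plan is to compute both $d_{\unr}$ and $d_{\ram}$ by extending scalars to $E_{\lambda}$, localizing at characteristic-zero points of each Hecke algebra, and matching the resulting local dimensions of invariant vectors via the Jacquet--Langlands correspondence. First I will interpret $d_{\unr}$: the isomorphism $\rmR^{\unr} \cong \bfT^{\unr}_{\fracm}$ of Theorem \ref{Free}(1), together with the freeness of $\calO_{\lambda}[\rmZ_{\rmH}(\overline{\rmB})]_{\fracm}$ over $\rmR^{\unr}$, implies that after inverting $l$ and localizing at a minimal prime of $\rmR^{\unr}[1/l]$ corresponding to a cuspidal automorphic representation $\vec{\pi}$ of $\bfG(\overline{\rmB})(\mathbb{A}_{\QQ})$, the resulting space has dimension $d_{\unr}$ over the corresponding residue field. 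By the definite Jacquet--Langlands transfer (Theorem \ref{JL}(1)), this $\vec{\pi}$ matches a cuspidal $\pi$ on $\GSp_{4}$ of general type with $\pi_{p}$ unramified and $\pi_{q}$ of type $\rmI\rmI\rma$, and the automorphic multiplicities are preserved; Arthur's multiplicity-one result for general type then gives $m^{\bfG(\overline{\rmB})}_{\mathrm{disc}}(\vec{\pi}) = m^{\bfG}_{\mathrm{disc}}(\pi) = 1$, so
\begin{equation*}
d_{\unr} = \dim_{E_{\pi}} \vec{\pi}^{\rmK_{\rmH}} = \prod_{v} \dim \vec{\pi}_{v}^{\rmK_{\rmH,v}}.
\end{equation*}

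An entirely parallel argument handles $d_{\ram}$: the freeness of $\rmH$ over $\rmR^{\ram} \cong \bfT^{\ram}_{\fracm}$ from Theorem \ref{Free}(2), combined with the indefinite Jacquet--Langlands transfer (Theorem \ref{JL}(2)), shows that the characteristic-zero localization of $\rmH$ is indexed by cuspidal representations $\pi'$ on $\bfG(\rmB)(\mathbb{A}_{\QQ})$ whose $\GSp_{4}$-transfer $\Pi$ is of general type with both $\Pi_{p}$ and $\Pi_{q}$ of type $\rmI\rmI\rma$, and the automorphic multiplicity is again $1$. This yields
\begin{equation*}
d_{\ram} = \prod_{v} \dim (\pi'_{v})^{\rmK_{\Pa,v}}.
\end{equation*}

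It then remains to match these local dimensions place by place. At each finite place $v \neq p$, the rigidity hypothesis (D2) for $(\Sigma_{\min}, \Sigma_{\lr})$ forces $\calD_{v}$ to admit an essentially unique characteristic-zero lift compatible with the chosen local deformation problem, so local-global compatibility of the Langlands correspondence identifies $\pi_{v} \cong \Pi_{v}$ as admissible representations; since the level structures at $v \neq p$ agree in Constructions \ref{indef-level} and \ref{def-level}, the local dimensions coincide. At $v = p$, $\dim \pi_{p}^{\bfG(\overline{\rmB})(\ZZ_{p})} = 1$ is the standard spherical vector in the unramified principal series, while $\dim (\pi'_{p})^{\mathrm{Pa}^{\rmD}} = 1$ follows from the uniqueness of paramodular newforms of \cite{BS-newform} together with the preservation of paramodular dimensions under the local Jacquet--Langlands transfer to $\GU_{2}(\rmD)$, which is essentially the content of the character identity of \cite{CG}. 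At the archimedean place, the dimension of $\vec{\pi}_{\infty}$ on the compact real form matches the cohomological contribution of $\pi'_{\infty}$, a discrete series of weight $(3,3)$, by Shelstad's character identity together with the standard $(\mathfrak{g}, \rmK_{\infty})$-cohomology computation for $\GSp_{4}(\RR)$. Putting these matches together yields $d_{\unr} = d_{\ram}$; the subtlest step will be the archimedean comparison, where one must carefully identify multiplicities across the two different real forms.
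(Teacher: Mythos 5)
Your outline follows the same skeleton as the paper's argument (use the freeness results of Theorem \ref{Free} to compute $d_{\unr}$ and $d_{\ram}$ at characteristic-zero points, then compare the two automorphic eigenspace dimensions through Jacquet--Langlands), but the step where all the content lives — the actual comparison of dimensions — is asserted rather than proved, and one of your assertions is wrong as stated. The representations $\pi$ (unramified at $p$) and $\Pi$ (type $\rmI\rmI\rma$ at $p$) are \emph{different} automorphic representations, so rigidity cannot "identify $\pi_{v}\cong\Pi_{v}$" at finite $v\neq p$; what is needed is equality of the dimensions of $\rmK_{v}$-invariants, and this is not automatic: at a place $v\in\Sigma_{\lr}$ with paramodular level, an unramified type $\rmI$ component has a $2$-dimensional space of paramodular vectors while a type $\rmI\rmI\rma$ component has a $1$-dimensional one, so you must actually rule out unramified components using the local deformation condition and local-global compatibility, not merely invoke "essential uniqueness of the lift." Similarly, at $p$ the claim that the local Jacquet--Langlands transfer of a type $\rmI\rmI\rma$ representation to $\GU_{2}(\rmD)$ has exactly one $\Pa^{\rmD}$-fixed vector is plausible but is not a formal consequence of the character identity of \cite{CG}, and you do not establish it.

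The biggest gap is the archimedean one you yourself flag: your identity $d_{\ram}=\prod_{v}\dim(\pi'_{v})^{\rmK_{\Pa,v}}$ requires knowing exactly which members of the discrete-series packet occur on $\bfG(\rmB)$, with what multiplicities, and that their total contribution to $\rmH^{3}$ is the $4$-dimensional Galois representation — precisely the computation you defer to "Shelstad plus the standard $(\mathfrak{g},\rmK_{\infty})$-cohomology computation." The paper is engineered to avoid all of this: the factor $4$ is made structural by working with the Galois-equivariant module $\rmH=\Hom_{\rmG_{\QQ}}((\rmR^{\ram})^{\oplus 4},\rmH^{3}_{\rmc}(\overline{\rmX}_{\Pa}(\rmB),\rmR\Psi(\calO_{\lambda}(1)))_{\fracm})$, so that $4d_{\ram}=\dim_{\overline{\QQ}_{l}}\rmH^{3}_{\rmc}[\iota_{l}\phi_{\pi^{\ram}}]$ without any packet analysis, and the finite-place comparison is then obtained by transferring both $\pi^{\unr}$ and $\pi^{\ram}$ to $\GL_{4}(\mathbb{A}_{\QQ})$ and citing \cite[Lemma 6.4.2]{LTXZZ}, which packages exactly the rigidity-based matching of invariant dimensions you are trying to redo by hand (with Theorem \ref{JL} supplying the multiplicity preservation). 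So either supply complete proofs of the three local statements above (ramifiedness at $\Sigma_{\lr}$, the paramodular dimension on $\GU_{2}(\rmD)$, and the archimedean multiplicity count), or follow the paper's reduction to the $\GL_{4}$ statement.
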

\begin{proof}
Let $\eta^{\unr}\in\Spec\phantom{.}\rmR^{\unr}[1/\ell](\overline{\QQ}_{\ell})$ be in the support of $\calO_{\lambda}[\rmZ_{\rmH}(\overline{\rmB})]_{\fracm}$ which gives rise to an automorphic representation $\pi^{\unr}$ of $\GSp_{4}(\mathbb{A})$ such that $\overline{\rho}_{\pi, \lambda}\otimes_{k}\overline{\FF}_{\ell}$ and $\rho_{\pi^{\unr},\iota_{\ell}}$ are residually isomorphic. Then we have
\begin{equation*}
d_{\unr}=\dim_{\overline{\QQ}_{\ell}}\overline{\QQ}_{\ell}[\rmZ_{\rmH}(\overline{\rmB})][\iota_{\ell}\phi_{\pi^{\unr}}].
\end{equation*}
Similarly, let $\eta^{\ram}\in\Spec\phantom{.}\rmR^{\ram}[1/\ell](\overline{\QQ}_{\ell})$ be in the support of $\rmH^{3}_{\rmc}(\overline{\rmX}_{\Pa}(\rmB), \rmR\Psi(\calO_{\lambda}(1)))_{\fracm}$ which gives rise to an automorphic representation $\pi^{\ram}$ of $\GSp_{4}(\mathbb{A}_{\QQ})$ such that $\rho_{\pi^{\ram},\iota_{\ell}}$ is residually isomorphic to $\overline{\rho}_{\pi, \lambda}\otimes_{k}\overline{\FF}_{\ell}$. Then we have
\begin{equation*}
4d_{\ram}=\dim_{\overline{\QQ}_{\ell}} \rmH^{3}_{\rmc}(\overline{\rmX}_{\Pa}(\rmB), \rmR\Psi(\overline{\QQ}_{\ell}))[\iota_{\ell}\phi_{\pi^{\ram}}].
\end{equation*}
Note that $\pi^{\unr}$ and $\pi^{\ram}$ are necessarily of general type. By considering the transfer of $\pi^{\unr}$ and $\pi^{\ram}$ to $\GL_{4}(\mathbb{A})$, we have $d_{\ram}=d_{\unr}$ by \cite[Lemma 6.4.2]{LTXZZ}.
\end{proof}
\begin{myproof}{Theorem}{\ref{arithmetic-level-raising}} Now we can prove our main theorem on arithmetic level raising. We have proved that
we have a surjection
\begin{equation*}
\mathrm{F}_{-1}\mathrm{H}^{1}(\mathrm{I}_{\QQ_{p^{2}}}, \mathrm{H}^{3}_{\rmc}(\overline{\rmX}_{\Pa}(\rmB), \rmR\Psi(\calO_{\lambda}(2)))_{\fracm})\twoheadrightarrow 
\calO_{\lambda}[\rmZ_{\rmH}(\rmB)]_{\fracm}/\det\phantom{.}\calT_{\lr, \fracm}
\end{equation*}
where the target is free of rank $d_{\unr}$ over $\rmR^{\mathrm{cong}}$. On the other hand, we have an injection
\begin{equation*}
\mathrm{F}_{-1}\mathrm{H}^{1}(\mathrm{I}_{\QQ_{p^{2}}}, \mathrm{H}^{3}_{\rmc}(\overline{\rmX}_{\Pa}(\rmB), \rmR\Psi(\calO_{\lambda}(2)))_{\fracm})\hookrightarrow \rmH^{1}_{\sing}(\QQ_{p^{2}}, \rmH^{3}_{\rmc}(\overline{\rmX}_{\Pa}(\rmB), \rmR\Psi(\calO_{\lambda}(2))_{\fracm})
\end{equation*}
and the target is free of rank $d_{\ram}$ over $\rmR^{\mathrm{cong}}$. Since $d_{\unr}=d_{\ram}$, we have the desired isomorphism
\begin{equation}\label{first-isom}
\rmH^{1}_{\sing}(\QQ_{p^{2}}, \rmH^{3}_{\rmc}(\overline{\rmX}_{\Pa}(\rmB), \rmR\Psi(\calO_{\lambda}(2)))_{\fracm})\xrightarrow{\sim}\calO_{\lambda}[\rmZ_{\rmH}(\overline{\rmB})]_{\fracm}/\det\phantom{.}\calT_{\lr}.
\end{equation}
This finishes the proof of the first part of the theorem.

For the second part, by the freeness of $\rmH^{3}_{\rmc}(\overline{\rmX}_{\Pa}(\rmB), \rmR\Psi(\calO_{\lambda}(2)))_{\fracm}$ as a $\bfT^{\ram}_{\fracm}$-module. It follows that the natural map
\begin{equation*}
\mathrm{H}^{1}(\mathrm{I}_{\QQ_{p^{2}}}, \mathrm{H}^{3}_{\rmc}(\overline{\rmX}_{\Pa}(\rmB), \rmR\Psi(\calO_{\lambda}(2)))_{\fracm})/\fracn\xrightarrow{\sim} \mathrm{H}^{1}(\mathrm{I}_{\QQ_{p^{2}}}, \mathrm{H}^{3}_{\rmc}(\overline{\rmX}_{\Pa}(\rmB), \rmR\Psi(\calO_{\lambda}(2)))_{\fracm}/\fracn)
\end{equation*}
is an isomorphism. We also have the short exact sequence as in Proposition \ref{mondromy-exact-seq}
\begin{equation*}
\begin{aligned}
0&\rightarrow \mathrm{F}_{-1}\mathrm{H}^{1}(\mathrm{I}_{\QQ_{p^{2}}}, \rmH^{3}_{\mathrm{c}}(\overline{\rmX}_{\Pa}(\rmB), \rmR\Psi(\calO_{\lambda}(2)))_{\fracm}) \rightarrow \rmH^{1}(\rmI_{\QQ_{p^{2}}}, \rmH^{3}_{\mathrm{c}}(\overline{\rmX}_{\Pa}(\rmB), \rmR\Psi(\calO_{\lambda})(2))_{\fracm}) \\
&\rightarrow \rmH^{3}_{\mathrm{c}}(\overline{\rmX}_{\Pa}(\rmB), \rmR\Psi(\calO_{\lambda}(2)))_{\fracm}/ \rmF_{-1}\rmH^{3}_{\mathrm{c}}(\overline{\rmX}_{\Pa}(\rmB), \rmR\Psi(\calO_{\lambda}(2)))_{\fracm}\rightarrow 0\\
\end{aligned}
\end{equation*}
which is split by the $\rmG_{\FF_{p^{2}}}$-action. And it follows that
\begin{equation*}
\mathrm{F}_{-1}\mathrm{H}^{1}(\mathrm{I}_{\QQ_{p^{2}}}, \rmH^{3}_{\mathrm{c}}(\overline{\rmX}_{\Pa}(\rmB), \rmR\Psi(\calO_{\lambda}(2)))_{\fracm})/\fracn\xrightarrow{\sim} \mathrm{F}_{-1}\mathrm{H}^{1}(\mathrm{I}_{\QQ_{p^{2}}}, \rmH^{3}_{\mathrm{c}}(\overline{\rmX}_{\Pa}(\rmB), \rmR\Psi(\calO_{\lambda}(2)))_{\fracm}/\fracn)
\end{equation*}
is an isomorphism.  Since we have shown above that there is an isomorphism
\begin{equation*}
\mathrm{F}_{-1}\mathrm{H}^{1}(\mathrm{I}_{\QQ_{p^{2}}}, \mathrm{H}^{3}_{\rmc}(\overline{\rmX}_{\Pa}(\rmB), \rmR\Psi(\calO_{\lambda}(2)))_{\fracm})\xrightarrow{\sim} \rmH^{1}_{\sing}(\QQ_{p^{2}}, \rmH^{3}_{\rmc}(\overline{\rmX}_{\Pa}(\rmB), \rmR\Psi(\calO_{\lambda}(2))_{\fracm}),
\end{equation*}
it follows that 
\begin{equation*}
\mathrm{H}^{1}_{\sing}({\QQ_{p^{2}}}, \rmH^{3}_{\mathrm{c}}(\overline{\rmX}_{\Pa}(\rmB), \rmR\Psi(\calO_{\lambda}(2)))_{\fracm})/\fracn\xrightarrow{\sim} \mathrm{H}^{1}_{\sing}({\QQ_{p^{2}}}, \rmH^{3}_{\mathrm{c}}(\overline{\rmX}_{\Pa}(\rmB), \rmR\Psi(\calO_{\lambda}(2)))_{\fracm}/\fracn).
\end{equation*}
On the other hand, we have $\bfT^{\ram}/\fracn\cong \calO_{\lambda}/\lambda^{m}$ and thus 
\begin{equation*}
\mathrm{H}^{1}_{\sing}({\QQ_{p^{2}}}, \rmH^{3}_{\mathrm{c}}(\overline{\rmX}_{\Pa}(\rmB), \rmR\Psi(\calO_{\lambda}(2)))_{\fracm}/\fracn)\cong \mathrm{H}^{1}_{\sing}({\QQ_{p^{2}}}, \rmH^{3}_{\mathrm{c}}(\overline{\rmX}_{\Pa}(\rmB), \rmR\Psi(\calO_{\lambda}/\lambda^{m}(2)))_{\fracm})
\end{equation*}
by the freeness of $ \rmH^{3}_{\mathrm{c}}(\overline{\rmX}_{\Pa}(\rmB), \rmR\Psi(\calO_{\lambda}(2)))_{\fracm}$ as $\bfT^{\ram}_{\fracm}$-module. 
Finally the second claim follows from the observation that 
\begin{equation*}
\begin{aligned}
\calO_{\lambda}[\rmZ_{\rmH}(\overline{\rmB})]_{\fracm}/\det\phantom{.}\calT_{\lr}&\cong  \calO_{\lambda}[\rmZ_{\rmH}(\overline{\rmB})]_{\fracm}\otimes_{\rmR^{\unr}}\rmR^{\mathrm{cong}}\\
&\cong  \calO_{\lambda}[\rmZ_{\rmH}(\overline{\rmB})]_{\fracm}\otimes_{\rmR^{\unr}}\bfT^{\ram}_{\fracm}/\fracn\\
&\cong  \calO_{\lambda}[\rmZ_{\rmH}(\overline{\rmB})]_{\fracm}\otimes \calO_{\lambda}/\lambda^{m}\\
\end{aligned}
\end{equation*}
and the isomorphism \ref{first-isom}.
\end{myproof}

\begin{corollary}\label{no-torsion}
We maintain the assumptions in Theorem \ref{arithmetic-level-raising}. 
\begin{enumerate}
\item Then we have that
\begin{equation*}
\rmH^{2}(\overline{\rmX}^{\square}_{\Pa}(\rmB), \calO_{\lambda})_{\fracm}\cong\mathrm{IH}^{2}(\overline{\rmX}_{\Pa}(\rmB), \calO_{\lambda})_{\fracm}
\end{equation*}
is a torsion free $\calO_{\lambda}$-module.
\item Then we have that
\begin{equation*}
\rmH^{4}_{\rmc}(\overline{\rmX}^{\square}_{\Pa}(\rmB), \calO_{\lambda})_{\fracm}\cong\mathrm{IH}^{4}_{\rmc}(\overline{\rmX}_{\Pa}(\rmB), \calO_{\lambda})_{\fracm}
\end{equation*}
is a torsion free $\calO_{\lambda}$-module. In particular, the map from Construction \ref{ss-cycle-class}
\begin{equation*}
(\mathrm{inc}^{\ast}_{\{0\}, \fracm}, \mathrm{inc}^{\ast}_{\{2\}, \fracm}): \rmH^{4}_{\rmc}(\overline{\rmX}^{\square}_{\Pa}(\rmB), \calO_{\lambda}(2))_{\fracm}\rightarrow \calO_{\lambda}[\rmZ_{\{0\}}(\overline{\rmB})]_{\fracm}\oplus\calO_{\lambda}[\rmZ_{\{2\}}(\overline{\rmB})]_{\fracm} 
\end{equation*}
is an isomorphism.
\end{enumerate}
\end{corollary}
\begin{proof}
Recall we have proved that the kernel of the natural map
\begin{equation*}
\mathrm{F}_{-1}\mathrm{H}^{1}(\mathrm{I}_{\QQ_{p^{2}}}, \mathrm{H}^{n}_{\rmc}(\overline{\rmX}_{\Pa}(\rmB), \calO_{\lambda}(2))_{\fracm})\twoheadrightarrow 
\calO_{\lambda}[\rmZ_{\rmH}(\rmB)]_{\fracm}/\det\phantom{.}\calT_{\lr, \fracm}
\end{equation*}
can be identified with the torsion part of $\rmH^{4}(\overline{\rmX}_{\Pa}(\rmB), \calO_{\lambda}(2))_{\fracm}$. We have shown in the above proof that this surjection is an isomorphism and the result follows.
\end{proof}
\begin{corollary}
We maintain the assumptions in Theorem \ref{arithmetic-level-raising}. Then we have that
\begin{equation*}
\mathrm{H}^{3}_{(\rmc)}(\overline{\rmX}_{\Pa}(\rmB), \calO_{\lambda})_{\fracm}
\end{equation*}
is a torsion free $\calO_{\lambda}$-module.
Moreover the intersection cohomology 
\begin{equation*}
\mathrm{IH}^{3}(\overline{\rmX}_{\Pa}(\rmB), \calO_{\lambda})_{\fracm}
\end{equation*}
is a torsion free $\calO_{\lambda}$-module.
\end{corollary}
\begin{proof}
For the first part,  we have an injection 
\begin{equation*}
\rmH^{3}_{(\rmc)}(\overline{\rmX}_{\Pa}(\rmB), \calO_{\lambda})_{\fracm}\hookrightarrow \rmH^{3}_{(\rmc)}(\overline{\rmX}_{\Pa}(\rmB), \rmR\Psi(\calO_{\lambda}))_{\fracm}
\end{equation*}
and $\rmH^{3}_{(\rmc)}(\overline{\rmX}_{\Pa}(\rmB), \rmR\Psi(\calO_{\lambda}))_{\fracm}$ is a torsion free $\calO_{\lambda}$-module by our assumption. Thus we have that $\rmH^{3}_{(\rmc)}(\overline{\rmX}_{\Pa}(\rmB), \calO_{\lambda})_{\fracm}$ is also torsion-free. 

Next, we show the cohomology group $\mathrm{IH}^{3}(\overline{\rmX}_{\Pa}(\rmB), \calO_{\lambda})_{\fracm}$ is torsion free. We consider the long exact sequence 
\begin{equation*}
\cdots\rightarrow \mathrm{IH}^{i}(\overline{\rmX}_{\Pa}(\rmB), \calO_{\lambda})_{\fracm}\rightarrow \mathrm{IH}^{i}(\overline{\rmX}_{\Pa}(\rmB), \calO_{\lambda})_{\fracm}\rightarrow \mathrm{IH}^{i}(\overline{\rmX}_{\Pa}(\rmB), \calO_{\lambda}/\lambda)_{\fracm}\rightarrow\cdots
\end{equation*}
induced by the short exact sequence 
\begin{equation*}
0\rightarrow \calO_{\lambda}\rightarrow\calO_{\lambda}\rightarrow \calO_{\lambda}/\lambda\rightarrow0.
\end{equation*}
We need to show the injection 
\begin{equation*}
\IH^{2}(\overline{\rmX}_{\Pa}(\rmB), \calO_{\lambda})_{\fracm}/\lambda\hookrightarrow \IH^{2}(\overline{\rmX}_{\Pa}(\rmB), \calO_{\lambda}/\lambda)_{\fracm}
\end{equation*}
is in fact surjective. For this, note that we have established that 
\begin{equation*}
\rmH^{2}(\overline{\rmX}^{\square}_{\Pa}(\rmB), \calO_{\lambda})_{\fracm}\cong\IH^{2}(\overline{\rmX}_{\Pa}(\rmB), \calO_{\lambda})_{\fracm} 
\end{equation*}
is torsion free and thus $\dim_{k} \IH^{2}(\overline{\rmX}_{\Pa}(\rmB), \calO_{\lambda})_{\fracm}/\lambda=\dim_{E_{\lambda}} \IH^{2}(\overline{\rmX}_{\Pa}(\rmB), E_{\lambda})_{\fracm}$. But we have 
\begin{equation*}
\begin{aligned}
&\dim_{k} \IH^{2}(\overline{\rmX}_{\Pa}(\rmB), \calO_{\lambda}/\lambda(1))_{\fracm}=\dim_{k} \IH^{4}_{\rmc}(\overline{\rmX}_{\Pa}(\rmB), \calO_{\lambda}/\lambda(2))_{\fracm};\\
&\dim_{E_{\lambda}} \IH^{2}(\overline{\rmX}_{\Pa}(\rmB), E_{\lambda}(1))_{\fracm}=\dim_{E_{\lambda}} \IH^{4}_{\rmc}(\overline{\rmX}_{\Pa}(\rmB), E_{\lambda}(2))_{\fracm}.\\
\end{aligned}
\end{equation*}
by generalized Poincare duality \cite[Proposition 2.1.17]{BBD}.  By considering the same long exact sequence and reasoning as above but applied to $\IH^{4}_{\rmc}(\overline{\rmX}_{\Pa}(\rmB), \calO_{\lambda}(2))_{\fracm}$, we have
\begin{equation}\label{dim-k-E}
\dim_{k} \IH^{4}_{\rmc}(\overline{\rmX}_{\Pa}(\rmB), \calO_{\lambda})_{\fracm}/\lambda=\dim_{k} \IH^{4}_{\rmc}(\overline{\rmX}_{\Pa}(\rmB), \calO_{\lambda}/\lambda)_{\fracm}
\end{equation}
since $\IH^{5}_{\rmc}(\overline{\rmX}_{\Pa}(\rmB), \calO_{\lambda})_{\fracm}\cong \rmH^{5}_{\rmc}(\overline{\rmX}_{\Pa}(\rmB), \calO_{\lambda})_{\fracm}\cong \rmH^{5}_{\rmc}(\overline{\rmX}_{\Pa}(\rmB), \rmR\Psi(\calO_{\lambda}))_{\fracm}$
vanishes. By the torsion-freeness of $\IH^{4}_{\rmc}(\overline{\rmX}_{\Pa}(\rmB), \calO_{\lambda})_{\fracm}$ and \eqref{dim-k-E}, we have 
\begin{equation*}
\dim_{k} \IH^{4}_{\rmc}(\overline{\rmX}_{\Pa}(\rmB), \calO_{\lambda}/\lambda)_{\fracm}=\dim_{k} \IH^{4}_{\rmc}(\overline{\rmX}_{\Pa}(\rmB), \calO_{\lambda})_{\fracm}/\lambda=\dim_{E_{\lambda}} \IH^{4}_{\rmc}(\overline{\rmX}_{\Pa}(\rmB), E_{\lambda})_{\fracm}.
\end{equation*}
It follows that 
\begin{equation*}
\dim_{k} \IH^{2}(\overline{\rmX}_{\Pa}(\rmB), \calO_{\lambda}/\lambda))_{\fracm}=\dim_{E_{\lambda}} \IH^{2}(\overline{\rmX}_{\Pa}(\rmB), E_{\lambda})_{\fracm}.
\end{equation*}
and hence $\dim_{k} \IH^{2}(\overline{\rmX}_{\Pa}(\rmB), \calO_{\lambda})_{\fracm}/\lambda=\dim_{k} \IH^{2}(\overline{\rmX}_{\Pa}(\rmB), \calO_{\lambda}/\lambda)_{\fracm}$. Then we can conclude that
\begin{equation*}
\IH^{2}(\overline{\rmX}_{\Pa}(\rmB), \calO_{\lambda}(1))_{\fracm}/\lambda\hookrightarrow \IH^{2}(\overline{\rmX}_{\Pa}(\rmB), \calO_{\lambda}/\lambda)_{\fracm} 
\end{equation*}
is in fact surjective and we are done.
\end{proof}

\begin{corollary}
We maintain the assumptions in Theorem \ref{arithmetic-level-raising}. Then the compactly supported intersection cohomology
\begin{equation*}
\mathrm{IH}^{3}_{\rmc}(\overline{\rmX}_{\Pa}(\rmB), \calO_{\lambda})_{\fracm}
\end{equation*}
is a torsion free $\calO_{\lambda}$-module.
 \end{corollary}
\begin{proof}
Consider the specialization exact sequence for $\rmH^{3}_{(\rmc)}(\overline{\rmX}_{\Pa}(\rmB), \rmR\Psi(\calO_{\lambda}))_{\fracm}$:
\begin{equation*}
\begin{tikzcd}
0 \arrow[r] & \rmH^{3}_{\rmc}(\overline{\rmX}_{\Pa}(\rmB), \calO_{\lambda})_{\fracm} \arrow[r] \arrow[d] & \rmH^{3}_{\rmc}(\overline{\rmX}_{\Pa}(\rmB), \rmR\Psi(\calO_{\lambda}))_{\fracm} \arrow[r] \arrow[d] & \bigoplus\limits_{\sigma\in \rmZ_{\{1\}}\overline{\rmB})}\rmR^{3}\Phi_{\sigma}(\calO_{\lambda})_{\fracm} \arrow[d, equal] \\
0 \arrow[r] & \rmH^{3}(\overline{\rmX}_{\Pa}(\rmB), \calO_{\lambda})_{\fracm} \arrow[r]           & \rmH^{3}(\overline{\rmX}_{\Pa}(\rmB), \rmR\Psi(\calO_{\lambda}))_{\fracm}\arrow[r]           & \bigoplus\limits_{\sigma\in \rmZ_{\{1\}}\overline{\rmB})}\rmR^{3}\Phi_{\sigma}(\calO_{\lambda})_{\fracm}.       
\end{tikzcd}
\end{equation*}
Since the middle vertical map is an isomorphism by our assumption, the first vertical map is also an isomorphism. It follows from this that
\begin{equation*}
\frac{\rmH^{3}_{\mathrm{c}}(\overline{\rmX}_{\Pa}(\rmB), \calO_{\lambda})_{\fracm}}{\bigoplus\limits_{\sigma\in \rmZ_{\{1\}}(\overline{\rmB})}\rmH^{3}_{\{\sigma\}}(\overline{\rmX}_{\Pa}(\rmB), \rmR\Psi(\calO_{\lambda}))_{\fracm}}\cong
\frac{\rmH^{3}(\overline{\rmX}_{\Pa}(\rmB), \calO_{\lambda})_{\fracm}}{\bigoplus\limits_{\sigma\in \rmZ_{\{1\}}(\overline{\rmB})}\rmH^{3}_{\{\sigma\}}(\overline{\rmX}_{\Pa}(\rmB), \rmR\Psi(\calO_{\lambda}))_{\fracm}}
\end{equation*}
But the righthand side is isomorphic to $\mathrm{IH}^{3}(\overline{\rmX}_{\Pa}(\rmB), \calO_{\lambda})_{\fracm}$ and hence is torsion-free by the previous lemma and therefore the lefthand side which is isomorphic to $\mathrm{IH}^{3}_{\rmc}(\overline{\rmX}_{\Pa}(\rmB), \calO_{\lambda})_{\fracm}$ is also torsion free.
\end{proof}

\subsection{Level raising for automorphic representation of $\GSp_{4}$} Let $\pi$ be a cuspidal automorphic representation of $\GSp_{4}(\mathbb{A})$ of general type with weight $(3,3)$ and trivial central character. Let $p$ be a level raising special prime for $\pi$ of length $m$. From the arithmetic level raising theorem proved in the last subsection, we can deduce a level raising theorem for $\pi$ which can be applied to deduce level lowering theorems in \cite{Wange}. This is the content of this subsection. 

Suppose the component $\pi_{q}$ of $\pi$ at the prime $q$ is of type $\rmI\rmI\rma$. Then $\pi$ admits a global Jacquet--Langlands transfer to an automorphic representation of $\bfG(\overline{\rmB})(\mathbb{A})$ by Theorem \ref{mult1-definite}. In particular the maximal ideal $\fracm$ associated to $\pi$ in the Construction \ref{Hecke-Algebra} $(3)$ lies in the support of $\calO_{\lambda}[\rmZ_{\rmH}(\overline{\rmB})]$ as a $\TT^{\Sigma\cup\{p\}}$-module. 

\begin{theorem}
Let $\pi$ be an automorphic representation of $\GSp_{4}(\mathbb{A})$ as above. Suppose that $\pi$ satisfy all the assumptions in Theorem \ref{arithmetic-level-raising}. Let $p$ be a level raising special prime for $\pi$ of length $m$. 

Then there exists an automorphic representation $\mathbf{\Pi}$ of $\GSp_{4}(\mathbb{A})$ of general type with weight $(3,3)$ and trivial central character such that
\begin{enumerate}
\item the component $\mathbf{\Pi}_{p}$ at $p$ is of type $\rmI\rmI\rma$;
\item we have an isomorphism of the residual Galois representation
\begin{equation*}
\overline{\rho}_{\pi, \lambda}\cong \overline{\rho}_{\mathbf{\Pi}, \lambda}.
\end{equation*}
\end{enumerate}
In this case we will say $\mathbf{\Pi}$ is a level raising of $\pi$. 
\end{theorem}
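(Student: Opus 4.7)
The plan is to deduce this theorem as an essentially direct consequence of the arithmetic level raising theorem (Theorem \ref{arithmetic-level-raising}) combined with the Jacquet--Langlands correspondence (Theorem \ref{JL}). I will first confirm that the maximal ideal $\fracm$ appears in the support of the middle-degree cohomology of the indefinite quaternionic unitary Shimura variety $\Sh(\rmB, \rmK_{\Pa})$, and then extract the level-raised automorphic form by transferring from $\bfG(\rmB)$ to $\GSp_4$.

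For the first step, I will use that $\pi_q$ is of type $\rmI\rmI\rma$: as noted in the paragraph preceding the theorem statement, this hypothesis guarantees via Jacquet--Langlands at $q$ that $\fracm$ lies in the support of $\calO_\lambda[\rmZ_{\rmH}(\overline{\rmB})]$, so that the module $\calO_\lambda/\lambda^m[\rmZ_{\rmH}(\overline{\rmB})]_{\fracm}$ is nonzero. Applying part (2) of Theorem \ref{arithmetic-level-raising} then yields
$$\rmH^1_{\sing}\bigl(\QQ_{p^2}, \rmH^3_{\rmc}(\overline{\rmX}_{\Pa}(\rmB), \rmR\Psi(\calO_\lambda/\lambda^m(1)))_{\fracm}\bigr) \neq 0,$$
from which, via the comparison between nearby cycle cohomology and generic-fiber cohomology (for which $\rmX_{\Pa}(\rmB)$ falls under the case (Nm) of Lan--Stroh), it follows that $\fracm$ appears in the support of $\rmH^3_{\rmc}(\Sh(\rmB, \rmK_{\Pa}), \calO_\lambda)$.

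The second step invokes Theorem \ref{JL}(2). The nonvanishing just obtained will produce an automorphic representation $\vec\pi'$ of $\bfG(\rmB)(\mathbb{A}_\QQ)$ whose Satake parameters outside $\Sigma\cup\{p\}$ are congruent modulo $\lambda$ to those of $\pi$, and whose Jacquet--Langlands transfer $\pi'$ to $\GSp_4(\mathbb{A}_\QQ)$ is of general type with weight $(3,3)$ and trivial central character. Since $\rmB$ is ramified at both $p$ and $q$, Theorem \ref{JL}(2) further guarantees that $\pi'_p$ and $\pi'_q$ are both of type $\rmI\rmI\rma$; this is the first conclusion we want.

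Finally, the required isomorphism $\overline{\rho}_{\pi,\lambda} \cong \overline{\rho}_{\pi',\lambda}$ follows by Chebotarev density: the characteristic polynomials of Frobenius at every place outside $\Sigma\cup\{p,l\}$ coincide modulo $\lambda$, since $\pi$ and $\pi'$ give rise to the same homomorphism $\TT^{\Sigma\cup\{p\}}/\fracm \to k$. Combined with the assumption that $\overline{\rho}_{\pi,\lambda}$ is absolutely irreducible (hence semisimple) and the Brauer--Nesbitt theorem, this yields the isomorphism of residual Galois representations. I expect the main subtlety to lie in the second step, specifically in certifying via the trace identity of Lemma \ref{first-trace-id}, Shelstad's character identity, and the uniqueness of paramodular newforms of Roberts--Schmidt that $\pi'_p$ is genuinely of type $\rmI\rmI\rma$ rather than another $\Pa^{\rmD}$-spherical local representation---this is precisely the content of the argument already carried out in the proof of Theorem \ref{JL}(2), so for the present theorem one largely just needs to invoke it carefully.
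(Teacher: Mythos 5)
Your steps (1)--(2) and the final Chebotarev/Brauer--Nesbitt step are fine and agree with the paper: the hypothesis at $q$ puts $\fracm$ in the support of $\calO_{\lambda}[\rmZ_{\rmH}(\overline{\rmB})]$, the arithmetic level raising isomorphism together with Assumption \ref{vanishing} (torsion-freeness in degree $3$) forces $\rmH^{3}_{\rmc}(\Sh(\rmB,\rmK_{\Pa}),\calO_{\lambda})_{\fracm}\neq 0$, and once a $\pi'$ with an attached Galois representation exists, congruence of Hecke eigenvalues away from $\Sigma\cup\{p,l\}$ plus absolute irreducibility of $\overline{\rho}_{\pi,\lambda}$ gives $\overline{\rho}_{\pi,\lambda}\cong\overline{\rho}_{\pi',\lambda}$. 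The gap is in the pivot on Theorem \ref{JL}(2). That theorem takes as \emph{input} a cuspidal automorphic representation of $\GSp_{4}(\mathbb{A}_{\QQ})$ of general type whose exact prime-to-$pq\infty$ eigensystem appears in $\rmH^{3}_{\rmc}(\Sh(\rmB,\rmK_{\Pa}),\overline{\QQ}_{l})$. The original $\pi$ does not satisfy this hypothesis: level raising only produces an eigensystem congruent to that of $\pi$ modulo $\lambda^{m}$, and $\iota_{l}\pi^{pq\infty}$ itself need not occur in the quaternionic cohomology (indeed $\pi_{p}$ is unramified type $\rmI$, so it cannot transfer to $\bfG(\rmB)$, which is ramified at $p$). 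Applying \ref{JL}(2) instead to the new eigensystem is circular: its hypothesis presupposes the existence of a general-type $\GSp_{4}$-representation completing that eigensystem, which is exactly what you are trying to construct; and the trace-identity argument inside the proof of \ref{JL} needs general-typeness to kill the endoscopic transfer factors, something you cannot certify for the new form without first attaching a Galois representation to it with big residual image.

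The paper avoids this by a different mechanism: it uses Theorem \ref{Free}(2) ($\rmR^{\ram}\cong\bfT^{\ram}_{\fracm}$ and freeness of $\rmH^{3}_{\rmc}(\overline{\rmX}_{\Pa}(\rmB),\rmR\Psi(\calO_{\lambda}))_{\fracm}$ over $\rmR^{\ram}$) to pick a characteristic-zero point $\eta^{\ram}\in\Spec\,\rmR^{\ram}[1/l]$ in the support; this produces $\pi'$ with the residual isomorphism built into the deformation-theoretic setup (so general-typeness follows from irreducibility of $\overline{\rho}_{\pi',\lambda}$), and then identifies $\pi'_{p}$ as type $\rmI\rmI\rma$ not via the trace formula at $p$ but via the Picard--Lefschetz description of the monodromy on the cohomology of $\rmX_{\Pa}(\rmB)$ (rank-one nilpotent $\rmN$), temperedness of $\pi'$ at $p$, and the local Langlands correspondence for Iwahori-spherical representations. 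To repair your route you would have to first establish, e.g.\ through the $\rmR=\bfT$ theorem or an Eichler--Shimura-type description of the $\fracm$-localized cohomology, that every eigensystem occurring there is non-endoscopic and non-CAP, and then rerun the trace-identity argument with the inner form as the starting point --- at which point you are essentially reproducing the paper's machinery rather than merely invoking Theorem \ref{JL}(2).
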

\begin{proof}
As remark in the above, the localization $\calO_{\lambda}[\rmZ_{\rmH}(\overline{\rmB})]_{\fracm}$
is non-zero. By Theorem \ref{arithmetic-level-raising}, we have an isomorphism 
\begin{equation*}
\rmH^{1}_{\sing}(\QQ_{p^{2}}, \rmH^{3}_{\rmc}(\overline{\rmX}_{\Pa}(\rmB), \rmR\Psi(\calO_{\lambda}(2)))_{\fracm})\xrightarrow{\sim}\calO_{\lambda}[\rmZ_{\rmH}(\overline{\rmB})]_{\fracm}/\det\phantom{.}\calT_{\lr}.
\end{equation*}
In particular $\rmH^{3}_{\rmc}(\overline{\rmX}_{\Pa}(\rmB), \rmR\Psi(\calO_{\lambda}(2)))_{\fracm}$ is non-zero. Since  $\rmH^{3}_{\rmc}(\overline{\rmX}_{\Pa}(\rmB), \rmR\Psi(\calO_{\lambda}(2)))_{\fracm}$ is a free $\rmR^{\ram}$-module, we can find a point $\eta^{\ram}\in \Spec\phantom{.}\rmR^{\ram}[1/\ell]$ in the support of 
\begin{equation*}
\rmH^{3}_{\rmc}(\overline{\rmX}_{\Pa}(\rmB), \rmR\Psi(\calO_{\lambda}(2)))_{\fracm}.
\end{equation*}
Then we can find  an automorphic representation $\mathbf{\Pi}$ of $\GSp_{4}(\mathbb{A})$ which satisfy the isomorphism 
\begin{equation*}
\overline{\rho}_{\pi, \lambda}\cong \overline{\rho}_{\mathbf{\Pi}, \lambda}.
\end{equation*}
In particular $\mathbf{\Pi}$ is of general type as $\overline{\rho}_{\mathbf{\Pi}, \lambda}$ is absolutely irreducible. By the Picard-Lefschetz formula applied to the Shimura variety ${\rmX}_{\Pa}(\rmB)$, we see that the monodromy of the associated Weil-Deligne representation of  $\rho_{\mathbf{\Pi}, \lambda}$ is necessarily is given by 
\begin{equation*}
\rmN_{\Pa}=\begin{pmatrix}0&&&\\&0&1&\\&&0&\\&&&0\\ \end{pmatrix}.
\end{equation*}
up to conjugation. Since $\mathbf{\Pi}$ is of general type and hence tempered at $p$, it follows that $\mathbf{\Pi}$ is of type $\rmI\rmI\rma$ by the local Langlands correspondence for non-supercuspidal representations, see for example \cite[Table 2]{Sch-Iwahori}. The theorem is proved.
\end{proof}

\end{document}